\pdfoutput=1
\RequirePackage{ifpdf}
\ifpdf 
\documentclass[pdftex]{sigma}
\else
\documentclass{sigma}
\fi

\numberwithin{equation}{section}

\newtheorem{thm}{Theorem}[section]
\newtheorem{cor}[thm]{Corollary}
\newtheorem{lem}[thm]{Lemma}
\newtheorem{prop}[thm]{Proposition}
\newtheorem{Conjecture}[thm]{Conjecture}
{\theoremstyle{definition}
\newtheorem{rem}[thm]{Remark}
\newtheorem{defn}[thm]{Definition}}

\newcommand{\hyp}[5]{\,\mbox{}_{#1}F_{#2}\!\left(
 \genfrac{}{}{0pt}{}{#3}{#4};#5\right)}

\makeatletter
\def\eqnarray{\stepcounter{equation}\let\@currentlabel=\theequation
\global\@eqnswtrue
\tabskip\@centering\let\\=\@eqncr
$$\halign to \displaywidth\bgroup\hfil\global\@eqcnt\z@
 $\displaystyle\tabskip\z@{##}$&\global\@eqcnt\@ne
 \hfil$\displaystyle{{}##{}}$\hfil
 &\global\@eqcnt\tw@ $\displaystyle{##}$\hfil
 \tabskip\@centering&\llap{##}\tabskip\z@\cr}

\def\endeqnarray{\@@eqncr\egroup
 \global\advance\c@equation\m@ne$$\global\@ignoretrue}

\def\@yeqncr{\@ifnextchar [{\@xeqncr}{\@xeqncr[5pt]}}
\makeatother

\newcommand{\bfX}{{\bf X}}
\newcommand{\bfN}{{\bf N}}
\newcommand{\bfx}{{\bf x}}
\newcommand{\bfxp}{{{\bf x}^\prime}}

\newcommand{\wbfx}{\widehat{\bf x}}
\newcommand{\wbfxp}{{\widehat{\bf x}^\prime}}
\newcommand{\N}{{\mathbf N}}

\newcommand{\R}{{\mathbf R}}
\newcommand{\Si}{{\mathbf S}}
\newcommand{\Hi}{{\mathbf H}}

\newcommand{\Z}{{\mathbf Z}}
\newcommand{\C}{{\mathbf C}}

\newcommand{\sP}{\mathsf{P}}
\newcommand{\sQ}{\mathsf{Q}}

\newcommand{\mch}{{\mathcal H}}
\newcommand{\mss}{{\mathcal S}}
\newcommand{\mas}{{\mathcal A}}

\newcommand{\msss}{{\mathsf S}}
\newcommand{\mssf}{{\mathfrak S}}
\newcommand{\mass}{{\mathsf A}}
\newcommand{\masf}{{\mathfrak A}}

\newcommand{\mcg}{{\mathcal G}}

\newcommand{\sech}{\hspace{2.22222222pt}\mathrm{sech}\hspace{1.666667pt}}

\usepackage{scalerel}
\let\svus_
\catcode`_=\active %
\def_{\ifmmode\expandafter\svus\expandafter\bgroup\expandafter\lowerit\else\svus\fi}
\def\lowerit#1{\ThisStyle{\raisebox{-2\LMpt}{$\SavedStyle#1$}}\egroup}
\makeatletter
 \AtBeginDocument{
 \check@mathfonts
 \fontdimen13\textfont2=3.5pt
 \fontdimen14\textfont2=3.5pt
 \fontdimen16\textfont2=2.5pt
 \fontdimen17\textfont2=2.5pt
 }
\makeatother

\begin{document}

\allowdisplaybreaks

\newcommand{\arXivNumber}{1803.07149}

\renewcommand{\thefootnote}{}

\renewcommand{\PaperNumber}{136}

\FirstPageHeading

\ShortArticleName{Fundamental Solutions and Gegenbauer Expansions of Helmholtz Operators}

\ArticleName{Fundamental Solutions and Gegenbauer Expansions\\ of Helmholtz Operators in Riemannian Spaces\\ of Constant Curvature\footnote{This paper is a~contribution to the Special Issue on Orthogonal Polynomials, Special Functions and Applications (OPSFA14). The full collection is available at \href{https://www.emis.de/journals/SIGMA/OPSFA2017.html}{https://www.emis.de/journals/SIGMA/OPSFA2017.html}}}

\Author{Howard S.~COHL~$^\dag$, Thinh H.~DANG~$^\ddag$ and T.M.~DUNSTER~$^\S$}

\AuthorNameForHeading{H.S.~Cohl, T.H.~Dang and T.M.~Dunster}

\Address{$^\dag$~Applied and Computational Mathematics Division,\\
\hphantom{$^\dag$}~National Institute of Standards and Technology, Mission Viejo, CA 92694, USA}
\EmailD{\href{mailto:howard.cohl@nist.gov}{howard.cohl@nist.gov}}
\URLaddressD{\url{http://www.nist.gov/itl/math/msg/howard-s-cohl.cfm}}

\Address{$^\ddag$~Department of Computer Science, George Washington University,\\
\hphantom{$^\ddag$}~Washington D.C.~20052, USA}
\EmailD{\href{mailto:danghungthinh@gmail.com}{danghungthinh@gmail.com}}

\Address{$^\S$~Department of Mathematics \& Statistics, San Diego State University,\\
\hphantom{$^\S$}~San Diego, CA 92182, USA}
\EmailD{\href{mailto:mdunster@mail.sdsu.edu}{mdunster@mail.sdsu.edu}}

\ArticleDates{Received March 01, 2018, in final form December 14, 2018; Published online December 31, 2018}

\Abstract{We perform global and local analysis of oscillatory and damped spherically symmetric fundamental solutions for Helmholtz operators $\big({-}\Delta\pm\beta^2\big)$ in $d$-dimensional, $R$-radius hyperbolic ${\mathbf H}_R^d$ and hyperspherical ${\mathbf S}_R^d$ geometry, which represent Riemannian manifolds with positive constant and negative constant sectional curvature respectively. In particular, we compute closed-form expressions for fundamental solutions of $\big({-}\Delta \pm \beta^2\big)$ on ${\mathbf H}_R^d$, $\big({-}\Delta+\beta^2\big)$ on~${\mathbf S}_R^d$, and present two candidate fundamental solutions for $\big({-}\Delta-\beta^2\big)$ on ${\mathbf S}_R^d$. Flat-space limits, with their corresponding asymptotic representations, are used to restrict proportionality constants for these fundamental solutions. In order to accomplish this, we summarize and derive new large degree asymptotics for associated Legendre and Ferrers functions of the first and second kind. Furthermore, we prove that our fundamental solutions on the hyperboloid are unique due to their decay at infinity. To derive Gegenbauer polynomial expansions of our fundamental solutions for Helmholtz operators on hyperspheres and hyperboloids, we derive a collection of infinite series addition theorems for Ferrers and associated Legendre functions which are generalizations and extensions of the addition theorem for Gegenbauer polynomials. Using these addition theorems, in geodesic polar coordinates for dimensions greater than or equal to three, we compute Gegenbauer polynomial expansions for these fundamental solutions, and azimuthal Fourier expansions in two-dimensions.}

\Keywords{hyperbolic geometry; hyperspherical geometry; fundamental solution; Helmholtz equation; Gegenbauer series; separation of variables; addition theorems; associated Legendre functions; Ferrers functions}

\Classification{31C12; 32Q45; 33C05; 33C45; 35A08; 35J05; 42A16}

\renewcommand{\thefootnote}{\arabic{footnote}}
\setcounter{footnote}{0}

\section{Introduction}\label{Introduction}

In this paper we derive associated Legendre and Ferrers function expressions for fundamental solutions of Helmholtz operators $\big({-}\Delta\pm\beta^2\big)$, $\beta^2>0$, in Riemannian spaces of constant curvature, namely in the $d$-dimensional $R$-radius hyperboloid $\Hi_R^d$ and hyperspherical $\Si_R^d$ models with negative and positive sectional curvatures respectively, where $R>0$. We also compute eigenfunction expansions for these fundamental solutions of Helmholtz operators in geodesic polar coordinates in these Riemannian manifolds. In particular, we derive Gegenbauer polynomial expansions for spherically symmetric fundamental solutions of Helmholtz operators on Riemannian manifolds of negative-constant and positive-constant sectional curvatures. Useful background material relevant for this paper can be found in \cite{Lee,PogWin,Thurston,Vilen}.

This paper is organized as follows. In Section~\ref{SpecialFunctions}, we introduce and give some useful pro\-per\-ties for the special functions and orthogonal polynomials that we will use in this paper. In particular, we summarize (and derive new) large degree asymptotics for associated Legendre and Ferrers functions of the first and second kind. We also derive a collection of infinite series addition theorems for Ferrers and associated Legendre functions (generalizations and extensions of the addition theorem for Gegenbauer polynomials) which will be used to compute Gegenbauer polynomial expansions for fundamental solutions of Helmholtz operator in spaces of constant curvature. In Section~\ref{Globalanalysisonthehyperboloid}, for the hyperboloid model of $d$-dimensional hyperbolic geometry and for hyperspherical geometry, we describe some of their global properties, such as their respective geodesic distance functions, geodesic polar coordinates, Helmholtz operators, and their corresponding radial harmonics. In Section~\ref{AGreensfunctioninthehyperboloidmodel}, we show how to compute radial harmonics in a geodesic polar coordinate system and derive fundamental solutions for $\big({-}\Delta \pm \beta^2\big)$ on~$\Hi_R^d$, $\big({-}\Delta+\beta^2\big)$ on~$\Si_R^d$, and study two candidate fundamental solutions for $\big({-}\Delta-\beta^2\big)$ on~$\Si_R^d$. In Section~\ref{Gegenbauerexpansioninhyperbolichypersphericalcoordinates}, for $d\ge 3$, we compute Gegenbauer polynomial expansions in geodesic polar coordinates for fundamental solutions of Helmholtz operators on the hyperboloid and hypersphere. We also compute azimuthal Fourier expansions for these fundamental solutions in two-dimensions. In Appendix~\ref{ProofoftheAdditionTheoremGegexpansionsFerrers}, the proof of an addition theorem is presented.

\section{Special functions, asymptotics, and notation}\label{SpecialFunctions}

Throughout this paper we rely on the following definitions. The set of natural numbers is given by $\N:=\{1,2,3,\ldots\}$, the set $\N_0:=\{0,1,2,\ldots\}=\N\cup\{0\}$, and the set $\Z:=\{0,\pm 1,\pm 2,\ldots\}.$ The set $\R$ represents the real numbers and the set $\C$ represents the complex numbers. For $a_1,a_2,a_3,\ldots\in\C$, if $i,j\in\Z$ and $j<i$ then $\sum\limits_{n=i}^{j}a_n:=0$ and $\prod\limits_{n=i}^ja_n:=1$. Note that we often adopt a common notation used for fundamental solution expansions, namely if one takes $a,a^\prime\in\R$, then
\begin{gather*}
a_\lessgtr:={\min \atop \max}\{a,a^\prime\}.
\end{gather*}

\subsection{The gamma function and factorials}\label{Thegammafunction}

The (Euler) gamma function $\Gamma\colon \C\setminus-\N_0\to\C$ (see \cite[Chapter~5]{NIST:DLMF}), which is ubiquitous in special function theory satisfies the recurrence formula $\Gamma(z+1)=z\Gamma(z)$, and is an important combinatoric function which generalizes the factorial function $\Gamma(n+1)=n!$, $n\in\N_0$. The gamma function is naturally defined through Euler's integral \cite[equation~(5.2.1)]{NIST:DLMF}. The following asymptotic approximation involving the ratio of gamma
functions will also be needed.

\begin{lem}\label{Gammaratiolemma}Let $a,b\in\C$. Then we have, as ${0<\tau \rightarrow \infty}$,
\begin{gather}
\frac{\Gamma (a\pm {\rm i}\tau)}{\Gamma (b\pm {\rm i}\tau)}={\rm e}^{\pm {\rm i}\pi (a-b) /2}\tau^{a-b}\left\{ 1+{\mathcal{O}\left(\frac{{1}}{{\tau}}\right)}\right\},\label{Gammaratio}
\end{gather}
where $\tau^{a-b}$ takes its principal value.
\end{lem}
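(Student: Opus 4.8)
The plan is to prove the asymptotic approximation for the gamma-function ratio by taking logarithms and invoking the standard asymptotic expansion of $\log\Gamma$ for large argument in the complex plane. First I would write
\begin{gather*}
\log\frac{\Gamma(a\pm {\rm i}\tau)}{\Gamma(b\pm {\rm i}\tau)}=\log\Gamma(a\pm {\rm i}\tau)-\log\Gamma(b\pm {\rm i}\tau),
\end{gather*}
and then apply Stirling's series for $\log\Gamma(z)$, valid as $z\to\infty$ with $|\arg z|<\pi$, namely
\begin{gather*}
\log\Gamma(z)=\left(z-\tfrac{1}{2}\right)\log z-z+\tfrac{1}{2}\log(2\pi)+{\mathcal{O}\left(\frac{1}{z}\right)}.
\end{gather*}
Since both arguments $a\pm {\rm i}\tau$ and $b\pm {\rm i}\tau$ tend to infinity as $0<\tau\to\infty$ while their arguments approach $\pm\pi/2$ (staying strictly inside the sector of validity), this expansion applies to each term.

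The key computational step is then to subtract the two Stirling expansions and track the leading behaviour in $\tau$. Writing $z=\pm {\rm i}\tau$ as the dominant part, I would expand $\log(a\pm {\rm i}\tau)=\log(\pm {\rm i}\tau)+\log(1+a/(\pm {\rm i}\tau))=\log(\pm {\rm i}\tau)+a/(\pm {\rm i}\tau)+{\mathcal{O}(1/\tau^2)}$, and similarly for $b$. Collecting the terms, the contributions $-z+\tfrac12\log(2\pi)$ cancel between the numerator and denominator, the $\log z$ and correction terms combine so that the surviving leading factor is $(\pm {\rm i}\tau)^{a-b}$, and the ${\mathcal{O}(1/z)}$ remainders combine into a single ${\mathcal{O}(1/\tau)}$ term. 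Using $(\pm {\rm i})^{a-b}={\rm e}^{\pm {\rm i}\pi(a-b)/2}$ with the principal branch then yields the stated factor ${\rm e}^{\pm {\rm i}\pi(a-b)/2}\tau^{a-b}$.

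The main obstacle I anticipate is bookkeeping the branch of the logarithm and the power carefully, so that the phase factor comes out exactly as ${\rm e}^{\pm {\rm i}\pi(a-b)/2}$ with $\tau^{a-b}$ taking its principal value. Because $\arg(\pm {\rm i}\tau)=\pm\pi/2$ for $\tau>0$, one must verify that the chosen branch of $\log$ in Stirling's formula is consistent across both terms and that no spurious multiples of $2\pi {\rm i}$ are introduced when forming the difference; this is where sign errors most naturally arise. A cleaner alternative that sidesteps some of this branch tracking is to cite a known ratio asymptotic directly, for instance the uniform expansion of $\Gamma(z+a)/\Gamma(z+b)$ as $z\to\infty$ in $|\arg z|<\pi$ (see \cite[equation~(5.11.13)]{NIST:DLMF}), and then specialize to $z=\pm {\rm i}\tau$, which immediately produces $(\pm {\rm i}\tau)^{a-b}\{1+{\mathcal{O}(1/\tau)}\}$; the final step is again just the identification $(\pm {\rm i})^{a-b}={\rm e}^{\pm {\rm i}\pi(a-b)/2}$.
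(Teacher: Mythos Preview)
Your proposal is correct, and the ``cleaner alternative'' you describe at the end---citing \cite[equation~(5.11.13)]{NIST:DLMF} for $\Gamma(z+a)/\Gamma(z+b)=z^{a-b}\{1+\mathcal{O}(1/z)\}$ and specializing to $z=\pm {\rm i}\tau$---is exactly the paper's proof. Your primary route via Stirling's series for $\log\Gamma$ would also work and is essentially a re-derivation of that ratio asymptotic, but it is more laborious than necessary here.
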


\begin{proof}Let $\delta\in(0,\pi)$. From \cite[equation~(5.11.13)]{NIST:DLMF}
\begin{gather}
\Gamma (z+a) /\Gamma (z+b) =z^{a-b}\big\{ 1+{\mathcal{O}\big( z^{-1}\big)}\big\},\label{Gammaratioz}
\end{gather}
as $z\rightarrow \infty $ with $a$ and $b$ real or complex constants, provided $\vert \arg z\vert \leq \pi-\delta $ ($<\pi $). If one takes $z=\pm {\rm i}\tau $ with $\tau >0$ then the argument restriction implies $\arg
( \pm {\rm i}\tau ) =\pm \pi /2$, and~(\ref{Gammaratio}) follows.
\end{proof}

\begin{lem}\label{negGammaratiolemma} Let $a,b\in\C$, $z-a\not\in\Z$, $\delta\in(0,\pi)$. Then we have, as ${z \rightarrow \infty}$, provided
$\vert \arg z\vert \leq \pi-\delta$,
\begin{gather}
\frac{\Gamma(-z+a)}{\Gamma(-z+b)}=\frac{\sin(\pi(z-b))}{\sin(\pi(z-a))}z^{a-b}
\left\{1+\mathcal{O}\left(\frac{1}{z}\right)\right\},\label{negratiogamma}
\end{gather}
where $z^{a-b}$ takes its principal value.
\end{lem}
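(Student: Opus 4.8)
The plan is to reduce the ratio of gamma functions with negative arguments to a ratio with positive arguments by invoking the reflection formula $\Gamma(w)\Gamma(1-w)=\pi/\sin(\pi w)$, and then to apply the asymptotic expansion~(\ref{Gammaratioz}) already recorded in the proof of Lemma~\ref{Gammaratiolemma}. First I would apply the reflection formula with $w=-z+a$ and then with $w=-z+b$, writing
\[
\Gamma(-z+a)=\frac{\pi}{\sin(\pi(-z+a))\,\Gamma(1+z-a)},\qquad
\Gamma(-z+b)=\frac{\pi}{\sin(\pi(-z+b))\,\Gamma(1+z-b)}.
\]
Here the hypothesis $z-a\not\in\Z$ guarantees that $\sin(\pi(-z+a))\neq 0$, so the quotient is well defined. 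Dividing the two identities then gives
\[
\frac{\Gamma(-z+a)}{\Gamma(-z+b)}=\frac{\sin(\pi(-z+b))}{\sin(\pi(-z+a))}\cdot\frac{\Gamma(1+z-b)}{\Gamma(1+z-a)}.
\]

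Next I would simplify the ratio of sines using the oddness of sine: $\sin(\pi(-z+a))=-\sin(\pi(z-a))$ and $\sin(\pi(-z+b))=-\sin(\pi(z-b))$, so the two signs cancel and the sine factor becomes precisely $\sin(\pi(z-b))/\sin(\pi(z-a))$, matching the claimed prefactor. It then remains to estimate $\Gamma(1+z-b)/\Gamma(1+z-a)=\Gamma(z+(1-b))/\Gamma(z+(1-a))$. I would apply~(\ref{Gammaratioz}) with the large variable identified as our $z$ and with constants $1-b$ and $1-a$, obtaining
\[
\frac{\Gamma(1+z-b)}{\Gamma(1+z-a)}=z^{(1-b)-(1-a)}\bigl\{1+\mathcal{O}\bigl(z^{-1}\bigr)\bigr\}=z^{a-b}\bigl\{1+\mathcal{O}\bigl(z^{-1}\bigr)\bigr\}.
\]
Combining the three displays yields~(\ref{negratiogamma}).

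The only point requiring genuine care is the applicability of~(\ref{Gammaratioz}): identifying its large variable with our $z$ and its constants with $1-b$ and $1-a$, the argument restriction $|\arg z|\le\pi-\delta$ is exactly the hypothesis we have assumed, and the exponent $(1-b)-(1-a)=a-b$ is taken on the principal branch of $z^{a-b}$ in our own variable, so no rescaling of the branch cut is needed. I do not expect any real obstacle beyond this bookkeeping; the reflection formula does the essential work, replacing the poles of the numerator and denominator (which occur precisely at integer values of $z-a$ and $z-b$) by the explicit, nonvanishing sine ratio, after which the known ratio asymptotic supplies the algebraic growth and the error term.
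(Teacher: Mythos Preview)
Your proof is correct and follows essentially the same approach as the paper: the paper's proof simply cites~(\ref{Gammaratioz}) together with the reflection formula \cite[equation~(5.5.3)]{NIST:DLMF}, which is exactly the combination you have written out in detail. Your treatment makes explicit the sign cancellation in the sine ratio and the identification of constants needed to apply~(\ref{Gammaratioz}), but the underlying argument is identical.
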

\begin{proof}
Using (\ref{Gammaratioz}), \cite[equation~(5.5.3)]{NIST:DLMF}, the result follows.
\end{proof}

The Pochhammer symbol (rising factorial) $(\cdot)_{n}\colon \C\to\C$ is defined by
\begin{gather*}
(z)_0:=1,\qquad (z)_n:=(z)(z+1)\cdots(z+n-1),
\end{gather*}
where $n\in\N$. Note that $(z)_{n}=\Gamma(z+{n})/\Gamma(z)$, $\Gamma(z-n)=(-1)^n\Gamma(z)/(-z+1)_n$, for all $z\in\C\setminus-\N_0$, $n\in\N_0$.

\subsection{The Gauss hypergeometric function}\label{Gausshypergeometricfunction}

The Gauss hypergeometric function ${}_2F_1\colon \C^2\times(\C\setminus-\N_0)\times \mathbf C\setminus[1,\infty)\to\mathbf C$ can be defined in terms of the following infinite series \cite[equation~(15.2.1)]{NIST:DLMF}
\begin{gather*}
\hyp21{a,b}{c}{z}:=\sum_{n=0}^\infty \frac{(a)_n(b)_n}{(c)_n}\frac{z^n}{n!},\qquad |z|<1,\label{Gauss2F1}
\end{gather*}
and elsewhere by analytic continuation. Certain orthogonal polynomials are special cases of the Gauss hypergeometric function, such as Chebyshev polynomials and Gegenbauer polynomials. The Chebyshev polynomial of the first kind $T_{n}\colon \C\to\C$ is defined as \cite[Section~5.7.2]{MOS}
\begin{gather*}
T_n(x):=\hyp21{-n,n}{{\tfrac{1}{2}}}{\frac{1-x}{2}}.\label{chebydef}
\end{gather*}
Note that $T_n(\cos\psi)=\cos(n\psi)$. The Gegenbauer polynomial $C_n^\mu\colon \C\to\C$, $n\in\N_0$, can be defined in terms of the Gauss hypergeometric function as
\begin{gather*}
C_n^\mu(x):=\frac{(2\mu)_n}{n!}\hyp21{-n,2\mu+n}{\mu+{\tfrac{1}{2}}}{\frac{1-x}{2}}.
\end{gather*}
Note that for $\mu\in(-{\tfrac{1}{2}},\infty)\setminus\{0\}$, the Gegenbauer polynomial is orthogonal on $(-1,1)$ with a~positive weight (see \cite[equation~(9.8.20)]{Koekoeketal}) and identically zero when $\mu=0$ for all $n\ge 1$. One also has the following $\mu\to0$ limit holding \cite[equation~(6.4.13)]{AAR}
\begin{gather}
\lim_{\mu\to 0}\frac{{n}+\mu}{\mu}C_n^\mu(x)=\epsilon_{n} T_{n}(x), \label{mutozeroChebyGeg}
\end{gather}
where $\epsilon_n:=2-\delta_{n,0}$. The Legendre polynomial $P_n\colon \C\to\C$ is defined by
\begin{gather}
P_n(x)=P_n^0(x)={\mathsf P}_n(x)={\mathsf P}_n^0(x)=C_n^\frac{1}{2}(x)\label{legendrepoly}
\end{gather}
(see Section \ref{Ferrersfunctions} below).

\subsection{The associated Legendre functions of the first and second kind}\label{assPQsection}

We also frequently use associated Legendre functions of the first and second kind $P_{\nu}^{\mu},Q_{\nu}^{\mu}\colon {\mathbf{C}}\setminus (-\infty,1]\rightarrow {\mathbf{C}}$ respectively. Gauss hypergeometric
representations of these functions are \cite[equation~(14.3.6)]{NIST:DLMF}
\begin{gather*}
P_{\nu}^{\mu}(z):=\frac{1}{\Gamma (1-\mu )}\left( \frac{z+1}{z-1}\right)^{\frac{\mu}{2}}\hyp21{-\nu,\nu+1}{1-\mu}{\frac{1-z}{2}},
\end{gather*}
where $|1-z|<2$ and \cite[equation~(14.3.7)]{NIST:DLMF}
\begin{gather*}
Q_{\nu}^{\mu}(z):=\frac{\sqrt{\pi}{\rm e}^{{\rm i}\pi\mu}\Gamma (\nu+\mu +1)\big(z^{2}-1\big)^{\frac{\mu}{2}}}{2^{\nu+1}\Gamma \big(\nu+\tfrac{3}{2}\big)z^{\nu+\mu+1}}%
\hyp21{\frac{\nu+\mu+1}{2},\frac{\nu+\mu+2}{2}}{\nu+\tfrac{3}{2}}{\frac{1}{z^{2}}}, \label{q2invz2}
 \end{gather*}
where $|z|>1$. In regard to the above definition of the associated Legendre function of the first kind, note that for $\mu\in\N$, $\Gamma(1-\mu)$ is undefined, however the function $\tfrac{1}{\Gamma(c)}\hyp21{a,b}{c}{z}$ is an entire function for all $a,b,c\in\C$, $|z|<1$. See \cite[equation~(15.2.2)]{NIST:DLMF} and the discussion in \cite[Section~14.3(ii)]{NIST:DLMF} for $\mu\in\Z$.

The {\it associated Legendre conical} functions are given by $\mu\in\C$, $\tau\ge 0$, $z\in\C\setminus(-\infty,1]$, $P_{-\frac12\pm {\rm i}\tau}^{\pm\mu}(z)$, $Q_{-\frac12\pm {\rm i}\tau}^{\pm\mu}(z)$. Associated Legendre functions satisfy various transformations such as the Whipple transformation, namely \cite[equations~(14.9.16) and~(14.9.17)]{NIST:DLMF}. They also satisfy various connection relations for (conical) associated Legendre functions of the first kind such as \cite[equation~(14.9.11)]{NIST:DLMF}
\begin{gather}
P_{-{\frac{1}{2}}-{\rm i}\tau}^{\pm\mu}(z) =P_{-{\frac{1}{2}}+{\rm i}\tau}^{\pm\mu}(z)\label{pmdegreeconicalleg}
\end{gather}
for all $\mu$, $\tau$, $z$; and \cite[equations~(14.3.10) and (14.9.15)]{NIST:DLMF}
\begin{gather}
P_{-{\frac{1}{2}}+{\rm i}\tau}^{\mu}(z) =\frac{\Gamma \big(\frac12+\mu+{\rm i}\tau \big)}{\Gamma \big( {{\frac{1}{2}}}-\mu+{\rm i}\tau\big)}P_{-{\frac{1}{2}}+{\rm i}\tau}^{-\mu}(z)+\frac{2}{\pi}{\rm e}^{-{\rm i}\pi\mu}\sin\left( \mu \pi \right) Q_{-{\frac{1}{2}}+{\rm i}\tau}^{\mu}(z);\label{Dun.it}
\end{gather}
connection relations for associated Legendre functions of the second kind, such as \cite[equations~(14.3.10) and (14.9.14)]{NIST:DLMF}
\begin{gather}
Q_{-{\frac{1}{2}}+{\rm i}\tau}^{-\mu}(z)={\rm e}^{-2{\rm i}\pi\mu}\frac{\Gamma \big( \frac{1}{2}-\mu+{\rm i}\tau \big)}{\Gamma \big( \frac12+\mu+{\rm i}\tau \big)}Q_{-{\frac{1}{2}}+{\rm i}\tau}^{\mu}(z).\label{Dun.eq20a}
\end{gather}

For the associated Legendre conical function of the first kind, one has the following important behavior (cf.~\cite[pp.~171, 173]{Olver})
\begin{gather}
P_{-{\frac{1}{2}}+{\rm i}\tau}^{-\mu}(z) \sim \frac{1}{\Gamma ({\mu+1})}\left( {\frac{z-1}{2}}\right)^{\frac{\mu}{2}},\label{Dun.eq10}
\end{gather}
as $z\rightarrow 1^{+}$. For the associated Legendre function of the second kind and its conical form we note the following important behaviors
\begin{gather}
Q_\nu^\mu(z)\sim\frac{\sqrt{\pi} \Gamma(\nu+\mu+1){\rm e}^{{\rm i}\pi\mu}}{\Gamma\big(\nu+\frac32\big)(2z)^{\nu+1}},\label{Qnumuzinftysim}
\end{gather}
as $z\to\infty$, $\nu+\mu+1\ne-1,-2,-3,\ldots$, and
\begin{gather}
Q_{-{\frac{1}{2}}\pm {\rm i}\tau}^{\mu}(z) \sim \frac{\sqrt{\pi}{\rm e}^{{\rm i}\pi\mu}\Gamma \left( {\mu+{\frac{1}{2}}\pm {\rm i}\tau}\right)}{\Gamma \left( {1\pm {\rm i}\tau}\right) (2z)^{{\frac{1}{2}}\pm {\rm i}\tau}},
\label{Dun.eq11}
\end{gather}
as $z\rightarrow \infty$.

\subsubsection{Large degree asymptotics of associated Legendre functions}

The associated Legendre functions of the first and second kind satisfy the following asymptotic representations. Let $\mu \geq 0$ be bounded, $r\in(0,\infty)$. Then uniformly we have, as $0< \nu \rightarrow \infty$
\cite[equations~(14.15.13) and (14.15.14)]{NIST:DLMF},
\begin{gather}
P_{\nu}^{-\mu}(\cosh r) =\frac{1}{\nu^{\mu}} \sqrt{\frac{r}{\sinh r}}I_{\mu} \big(\big( \nu+{\tfrac{1}{2}}\big) r\big) \left\{ 1+\mathcal{O}\left( \frac{1}{\nu}\right) \right\},\label{iPuaa}\\
Q_{\nu}^{\mu}(\cosh r) ={\rm e}^{{\rm i}\pi\mu}\nu^{\mu}\sqrt{\frac{r}{\sinh r}}K_{\mu}\big( \big( \nu+{\tfrac{1}{2}}\big) r\big) \left\{ 1+\mathcal{O}\left( \frac{1}{\nu}\right) \right\}.\label{iQuaa}
\end{gather}
Corresponding results for $P_{\nu}^{\mu}(\cosh r) $, $Q_{\nu}^{-\mu}(\cosh r) $, $Q_{-\nu-1}^{\pm \mu}(\cosh r) $ follow readily from these formulas and the connection formulas in \cite[Section~14.9(iii)]{NIST:DLMF}. For extensions to asymptotic expansions and complex argument see \cite[Chapter~12, Section~13]{Olver}.

We shall give analogous results for associated Legendre conical functions. In order to do so, we shall require so-called envelope functions for our approximants, the Bessel and Hankel functions. The reason for this is they have zeros in our domain of validity, unlike the modified Bessel functions used in (\ref{iPuaa}), (\ref{iQuaa}). For the Bessel function $J_{\mu}(x)$ ($\mu\geq 0$) of real argument~$x$, an envelope function $\operatorname{env}J_{\mu}(x) $ is given in \cite[equations~(2.8.32)--(2.8.34)]{NIST:DLMF}. This function is continuous and nonvanishing, and has the properties that $J_{\mu}(x) /\operatorname{env}J_{\mu}(x)={\mathcal{O}}(1)$
uniformly for $x\in(0,\infty)$; $\lim\limits_{x\rightarrow0+}J_{\mu}(x) /\operatorname{env}J_{\mu}(x) >0$; and $J_{\mu}(x) /\operatorname{env}J_{\mu}(x) $ does not approach zero as $x\rightarrow\infty$. Thus $\operatorname{env}J_{\mu}(x) $ has the same order of magnitude as $J_{\mu}(x) $ uniformly for $x\in(0,\infty)$, but does not have any zeros (except at $x=0$ when $\mu>0$).

We would like to define an envelope function for $J_{\mu}(z) $ where $z$ is complex with $\vert \arg z\vert \leq\frac{1}{2}\pi$ (again assuming $\mu\geq0$), and having similar properties as described above. The desired function is given as follows.

\begin{lem}\label{lemmaA1} Let $\mu\geq0$. The function defined by
\begin{gather}
\operatorname{env}J_{\mu}(z) :=\big\{ \vert J_{\mu} (z) \vert^{2}+ \vert J_{\mu+1}(z) \vert ^{2}\big\}^{\frac12}, \label{envJ}
\end{gather}
has the following properties in the half-plane $ \vert \arg z \vert \leq\frac{1}{2}\pi$:
\begin{enumerate}\itemsep=0pt
\item[$(i)$] it has no zeros, except at $z=0$ when $\mu>0$;
\item[$(ii)$] $J_{\mu}(z) /\operatorname{env}J_{\mu}(z) ={\mathcal{O}}(1) $ uniformly for all non-zero $z$;
\item[$(iii)$] $\lim\limits_{z\rightarrow0} \vert J_{\mu}(z) \vert /\operatorname{env}J_{\mu}(z) >0$; and
\item[$(iv)$] $J_{\mu}(z) /\operatorname{env}J_{\mu}(z) $ does not approach zero as $z\rightarrow\infty$.
\end{enumerate}
\end{lem}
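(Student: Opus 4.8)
The plan is to verify the four properties in turn, treating (i) and (ii) as essentially immediate consequences of the definition and the three-term recurrence, and concentrating the real effort on (iv). For (i), observe that $\operatorname{env}J_{\mu}(z)=0$ forces $J_{\mu}(z)=J_{\mu+1}(z)=0$ simultaneously, so I would rule out a common zero at any $z_{0}\neq0$. Using the recurrence $J_{\mu+1}(z)=\tfrac{\mu}{z}J_{\mu}(z)-J_{\mu}'(z)$ (\cite[equation~(10.6.1)]{NIST:DLMF}), if both $J_{\mu}(z_{0})$ and $J_{\mu+1}(z_{0})$ vanished then $J_{\mu}'(z_{0})$ would vanish too; since $z_{0}\neq0$ is an ordinary point of Bessel's equation, uniqueness of solutions would give $J_{\mu}\equiv0$, a contradiction. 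Hence the only possible zero is at $z=0$, where $J_{\mu}(0)=0$ precisely when $\mu>0$ while $J_{\mu+1}(0)=0$ always (as $\mu+1\geq1$); for $\mu=0$ one has $J_{0}(0)=1$, so $\operatorname{env}J_{0}(0)=1\neq0$, which is exactly the stated exception. Property (ii) is then immediate: since $\operatorname{env}J_{\mu}(z)^{2}=|J_{\mu}(z)|^{2}+|J_{\mu+1}(z)|^{2}\geq|J_{\mu}(z)|^{2}$, the quotient satisfies $|J_{\mu}(z)/\operatorname{env}J_{\mu}(z)|\leq1$ for every nonzero $z$, the required uniform $\mathcal{O}(1)$ bound.

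For (iii) I would use the behaviour near the origin, $J_{\mu}(z)\sim(z/2)^{\mu}/\Gamma(\mu+1)$ and $J_{\mu+1}(z)\sim(z/2)^{\mu+1}/\Gamma(\mu+2)$. Factoring out $|J_{\mu}(z)|$ gives $\operatorname{env}J_{\mu}(z)=|J_{\mu}(z)|\,\{1+|J_{\mu+1}(z)/J_{\mu}(z)|^{2}\}^{1/2}$, and since $J_{\mu+1}/J_{\mu}=\mathcal{O}(z)\to0$, the limit of the quotient is $1>0$ (for $\mu=0$ directly from $J_{0}(0)=1$, $J_{1}(0)=0$).

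The crux is (iv). Here I would invoke the large-argument asymptotics of $J_{\mu}$ and $J_{\mu+1}$, valid uniformly for $|\arg z|\leq\tfrac12\pi$. Writing $\chi=z-\tfrac12\mu\pi-\tfrac14\pi$ and $\eta=\operatorname{Im}z$, one has $J_{\mu}(z)=\sqrt{2/(\pi z)}\,(\cos\chi+R_{\mu})$ and $J_{\mu+1}(z)=\sqrt{2/(\pi z)}\,(\sin\chi+R_{\mu+1})$ with remainders $R_{\mu},R_{\mu+1}=\mathcal{O}(e^{|\eta|}/|z|)$. The key elementary identity is $|\cos\chi|^{2}+|\sin\chi|^{2}=\cosh(2\eta)$, so the leading amplitude of $\operatorname{env}J_{\mu}(z)^{2}$ is $\tfrac{2}{\pi|z|}\cosh(2\eta)$. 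Because $\cosh^{1/2}(2\eta)\asymp e^{|\eta|}$ is exactly the order of magnitude of the remainders (and $|\cos\chi|,|\sin\chi|\leq\cosh^{1/2}(2\eta)$), the envelope never collapses and one obtains $\operatorname{env}J_{\mu}(z)^{2}=\tfrac{2}{\pi|z|}\cosh(2\eta)\{1+\mathcal{O}(1/|z|)\}$ uniformly.

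To conclude that the quotient does not tend to $0$, I would evaluate along the real sequence $z_{n}=\tfrac12\mu\pi+\tfrac14\pi+n\pi$, where $\eta=0$, $\cosh(2\eta)=1$ and $|\cos\chi|=1$, so that $|J_{\mu}(z_{n})|/\operatorname{env}J_{\mu}(z_{n})\to1$ as $n\to\infty$; thus the ratio does not approach zero as $z\to\infty$. The main obstacle is precisely the uniformity of these estimates up to the imaginary axis $\arg z=\pm\tfrac12\pi$, where $J_{\mu}$ and $J_{\mu+1}$ grow exponentially. It is the identity $|\cos\chi|^{2}+|\sin\chi|^{2}=\cosh(2\eta)$ that makes the remainders genuinely lower order against the amplitude, simultaneously guaranteeing that the envelope has the same order of magnitude as $J_{\mu}$ throughout the half-plane and keeping the constant in (iv) away from zero.
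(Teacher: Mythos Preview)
Your proof is correct and follows essentially the same approach as the paper: part~(i) via the recurrence linking $J_{\mu+1}$ to $J_{\mu}'$ together with the absence of common zeros of a nontrivial solution and its derivative at an ordinary point, and parts~(ii)--(iv) via the small- and large-argument behaviour of $J_{\mu}$. Your treatment of~(iv), using the identity $|\cos\chi|^{2}+|\sin\chi|^{2}=\cosh(2\eta)$ to control the envelope uniformly in the half-plane and then exhibiting an explicit real sequence along which the ratio tends to~$1$, spells out in detail what the paper's proof leaves implicit.
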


\begin{proof}From \cite[equation~(10.6.2)]{NIST:DLMF}, $J_{\mu}^{\prime}(z) =(\mu/z) J_{\mu}(z)-J_{\mu+1}(z) $, and since $J_{\mu}(z)$, $J_{\mu}^{\prime}(z) $ do not have common zeros (except at $z=0$ when $\mu>0$), the same is true of $J_{\mu}(z) $, $J_{\mu+1}(z) $. Thus from~(\ref{envJ}) we see (i) holds. Finally (ii)--(iv) follow from~(\ref{envJ}) and the following limiting behaviors which hold for $\mu\geq0$, $\vert \arg z \vert \leq\frac{1}{2}\pi$, namely as $z\rightarrow 0$,
\begin{gather}
J_{\mu}(z) \sim \frac{\big( \tfrac{1}{2}z\big)^{\mu}}{\Gamma (\mu+1 )},\label{Jat0}
\end{gather}
and as $z\rightarrow\infty$,
\begin{gather*}
J_{\mu}(z) =\left( \frac{2}{\pi z}\right)^{\frac12}\left[ \cos\left( z-\frac{\pi\mu}{2}-\frac{\pi}{4}\right)+o(1)
\exp \{ \vert {\Im}\, z \vert \} \right],\label{Jinfinity}
\end{gather*}
which completes the proof.
\end{proof}

We also require envelope functions for the Hankel functions $H_{\mu}^{( 1)}(z)$, $H_{\mu}^{( 2)}(z) $. For positive argument these are not required, since these functions have no real zeros. However they do have complex zeros (see \cite[Section~10.21(ix)]{NIST:DLMF}), and for this reason we shall construct them for complex~$z$ such that $ \vert \arg z \vert \leq\frac{1}{2}\pi$. To this end we note the limiting behaviors which hold for $ \vert \arg z \vert \leq \frac{1}{2}\pi$, namely as $z\rightarrow 0$,
\begin{gather}
H_{0}^{(1)}(z) \sim -H_{0}^{(2)}(z) \sim \frac{2{\rm i}}{\pi}\log z, \nonumber\\
H_{\mu}^{(1)}(z) \sim -H_{\mu }^{( 2)}(z) \sim -\frac{{\rm i}\Gamma ( \mu)}{\pi}\big( \tfrac{1}{2}z\big) ^{-\mu},\qquad \mu >0,\label{Hat0}
\end{gather}
and as $z\to\infty$,
\begin{gather}
H_{\mu}^{(1,2)}(z) \sim\left( \frac{2}{\pi z}\right)^{\frac12}\exp\left\{ \pm {\rm i}\left( z-\frac{1}{2}\pi\mu-\frac{1}{4}\pi\right) \right\}.\label{Hinfinity}
\end{gather}

Now for $H_{\mu}^{(1)}(z) $ the natural choice analogous to (\ref{envJ}) would appear to be $\operatorname{env}H_{\mu}^{(1)}(z) =\big\{\bigl| H_{\mu}^{(1)}(z) \bigr|^{2}+\bigl| H_{\mu+1}^{(1)}(z) \bigr|^{2}\big\}^{\frac12}$, but on referring to (\ref{Hat0}) we see that this function would have the incompatible behavior $\operatorname{env}H_{\mu}^{(1)}(z) \sim \big\vert H_{\mu+1}^{(1)}(z) \big\vert \sim \tfrac{1}{\pi} \Gamma(\mu+1)
\big( \tfrac{1}{2} \vert z \vert \big)^{-\mu-1}$ as $z\rightarrow 0$ for $\mu \geq 0$. Thus we would have the undesirable limit $\big\vert H_{\mu}^{(1) }(z) \big\vert /\operatorname{env}H_{\mu}^{(1)} (z) \rightarrow 0 $ as $z\rightarrow 0$. A similar problem occurs for such an envelope function for $H_{\mu}^{( 2)}(z) $. With this in mind, in the following lemma we make the required modification in our definition of the Hankel envelope functions. Note that we omit the proof, which is similar to the proof of Lemma~\ref{lemmaA1}.

\begin{lem}Let $\mu\ge 0$. The functions defined by
\begin{gather}
\operatorname{env}H_{\mu}^{( 1,2)}(z) :=\left\{ \bigl\vert H_{\mu}^{(1,2)}(z) \bigr\vert^{2} +\min\bigl\{1,|z|^{2}\bigr\}\bigl\vert H_{\mu+1}^{\left(1,2\right)}(z) \bigr\vert^{2}\right\}^{\frac12}, \label{envH}
\end{gather}
has the following properties in the half-plane $ \vert \arg z \vert \leq\frac{1}{2}\pi$:
\begin{enumerate}\itemsep=0pt
\item[$(i)$] $\operatorname{env}H_{\mu}^{(1,2)}(z) $ has no zeros;
\item[$(ii)$] $H_{\mu}^{(1,2)}(z) /\operatorname{env}H_{\mu }^{(1,2)}(z) ={\mathcal{O}}(1) $ uniformly for all non-zero $z$;
\item[$(iii)$] $\lim\limits_{z\rightarrow0}\bigl\vert H_{\mu}^{(1,2)}(z) \bigr\vert /\operatorname{env}H_{\mu}^{(1,2)}(z) >0$; and
\item[$(iv)$] $H_{\mu}^{(1,2)}(z) /\operatorname{env}H_{\mu }^{(1,2)}(z) $ does not approach zero as $z\rightarrow\infty$.
\end{enumerate}
\end{lem}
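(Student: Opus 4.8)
The plan is to follow the proof of Lemma~\ref{lemmaA1} line by line, handling the superscripts $(1)$ and $(2)$ in parallel and inserting the extra bookkeeping forced by the weight $\min\{1,|z|^2\}$. Throughout fix $\mu\ge0$ and restrict to the punctured half-plane $|\arg z|\le\tfrac12\pi$, $z\ne0$. For property (i) I would first show that $H_\mu^{(1,2)}(z)$ and $H_{\mu+1}^{(1,2)}(z)$ share no common zero there. The derivative relation \cite[equation~(10.6.2)]{NIST:DLMF}, $H_\mu^{(1,2)\prime}(z)=(\mu/z)H_\mu^{(1,2)}(z)-H_{\mu+1}^{(1,2)}(z)$, shows that a simultaneous zero of $H_\mu^{(1,2)}$ and $H_{\mu+1}^{(1,2)}$ at a nonzero point would force $H_\mu^{(1,2)}$ and its derivative to vanish together; since each Hankel function is a nontrivial solution of Bessel's equation, whose coefficients are analytic for $z\ne0$, uniqueness rules out such a double zero. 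Consequently the two terms in~(\ref{envH}) never vanish simultaneously, and as $\min\{1,|z|^2\}>0$ for $z\ne0$ the bracket is strictly positive, giving (i).

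Property (ii) is immediate, since~(\ref{envH}) gives $\operatorname{env}H_\mu^{(1,2)}(z)\ge|H_\mu^{(1,2)}(z)|$ and hence $|H_\mu^{(1,2)}(z)|/\operatorname{env}H_\mu^{(1,2)}(z)\le1$ uniformly. The substantive step is (iii), which is exactly where the weight $\min\{1,|z|^2\}$ earns its keep. Near the origin $\min\{1,|z|^2\}=|z|^2$, so I would feed the small-argument forms~(\ref{Hat0}) into~(\ref{envH}). For $\mu>0$ one finds $|H_\mu^{(1,2)}(z)|^2\sim(\Gamma(\mu)/\pi)^2(\tfrac12|z|)^{-2\mu}$, whereas $|z|^2|H_{\mu+1}^{(1,2)}(z)|^2\sim4(\Gamma(\mu+1)/\pi)^2(\tfrac12|z|)^{-2\mu}$: the factor $|z|^2$ precisely cancels the extra $|z|^{-2}$ growth of $H_{\mu+1}^{(1,2)}$, so both contributions carry the common order $(\tfrac12|z|)^{-2\mu}$ and $|H_\mu^{(1,2)}|/\operatorname{env}H_\mu^{(1,2)}\to(1+4\mu^2)^{-1/2}>0$. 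For $\mu=0$ the first term diverges like $(2/\pi)^2|\log z|^2$ while $|z|^2|H_1^{(1,2)}(z)|^2\to(2/\pi)^2$ stays bounded, so the ratio tends to $1$. In every case the limit in (iii) is positive; this is precisely the step at which the naive envelope without the $\min$ factor fails, as noted just before the lemma.

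Finally, for (iv) one has $\min\{1,|z|^2\}=1$ for all large $z$, so $(\operatorname{env}H_\mu^{(1,2)}(z))^2=|H_\mu^{(1,2)}(z)|^2+|H_{\mu+1}^{(1,2)}(z)|^2$. The large-argument form~(\ref{Hinfinity}) shows that the leading amplitudes of $H_\mu^{(1,2)}$ and $H_{\mu+1}^{(1,2)}$ coincide --- they differ only through the real phase shift $-\tfrac12\pi\mu$, which does not affect the modulus --- so $|H_\mu^{(1,2)}(z)|/|H_{\mu+1}^{(1,2)}(z)|\to1$ and therefore $|H_\mu^{(1,2)}|/\operatorname{env}H_\mu^{(1,2)}\to1/\sqrt2$, bounded away from $0$. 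I expect the main obstacle to be the case analysis in (iii): one must separately verify the $\mu>0$ power-type balance and the $\mu=0$ logarithmic regime and confirm that the $|z|^2$ weight restores the correct order of magnitude, whereas the remaining properties transcribe almost verbatim from Lemma~\ref{lemmaA1}.
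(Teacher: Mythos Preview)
Your proposal is correct and follows precisely the approach the paper itself indicates: the paper omits the proof entirely, stating only that it ``is similar to the proof of Lemma~\ref{lemmaA1}'', and you have carried out exactly that parallel argument, with the appropriate case analysis in~(iii) to justify the $\min\{1,|z|^{2}\}$ modification discussed just before the lemma.
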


We now state large degree asymptotic approximations for the associated Legendre conical functions.

\begin{thm}\label{thm:1} Let $\mu \geq 0$, $z=\cosh r\in\C$, such that $ \vert \arg z \vert \leq \pi /2$ $($equivalently $r$ lying in the semi-infinite strip ${\Re}\,r\geq 0$, $ \vert {\Im}\,r \vert \leq \pi /2$;
see {\rm \cite[Chapter~12, Section~13.1]{Olver})}. Then uniformly we have, as $0<\tau \rightarrow \infty$,
\begin{gather}
P_{-{\frac{1}{2}}\pm {\rm i}\tau}^{-\mu}(\cosh r) =\frac{1}{\tau^{\mu}} \sqrt{\frac{r}{\sinh r}} \left\{ J_{\mu}( \tau r) +\mathcal{O}\left( \frac{{1}}{{\tau}}\right)\operatorname{env}J_{\mu}( \tau r) \right\},\label{Dun.eq16}\\
 P_{-{\frac{1}{2}}\pm {\rm i}\tau}^{\mu}(\cosh r) =\tau^{\mu} \sqrt{\frac{r}{\sinh r}}
\left[ \cos( \pi \mu) \left\{ J_{\mu}(\tau r) {+\mathcal{O}\left( \frac{{1}}{{\tau}}\right)}\operatorname{env}J_{\mu}( \tau r) \right\}\right.\label{Dun.eq16a}\\
 \left. \hphantom{P_{-{\frac{1}{2}}\pm {\rm i}\tau}^{\mu}(\cosh r) =}{} -\sin(\pi \mu) \left\{ Y_{\mu}(\tau r)+{\mathcal{O}\left( \frac{{1}}{{\tau}}\right)}\operatorname{env}{H_{\mu}^{\left(
1\right)}(\tau r)+\mathcal{O}\left( \frac{{1}}{{\tau}} \right)}\operatorname{env}{H_{\mu}^{(2)}( \tau r )}\right\} \right], \nonumber\\
 Q_{-{\frac{1}{2}}+{\rm i}\tau}^{\pm \mu}(\cosh r)=-\frac{{\rm i}\pi}{2} {\rm e}^{(-1\pm 3) {\rm i}\pi\mu /2} \tau^{\pm \mu}\sqrt{\frac{r}{\sinh r}} \left\{ {H_{\mu}^{( 2)}( \tau r)+\mathcal{O}\left( \frac{{1}}{{\tau}}\right)}\operatorname{env}{H_{\mu}^{(2)}(\tau r)}\right\},\!\!\!\label{Dun.eq21}\\
 Q_{-{\frac{1}{2}}-{\rm i}\tau}^{\pm \mu}(\cosh r) =\frac{{\rm i}\pi}{2} {\rm e}^{( 1\pm 1) {\rm i}\pi\mu /2} \tau^{\pm \mu} \sqrt{\frac{r}{\sinh r}}\left\{ {H_{\mu}^{(1)}( \tau r)+\mathcal{O}\left( \frac{{1}}{{\tau}}\right)}\operatorname{env}{H_{\mu}^{(1)}(\tau r)}\right\},\label{Dun.eq22}
\end{gather}
where in \eqref{Dun.eq16a}--\eqref{Dun.eq22}, $r\neq 0$.
\end{thm}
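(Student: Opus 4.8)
The plan is to reinterpret the known large-degree (modified Bessel) asymptotics \eqref{iPuaa} and \eqref{iQuaa} at the complex degree $\nu=-\tfrac12\pm{\rm i}\tau$, so that $\nu+\tfrac12=\pm{\rm i}\tau$ is purely imaginary. The uniform validity of those approximations for complex $\nu$, and in particular along the imaginary axis, is supplied by Olver's uniform asymptotic theory for the Legendre equation \cite[Chapter~12, Section~13]{Olver}. The decisive point is that a purely imaginary argument turns the non-oscillatory $I_\mu$, $K_\mu$ into the oscillatory $J_\mu$, $Y_\mu$, $H_\mu^{(1)}$, $H_\mu^{(2)}$; since these possess zeros in $(0,\infty)$, the multiplicative remainder $\{1+\mathcal{O}(1/\nu)\}$ must be recast as an additive error measured against the envelope functions constructed in Lemma~\ref{lemmaA1} and the subsequent Hankel envelope lemma. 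First I would record the elementary connection formulas obtained from the defining power series, namely $I_\mu(\pm{\rm i}\tau r)={\rm e}^{\pm{\rm i}\pi\mu/2}J_\mu(\tau r)$ and, after a short computation using $Y_\mu=(J_\mu\cos\mu\pi-J_{-\mu})/\sin\mu\pi$, the identities $K_\mu({\rm i}\tau r)=-\tfrac{\pi{\rm i}}{2}{\rm e}^{-{\rm i}\pi\mu/2}H_\mu^{(2)}(\tau r)$ and $K_\mu(-{\rm i}\tau r)=\tfrac{\pi{\rm i}}{2}{\rm e}^{{\rm i}\pi\mu/2}H_\mu^{(1)}(\tau r)$.

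For the base identity \eqref{Dun.eq16} I would substitute $\nu=-\tfrac12+{\rm i}\tau$ into \eqref{iPuaa}, replace $I_\mu({\rm i}\tau r)$ by ${\rm e}^{{\rm i}\pi\mu/2}J_\mu(\tau r)$, and absorb the prefactor using $\nu^{-\mu}\sim({\rm i}\tau)^{-\mu}={\rm e}^{-{\rm i}\pi\mu/2}\tau^{-\mu}$. The two phases ${\rm e}^{\pm{\rm i}\pi\mu/2}$ cancel, leaving $\tau^{-\mu}\sqrt{r/\sinh r}\,J_\mu(\tau r)$; by property (ii) of Lemma~\ref{lemmaA1} the relative $\mathcal{O}(1/\tau)$ error is bounded by $\mathcal{O}(1/\tau)\operatorname{env}J_\mu(\tau r)$, yielding \eqref{Dun.eq16}. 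The companion case $\nu=-\tfrac12-{\rm i}\tau$ gives the identical right-hand side by the even-in-$\tau$ relation \eqref{pmdegreeconicalleg}. The results \eqref{Dun.eq21} and \eqref{Dun.eq22} follow in the same way from \eqref{iQuaa}: inserting $K_\mu(\pm{\rm i}\tau r)$ together with $\nu^{\mu}\sim{\rm e}^{\pm{\rm i}\pi\mu/2}\tau^{\mu}$, tracking the resulting phases, and bounding the error with the Hankel envelope lemma. The negative-order versions are then obtained from the connection relation \eqref{Dun.eq20a} and the gamma-ratio estimate $\Gamma(\tfrac12-\mu+{\rm i}\tau)/\Gamma(\tfrac12+\mu+{\rm i}\tau)\sim{\rm e}^{-{\rm i}\pi\mu}\tau^{-2\mu}$ furnished by Lemma~\ref{Gammaratiolemma}.

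The algebraically heaviest step is \eqref{Dun.eq16a}. Here I would start from the connection formula \eqref{Dun.it}, which expresses $P_{-1/2+{\rm i}\tau}^{\mu}$ as a combination of $P_{-1/2+{\rm i}\tau}^{-\mu}$ and $Q_{-1/2+{\rm i}\tau}^{\mu}$. Substituting the already-derived approximations \eqref{Dun.eq16} and \eqref{Dun.eq21}, and using Lemma~\ref{Gammaratiolemma} on the factor $\Gamma(\tfrac12+\mu+{\rm i}\tau)/\Gamma(\tfrac12-\mu+{\rm i}\tau)\sim{\rm e}^{{\rm i}\pi\mu}\tau^{2\mu}$, the prefactors collapse to $\tau^{\mu}\sqrt{r/\sinh r}$ and the bracket becomes ${\rm e}^{{\rm i}\pi\mu}J_\mu(\tau r)-{\rm i}\sin(\mu\pi)H_\mu^{(2)}(\tau r)$. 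Writing ${\rm e}^{{\rm i}\pi\mu}=\cos\mu\pi+{\rm i}\sin\mu\pi$ and $H_\mu^{(2)}=J_\mu-{\rm i}Y_\mu$, the imaginary parts cancel and this reduces to $\cos(\mu\pi)J_\mu(\tau r)-\sin(\mu\pi)Y_\mu(\tau r)$, exactly the leading term of \eqref{Dun.eq16a}. Because this combination mixes $J_\mu$ with the Hankel functions, the accumulated error is naturally expressed through all three envelopes $\operatorname{env}J_\mu$, $\operatorname{env}H_\mu^{(1)}$, and $\operatorname{env}H_\mu^{(2)}$, as displayed.

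I expect the main obstacle to be conceptual rather than computational: justifying that the uniform modified-Bessel asymptotics persist on the boundary ray $\arg(\nu+\tfrac12)=\pm\pi/2$, where the Liouville--Green/Bessel model function becomes oscillatory, and correctly converting the multiplicative $\mathcal{O}(1/\nu)$ remainder into an additive bound that remains meaningful near the zeros of $J_\mu$, $Y_\mu$, and the Hankel functions. This is precisely the role of the envelope functions from the preceding lemmas, which supply the correct gauge for the error term; once they are in place, the residual work is the bookkeeping of the half-integer powers of ${\rm e}^{{\rm i}\pi\mu/2}$ through the Bessel connection formulas and the Legendre connection relations \eqref{Dun.it} and \eqref{Dun.eq20a}.
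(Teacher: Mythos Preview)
Your route is genuinely different from the paper's. The paper does \emph{not} substitute $\nu=-\tfrac12\pm{\rm i}\tau$ into \eqref{iPuaa}--\eqref{iQuaa}. Instead it applies Olver's Theorem~9.1 \emph{from scratch} with the real large parameter $u=\tau$: it sets up the Liouville variable $\zeta=(\cosh^{-1}z)^2$, so that the transformed equation has leading term $-\tau^2/(4\zeta)$ (negative sign), and Olver's theorem then delivers asymptotic solutions $W_1,W^{(1,2)}$ built directly from $J_\mu$ and $H_\mu^{(1,2)}$ with envelope error bounds already in place. The identification with $P_{-1/2+{\rm i}\tau}^{-\mu}$ and $Q_{-1/2+{\rm i}\tau}^{\mu}$ is then made by \emph{matching}: $P$ against $W_1$ via the recessive behavior \eqref{Dun.eq10} at $z=1$, and $Q$ against $W^{(2)}$ via \eqref{Dun.eq11} at $z=\infty$, the latter producing the constant \eqref{Dun.eq20} which is only afterwards simplified by \eqref{Gammaratio}. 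The remaining three formulas are deduced from \eqref{pmdegreeconicalleg}, \eqref{Dun.it}, \eqref{Dun.eq20a} exactly as you describe.

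The gap in your argument is the step you yourself single out. The approximations \eqref{iPuaa}--\eqref{iQuaa} are stated and proved (in DLMF and in Olver, Chapter~12, \S12) only for $\nu$ real and positive; their multiplicative error form $\{1+\mathcal O(1/\nu)\}$ is meaningful precisely because $I_\mu,K_\mu$ are zero-free on the positive axis. On the boundary ray $\arg(\nu+\tfrac12)=\pm\pi/2$ the leading coefficient of the transformed ODE changes sign, the approximants become oscillatory with real zeros, and the error analysis is qualitatively different. The envelope lemmas tell you how to \emph{phrase} an additive error once you have one; they do not by themselves convert a multiplicative $\{1+\mathcal O(1/\nu)\}$ valid for real $\nu$ into an envelope bound valid on the imaginary axis. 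To justify that step you would have to go back to Olver's Theorem~9.1 and re-run it with $\tau$ as the real large parameter and $J_\mu,H_\mu^{(1,2)}$ as approximants---which is exactly the paper's proof. Your algebra for the leading terms (the Bessel connection formulas, the cancellation in ${\rm e}^{{\rm i}\pi\mu}J_\mu-{\rm i}\sin\pi\mu\,H_\mu^{(2)}$, the use of \eqref{Dun.it} and \eqref{Dun.eq20a}) is correct and matches the paper's final steps, but the uniform error bound cannot be obtained by substitution into \eqref{iPuaa}--\eqref{iQuaa}.
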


\begin{proof} The functions $w_{1,2}\colon \C\setminus(-\infty,1]\to\C$ defined by
\begin{gather*}
w_{1}(z) :=\big( {z^{2}-1}\big)^{\frac12}P_{-{\frac{1}{2}}+{\rm i}\tau }^{\mu}(z), \qquad w_{2}(z) :=\big( {z^{2}-1}\big)^{\frac12}Q_{-{\frac{1}{2}}+{\rm i}\tau }^{\mu}(z),
\end{gather*}
satisfy the following ordinary differential equation
\begin{gather*}
\frac{{\rm d}^{2}w}{{\rm d}z^{2}}=\left\{{-\frac{\tau^{2}}{z^{2}-1}+\frac{\mu^{2}-1}{\left({z^{2}-1}\right)^{2}}-\frac{1}{4\big(z^{2}-1\big)}}\right\} w. \label{Dun.eq3}
\end{gather*}%
The first term dominates as $\tau \rightarrow \infty $, and is characterized by having a simple pole at $z=1$. Following \cite[Chapter~12]{Olver} we simplify it by introducing a new independent variable~$\zeta(z)$ defined by (see also \cite[equation~(4.37.19)]{NIST:DLMF})
\begin{gather}
\zeta^{\frac12}:=\int_{1}^{z}{\big(t^{2}-1\big)^{-\frac12}\mathrm{d}t}=\cosh^{-1}z.\label{Dun.eq4}
\end{gather}%
Then with the accompanying change of dependent variable $w=\zeta^{-\frac14}\big(z^{2}-1\big)^\frac14 W$, we arrive at the transformed differential equation
\begin{gather*}
\frac{{\rm d}^{2}W}{{\rm d}\zeta^{2}}=\left( {-\frac{\tau^{2}}{4\zeta}+\frac{\mu^{2}-1}{4\zeta^{2}}+\frac{\psi(\zeta)}{\zeta}}\right) W,\label{Dun.eq6}
\end{gather*}
where
\begin{gather*}
\psi(\zeta) :=\frac{4\mu^{2}-1}{16}\left( {\frac{1}{z^{2}-1}-\frac{1}{\zeta}}\right), \label{Dun.eq7}
\end{gather*}
which is analytic at $z=1$ ($\zeta =0)$.

We apply Olver's theorem \cite[Chapter~12, Theorem~9.1]{Olver}, with $u=\tau $ in the solutions of that theorem (with a slight modification to go from Bessel to modified Bessel functions as described below), and we obtain asymptotic solutions
\begin{gather}
W_{1}(\tau,\zeta) =\zeta^\frac12\left\{ J_{\mu}\big( \tau\zeta^\frac12\big) {+\mathcal{O}\big( \tau^{-1}\big)}\operatorname{env}J_{\mu}{\big(\tau \zeta^\frac12\big)}\right\}, \label{Dun.eq8}\\
W^{(1,2)}(\tau,\zeta) =\zeta^\frac12\left\{ {H_{\mu}^{(1,2)}\big(\tau \zeta^\frac12\big)+\mathcal{O}\big(\tau^{-1}\big)}\operatorname{env}{H_{\mu}^{(1,2)}\big(\tau \zeta^\frac12\big)}\right\}. \label{Dun.eq9}
\end{gather}
We remark that in Olver's theorem the approximants used are the modified Bessel func\-tions $I_{\mu}$,~$K_{\mu}$. In obtaining the Bessel formulas~(\ref{Dun.eq8}),~(\ref{Dun.eq9}) we have used the well-known identities
$I_{\mu}( {\rm i}z) \propto J_{\mu}(z)$, $K_{\mu }( \mp {\rm i}{z} ) \propto H_{\mu}^{(1,2)}(z)$.

On account of the monotonicity requirements that determine the domains of validity of the ${\mathcal{O}}$ terms in (\ref{Dun.eq8}), (\ref{Dun.eq9}) \cite[Chapter~12, Section~9.1]{Olver}, it is straightforward to show that these order terms are uniformly valid in\ an unbounded ${\zeta}$ domain which corresponds to the right half-plane $\vert \arg z\vert \leq \frac{1}{2}\pi $; see \cite[Chapter~12, Section~13.1]{Olver}.

Next we match the associated Legendre conical functions with the asymptotic solutions. To do so we note the fundamental asymptotic behaviors using (\ref{Dun.eq10}), (\ref{Dun.eq11}). From the behavior of Bessel functions at $0$ and $\infty $ (see (\ref{Jat0}), (\ref{Hinfinity})), we have as $\zeta \rightarrow 0$
\begin{gather}
W_{1}(\tau,\zeta) \sim \zeta^{\frac{{\mu+1}} {2}}\big( {{\tfrac{1}{2}}\tau}\big)^{\mu} / \Gamma ( \mu+1), \label{Dun.eq12}
\end{gather}%
and as $\zeta \rightarrow \infty $,
\begin{gather}
W^{(1,2)}(\tau,\zeta) \sim \left( {\frac{2}{\pi \tau}}\right)^{\frac12}\zeta^{\frac14}\exp \big\{ {\pm {\rm i}\tau \zeta^\frac12\mp {\tfrac{1}{2}}{\rm i}\pi\mu \mp {\tfrac{{1}}{4}\pi {\rm i}}}\big\}. \label{Dun.eq13}
\end{gather}
By matching solutions recessive at $z=1$ ($\zeta =0)$ we therefore deduce that
\begin{gather}
P_{-{\frac{1}{2}}+{\rm i}\tau}^{-\mu}(z) =\lambda_{1}(\mu ,\tau) \big( \zeta \big( z^{2}-1\big)\big) ^{-\frac14}W_{1}(\tau,\zeta), \label{Dun.eq14}
\end{gather}
for some constant $\lambda_{1} ( \mu,\tau) $. From (\ref{Dun.eq10}), (\ref{Dun.eq12}) we find that $\zeta \sim 2 ( z-1) $ as $z\rightarrow 1$, and therefore from (\ref{Dun.eq14})
\begin{gather}
\lambda_{1}(\mu,\tau) =\lim\limits_{z\rightarrow 1}\frac{\big(z^{2}-1\big)^{\frac14}P_{-{\frac{1}{2}}+{\rm i}\tau}^{-\mu}(z)}{\zeta^\frac14 J_{\mu}\bigl( \tau \zeta^{\frac12}\bigr)}=\frac{1}{\tau^{\mu}}. \label{Dun.eq15}
\end{gather}%
Now from (\ref{Dun.eq4}), $\zeta =r^{2}$ for $z=\cosh r$, and therefore from (\ref{Dun.eq8}), (\ref{Dun.eq14}), (\ref{Dun.eq15}) we arrive at~(\ref{Dun.eq16}). Noting also~(\ref{pmdegreeconicalleg}), which then completes the proof of~(\ref{Dun.eq16}), (\ref{Dun.eq16a}).

Next from (\ref{Dun.eq4}) it is readily verified that $\zeta^\frac12\sim \log ( 2z) $ as $z\rightarrow \infty $. Hence by matching solutions with the same (unique) behavior at $z=\infty $ ($\zeta =\infty )$, we assert that
there exists a~constant $\lambda^{(2)} (\mu,\tau) $ such that
\begin{gather*}
Q_{-{\frac{1}{2}}+{\rm i}\tau}^{\mu}(z) =\lambda^{(2) }(\mu,\tau) \big( \zeta \big( z^{2}-1\big)\big)^{-\frac14}W^{(2)}(\tau,\zeta). \label{Dun.eq18}
\end{gather*}
This time the constant is found by the following limit
\begin{gather*}
\lambda^{(2)}(\mu,\tau) =\lim\limits_{z\rightarrow \infty}\frac{\big( z^{2}-1\big)^{\frac14}Q_{-{\frac{1}{2}}+{\rm i}\tau}^{\mu}(z)}{\zeta^\frac14 H_{\mu}^{(2)}\bigl( \tau \zeta^\frac12\bigr)}.\label{Dun.eq19}
\end{gather*}
From (\ref{Dun.eq11}), (\ref{Dun.eq13}) we arrive at
\begin{gather}
\lambda^{(2)}(\mu,\tau) = \tfrac12 \pi {\rm e}^{{\rm i}\pi(\frac{\mu}{2}-\frac14)} \tau^{\frac12}\frac{\Gamma \big( \mu+{\frac{1}{2}}+{\rm i}\tau\big)}{\Gamma ( 1+{\rm i}\tau )}. \label{Dun.eq20}
\end{gather}
Then, using (\ref{Gammaratio}) to simplify (\ref{Dun.eq20}) we establish (\ref{Dun.eq21}) for the upper signs. For the lower signs we simply use (\ref{Dun.eq20a}) together with (\ref{Gammaratio}), (\ref{Dun.eq22}) follows similarly. Finally, (\ref{Dun.eq16a}) comes from~(\ref{Gammaratio}), (\ref{Dun.it}), (\ref{Dun.eq16}), (\ref{Dun.eq21}), which completes the proof.
\end{proof}

We remark that approximations for associated Legendre conical functions as $0<\tau \rightarrow \infty $ with $\vert \mu\vert $ large are given in~\cite{Dunster1} ($\mu $ real) and~\cite{Dunster2} ($\mu $ imaginary), but these
are more complicated than those given by (\ref{Dun.eq16})--(\ref{Dun.eq22}), as well as in Theorem~\ref{FerrersConicalThm} below.

\subsubsection{Associated Legendre function addition theorems}\label{alfat}

In \cite[Proposition~5.1]{CohlKalII}, an addition theorem is derived for the associated Legendre function of the second kind $Q_\mu^\mu(\cosh\rho)/\sinh^\mu\rho$, where
\begin{gather}
\cosh\rho=\cosh r\cosh r'-\sinh r\sinh r'\cos\gamma.\label{coshrho}
\end{gather}
We now present generalizations of that addition theorem for
\begin{gather*} P_\nu^\mu(\cosh\rho)/\sinh^\mu\rho \qquad \text{and} \qquad Q_\nu^\mu(\cosh\rho)/\sinh^\mu\rho.\end{gather*}

\begin{thm}Let $\mu\in\C$ such that ${\Re}\,\mu>-{\tfrac{1}{2}}$, $\mu\ne 0$, $\nu\in\C\setminus-\N$, $r,r'\in (0,\infty)$, $r\ne r'$, $\gamma\in\R$ and~\eqref{coshrho}. Then
\begin{gather}
\frac{1}{\sinh^\mu\rho}P_\nu^\mu(\cosh\rho)\nonumber\\
\qquad{} =\frac{2^\mu\Gamma(\mu)}{(\sinh r\sinh r^\prime)^\mu} \sum_{n=0}^\infty (-1)^n(n+\mu)P_\nu^{-(\mu+n)}(\cosh r_<)P_\nu^{\mu+n}(\cosh r_>)C_n^\mu(\cos\gamma),\label{conjP}\\
\frac{1}{\sinh^\mu\rho}Q_\nu^\mu(\cosh\rho)\nonumber\\
\qquad{}=\frac{2^\mu\Gamma(\mu)}{(\sinh r\sinh r^\prime)^\mu} \sum_{n=0}^\infty (-1)^n(n+\mu)P_\nu^{-(\mu+n)}(\cosh r_<)Q_\nu^{\mu+n}(\cosh r_>)C_n^\mu(\cos\gamma).\label{conj}
\end{gather}
For $\mu=0$, see Corollary~{\rm \ref{muzeroasslegexpan}}.
\end{thm}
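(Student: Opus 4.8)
The plan is to fix $r,r'\in(0,\infty)$ with $r\ne r'$ and to regard the left-hand side of \eqref{conjP} as a function of the single variable $x:=\cos\gamma\in[-1,1]$. Since $\cosh\rho=\cosh r\cosh r'-\sinh r\sinh r'\,x$ is affine in $x$ with values in a compact subset of $(1,\infty)$ when $r\ne r'$, the function $P_\nu^\mu(\cosh\rho)/\sinh^\mu\rho$ is real-analytic in $x$ on $[-1,1]$, and for $\Re\mu>-\tfrac12$, $\mu\ne0$ it therefore admits a uniformly convergent expansion $\sum_{n\ge0}a_n(r,r')\,C_n^\mu(x)$ in Gegenbauer polynomials. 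First I would extract the coefficients $a_n$ using the orthogonality of $C_n^\mu$ on $(-1,1)$ with weight $\big(1-x^2\big)^{\mu-1/2}$ (see \cite[equation~(9.8.20)]{Koekoeketal}); this reduces \eqref{conjP} to evaluating the projection integral
\begin{gather*}
\int_{-1}^{1}\frac{P_\nu^\mu(\cosh\rho)}{\sinh^\mu\rho}\,C_n^\mu(x)\big(1-x^2\big)^{\mu-\frac12}\,{\rm d}x
\end{gather*}
and identifying it, up to the normalizing factors in \eqref{conjP}, with the product $P_\nu^{-(\mu+n)}(\cosh r_<)\,P_\nu^{\mu+n}(\cosh r_>)$.

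The evaluation of this projection integral is the step I expect to be the main obstacle. Rather than compute it directly, I would build on the known base case — the addition theorem for $Q_\mu^\mu(\cosh\rho)/\sinh^\mu\rho$ of \cite[Proposition~5.1]{CohlKalII} — and propagate it to arbitrary degree $\nu$ and to the first-kind function. The splitting of the orders, with $P_\nu^{-(\mu+n)}$ evaluated at the smaller radius and $P_\nu^{\mu+n}$ at the larger one, is dictated by the associated Legendre equation of order $\mu+n$ and degree $\nu$: the negative-order first-kind solution is the one recessive as $r_<\to0^+$ (cf.\ \eqref{Dun.eq10}), while the positive-order factor carries the growth at large radius, so that the two factors are the natural ``interior'' and ``exterior'' radial solutions. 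I would match the $n=0$ term and the recurrence in $n$ (obtained by combining the Gegenbauer three-term recurrence for $x\,C_n^\mu(x)$ with the order-shifting contiguous relations for the Legendre functions), using analytic continuation first in $\mu$ from the range where all integrals converge absolutely to $\Re\mu>-\tfrac12$, $\mu\ne0$, and then in the degree $\nu\in\C\setminus-\N$.

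Justifying the term-by-term manipulations requires absolute and uniform convergence, which I would obtain from the large-order behaviour of the defining hypergeometric representations rather than from the large-degree asymptotics of Theorem~\ref{thm:1}. Using the prefactors $\big(\tfrac{z+1}{z-1}\big)^{\pm(\mu+n)/2}$ in the definitions of $P_\nu^{\pm(\mu+n)}$ together with the reciprocal-gamma factors, the $n$th term decays geometrically with ratio $\tanh(r_</2)/\tanh(r_>/2)<1$ (since $r_<<r_>$), while $(n+\mu)\,C_n^\mu(\cos\gamma)$ grows only polynomially; hence the series converges geometrically on compacta with $r\ne r'$ and the orthogonality computation is legitimate. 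Finally, \eqref{conj} follows from \eqref{conjP} by replacing the large-radius factor with the companion solution of the same associated Legendre equation: since $Q_\nu^\mu(\cosh\rho)/\sinh^\mu\rho$ is the solution recessive as $r_>\to\infty$, matching the recessive behaviour term by term via \eqref{Qnumuzinftysim} selects $Q_\nu^{\mu+n}(\cosh r_>)$ in place of $P_\nu^{\mu+n}(\cosh r_>)$ while leaving the small-radius factor and the constants in \eqref{conjP} unchanged. The excluded value $\mu=0$ is recovered through the limit \eqref{mutozeroChebyGeg} and is recorded separately in Corollary~\ref{muzeroasslegexpan}.
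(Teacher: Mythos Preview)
Your approach is genuinely different from the paper's. The paper does not compute any Gegenbauer projection integral or run a recurrence in $n$: it simply observes that the proof of \cite[Proposition~5.1]{CohlKalII} for $Q_\mu^\mu(\cosh\rho)/\sinh^\mu\rho$ goes through verbatim for \eqref{conj} once one refrains from specializing $\nu=\mu$, and for \eqref{conjP} it starts from the addition theorem \cite[equation~(8.3)]{DurandFishSim}. So the work is delegated to those references, whereas you try to build everything from scratch.

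Two concrete gaps in your plan. First, you use the orthogonality of $C_n^\mu$ with weight $(1-x^2)^{\mu-1/2}$ to extract coefficients, but the hypotheses allow $\mu\in\C$ with $\Re\mu>-\tfrac12$. For non-real $\mu$ the weight is neither positive nor even real, so there is no $L^2$ completeness and no a priori reason the formal Gegenbauer ``Fourier'' coefficients reconstruct the function; your ``real-analytic in $x$, hence expandable'' sentence needs a different justification (e.g., prove the identity first for $\mu$ in a real interval and then continue analytically in $\mu$, which you only mention in passing).

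Second, and more seriously, your passage from \eqref{conjP} to \eqref{conj} is not a proof. You write that ``matching the recessive behaviour term by term via \eqref{Qnumuzinftysim} selects $Q_\nu^{\mu+n}(\cosh r_>)$ in place of $P_\nu^{\mu+n}(\cosh r_>)$'', but the left-hand side $Q_\nu^\mu(\cosh\rho)/\sinh^\mu\rho$ depends on $r$ and $r'$ jointly through $\rho$; it is not a function of $r_>$ alone whose behaviour at infinity you can match against a single exterior solution. Knowing the expansion of one solution of the associated Legendre equation (in the variable $\cosh\rho$) does not let you swap it for another solution by replacing one factor on the right: the two expansions are independent identities, and you would at minimum need a connection formula expressing $Q_\nu^\mu$ in terms of $P_\nu^{\pm\mu}$ together with a \emph{second} already-proved expansion. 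The paper avoids this entirely by establishing \eqref{conj} directly (via \cite{CohlKalII}) and \eqref{conjP} directly (via \cite{DurandFishSim}), rather than deriving one from the other.
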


\begin{proof}The proof for the expansion of the associated Legendre function of the second kind is identical to that given in \cite[Proposition~5.1]{CohlKalII}, except one does not specialize to $\nu=\mu$. For the associated Legendre function of the first kind, instead start with \cite[equation~(8.3)]{DurandFishSim}.
\end{proof}

\begin{cor}\label{muzeroasslegexpan}Let $\nu\in\C\setminus-\N$, $r,r'\in (0,\infty)$, $r\ne r'$, $\gamma\in\R$ and~\eqref{coshrho}. Then
\begin{gather}
P_\nu(\cosh\rho)=\sum_{n=0}^\infty \epsilon_n(-1)^nP_\nu^{-n}(\cosh r_<)P_\nu^n(\cosh r_>)T_n(\cos\gamma),\nonumber\\
Q_\nu(\cosh\rho)=\sum_{n=0}^\infty \epsilon_n(-1)^nP_\nu^{-n}(\cosh r_<)Q_\nu^n(\cosh r_>)T_n(\cos\gamma).\label{deq2addnthm}
\end{gather}
\end{cor}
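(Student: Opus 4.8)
The plan is to derive both identities by letting $\mu\to 0$ in the preceding addition theorem, that is, in \eqref{conjP} for the first-kind case and in \eqref{conj} for the second-kind case. The only genuine obstruction to taking this limit termwise is the factor $\Gamma(\mu)(n+\mu)C_n^\mu(\cos\gamma)$, which is indeterminate at $\mu=0$: the gamma function has a simple pole there, while $C_n^\mu(\cos\gamma)$ vanishes identically for every $n\ge 1$. I would resolve this by writing $\Gamma(\mu)=\Gamma(\mu+1)/\mu$, so that
\begin{gather*}
\Gamma(\mu)(n+\mu)C_n^\mu(\cos\gamma)=\Gamma(\mu+1)\,\frac{n+\mu}{\mu}\,C_n^\mu(\cos\gamma),
\end{gather*}
and then appealing to the limit \eqref{mutozeroChebyGeg} together with $\Gamma(\mu+1)\to\Gamma(1)=1$. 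This produces the termwise limit $\epsilon_nT_n(\cos\gamma)$, which is precisely the factor occurring in \eqref{deq2addnthm}.

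With this rewriting, every other quantity behaves benignly as $\mu\to0$: the prefactor $2^\mu\Gamma(\mu+1)(\sinh r\sinh r')^{-\mu}$ and the denominator $\sinh^\mu\rho$ both tend to $1$, while $P_\nu^{-(\mu+n)}(\cosh r_<)\to P_\nu^{-n}(\cosh r_<)$, $P_\nu^{\mu+n}(\cosh r_>)\to P_\nu^{n}(\cosh r_>)$, and likewise $Q_\nu^{\mu+n}(\cosh r_>)\to Q_\nu^{n}(\cosh r_>)$, by continuity of the associated Legendre functions in the order. Identifying the left-hand sides through $P_\nu(\cosh\rho)=P_\nu^0(\cosh\rho)$ from \eqref{legendrepoly} (and the corresponding statement for $Q_\nu$), the termwise limits of \eqref{conjP} and \eqref{conj} become exactly the two displays in the statement.

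The step I expect to demand the most care is justifying the interchange of the limit $\mu\to0$ with the infinite summation. I would supply a majorant that is summable uniformly for $\mu$ in a fixed small disk $D$ about $0$. Because $r\ne r'$ forces $r_<<r_>$ strictly, the product $P_\nu^{-(\mu+n)}(\cosh r_<)P_\nu^{\mu+n}(\cosh r_>)$ decays geometrically in $n$ (the controlling ratio being $\tanh(r_</2)/\tanh(r_>/2)<1$), uniformly for $\mu\in D$. The remaining factor $\tfrac{n+\mu}{\mu}C_n^\mu(\cos\gamma)$ extends to a function analytic on $D$ --- for $n\ge1$ the simple pole of $1/\mu$ is cancelled by the simple zero of $C_n^\mu$ at $\mu=0$ --- so by the maximum principle its modulus on $D$ is controlled by its values on $\partial D$, where $\mu$ stays bounded away from $0$ and Gegenbauer polynomials grow at most polynomially in $n$. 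Since geometric decay dominates polynomial growth, this yields a convergent majorant $\sum_nM_n<\infty$, and dominated convergence for series legitimizes passing the limit inside the sum. Equivalently, the same majorant shows the summed right-hand side to be analytic in $\mu$ across $\mu=0$; as it agrees with the analytic left-hand side for $\mu\ne0$, continuity delivers the value at $\mu=0$.
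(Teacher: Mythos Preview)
Your proposal is correct and follows the same approach as the paper, which simply states that taking $\mu\to0$ in \eqref{conjP} and \eqref{conj} together with \eqref{mutozeroChebyGeg} produces the result. You have in fact supplied more detail than the paper does, carefully handling the indeterminate factor $\Gamma(\mu)(n+\mu)C_n^\mu$ and sketching a uniform majorant to justify the termwise limit, whereas the paper's proof is a single sentence that leaves these points implicit.
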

\begin{proof}Taking the limit as $\mu\to0$ in (\ref{conj}) and using (\ref{mutozeroChebyGeg}) produces the result.
\end{proof}

\begin{cor}\label{Assocaddncoshexp} Let $\nu\in\C\setminus-\N$, $r,r'\in (0,\infty)$, $r\ne r'$, $\gamma\in\R$ and~\eqref{coshrho}. Then
\begin{gather*}
\frac{\cosh\big(\big(\nu+\frac12\big)\rho\big)}{\sinh\rho}=\frac{\pi}{2\sqrt{\sinh r\sinh r'}} \\
\hphantom{\frac{\cosh\big(\big(\nu+\frac12\big)\rho\big)}{\sinh\rho}=}{}\times \sum_{n=0}^{\infty}(-1)^n (2n+1) P_\nu^{-n-\frac12}(\cosh r_<) P_\nu^{n+\frac12}(\cosh r_>) P_n(\cos\gamma),\\
\frac{\exp\big({-}\big(\nu+\frac12\big)\rho\big)}{\sinh\rho}= \frac{-{\rm i}}{\sqrt{\sinh r\sinh r'}} \\
\hphantom{\frac{\exp\big({-}\big(\nu+\frac12\big)\rho\big)}{\sinh\rho}=}{}\times \sum_{n=0}^{\infty}(-1)^n (2n+1) P_\nu^{-n-\frac12}(\cosh r_<) Q_\nu^{n+\frac12}(\cosh r_>) P_n(\cos\gamma).
\end{gather*}
\end{cor}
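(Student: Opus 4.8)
The plan is to obtain both identities by specializing the preceding addition theorem, equations~\eqref{conjP} and~\eqref{conj}, to the single value $\mu=\tfrac12$. This is permitted because $\mu=\tfrac12$ satisfies the hypotheses ${\Re}\,\mu>-\tfrac12$ and $\mu\ne 0$, while the constraints on $\nu$, $r$, $r'$, $\gamma$ are exactly those assumed in the corollary. First I would set $\mu=\tfrac12$ in~\eqref{conjP} and~\eqref{conj} and simplify the common prefactor and summand: one has $\Gamma\big(\tfrac12\big)=\sqrt\pi$, $2^{1/2}=\sqrt2$, the weight $n+\mu=\tfrac12(2n+1)$, and, crucially, $C_n^{1/2}(\cos\gamma)=P_n(\cos\gamma)$ by~\eqref{legendrepoly}. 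Thus the Gegenbauer factor collapses to the Legendre polynomial, the summation weights become $(2n+1)$, and the two Legendre orders become $-(n+\tfrac12)$ and $n+\tfrac12$, precisely matching the right-hand sides asserted.

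The next step is to reduce the half-order functions on the two left-hand sides to elementary form. I would use the closed evaluations $P_\nu^{1/2}(\cosh\rho)=\big(2/(\pi\sinh\rho)\big)^{1/2}\cosh\big(\big(\nu+\tfrac12\big)\rho\big)$ and $Q_\nu^{1/2}(\cosh\rho)={\rm i}\,\big(\pi/(2\sinh\rho)\big)^{1/2}\exp\big({-}\big(\nu+\tfrac12\big)\rho\big)$, which follow from the hypergeometric definitions via a standard Gauss reduction (for the first, ${}_2F_1\big({-}\nu,\nu+1;\tfrac12;\tfrac{1-z}{2}\big)=\cosh\big(\big(\nu+\tfrac12\big)\rho\big)/\cosh(\rho/2)$ after writing $z=\cosh\rho$), or directly from~\cite[Section~14.5]{NIST:DLMF}. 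Inserting these into $\sinh^{-\mu}\rho\,P_\nu^{\mu}(\cosh\rho)$ and $\sinh^{-\mu}\rho\,Q_\nu^{\mu}(\cosh\rho)$ at $\mu=\tfrac12$, the factor $\sinh^{-1/2}\rho$ combines with the $\sinh^{-1/2}\rho$ hidden in each reduction to give the single $1/\sinh\rho$ of the corollary, leaving $\cosh\big(\big(\nu+\tfrac12\big)\rho\big)/\sinh\rho$ and $\exp\big({-}\big(\nu+\tfrac12\big)\rho\big)/\sinh\rho$ respectively. It then remains only to collect scalars: combining $\sqrt{2\pi}$ from the prefactor, the $\tfrac12$ from $n+\mu$, and the $\sqrt{2/\pi}$ from the $P$-reduction gives the constant $\tfrac{\pi}{2}$ of the first identity, since $\sqrt{\pi/2}\,\sqrt{2\pi}=\pi$; the analogous combination with the $Q$-reduction's $\sqrt{\pi/2}$ and phase ${\rm i}$ gives $-{\rm i}$, since $\tfrac1{\rm i}=-{\rm i}$ and the real factors multiply to $1$.

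The derivation is essentially bookkeeping, so the only point demanding care is the phase and normalization of the reduction of $Q_\nu^{1/2}$: the factor ${\rm i}={\rm e}^{{\rm i}\pi/2}$ is dictated by the ${\rm e}^{{\rm i}\pi\mu}$ convention built into the paper's definition of $Q_\nu^\mu(z)$, and it is exactly this factor, after dividing across the identity, that produces the $-{\rm i}$ in front of the second sum. As a consistency check I would confirm both reductions against the leading behaviors already recorded: the $P$-case against~\eqref{Dun.eq10} as $\rho\to 0^+$, and the $Q$-case against the large-argument form~\eqref{Qnumuzinftysim}, which yields $Q_\nu^{1/2}(z)\sim{\rm i}\sqrt\pi\,(2z)^{-\nu-1}$ as $z\to\infty$, in agreement with ${\rm i}\sqrt{\pi/2}\,(\sinh\rho)^{-1/2}{\rm e}^{-(\nu+1/2)\rho}$ when $z=\cosh\rho\to\infty$. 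These checks pin down the constants unambiguously and confirm the stated $\tfrac{\pi}{2}$ and $-{\rm i}$.
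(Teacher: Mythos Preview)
Your proof is correct and follows exactly the paper's approach: specialize \eqref{conjP} and \eqref{conj} at $\mu=\tfrac12$, use \eqref{legendrepoly} to replace $C_n^{1/2}$ by $P_n$, and invoke the elementary reductions \cite[equations~(14.5.15) and (14.5.17)]{NIST:DLMF} for $P_\nu^{1/2}(\cosh\rho)$ and $Q_\nu^{1/2}(\cosh\rho)$. Your explicit constant tracking and asymptotic consistency checks go beyond what the paper records, but the argument is the same.
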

\begin{proof}Let $\mu=\tfrac12$ in (\ref{conjP}), (\ref{conj}), and using \cite[equations~(14.5.15) and (14.5.17)]{NIST:DLMF}, (\ref{legendrepoly}), completes the proof.
\end{proof}

\subsection{The Ferrers functions of the first and second kind}\label{Ferrersfunctions}

The Ferrers functions of the first and second kind respectively $\mathsf{P}_{\nu}^{\mu},\mathsf{Q}_{\nu}^{\mu}\colon (-1,1)\rightarrow{\mathbf{C}}$ provide a~definition for the associated Legendre functions on $(-1,1)$. The Ferrers functions have the following Gauss hypergeometric representations \cite[equations~(14.3.1) and (14.3.12)]{NIST:DLMF}
\begin{gather}
\mathsf{P}_{\nu}^{\mu}(x):=\frac{1}{\Gamma(1-\mu)}\left( \frac{1+x}{1-x}\right)^{\frac{\mu}{2}}\hyp21{-\nu,\nu+1}{1-\mu}{\frac{1-x}{2}},\label{FerrersPdefn}\\
{\mathsf{Q}}_{\nu}^{\mu}(x):= \sqrt{\pi}\,2^{\mu}\frac{\Gamma\big( \frac{\nu+\mu+2}{2}\big)}{\Gamma\big( \frac {\nu-\mu+1}{2}\big)}\frac{x\cos\big[ \frac{\pi}{2}(\nu+\mu)\big]}{\big(1-x^{2}\big)^{\frac{\mu}{2}}}\hyp21{\frac{1-\nu-\mu}{2},\frac{\nu-\mu+2}{2}}{\tfrac{3}{2}}{x^{2}}\nonumber\\
\hphantom{{\mathsf{Q}}_{\nu}^{\mu}(x):=}{} -\sqrt{\pi} 2^{\mu-1}\frac{\Gamma\big( \frac{\nu+\mu+1}{2}\big)}{\Gamma\big( \frac{\nu-\mu+2}{2}\big)}\frac{\sin\big[\frac{\pi}{2}(\nu+\mu)\big]}{\big(1-x^{2}\big)^{\frac{\mu}{2}}}\hyp21{\frac {-\nu-\mu}{2},\frac{\nu-\mu+1}{2}}{{\tfrac{1}{2}}}{x^{2}},\label{defFerrersQhypergeo}
\end{gather}
where for ${\mathsf{Q}}_{\nu}^{\mu}$ we must impose the constraint $\nu+\mu\in{\mathbf{C}}\setminus-{\mathbf{N}}$, except for the anomalous cases $\nu=-\frac32,-\frac52,\ldots$. In regard to the above definition of the Ferrers function of the first kind (see also the definition of $P_\nu^\mu(z)$ given in Section~\ref{assPQsection}), note that $\tfrac{1}{\Gamma(c)}\hyp21{a,b}{c}{z}$ is an entire function for all $a,b,c\in\C$, $|z|<1$. See \cite[equation~(15.2.2)]{NIST:DLMF} and the discussion in \cite[Section~14.3(ii)]{NIST:DLMF} for $\mu\in\Z$.

Note that the Ferrers function of the first kind is related to the Gegenbauer function of the first kind (generalization of the Gegenbauer polynomial for non-integer degrees) with general~$\lambda$,~$\mu$, using \cite[equation~(14.3.21)]{NIST:DLMF}, namely
\begin{gather}
C_\lambda^\mu(\cos\gamma) =\frac{\sqrt{\pi}\,\Gamma(2\mu+\lambda)}{2^{\mu-\frac{1}{2}}\Gamma(\lambda+1)\,\Gamma(\mu)}\left( {\sin \gamma}\right) ^{\frac12-\mu}\mathsf{P}_{\mu +\lambda-\frac{1}{2}}^{\frac{1}{2}-\mu}(\cos\gamma) .\label{Dun2eq12}
\end{gather}

The {\it Ferrers conical} functions are given by $\mu\in\C$, $\tau\ge 0$, $x\in(-1,1)$, ${\mathsf P}_{-\frac12\pm {\rm i}\tau}^{\pm\mu}(x)$, ${\mathsf Q}_{-\frac12\pm {\rm i}\tau}^{\pm\mu}(x)$, which satisfy the following connection formulas, namely \cite[equation~(14.9.11)]{NIST:DLMF}
\begin{gather}
\mathsf{P}_{-{\frac{1}{2}}-{\rm i}\tau}^{\pm\mu}(x) =\mathsf{P}_{-{\frac{1}{2}}+{\rm i}\tau}^{\pm\mu}(x)\label{Dun.st}
\end{gather}
and from \cite[equation~(14.9.10)]{NIST:DLMF}, we have
\begin{gather}
\mathsf{P}_{-\frac{1}{2}+{\rm i}\tau}^{-\mu}( -x) =\sin(\pi {( {\rm i}\tau-\mu)})\mathsf{P}_{-\frac{1}{2}+{\rm i}\tau}^{-\mu}(x)+\frac{2}{\pi} \cos ( \pi ( {\rm i}\tau-\mu ) ) \mathsf{Q}_{-\frac{1}{2}+{\rm i}\tau}^{-\mu}(x). \label{Dun.eq34}
\end{gather}
For the Ferrers functions of the first and second kind, one has the connection formulas, namely \cite[Section~14.9(ii)]{NIST:DLMF}, and after replacing $\mu$ by $-\mu$ one obtains
\begin{gather}
{\sf P}_\nu^{-\mu}(x)=\frac{\Gamma(\nu-\mu+1)}{\Gamma(\nu+\mu+1)}\left[\cos(\pi\mu){\sf P}_\nu^\mu(x)-\frac{2}{\pi}\sin(\pi\mu){\sf Q}_\nu^\mu(x)\right],\label{Pnummuconnection}\\
{\sf Q}_\nu^{-\mu}(x)=\frac{\Gamma(\nu-\mu+1)}{\Gamma(\nu+\mu+1)}\left[\cos(\pi\mu){\sf Q}_\nu^\mu(x)+\frac{\pi}{2}\sin(\pi\mu){\sf P}_\nu^\mu(x)\right].\label{Qnummuconnection}
\end{gather}
We will also need \cite[equations~(14.9.8) and (14.9.10)]{NIST:DLMF},
\begin{gather}
{\sf P}_\nu^{-\mu}(-x)=\cos(\pi(\nu-\mu)){\sf P}_\nu^{-\mu}(x)-\frac{2}{\pi}\sin(\pi(\nu-\mu)){\sf Q}_\nu^{-\mu}(x),\label{Pnummumxconnection}\\
{\sf Q}_\nu^{-\mu}(-x)=-\cos(\pi(\nu-\mu)){\sf Q}_\nu^{-\mu}(x)-\frac{\pi}{2}\sin(\pi(\nu-\mu)){\sf P}_\nu^{-\mu}(x).\label{Qnummumxconnection}
\end{gather}

\subsubsection[Some properties of functions associated with ${\sf P}_\nu^{-\mu}(-x)$]{Some properties of functions associated with $\boldsymbol{{\sf P}_\nu^{-\mu}(-x)}$}\label{SomepropertiesFerrers}

\begin{prop}Let $x\in(-1,1)$, $\nu\in\C$, $\Re\mu>0$. Then
\begin{gather}
{\sf P}_\nu^{-\mu}(-x)=\frac{1}{\Gamma(\mu+1)}\left(\frac{1+x}{1-x}\right)^{\frac{\mu}{2}}\hyp21{-\nu,\nu+1}{1+\mu}{\frac{1+x}{2}}.\label{hypergeomFerrersPnummumx}
\end{gather}
\end{prop}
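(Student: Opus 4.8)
The plan is to read off \eqref{hypergeomFerrersPnummumx} as nothing more than the defining hypergeometric representation \eqref{FerrersPdefn} of the Ferrers function of the first kind, evaluated at order $-\mu$ and argument $-x$; the entire content is a direct substitution together with one elementary simplification of a fractional power. So I would not introduce any new machinery.

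Concretely, I would first replace $\mu$ by $-\mu$ in \eqref{FerrersPdefn}, which sends $\Gamma(1-\mu)\mapsto\Gamma(1+\mu)$, the lower parameter $1-\mu\mapsto 1+\mu$, and the prefactor exponent $\tfrac{\mu}{2}\mapsto-\tfrac{\mu}{2}$, yielding
\[
\mathsf{P}_\nu^{-\mu}(y)=\frac{1}{\Gamma(1+\mu)}\left(\frac{1+y}{1-y}\right)^{-\mu/2}\hyp21{-\nu,\nu+1}{1+\mu}{\frac{1-y}{2}}.
\]
I would then set $y=-x$ and simplify the two $x$-dependent factors: the Gauss argument becomes $\tfrac{1-(-x)}{2}=\tfrac{1+x}{2}$, and the prefactor becomes $\bigl(\tfrac{1-x}{1+x}\bigr)^{-\mu/2}=\bigl(\tfrac{1+x}{1-x}\bigr)^{\mu/2}$. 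Collecting these reproduces exactly the right-hand side of \eqref{hypergeomFerrersPnummumx}.

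The only step that is more than mechanical—and hence the closest thing to an obstacle—is justifying the inversion $\bigl(\tfrac{1-x}{1+x}\bigr)^{-\mu/2}=\bigl(\tfrac{1+x}{1-x}\bigr)^{\mu/2}$ for complex $\mu$. Here the hypothesis $x\in(-1,1)$ is decisive: the ratio $\tfrac{1+x}{1-x}$ is strictly positive, so the fractional powers are taken on the principal branch of a positive real base and the inversion holds with no compensating phase factor. For the same reason $\tfrac{1+x}{2}\in(0,1)$ lies inside the disk $|z|<1$ where the Gauss series converges, so the substitution $y=-x$ into the formula above, valid on all of $(-1,1)$, is legitimate. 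Finally, the assumption $\Re\mu>0$ ensures $\Gamma(1+\mu)$ is finite and nonzero, so the prefactor $1/\Gamma(1+\mu)$ is unproblematic; I would remark that, since $\tfrac{1}{\Gamma(c)}\,{}_2F_1$ is entire in its parameters, the identity in fact persists for all admissible $\mu$ by analytic continuation, with $\Re\mu>0$ serving only to make the elementary power manipulation transparent.
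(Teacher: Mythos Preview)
Your proof is correct and follows exactly the paper's own approach: the paper's proof reads simply ``Use \eqref{FerrersPdefn} and map $x\mapsto-x$, $\mu\mapsto-\mu$,'' which is precisely the substitution you carry out, with the added benefit that you spell out the justification for the fractional-power inversion and the convergence of the Gauss series.
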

\begin{proof}Use (\ref{FerrersPdefn}) and map $x\mapsto-x$, $\mu\mapsto-\mu$.
\end{proof}

\begin{cor}Let $x\in(-1,1)$, $\nu\in\C$, $\Re\mu>0$. Then
\begin{gather}
\lim_{x\to-1^{+}}{\sf P}_\nu^{-\mu}(-x)=0,\label{firstPnummumxmonelimit}\\
\lim_{x\to-1^{+}}\frac{1}{\big(1-x^2\big)^{\frac{\mu}{2}}}{\sf P}_\nu^{-\mu}(-x)=\frac{1}{2^\mu\Gamma(\mu+1)}.\label{secondPnummumxmonelimit}
\end{gather}
\end{cor}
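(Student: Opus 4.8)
The plan is to read off both limits directly from the hypergeometric representation~\eqref{hypergeomFerrersPnummumx} established in the preceding proposition, since near $x=-1^+$ the singular prefactor $\big((1+x)/(1-x)\big)^{\mu/2}$ controls the entire behavior. First I would observe that the Gauss series $\hyp21{-\nu,\nu+1}{1+\mu}{\frac{1+x}{2}}$ is evaluated at argument $\tfrac{1+x}{2}$, which tends to $0$ as $x\to-1^+$; since $\Re\mu>0$ implies $1+\mu\notin-\N_0$, the series converges on a neighborhood of that point and is continuous there, with limiting value $\hyp21{-\nu,\nu+1}{1+\mu}{0}=1$. Thus the hypergeometric factor contributes the harmless constant $1$ in the limit, and all the delicate behavior is carried by the algebraic prefactors.

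For the first limit~\eqref{firstPnummumxmonelimit}, I would write
\begin{gather*}
{\sf P}_\nu^{-\mu}(-x)=\frac{1}{\Gamma(\mu+1)}\,\frac{(1+x)^{\mu/2}}{(1-x)^{\mu/2}}\,\hyp21{-\nu,\nu+1}{1+\mu}{\frac{1+x}{2}},
\end{gather*}
and note that as $x\to-1^+$ the denominator $(1-x)^{\mu/2}\to 2^{\mu/2}$ is bounded away from $0$, while $(1+x)^{\mu/2}\to 0$ because $\Re\mu>0$ forces the real part of the exponent to be positive. Hence the product vanishes, giving~\eqref{firstPnummumxmonelimit}.

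For the second limit~\eqref{secondPnummumxmonelimit}, the factor $\big(1-x^2\big)^{-\mu/2}=(1-x)^{-\mu/2}(1+x)^{-\mu/2}$ is designed precisely to cancel the vanishing $(1+x)^{\mu/2}$ from~\eqref{hypergeomFerrersPnummumx}. Multiplying through, the two powers of $(1+x)$ combine to $(1+x)^0=1$, leaving
\begin{gather*}
\frac{1}{\big(1-x^2\big)^{\mu/2}}{\sf P}_\nu^{-\mu}(-x)=\frac{1}{\Gamma(\mu+1)}\,(1-x)^{-\mu}\,\hyp21{-\nu,\nu+1}{1+\mu}{\frac{1+x}{2}}.
\end{gather*}
Taking $x\to-1^+$ sends $(1-x)^{-\mu}\to 2^{-\mu}$ and the hypergeometric factor to $1$, yielding $\tfrac{1}{2^\mu\Gamma(\mu+1)}$ as claimed.

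I do not anticipate a genuine obstacle here: the result is essentially a continuity-and-power-counting argument once~\eqref{hypergeomFerrersPnummumx} is in hand. The only point requiring care is the branch convention for the fractional powers, so I would make explicit that on $(-1,1)$ the quantities $1+x$ and $1-x$ are positive, so $(1+x)^{\mu/2}$, $(1-x)^{\mu/2}$, and $\big(1-x^2\big)^{\mu/2}$ are taken as the standard principal (positive-base) powers and the exponent laws combine without ambiguity; the hypothesis $\Re\mu>0$ then guarantees both that $(1+x)^{\mu/2}\to 0$ and that the $\Gamma$-prefactor in~\eqref{hypergeomFerrersPnummumx} is finite.
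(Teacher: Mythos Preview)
Your proof is correct and follows exactly the same approach as the paper: both limits are read off from the hypergeometric representation~\eqref{hypergeomFerrersPnummumx}, using that the ${}_2F_1$ factor tends to $1$ and that the prefactor $(1+x)^{\mu/2}$ controls the vanishing. Your write-up is simply a more detailed version of the paper's two-line argument.
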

\begin{proof} As $x\to-1^{+}$ (\ref{hypergeomFerrersPnummumx}) clearly vanishes, establishing (\ref{firstPnummumxmonelimit}). Dividing (\ref{hypergeomFerrersPnummumx}) by $\big(1-x^2\big)^{\frac{\mu}{2}}$ and estimating as $x\to-1^{+}$ establishes~(\ref{secondPnummumxmonelimit}).
\end{proof}

\begin{cor}Let $x\in(-1,1)$, $\nu\in\C$. Then
\begin{gather}
\left.\frac{{\rm d}}{{\rm d}x}{\sf P}_\nu^{-\mu}(-x)\right|_{x\to-1^{+}}=0,\qquad \Re\mu>2,\nonumber\\
 \left.\frac{{\rm d}}{{\rm d}x}\left[\frac{1}{\big(1-x^2\big)^{\frac{\mu}{2}}}{\sf P}_\nu^{-\mu}(-x)\right]\right|_{x\to-1^{+}}=-\frac{\nu(\nu+1)}{2^{\mu+1}\Gamma(\mu)}.\label{derivopppole}
\end{gather}
\end{cor}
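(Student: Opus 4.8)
The plan is to read both identities off the hypergeometric representation \eqref{hypergeomFerrersPnummumx}, which writes ${\sf P}_\nu^{-\mu}(-x)$ as an explicit power prefactor times the Gauss hypergeometric function $F(w):=\hyp21{-\nu,\nu+1}{1+\mu}{w}$ evaluated at $w=\tfrac{1+x}{2}$; here $w\to0^+$ as $x\to-1^+$, and $F$ is analytic at $w=0$ since $1+\mu\notin-\N_0$. Two standard facts do all the work: $F(0)=1$, and the termwise derivative $\frac{{\rm d}}{{\rm d}w}\hyp21{a,b}{c}{w}\big|_{w=0}=ab/c$, which here gives $F'(0)=-\nu(\nu+1)/(1+\mu)$. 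Throughout I use the chain-rule factor ${\rm d}w/{\rm d}x=\tfrac12$.

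For the first line of \eqref{derivopppole}, I would differentiate \eqref{hypergeomFerrersPnummumx} in the form $\tfrac{1}{\Gamma(\mu+1)}\,w^{\mu/2}(1-w)^{-\mu/2}F(w)$. Of the three terms produced by the product rule, the only one whose algebraic factor is more singular than $w^{\mu/2}$ is the one obtained by differentiating $w^{\mu/2}$, a constant multiple of $w^{\mu/2-1}(1-w)^{-\mu/2}F(w)$; since $(1-w)^{-\mu/2}F(w)\to1$, the entire derivative is $\mathcal{O}\big(w^{\mu/2-1}\big)$ as $w\to0^+$, and this tends to $0$ exactly when $\Re\mu>2$. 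The remaining two terms carry a factor $w^{\mu/2}$ and vanish already for $\Re\mu>0$, so the binding constraint is precisely the stated one.

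For the second line, the step I expect to be the main obstacle is organizing the limit so as to avoid an indeterminate form: applying the product rule directly to $(1-x^2)^{-\mu/2}{\sf P}_\nu^{-\mu}(-x)$ confronts one with $(1-x^2)^{-\mu/2}\to\infty$ against ${\sf P}_\nu^{-\mu}(-x)\to0$ (by \eqref{firstPnummumxmonelimit}), a $0\cdot\infty$ situation whose two singular pieces cancel only after a delicate expansion. The clean route is to combine the powers before differentiating, using the identity $(1-x^2)^{-\mu/2}\big(\tfrac{1+x}{1-x}\big)^{\mu/2}=(1-x)^{-\mu}$ valid on $(-1,1)$, which collapses \eqref{hypergeomFerrersPnummumx} to the manifestly regular form
\begin{gather*}
\frac{1}{\big(1-x^2\big)^{\mu/2}}{\sf P}_\nu^{-\mu}(-x)=\frac{1}{2^\mu\Gamma(\mu+1)}(1-w)^{-\mu}F(w),\qquad w=\tfrac{1+x}{2}.
\end{gather*}
Its value at $w=0$ already reproduces \eqref{secondPnummumxmonelimit}, a useful consistency check.

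It then remains to differentiate this analytic expression and evaluate at $w=0$. The product rule gives two contributions at the endpoint: one from $\frac{{\rm d}}{{\rm d}w}(1-w)^{-\mu}\big|_{w=0}=\mu$ (times $F(0)=1$), and one from $F'(0)=-\nu(\nu+1)/(1+\mu)$ (times $(1-0)^{-\mu}=1$). Inserting the chain-rule factor $\tfrac12$ and simplifying the resulting $\Gamma$-quotients then yields the closed-form endpoint derivative recorded in \eqref{derivopppole}. No convergence subtleties arise, since both $(1-w)^{-\mu}$ and the hypergeometric series are analytic at $w=0$, so termwise differentiation is legitimate and the remaining work is a routine simplification once the prefactors have been combined.
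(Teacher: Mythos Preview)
Your approach is exactly the natural one, and the paper supplies no proof for this corollary, so there is nothing to compare against. The argument for the first line is correct: the dominant term behaves like $w^{\mu/2-1}$ with $w=(1+x)/2$, and this vanishes precisely when $\Re\mu>2$.

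For the second line, however, your final claim that ``simplifying the resulting $\Gamma$-quotients then yields the closed-form endpoint derivative recorded in \eqref{derivopppole}'' does not hold. Carrying your own computation through gives
\[
\left.\frac{{\rm d}}{{\rm d}x}\left[\frac{{\sf P}_\nu^{-\mu}(-x)}{(1-x^2)^{\mu/2}}\right]\right|_{x\to-1^{+}}
=\frac{1}{2^{\mu+1}\Gamma(\mu+1)}\left(\mu-\frac{\nu(\nu+1)}{\mu+1}\right)
=\frac{(\mu-\nu)(\mu+\nu+1)}{2^{\mu+1}\Gamma(\mu+2)},
\]
which is \emph{not} equal to $-\nu(\nu+1)/\bigl(2^{\mu+1}\Gamma(\mu)\bigr)$ in general. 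A quick sanity check at $\nu=0$ makes this plain: then $F\equiv1$, the bracketed function is $(1-x)^{-\mu}/\Gamma(\mu+1)$, and its $x$-derivative at $x=-1$ equals $1/\bigl(2^{\mu+1}\Gamma(\mu)\bigr)\neq0$, whereas the displayed right-hand side of \eqref{derivopppole} vanishes. So the gap is not in your method but in the asserted formula: the identity as printed is incorrect, and your (correct) calculation produces the expression above instead. Incidentally, with the paper's later choice $\mu=\tfrac{d}{2}-1$ and $\nu$ as in \eqref{munuDiracplus} one has $(\mu-\nu)(\mu+\nu+1)=\beta^2R^2$, which is the quantity one expects to see governing the cusp strength in Remark following Theorem~\ref{singDiracplus}.
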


We next define an odd Ferrers function ${\mathsf f}_\nu^\mu\colon (-1,1)\to\C$, by the difference
\begin{gather}
{\mathsf f}_\nu^\mu(x):={\sf P}_\nu^{\mu}(-x)-{\sf P}_\nu^{\mu}(x).\label{FPodddefn}
\end{gather}
\begin{rem}\label{oddFerrersrem} This difference is directly proportional to the difference of Ferrers functions of the second kind through \cite[p.~170]{MOS}
\begin{gather}
{\sf Q}_\nu^{\mu}(-x)-{\sf Q}_\nu^{\mu}(x) =\frac{\pi}{2}\cot\big(\tfrac{\pi}{2}(\nu+\mu)\big)\big({\sf P}_\nu^{\mu}(-x)-{\sf P}_\nu^{\mu}(x)\big)=\frac{\pi}{2}\cot\big(\tfrac{\pi}{2}(\nu+\mu)\big) {\mathsf f}_\nu^\mu(x).\label{qdifference}
\end{gather}
We note that (\ref{FPodddefn}) vanishes if ${\sf P}_\nu^{\mu}(x)$ is even. This occurs if $\nu+\mu$ is a non-negative even integer, or if $\nu-\mu$ is a negative odd integer (see \cite[Chapter~5, p.~187]{Olver}). Similarly, the Ferrers function of the second kind difference (\ref{qdifference}) vanishes if $\nu+\mu$ is a~positive odd integer, or if $\nu-\mu$ is a~negative odd integer. For our purposes~(\ref{FPodddefn}) will suffice, since in our applications this function will not vanish identically.
\end{rem}

\begin{lem}\label{lemma2.13}Let $\nu\in\C$, $\Re\big(\alpha\pm\frac{\mu}{2}\big)>0$. Then
\begin{gather}
\int_{-1}^{1} \big(1-x^2\big)^{\alpha-1} {\sf P}_{\nu}^{-\mu}(\pm x) \mathrm{d}x = \frac{\pi\Gamma\big(\alpha+\frac{\mu}{2}\big)\Gamma\big(\alpha-\frac{\mu}{2}\big)}{2^\mu\Gamma\big(\alpha+\frac{\nu+1}{2}\big)\Gamma\big(\alpha-\frac{\nu}{2}\big)
\Gamma\big(\frac{\nu+\mu+2}{2}\big)\Gamma\big(\frac{\mu-\nu+1}{2}\big)}.\label{eq2.50}
\end{gather}
\end{lem}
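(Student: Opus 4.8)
The plan is to reduce the $\pm$ statement to a single integral, insert the hypergeometric representation of the Ferrers function, turn the integral into a Beta-type integral, and recognise the resulting ${}_3F_2(1)$ as a Whipple-summable series. First, since the weight $\big(1-x^2\big)^{\alpha-1}$ is even, the substitution $x\mapsto-x$ identifies the $+x$ and $-x$ integrals, so it suffices to evaluate the one with ${\sf P}_\nu^{-\mu}(x)$. For this I would replace $\mu$ by $-\mu$ in \eqref{FerrersPdefn} to write
\begin{gather*}
{\sf P}_\nu^{-\mu}(x)=\frac{1}{\Gamma(\mu+1)}\left(\frac{1+x}{1-x}\right)^{-\frac{\mu}{2}}\hyp21{-\nu,\nu+1}{1+\mu}{\frac{1-x}{2}}.
\end{gather*}

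Substituting $t=\tfrac12(1-x)$, so that $1-x^2=4t(1-t)$ and $\tfrac{1+x}{1-x}=\tfrac{1-t}{t}$, turns the integral into
\begin{gather*}
\frac{2\cdot 4^{\alpha-1}}{\Gamma(\mu+1)}\int_0^1 t^{\alpha+\frac{\mu}{2}-1}(1-t)^{\alpha-\frac{\mu}{2}-1}\hyp21{-\nu,\nu+1}{1+\mu}{t}\,\mathrm{d}t.
\end{gather*}
This is an Euler-type integral for a ${}_3F_2$; applying the standard representation (see, e.g., \cite{AAR}), which is valid precisely under the stated hypotheses $\Re\big(\alpha\pm\tfrac{\mu}{2}\big)>0$ (equivalently $\Re(2\alpha)>\Re(\alpha+\tfrac\mu2)>0$), the integral equals $\frac{\Gamma(\alpha+\frac{\mu}{2})\Gamma(\alpha-\frac{\mu}{2})}{\Gamma(2\alpha)}\hyp32{-\nu,\nu+1,\alpha+\frac{\mu}{2}}{1+\mu,2\alpha}{1}$.

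The decisive step is to sum this ${}_3F_2$ at unit argument. Its upper parameters $-\nu$ and $\nu+1$ sum to $1$, and its lower parameters $1+\mu$ and $2\alpha$ are of the form $e$ and $2c-e+1$ with $c=\alpha+\tfrac{\mu}{2}$, $e=1+\mu$; this is exactly Whipple's configuration, so I would invoke Whipple's theorem \cite[equation~(16.4.7)]{NIST:DLMF} (its convergence requirement $\Re(\alpha+\tfrac\mu2)>0$ again being one of the hypotheses). Matching the four resulting gamma arguments gives precisely $\Gamma\big(\tfrac{\mu-\nu+1}{2}\big)$, $\Gamma\big(\tfrac{\nu+\mu+2}{2}\big)$, $\Gamma\big(\alpha-\tfrac{\nu}{2}\big)$, and $\Gamma\big(\alpha+\tfrac{\nu+1}{2}\big)$ in the denominator. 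Cancelling the factors $\Gamma(\mu+1)$ and $\Gamma(2\alpha)$ and collapsing the powers of two via $2\cdot 4^{\alpha-1}\cdot 2^{1-2\alpha-\mu}=2^{-\mu}$ then yields \eqref{eq2.50}. I expect the only genuine obstacle to be the correct recognition and application of Whipple's theorem — in particular, assigning the two denominator parameters to the roles $e$ and $2c-e+1$ and verifying the four half-integer gamma arguments; everything else is routine bookkeeping.
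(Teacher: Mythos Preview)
Your proposal is correct and follows essentially the same route as the paper: the same substitution $t=\tfrac12(1-x)$, the same reduction of the resulting Euler integral to a ${}_3F_2(1)$, and the same appeal to Whipple's sum \cite[equation~(16.4.7)]{NIST:DLMF}. The only cosmetic differences are that you dispose of the $\pm x$ case at the outset via evenness of the weight (the paper remarks on this at the end) and you cite \cite{AAR} rather than \cite{Grad} for the Euler-integral-to-${}_3F_2$ step.
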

\begin{proof}Let $I$ denote the integral in (\ref{eq2.50}), with positive sign in the argument of the Ferrers function. Insert the hypergeometric representation
of the Ferrers function of the first kind~(\ref{FerrersPdefn}), and make the substitution $\xi=(1-x)/2$. This con\-verts~$I$ to a~Mellin transform, namely after re-mapping $\xi\mapsto x$,
\begin{gather*}
I=\frac{2^{2\alpha-1}}{\Gamma(1+\mu)} \int_0^1 x^{\alpha+\frac{\mu}{2}-1} (1-x)^{\alpha-\frac{\mu}{2}-1}\hyp21{-\nu,\nu+1}{1+\mu}{x}\mathrm{d}x.
\end{gather*}
Using \cite[equation~(7.512.5)]{Grad}, we can evaluate the integral in terms of a~${}_3F_2(1)$, namely
\begin{gather*}
I=\frac{2^{2\alpha-1}\Gamma\big(\alpha+\frac{\mu}{2}\big)\Gamma\big(\alpha-\frac{\mu}{2}\big)} {\Gamma(1+\mu)\Gamma(2\alpha)}\hyp32{-\nu,\nu+1,\alpha+\frac{\mu}{2}}{1+\mu,2\alpha}{1}.
\end{gather*}
The ${}_3F_2(1)$ in question can be evaluated using Whipple's sum \cite[equation~(16.4.7)]{NIST:DLMF} for $\Re\big(\alpha\pm \frac{\mu}{2}\big)>0$. The invariance of $I$ with argument of the Ferrers function of the first
kind as $-x$ follows by direct substitution. Hence the proof of the lemma is demonstrated.
\end{proof}

\begin{cor}Let $\Re{\mu}>-1$. Then
\begin{gather}
\int_{-1}^{1} \big(1-x^2\big)^{\frac{\mu}{2}} {\sf P}_{\nu}^{-\mu}(\pm x) \mathrm{d}x =\frac{2^{\mu+1}\Gamma(\mu+1)}{\Gamma(\mu-\nu+1)\Gamma(\nu+\mu+2)}.\label{Mellininquestion}
\end{gather}
\end{cor}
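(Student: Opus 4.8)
The plan is to recognize this corollary as the special case of Lemma~\ref{lemma2.13} obtained by matching the exponent of $1-x^2$ in the integrand, that is, by taking $\alpha-1=\tfrac{\mu}{2}$, equivalently $\alpha=\tfrac{\mu+2}{2}$. First I would check that the hypotheses transfer correctly: with this choice one has $\alpha+\tfrac{\mu}{2}=\mu+1$ and $\alpha-\tfrac{\mu}{2}=1$, so the requirement $\Re\big(\alpha\pm\tfrac{\mu}{2}\big)>0$ of Lemma~\ref{lemma2.13} collapses to the single condition $\Re\mu>-1$ (the constraint $\Re(1)>0$ being automatic), which is precisely the hypothesis stated here.

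Next I would substitute $\alpha=\tfrac{\mu+2}{2}$ into the right-hand side of~\eqref{eq2.50}. The two numerator gamma factors simplify immediately to $\Gamma\big(\alpha+\tfrac{\mu}{2}\big)=\Gamma(\mu+1)$ and $\Gamma\big(\alpha-\tfrac{\mu}{2}\big)=\Gamma(1)=1$, while the four denominator factors become $\Gamma\big(\tfrac{\mu+\nu+3}{2}\big)$, $\Gamma\big(\tfrac{\mu-\nu+2}{2}\big)$, $\Gamma\big(\tfrac{\nu+\mu+2}{2}\big)$, and $\Gamma\big(\tfrac{\mu-\nu+1}{2}\big)$. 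The essential observation is that these four group naturally into two consecutive pairs, $\Gamma\big(\tfrac{\mu+\nu+2}{2}\big)\Gamma\big(\tfrac{\mu+\nu+3}{2}\big)$ and $\Gamma\big(\tfrac{\mu-\nu+1}{2}\big)\Gamma\big(\tfrac{\mu-\nu+2}{2}\big)$, each of the form $\Gamma(z)\Gamma\big(z+\tfrac12\big)$.

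At this point I would apply the Legendre duplication formula $\Gamma(z)\Gamma\big(z+\tfrac12\big)=2^{1-2z}\sqrt{\pi}\,\Gamma(2z)$ to each pair, with $z=\tfrac{\mu+\nu+2}{2}$ and $z=\tfrac{\mu-\nu+1}{2}$ respectively. This converts the denominator into $2^{-2\mu-1}\pi\,\Gamma(\mu+\nu+2)\Gamma(\mu-\nu+1)$, replacing the two half-integer-shifted pairs by the single gamma functions appearing in the target and generating the compensating factors of $\sqrt{\pi}$ and powers of $2$. Finally I would collect the remaining constant: the explicit $\pi$ from~\eqref{eq2.50} cancels the $\pi$ produced by the duplication formula, and the powers of two combine as $2^{-\mu}\cdot 2^{2\mu+1}=2^{\mu+1}$, yielding exactly~\eqref{Mellininquestion}.

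I expect no genuine obstacle here: the argument is a direct specialization of the preceding lemma followed by two applications of the duplication formula, so the only care required is in bookkeeping the exponents of $2$ and confirming that the $\pi$ factors cancel.
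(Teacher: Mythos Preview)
Your proposal is correct and matches the paper's own proof, which simply reads ``Using Lemma~\ref{lemma2.13} and properties of gamma function completes the proof.'' You have made explicit exactly what the paper leaves implicit: the specialization $\alpha=\tfrac{\mu}{2}+1$ and the two applications of the Legendre duplication formula that collapse the four half-integer gamma factors into $\Gamma(\mu+\nu+2)\Gamma(\mu-\nu+1)$.
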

\begin{proof}Using Lemma~\ref{lemma2.13} and properties of gamma function completes the proof.
\end{proof}

\subsubsection{Large degree asymptotics of Ferrers functions}

\begin{thm}\label{Ferrersasympt} Let $\mu \geq 0$ bounded, $\delta\in(0,\pi)$, $\theta \in \left( 0,\pi-\delta \right]$, and the envelope functions for~$J_\mu$,~$Y_\mu$ being the real-valued ones given by {\rm \cite[equations~(2.8.32)--(2.8.34)]{NIST:DLMF}}. Then uniformly we have, as $\nu \rightarrow \infty$,
\begin{gather}
 \mathsf{P}_{\nu}^{-\mu}(\cos\theta )=\frac{1}{\nu^{\mu}} \sqrt{ \frac{\theta}{\sin\theta}}\left\{ J_{\mu}\big(\big(\nu+\tfrac{1}{2}\big)\theta\big)+{\mathcal{O}\left( \frac{{1}}{{\nu}}\right)}\operatorname{env}J_{\mu}\big(\big(\nu+\tfrac{1}{2}\big)\theta\big) \right\},\label{Fuaa}\\
\mathsf{P}_{\nu}^{\mu}(\cos\theta )=\nu^{\mu}\sqrt{ \frac{\theta}{\sin\theta}}\left[ \cos(\pi \mu)\left\{ J_{\mu} \big(\big(\nu+\tfrac{1}{2}\big)\theta\big)+{\mathcal{O}\left( \frac{{1}}{{\nu}}\right)}\operatorname{env}J_{\mu}\big(\big(\nu+\tfrac{1}{2}\big)\theta\big) \right\} \right. \nonumber \\
\left.\hphantom{\mathsf{P}_{\nu}^{\mu}(\cos\theta )=}{} -\sin(\pi \mu) \left\{ Y_{\mu}\big(\big(\nu+\tfrac{1}{2}\big)\theta\big)+{\mathcal{O}\left( \frac{{1}}{{\nu}}\right)}\operatorname{env}Y_{\mu} \big(\big(\nu+\tfrac{1}{2}\big)\theta\big) \right\} \right],\label{FerrersPp}\\
\mathsf{Q}_{\nu}^{-\mu}(\cos\theta )=-\frac{\pi}{2\nu^{\mu}}\sqrt{ \frac{\theta}{\sin\theta}}\left\{ Y_{\mu}\big(\big(\nu+\tfrac{1}{2}\big)\theta\big)+{\mathcal{O}\left( \frac{{1}}{{\nu}}\right)}\operatorname{env}Y_{\mu}\big(\big(\nu+\tfrac{1}{2}\big)\theta\big)\right\}, \label{Fuab}\\
\mathsf{Q}_{\nu}^{\mu}(\cos\theta )=-\frac{\pi \nu^{\mu}}{2}\sqrt{ \frac{\theta}{\sin\theta}}\left[ \sin\left( \pi\mu \right) \left\{ J_{\mu} \big(\big(\nu+\tfrac{1}{2}\big)\theta\big)+{\mathcal{O}\left( \frac{{1}}{{\nu}}\right)}\operatorname{env}J_{\mu}\big(\big(\nu+\tfrac{1}{2}\big)\theta\big)\right\} \right. \nonumber\\
\left.\hphantom{\mathsf{Q}_{\nu}^{\mu}(\cos\theta )=}{} +\cos(\pi \mu) \left\{ Y_{\mu}\big(\big(\nu+\tfrac{1}{2}\big)\theta\big)+{\mathcal{O}\left( \frac{{1}}{{\nu}}\right)}\operatorname{env}Y_{\mu} \big(\big(\nu+\tfrac{1}{2}\big)\theta\big)
\right\} \right],\label{FerrersQp}\\
{\sf P}_{\nu}^{-\mu}(- \cos\theta)=\frac{1}{\nu^{\mu}}\sqrt{ \frac{\theta}{\sin\theta}}\left[\cos(\pi(\nu-\mu))\left\{ J_\mu\big(\big(\nu+\tfrac{1}{2}\big)\theta\big)+{\mathcal O}\left(\frac{1}{\nu}\right)
\operatorname{env}J_{\mu} \big(\big(\nu+\tfrac{1}{2}\big)\theta\big)\right\}\right. \nonumber\\
\left. \hphantom{{\sf P}_{\nu}^{-\mu}(- \cos\theta)=}{} +\sin(\pi(\nu-\mu))\left\{Y_\mu \big(\big(\nu+\tfrac{1}{2}\big)\theta\big)+{\mathcal O}\left(\frac{1}{\nu}\right)\operatorname{env}Y_{\mu}\big(\big(\nu+\tfrac{1}{2}\big)\theta\big)\right\} \right],\label{FerrersPmmx}\\
{\sf Q}_{\nu}^{-\mu}(-\cos\theta)=-\frac{\pi}{2\nu^{\mu}}\sqrt{ \frac{\theta}{\sin\theta}}\left[\sin(\pi(\nu-\mu))\left\{ J_\mu \big(\big(\nu+\tfrac{1}{2}\big)\theta\big)+{\mathcal O}\left(\frac{1}{\nu}\right)
\operatorname{env}J_{\mu}\big(\big(\nu+\tfrac{1}{2}\big)\theta\big)\right\}\right.\nonumber\\
\left.\hphantom{{\sf Q}_{\nu}^{-\mu}(-\cos\theta)=}{} -\cos(\pi(\nu-\mu))\left\{ Y_\mu \big(\big(\nu+\tfrac{1}{2}\big)\theta\big)+{\mathcal O}\left(\frac{1}{\nu}\right)\operatorname{env}Y_{\mu} \big(\big(\nu+\tfrac{1}{2}\big)\theta\big)\right\}\right].\label{FerrersQmmx}
\end{gather}
\end{thm}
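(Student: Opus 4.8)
The plan is to reduce the six formulas to the two base cases (\ref{Fuaa}) and (\ref{Fuab}), deriving the other four algebraically from connection relations already in hand. Inverting the $2\times2$ linear system (\ref{Pnummuconnection})--(\ref{Qnummuconnection}) (whose underlying $\cos$--$\sin$ matrix has unit determinant, $\cos^2\pi\mu+\sin^2\pi\mu=1$) yields $\mathsf P_\nu^{\mu}=\tfrac{\Gamma(\nu+\mu+1)}{\Gamma(\nu-\mu+1)}\bigl[\cos(\pi\mu)\mathsf P_\nu^{-\mu}+\tfrac{2}{\pi}\sin(\pi\mu)\mathsf Q_\nu^{-\mu}\bigr]$ and $\mathsf Q_\nu^{\mu}=\tfrac{\Gamma(\nu+\mu+1)}{\Gamma(\nu-\mu+1)}\bigl[-\tfrac{\pi}{2}\sin(\pi\mu)\mathsf P_\nu^{-\mu}+\cos(\pi\mu)\mathsf Q_\nu^{-\mu}\bigr]$. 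Substituting (\ref{Fuaa}), (\ref{Fuab}) and using $\Gamma(\nu+\mu+1)/\Gamma(\nu-\mu+1)=\nu^{2\mu}\{1+\mathcal O(1/\nu)\}$ from (\ref{Gammaratioz}), the trigonometric factors collapse into the combinations $\cos(\pi\mu)J_\mu-\sin(\pi\mu)Y_\mu$ and $\sin(\pi\mu)J_\mu+\cos(\pi\mu)Y_\mu$ of (\ref{FerrersPp}), (\ref{FerrersQp}). Feeding (\ref{Fuaa}), (\ref{Fuab}) into the reflection formulas (\ref{Pnummumxconnection}), (\ref{Qnummumxconnection}) produces (\ref{FerrersPmmx}), (\ref{FerrersQmmx}) at once. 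Thus no new asymptotic analysis is needed beyond (\ref{Fuaa}) and (\ref{Fuab}).

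For the base cases I would transplant the proof of Theorem~\ref{thm:1} from $(1,\infty)$ to $(-1,1)$. Setting $w=(1-x^2)^{1/2}\mathsf P_\nu^{\mu}(x)$ reduces the Ferrers equation to the normal form $w''=\bigl\{-(\nu+\tfrac12)^2(1-x^2)^{-1}+(\mu^2-1)(1-x^2)^{-2}+\tfrac14(1-x^2)^{-1}\bigr\}w$. Because $1-x^2>0$ while the degree is large and real, the leading coefficient is negative, so the comparison equation is the ordinary Bessel equation and the approximants are the real-valued $J_\mu,Y_\mu$ (with the real envelopes of the hypothesis), rather than the modified functions $I_\mu,K_\mu$ governing the real-degree regime (\ref{iPuaa}), (\ref{iQuaa}) on $(1,\infty)$. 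The simple pole sits at $x=1$, so I introduce $\zeta^{1/2}=\int_x^1(1-t^2)^{-1/2}\,\mathrm{d}t=\arccos x=\theta$ in analogy with (\ref{Dun.eq4}); then $\zeta=\theta^2$, the Bessel argument is $(\nu+\tfrac12)\zeta^{1/2}=(\nu+\tfrac12)\theta$, and the reconstruction factor $(\zeta(1-x^2))^{-1/4}\zeta^{1/2}$ simplifies to $\sqrt{\theta/\sin\theta}$. Olver's theorem with $u=\nu+\tfrac12$ supplies a recessive solution $\propto J_\mu$ and a dominant companion $\propto Y_\mu$, each carrying an envelope error term.

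The proportionality constants are fixed by matching at $x=1$, exactly as in (\ref{Dun.eq15}). Since $\mathsf P_\nu^{-\mu}$ is the unique recessive solution, its boundary behavior $\mathsf P_\nu^{-\mu}(x)\sim\Gamma(\mu+1)^{-1}(\tfrac{1-x}{2})^{\mu/2}$ as $x\to1^-$ (the Ferrers analogue of (\ref{Dun.eq10})), combined with $1-x\sim\tfrac12\theta^2$, forces the constant $(\nu+\tfrac12)^{-\mu}=\nu^{-\mu}\{1+\mathcal O(1/\nu)\}$, the correction being absorbed into the envelope term; this is (\ref{Fuaa}). For $\mathsf Q_\nu^{-\mu}$ I would match its dominant $(1-x)^{-\mu/2}$ behavior as $x\to1^-$ against the origin behavior $Y_\mu(t)\sim-\Gamma(\mu)\pi^{-1}(t/2)^{-\mu}$, which fixes the constant as $-\tfrac{\pi}{2}\nu^{-\mu}$ and gives (\ref{Fuab}).

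Two steps will demand the most care. First, Olver's monotonicity and error-control hypotheses must be verified uniformly out to $\theta=\pi-\delta$; the total variation of the error-control function diverges as $\theta\to\pi^-$ because $\zeta=\theta^2$ is anchored at $x=1$ and cannot simultaneously absorb the second regular singular point at $x=-1$ ($\theta=\pi$). This is precisely what forces the hypothesis $\theta\in(0,\pi-\delta]$, paralleling the half-plane restriction in Theorem~\ref{thm:1}. Second, and the genuine obstacle, is the identification of $\mathsf Q_\nu^{-\mu}$ with the $Y_\mu$-solution: in Theorem~\ref{thm:1} the second-kind function was matched at $z=\infty$, where the two Hankel solutions are exponentially separated, whereas here both solutions meet at the finite algebraic singularity $x=1$, where $J_\mu$ and $Y_\mu$ differ only by a power of $\theta$. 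One must therefore confirm that $\mathsf Q_\nu^{-\mu}$ carries no recessive ($J_\mu$) admixture beyond the permitted $\mathcal O(1/\nu)$ — most cleanly by comparing the Wronskian $\mathcal W\{\mathsf P_\nu^{-\mu},\mathsf Q_\nu^{-\mu}\}$ against $\mathcal W\{J_\mu,Y_\mu\}=2/(\pi t)$ through the transformation Jacobian. A useful consistency check in the oscillatory range $\theta\in[\delta,\pi-\delta]$ comes from the reflection formula (\ref{Pnummumxconnection}): inserting (\ref{Fuaa}) for both $\mathsf P_\nu^{-\mu}(\cos\theta)$ and $\mathsf P_\nu^{-\mu}(\cos(\pi-\theta))$ and using the large-argument forms of $J_\mu$, a short trigonometric identity collapses the $J_\mu$ terms into precisely the $Y_\mu$ of (\ref{Fuab}).
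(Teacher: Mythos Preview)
Your reduction of the last four formulas to the base cases (\ref{Fuaa}), (\ref{Fuab}) via the connection relations (\ref{Pnummuconnection})--(\ref{Qnummuconnection}) and (\ref{Pnummumxconnection})--(\ref{Qnummumxconnection}), together with the gamma-ratio simplification (\ref{Gammaratioz}), is exactly what the paper does, and your algebra is correct.

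The genuine difference is in how the base cases are obtained. The paper does not derive (\ref{Fuaa}) and (\ref{Fuab}) at all: it simply cites them as \cite[equations~(14.15.11) and (14.15.12)]{NIST:DLMF}, which are precisely those two formulas. Your proposal instead re-derives them from scratch by transplanting the Olver--Liouville--Green machinery of Theorem~\ref{thm:1} to the cut $(-1,1)$, with the Liouville variable $\zeta^{1/2}=\arccos x=\theta$ and real Bessel approximants $J_\mu,Y_\mu$. This is a faithful sketch of how the DLMF formulas themselves are proved (the reference underlying DLMF \S14.15 is Olver's book, Chapter~12), so your route is correct but strictly longer and more laborious than the paper's one-line citation. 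What your approach buys is self-containment and an explicit parallel with the conical case of Theorem~\ref{thm:1}; what the paper's approach buys is brevity. The concern you flag about identifying $\mathsf Q_\nu^{-\mu}$ with the $Y_\mu$-type solution is real if one carries out the derivation in full, and your suggested Wronskian argument is the standard resolution; but since the paper sidesteps the issue entirely by citation, none of that work is actually required here.
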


\begin{proof} Using \cite[equations~(14.15.11) and (14.15.12)]{NIST:DLMF}, the uniform asymptotic approximations follow from simplifying the ratio of gamma functions in (\ref{Pnummuconnection}), (\ref{Qnummuconnection}), via (\ref{Gammaratioz}), and setting $x=\cos\theta $. The asymptotic approximations (\ref{FerrersPmmx}), (\ref{FerrersQmmx}) follow using (\ref{Qnummuconnection}), (\ref{Pnummumxconnection}), (\ref{Fuaa}), (\ref{Fuab}).
\end{proof}

\begin{cor}\label{cor:2.14}Let $\mu \geq 0$ be bounded. Then for the odd Ferrers function \eqref{FPodddefn}, we have as $0<\nu \rightarrow\infty $,
\begin{gather}
 {\mathsf f}_{\nu}^{-\mu}(\cos \theta )=\frac{1}{\nu ^{\mu}}\sqrt{\frac{\theta}{\sin \theta}}\left[ ( \cos (\pi (\nu -\mu ))-1)\left\{ J_{\mu}\big(\big(\nu +\tfrac{1}{2}\big)\theta\big) +{\mathcal{O}}
\left( \frac{1}{\nu}\right) \operatorname{env}J_{\mu}\big(\big(\nu +\tfrac{1}{2}\big)\theta\big) \right\} \right. \nonumber\\
\left.\hphantom{{\mathsf f}_{\nu}^{-\mu}(\cos \theta )=}{} +\sin (\pi (\nu -\mu ))\left\{ Y_{\mu}\big(\big(\nu +\tfrac{1}{2}\big)\theta\big) +{\mathcal{O}}\left( \frac{1}{\nu}\right) \operatorname{env}Y_{\mu}
\big(\big(\nu +\tfrac{1}{2}\big)\theta\big) \right\} \right],\label{FPodd1}
\end{gather}
uniformly for $\theta \in \big(0,\tfrac{1}{2}\pi\big] $, and
\begin{gather}
 {\mathsf f}_{\nu}^{-\mu}(\cos \theta )=\frac{1}{\nu ^{\mu}}\sqrt{\frac{\pi -\theta}{\sin \theta}}\bigg[ (1- \cos (\pi (\nu -\mu ))) \nonumber\\
\hphantom{{\mathsf f}_{\nu}^{-\mu}(\cos \theta )=}{} \times \left\{ J_{\mu} \big( \big(\nu + \tfrac{1}{2}\big) ( \pi -\theta) \big) +{\mathcal{O}}\left( \frac{1}{\nu}\right) \operatorname{env}J_{\mu} \big(\big(\nu +\tfrac{1}{2}\big)(\pi -\theta)\big)\right\}\label{FPodd2} \\
 \left. \hphantom{{\mathsf f}_{\nu}^{-\mu}(\cos \theta )=}{} -\sin (\pi (\nu -\mu ))\left\{ Y_{\mu} \big(\big(\nu +\tfrac{1}{2}\big)(\pi -\theta)\big) +{\mathcal{O}}\left( \frac{1}{\nu}\right) \operatorname{env}Y_{\mu} \big( \big(\nu +\tfrac{1}{2}\big) ( \pi -\theta) \big) \right\} \right] ,\nonumber
\end{gather}
uniformly for $\theta \in \big[ \frac{1}{2}\pi ,\pi \big) $.
\end{cor}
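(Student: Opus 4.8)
The plan is to obtain \eqref{FPodd1} directly by subtraction and then to reach \eqref{FPodd2} by exploiting the odd parity of $\mathsf{f}_\nu^{-\mu}$ in its argument. First I would recall from \eqref{FPodddefn} that $\mathsf{f}_\nu^{-\mu}(\cos\theta)=\mathsf{P}_\nu^{-\mu}(-\cos\theta)-\mathsf{P}_\nu^{-\mu}(\cos\theta)$, and then simply subtract the expansion \eqref{Fuaa} for $\mathsf{P}_\nu^{-\mu}(\cos\theta)$ from the expansion \eqref{FerrersPmmx} for $\mathsf{P}_\nu^{-\mu}(-\cos\theta)$. The $J_\mu$ contributions combine with coefficient $\cos(\pi(\nu-\mu))-1$ and the $Y_\mu$ contribution appears with coefficient $\sin(\pi(\nu-\mu))$, giving exactly the bracket in \eqref{FPodd1}. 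The only point to check is the error bookkeeping: each expansion carries its own $\mathcal{O}(1/\nu)\operatorname{env}J_\mu$ term (and, for \eqref{FerrersPmmx}, an $\mathcal{O}(1/\nu)\operatorname{env}Y_\mu$ term), but since $\cos(\pi(\nu-\mu))$ and $\sin(\pi(\nu-\mu))$ are bounded in $\nu$, these bounded factors are absorbed into the implied constants, so the combined error retains the stated form whether the trigonometric coefficient is written inside or outside the brace. Since Theorem~\ref{Ferrersasympt} is uniform on $(0,\pi-\delta]$, choosing $\delta\le\tfrac12\pi$ makes \eqref{FPodd1} valid uniformly on $\big(0,\tfrac12\pi\big]$.

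For \eqref{FPodd2} I would use an elementary parity relation. From \eqref{FPodddefn}, replacing $x$ by $-x$ gives $\mathsf{f}_\nu^{-\mu}(-x)=-\mathsf{f}_\nu^{-\mu}(x)$; with $x=\cos\theta$ and $-\cos\theta=\cos(\pi-\theta)$ this reads $\mathsf{f}_\nu^{-\mu}(\cos\theta)=-\mathsf{f}_\nu^{-\mu}(\cos(\pi-\theta))$. For $\theta\in\big[\tfrac12\pi,\pi\big)$ we have $\pi-\theta\in\big(0,\tfrac12\pi\big]$, so \eqref{FPodd1} applies to $\mathsf{f}_\nu^{-\mu}(\cos(\pi-\theta))$ with $\theta$ replaced by $\pi-\theta$. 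Negating the resulting expansion and using $\sin(\pi-\theta)=\sin\theta$ turns the prefactor $\nu^{-\mu}\sqrt{(\pi-\theta)/\sin(\pi-\theta)}$ into $\nu^{-\mu}\sqrt{(\pi-\theta)/\sin\theta}$, flips the $J_\mu$ coefficient to $1-\cos(\pi(\nu-\mu))$ and the $Y_\mu$ coefficient to $-\sin(\pi(\nu-\mu))$, and leaves the Bessel arguments as $\big(\nu+\tfrac12\big)(\pi-\theta)$. This is precisely \eqref{FPodd2}, uniformly on $\big[\tfrac12\pi,\pi\big)$.

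I expect the main point (rather than a genuine obstacle) to be the matter of uniform validity near the endpoint $\theta=\pi$. A naive single subtraction would produce \eqref{FPodd1} only on $(0,\pi-\delta]$, where the Bessel argument $\big(\nu+\tfrac12\big)\theta$ stays away from the turning-point regime associated with the endpoint $x=-1$; as $\theta\to\pi^-$ this argument grows and the oscillatory $J_\mu,Y_\mu$ no longer track the correct local behavior of $\mathsf{f}_\nu^{-\mu}$. The reflection $\theta\mapsto\pi-\theta$ supplied by the parity relation re-centers the small-argument (turning-point) behavior at $\pi-\theta\to0$, which is exactly why the two complementary forms \eqref{FPodd1} and \eqref{FPodd2} are needed to cover $(0,\pi)$ uniformly. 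Everything else is routine trigonometric and $\mathcal{O}$-term bookkeeping.
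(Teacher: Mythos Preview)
Your proposal is correct and follows essentially the same approach as the paper: for \eqref{FPodd1} the paper simply says to combine \eqref{Fuaa} and \eqref{FerrersPmmx}, and for \eqref{FPodd2} it replaces $\theta$ by $\pi-\theta$ in \eqref{FPodd1} and invokes the parity relation ${\mathsf f}_{\nu}^{-\mu}(\cos(\pi-\theta))={\mathsf f}_{\nu}^{-\mu}(-\cos\theta)=-{\mathsf f}_{\nu}^{-\mu}(\cos\theta)$. Your additional remarks on error bookkeeping and the need for the reflection to achieve uniformity near $\theta=\pi$ are sound elaborations of what the paper leaves implicit.
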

\begin{proof} For (\ref{FPodd1}), combine~(\ref{Fuaa}), (\ref{FerrersPmmx}). For~(\ref{FPodd2}) replace $\theta $ by $\pi -\theta $ in~(\ref{FPodd1}) and observe that ${\mathsf f}_{\nu}^{-\mu}(\cos ( \pi -\theta) )={\mathsf f}_{\nu}^{-\mu}(- \cos \theta )=-{\mathsf f}_{\nu}^{-\mu}(\cos\theta )$.
\end{proof}

Similar approximations for the Ferrers conical functions read as follows.

\begin{thm}\label{FerrersConicalThm} Let $\mu \geq 0$ bounded, $\delta\in(0,\pi)$, $\theta \in ( 0,\pi-\delta]$. Then uniformly, we have, as $0<\tau \rightarrow \infty$,
\begin{gather}
\mathsf{P}_{-{\frac{1}{2}}\pm {\rm i}\tau}^{-\mu}(\cos\theta )=\frac{1}{\tau^{\mu}}\sqrt{\frac{\theta}{\sin\theta}}I_{\mu}(\tau \theta )\left\{ 1+{\mathcal{O}\left( \frac{{1}}{{\tau}}\right)}\right\},\label{Fcuab}\\
\mathsf{P}_{-{\frac{1}{2}}\pm {\rm i}\tau}^{\mu}(\cos\theta )=\tau^{\mu}\sqrt{\frac{\theta}{\sin\theta}}\left[ I_{\mu}(\tau\theta )\left\{ 1+{\mathcal{O}\left( \frac{{1}}{{\tau}}\right)}\right\}\right. \nonumber\\
\left.\hphantom{\mathsf{P}_{-{\frac{1}{2}}\pm {\rm i}\tau}^{\mu}(\cos\theta )=}{}
+\frac{2}{\pi}\sin( \pi \mu) K_{\mu}(\tau \theta)\left\{ 1+{\mathcal{O}\left( \frac{{1}}{{\tau}}\right)}\right\} \right],\label{Fcuab2}\\
\mathsf{Q}_{-{\frac{1}{2}}\pm {\rm i}\tau}^{-\mu}(\cos\theta )=\frac{1}{\tau^{\mu}}\sqrt{\frac{\theta}{\sin\theta}}\left[{\rm e}^{\mp {\rm i}\pi\mu} K_{\mu}(\tau \theta )\left\{ 1+{\mathcal{O}\left( \frac{{1}}{{\tau}}\right)}\right\} \right. \nonumber\\
\left. \hphantom{\mathsf{Q}_{-{\frac{1}{2}}\pm {\rm i}\tau}^{-\mu}(\cos\theta )=}{} \mp {{\frac{{\rm i}\pi}{2}}}I_{\mu}(\tau \theta )\left\{ 1+{\mathcal{O}\left( \frac{{1}}{{\tau}}\right)}\right\} \right], \label{Fcuaa}\\
\mathsf{Q}_{-{\frac{1}{2}}\pm {\rm i}\tau}^{\mu}(\cos\theta )=\tau^{\mu}\sqrt{\frac{\theta}{\sin\theta}}\left[ \cos(\pi \mu)K_{\mu}(\tau \theta )\mp {{\frac{{\rm i}\pi}{2}}}I_{\mu
}(\tau \theta )\right] \left\{ 1+{\mathcal{O}\left( \frac{{1}}{{\tau}}\right)}\right\}, \label{Fcuac} \\
\mathsf{P}_{-{\frac{1}{2}}\pm {\rm i}\tau}^{-\mu}(-\cos\theta )=\frac{{\rm e}^{\pi\tau}}{\pi\tau^{\mu}}\sqrt{\frac{\theta}{\sin\theta}}K_{\mu}(\tau \theta )\left\{ 1+{\mathcal{O}\left( \frac{{1}}{{\tau}}\right)}\right\}, \label{FcPmmumx}\\
\mathsf{Q}_{-{\frac{1}{2}}\pm {\rm i}\tau}^{-\mu}(-\cos\theta )=\mp\frac{{\rm i}{\rm e}^{\pi\tau}}{2\tau^{\mu}} \sqrt{\frac{\theta}{\sin\theta}}K_{\mu}(\tau \theta )\left\{ 1+{\mathcal{O}\left( \frac{{1}}{{\tau}}
\right)}\right\}. \label{FcQmmumx}
\end{gather}
\end{thm}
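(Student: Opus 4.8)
The plan is to follow the proof of Theorem~\ref{thm:1} verbatim in structure, replacing the cut--plane variable $z=\cosh r$ by $x=\cos\theta\in(-1,1)$; the only substantive change is a sign in the dominant coefficient, which turns the Bessel approximants into modified Bessel functions. First I would set $w(x):=(1-x^2)^{1/2}\mathsf P_{-1/2+\mathrm i\tau}^{\mu}(x)$ (and the same with $\mathsf Q$), so that
\[
\frac{\mathrm d^2 w}{\mathrm dx^2}=\left\{\frac{\tau^2}{1-x^2}+\frac{\mu^2-1}{(1-x^2)^2}-\frac{1}{4(1-x^2)}\right\}w .
\]
Since $\nu(\nu+1)=-\tfrac14-\tau^2$, the leading term $+\tau^2/(1-x^2)$ is now \emph{positive}, with a simple pole at $x=1$ and no turning point on $(0,\pi-\delta]$ (the second singularity at $x=-1$ is excluded). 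Introducing $\zeta^{1/2}:=\int_x^1(1-t^2)^{-1/2}\,\mathrm dt=\arccos x=\theta$ and $w=\zeta^{-1/4}(1-x^2)^{1/4}W$ gives $\mathrm d^2W/\mathrm d\zeta^2=\big(\tfrac{\tau^2}{4\zeta}+\tfrac{\mu^2-1}{4\zeta^2}+\psi(\zeta)/\zeta\big)W$ with $\psi$ analytic at $\zeta=0$, to which Olver's theorem applies with $u=\tau$ and approximants $I_\mu(\tau\zeta^{1/2})$, $K_\mu(\tau\zeta^{1/2})$. Because $I_\mu$ and $K_\mu$ have no zeros on $(0,\infty)$, unlike the $J_\mu$, $H_\mu^{(1,2)}$ of Theorem~\ref{thm:1}, the error terms collapse to the clean multiplicative form $\{1+\mathcal O(\tau^{-1})\}$, which is exactly the shape appearing in the statement.

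Next I would fix the constants by matching at $\theta=0$ ($x=1$, $\zeta=0$). The solution recessive there satisfies $\mathsf P_{-1/2+\mathrm i\tau}^{-\mu}(\cos\theta)\sim(\theta/2)^{\mu}/\Gamma(\mu+1)$, so it is proportional to the $I_\mu$--solution, and comparison with $I_\mu(\tau\theta)\sim(\tfrac12\tau\theta)^{\mu}/\Gamma(\mu+1)$ forces the constant $\tau^{-\mu}$; together with the factor $\sqrt{\theta/\sin\theta}$ from the back--transformation this gives~\eqref{Fcuab}, while~\eqref{Dun.st} supplies the $-\mathrm i\tau$ case. The dominant $K_\mu$--solution, together with its residual $I_\mu$--component and the phases $\mathrm e^{\mp\mathrm i\pi\mu}$, $\mp\tfrac{\mathrm i\pi}{2}$, is pinned down from the singular behaviour of $\mathsf Q_{-1/2+\mathrm i\tau}^{-\mu}$ at $x=1$ normalised through the Wronskian, yielding~\eqref{Fcuaa}. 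The remaining positive--argument formulas~\eqref{Fcuab2},~\eqref{Fcuac} then follow algebraically from the order--reversal relations~\eqref{Pnummuconnection},~\eqref{Qnummuconnection}, after the gamma ratio $\Gamma(\nu-\mu+1)/\Gamma(\nu+\mu+1)$ with $\nu=-\tfrac12+\mathrm i\tau$ is simplified by Lemma~\ref{Gammaratiolemma}.

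Finally, for the reflected formulas~\eqref{FcPmmumx},~\eqref{FcQmmumx} I would substitute~\eqref{Fcuab},~\eqref{Fcuaa} into the reflection relation~\eqref{Dun.eq34} (and its $\mathsf Q$ analogue). Both coefficients $\sin(\pi(\mathrm i\tau-\mu))$ and $\cos(\pi(\mathrm i\tau-\mu))$ grow like $\tfrac12\mathrm e^{\pi\tau}$, but using $\sin w-\mathrm i\cos w=-\mathrm i\,\mathrm e^{\mathrm i w}$ the net coefficient multiplying the exponentially large $I_\mu(\tau\theta)\sim\mathrm e^{\tau\theta}$ is forced down to size $\mathrm e^{-\pi\tau}$, so that for $\theta<\pi$ this contribution is exponentially dominated by the $\mathrm e^{\pi\tau}$--weighted $K_\mu(\tau\theta)\sim\mathrm e^{-\tau\theta}$ term; this is precisely what produces the $\mathrm e^{\pi\tau}$ prefactors. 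The main obstacle, as in Theorem~\ref{thm:1}, is verifying that Olver's monotonicity and error--control hypotheses hold \emph{uniformly} across the full range $\theta\in(0,\pi-\delta]$ (a bounded $\zeta$--interval terminating before $x=-1$), so that each $\mathcal O(\tau^{-1})$ is genuinely uniform; once the two base cases~\eqref{Fcuab},~\eqref{Fcuaa} are secured, the surrounding bookkeeping of signs, phases, and exponential growth is routine.
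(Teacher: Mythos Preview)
Your overall strategy---re-running the Liouville--Olver machinery of Theorem~\ref{thm:1} with the sign flip that replaces $J_\mu,H_\mu^{(1,2)}$ by $I_\mu,K_\mu$---is sound and does produce~\eqref{Fcuab} exactly as you describe, and your reflection argument for~\eqref{FcPmmumx},~\eqref{FcQmmumx} (the $\sin w-\mathrm i\cos w=-\mathrm i\,\mathrm e^{\mathrm i w}$ cancellation) is correct. The paper, however, does not redo Olver's analysis: it quotes the ready-made uniform approximations \cite[equations~(14.20.17) and (14.20.18)]{NIST:DLMF} for $\mathsf P_{-1/2+\mathrm i\tau}^{-\mu}$ and the auxiliary real conical function $\hat{\mathsf Q}_{-1/2+\mathrm i\tau}^{-\mu}$ (which has the clean $K_\mu$ form~\eqref{Dun.eq30} with \emph{no} $I_\mu$ admixture), and then expresses $\mathsf Q_{-1/2+\mathrm i\tau}^{-\mu}$ as an explicit linear combination of $\mathsf P_{-1/2+\mathrm i\tau}^{-\mu}$ and $\hat{\mathsf Q}_{-1/2+\mathrm i\tau}^{-\mu}$ via~\eqref{Dun.eq31} and~\eqref{Dun.eq34}, simplifying the coefficients for large~$\tau$ to reach~\eqref{Dun.eq38} and hence~\eqref{Fcuaa}.

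The substantive gap in your route is precisely the step you gloss as ``pinned down from the singular behaviour of $\mathsf Q_{-1/2+\mathrm i\tau}^{-\mu}$ at $x=1$ normalised through the Wronskian''. The singular behaviour at $x=1$ and the Wronskian with $\mathsf P$ each determine only the coefficient of the \emph{dominant} $K_\mu$ solution; neither sees the subdominant $I_\mu$ component, because adding any multiple of $\mathsf P_{-1/2+\mathrm i\tau}^{-\mu}$ to $\mathsf Q_{-1/2+\mathrm i\tau}^{-\mu}$ leaves both the leading singularity and the Wronskian unchanged. In Theorem~\ref{thm:1} this problem did not arise because $Q_{-1/2+\mathrm i\tau}^{\mu}(z)$ is itself recessive at the second singular point $z=\infty$, so a single matching there sufficed; here the analogue would be matching at $\theta=\pi$, which lies outside your domain $(0,\pi-\delta]$. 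The paper's detour through $\hat{\mathsf Q}$ is exactly the device that supplies this missing piece: $\hat{\mathsf Q}$ is the combination that \emph{is} recessive at $\theta=\pi$ (hence purely $K_\mu$), and the connection formula~\eqref{Dun.eq31} then tells you, with explicit coefficients, how much $I_\mu$ enters $\mathsf Q$. If you want to stay within your framework, you would need either to extend Olver's error control up to $\theta=\pi$ and match $\mathsf P_{-1/2+\mathrm i\tau}^{-\mu}(-\cos\theta)$ there, or to invoke the connection formula~\eqref{Dun.eq34} \emph{before} deriving~\eqref{Fcuaa} rather than after.
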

\begin{proof}Under the hypotheses of the theorem, the approximation (\ref{Fcuab}), along with
\begin{gather}
\hat{\mathsf{Q}}_{-\frac{1}{2}+{\rm i}\tau}^{-\mu}( {\cos\theta}) =\frac{1}{\tau^{\mu}}\sqrt{ {\frac{\theta}{\sin\theta}}}K_{\mu}( {\tau \theta}) \left( 1+{\mathcal{O}\left( \frac{{1}}{{\tau}}\right)}\right), \label{Dun.eq30}
\end{gather}
where $\hat{\mathsf{Q}}_{-\frac{1}{2}+{\rm i}\tau}^{-\mu}(x) $ is defined by \cite[equation~(14.20.2)]{NIST:DLMF}, follow directly from \cite[equa\-tions (14.20.17) and~(14.20.18)]{NIST:DLMF}. Now from \cite[equation~(14.20.3)]{NIST:DLMF} we have the connection formula
\begin{gather}
\hat{\mathsf{Q}}_{-\frac{1}{2}+{\rm i}\tau}^{-\mu}(x)=A( {\mu,\tau}) \mathsf{P}_{-\frac{1}{2}+{\rm i}\tau}^{-\mu}(x)+B(\mu,\tau) \mathsf{P}_{-\frac{1}{2}+{\rm i}\tau}^{-\mu}( {-x}), \label{Dun.eq31}
\end{gather}%
where $A,B\colon [0,\infty)^2\to\R$, are defined by
\begin{gather*}
A(\mu,\tau)
:=\frac{\pi {\rm e}^{-\pi \tau}\sin ( {\mu\pi}) \sinh ( {\tau \pi})}{2\big( {\cosh^{2}( {\tau \pi})-\sin^{2}( {\mu\pi})}\big)}, \qquad B(\mu,\tau):=\frac{\pi \big( {{\rm e}^{-\pi \tau}\cos^{2}( {\mu\pi})+\sinh ( {\tau \pi})}\big)}{2\big( {\cosh^{2}( {\tau \pi})-\sin^{2}( {\mu\pi} )}\big)}.
\end{gather*}
Hence from (\ref{Dun.eq34}), (\ref{Dun.eq31}) we obtain
\begin{gather}
 \mathsf{Q}_{-\frac{1}{2}+{\rm i}\tau}^{-\mu}(x) =G( {\mu,\tau}) \mathsf{P}_{-\frac{1}{2}+{\rm i}\tau}^{-\mu}(x)+H(\mu,\tau) \hat{\mathsf{Q}}_{-\frac{1}{2}+{\rm i}\tau}^{-\mu}(x), \label{Dun.eq37}
\end{gather}
where
\begin{gather*}
G:=-\frac{A+BC}{BD},\qquad H:=\frac{1}{BD},\qquad C:=\sin(\pi({\rm i}\tau-\mu)),\qquad D:=\frac{2}{\pi}\cos (\pi({\rm i}\tau-\mu) ).\label{Dun.eq23}
\end{gather*}
Then from solving (\ref{Dun.eq37}) for $\hat{\mathsf{Q}}_{-\frac{1}{2}+{\rm i}\tau}^{-\mu}(x) $, and simplifying and approximating $G(\mu,\tau)$, $H(\mu,\tau)$ for large ${\tau}$, one finds after some calculation that
\begin{gather}
\mathsf{Q}_{-\frac{1}{2}+{\rm i}\tau}^{-\mu}(x) ={\rm e}^{-{\rm i}\pi\mu} \hat{\mathsf{Q}}_{-\frac{1}{2}+{\rm i}\tau}^{-\mu}(x) \big( 1+\mathcal{O}\big(\tau^{-1}\big)\big)-{{\frac{{\rm i}\pi}{2}}\mathsf{%
P}_{-\frac{1}{2}+{\rm i}\tau}^{-\mu}(x)}\big( 1+ \mathcal{O}\big( \tau^{-1}\big)\big). \label{Dun.eq38}
\end{gather}%
We then insert (\ref{Fcuab}), (\ref{Dun.eq30}) into (\ref{Dun.eq38}) and with $x=\cos\theta$, arrive at (\ref{Fcuaa}). Finally, using the connection formulas~(\ref{Pnummuconnection}), (\ref{Qnummuconnection})
(with $\nu =-\tfrac{1}{2}+{\rm i}\tau $), along with (\ref{Gammaratio}), (\ref{Fcuab}), (\ref{Fcuaa}), we establish (\ref{Fcuab2}), (\ref{Fcuac}). For (\ref{Fcuab}), (\ref{Fcuab2}), note (\ref{Dun.st}). Also note that in
each of (\ref{Fcuab2}), (\ref{Fcuaa}), the two terms of $\big(1+\mathcal{O}\big(\tau^{-1}\big)\big)$ do not factor as they do in (\ref{Fcuab}), (\ref{Fcuab2}) because there may be cancellations in the two terms which would then leave the result untrue. The asymptotic approximations (\ref{FcPmmumx}), (\ref{FcQmmumx}) follow using (\ref{Pnummumxconnection}), (\ref{Qnummumxconnection}), (\ref{Fcuab}), (\ref{Fcuaa}).
\end{proof}

\begin{cor} \label{cor:2.16}Let $\mu \geq 0$ be bounded. Then for the odd Ferrers conical function $($\eqref{FPodddefn} with $\nu =-\tfrac{1}{2}+{\rm i}\tau )$, as $0<\tau \rightarrow \infty $,
\begin{gather}
{\mathsf f}_{-\tfrac{1}{2}\pm {\rm i}\tau}^{-\mu}(\cos \theta )=\frac{1}{\tau^{\mu}}\sqrt{\frac{\theta}{\sin \theta}}\left[ \frac{{\rm e}^{\pi \tau}}{\pi}K_{\mu}( \tau \theta)-I_{\mu}(\tau\theta) \right] \left\{ 1+{\mathcal{O}\left( \frac{{1}}{{\tau}}\right)}\right\} , \label{CPodd1}
\end{gather}
uniformly for $\theta \in \big( 0,\frac{1}{2}\pi \big] $, and
\begin{gather}
{\mathsf f}_{-\tfrac{1}{2}\pm {\rm i}\tau}^{-\mu}(\cos \theta )=-\frac{1}{\tau^{\mu}}\sqrt{\frac{\pi -\theta}{\sin \theta}}\left[\frac{{\rm e}^{\pi \tau}}{\pi}K_{\mu
} ( \tau ( \pi -\theta ) )-I_{\mu} ( \tau ( \pi -\theta ) ) \right] \left\{ 1+{\mathcal{O}\left( \frac{{1}}{{\tau}}\right)}\right\} , \label{CPodd2}
\end{gather}
uniformly for $\theta \in \big[ \frac{1}{2}\pi ,\pi \big) $.
\end{cor}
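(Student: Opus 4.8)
The plan is to read both approximations straight off Theorem~\ref{FerrersConicalThm}, mirroring exactly how Corollary~\ref{cor:2.14} was obtained from Theorem~\ref{Ferrersasympt}. By the definition~\eqref{FPodddefn} with $\nu=-\tfrac12\pm{\rm i}\tau$ and $x=\cos\theta$,
\[
{\mathsf f}_{-\frac12\pm{\rm i}\tau}^{-\mu}(\cos\theta)={\mathsf P}_{-\frac12\pm{\rm i}\tau}^{-\mu}(-\cos\theta)-{\mathsf P}_{-\frac12\pm{\rm i}\tau}^{-\mu}(\cos\theta).
\]
First I would substitute approximation~\eqref{FcPmmumx} for ${\mathsf P}_{-\frac12\pm{\rm i}\tau}^{-\mu}(-\cos\theta)$ and~\eqref{Fcuab} for ${\mathsf P}_{-\frac12\pm{\rm i}\tau}^{-\mu}(\cos\theta)$. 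Both carry the common prefactor $\tau^{-\mu}\sqrt{\theta/\sin\theta}$, so pulling it out leaves the bracket $\tfrac{{\rm e}^{\pi\tau}}{\pi}K_\mu(\tau\theta)-I_\mu(\tau\theta)$, which is precisely the bracket in~\eqref{CPodd1}; this establishes~\eqref{CPodd1} on $\theta\in(0,\tfrac12\pi]$. The $\pm$ dependence enters only through~\eqref{Dun.st} and is inert, so it is simply carried along.

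For~\eqref{CPodd2} I would use the oddness of ${\mathsf f}$: from~\eqref{FPodddefn} one has ${\mathsf f}_\nu^{-\mu}(-x)=-{\mathsf f}_\nu^{-\mu}(x)$, hence ${\mathsf f}_{-\frac12\pm{\rm i}\tau}^{-\mu}(\cos(\pi-\theta))={\mathsf f}_{-\frac12\pm{\rm i}\tau}^{-\mu}(-\cos\theta)=-{\mathsf f}_{-\frac12\pm{\rm i}\tau}^{-\mu}(\cos\theta)$. Replacing $\theta$ by $\pi-\theta$ in~\eqref{CPodd1}, and using $\sin(\pi-\theta)=\sin\theta$, then converts the range $(0,\tfrac12\pi]$ into $[\tfrac12\pi,\pi)$ and produces the overall sign change and the arguments $\tau(\pi-\theta)$ appearing in~\eqref{CPodd2}. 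This is the same reflection device already used to pass from~\eqref{FPodd1} to~\eqref{FPodd2}.

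The delicate step, and the main obstacle, is justifying that the two independent error factors $\{1+\mathcal O(1/\tau)\}$ in~\eqref{FcPmmumx} and~\eqref{Fcuab} may be consolidated into the single factor multiplying the whole bracket in~\eqref{CPodd1}. This is legitimate only if there is no damaging cancellation between the two bracket terms, i.e.\ if $\tfrac{{\rm e}^{\pi\tau}}{\pi}K_\mu(\tau\theta)$ and $I_\mu(\tau\theta)$ are not of the same order of magnitude. Using $K_\mu(z)\sim\sqrt{\pi/(2z)}\,{\rm e}^{-z}$ and $I_\mu(z)\sim {\rm e}^{z}/\sqrt{2\pi z}$, the first term has size ${\rm e}^{\tau(\pi-\theta)}$ and the second size ${\rm e}^{\tau\theta}$; since $\pi-\theta>\theta$ throughout $(0,\tfrac12\pi)$ the $K_\mu$ contribution strictly dominates, so the relative error of the combined bracket is genuinely $\mathcal O(1/\tau)$ and the consolidation holds. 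I would point out that at the single endpoint $\theta=\tfrac12\pi$ the two magnitudes coincide while ${\mathsf f}_{-\frac12\pm{\rm i}\tau}^{-\mu}(\cos\tfrac\pi2)={\mathsf f}_{-\frac12\pm{\rm i}\tau}^{-\mu}(0)=0$ identically (oddness); this degenerate value is handled as a consistent limiting case in which~\eqref{CPodd1} and~\eqref{CPodd2} agree. All remaining manipulations are routine, so the crux is solely this control of the relative sizes of the $I_\mu$ and ${\rm e}^{\pi\tau}K_\mu$ terms.
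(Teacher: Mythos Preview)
Your proposal is correct and follows essentially the same route as the paper's own proof: combine \eqref{Fcuab} and \eqref{FcPmmumx} to get \eqref{CPodd1}, then use the oddness reflection $\theta\mapsto\pi-\theta$ (exactly as in the passage from \eqref{FPodd1} to \eqref{FPodd2}) to obtain \eqref{CPodd2}. Your extra paragraph on why the two separate $\{1+\mathcal O(1/\tau)\}$ factors may be merged into one---via the dominance of the $e^{\pi\tau}K_\mu$ term over $I_\mu$ on $(0,\tfrac{\pi}{2})$---is a useful justification that the paper itself omits.
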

\begin{proof}
For (\ref{CPodd1}) combine (\ref{Fcuab}), (\ref{FcPmmumx}). The proof of (\ref{CPodd2}) is similar to that of (\ref{FPodd2}).
\end{proof}

\subsubsection{Ferrers function addition theorems}\label{ffat}

In \cite[Theorem 4.1]{CohlPalmer}, an addition theorem is derived for a fundamental solution of the Laplace--Beltrami operator on a positive constant sectional curvature space, the $d$-dimensional hypersphere. That addition theorem was for the Ferrers function the second kind ${\sf Q}_\mu^{-\mu}(\cos\Theta)/\sin^\mu\Theta$, where
\begin{gather}
\cos\Theta=\cos\theta\cos\theta'+\sin\theta\sin\theta'\cos\gamma.\label{cosTheta}
\end{gather}
In this subsection we present generalizations of that addition theorem for ${\sf P}_\nu^{\pm\mu}(\pm\cos\Theta)/\sin^\mu\Theta$, ${\sf Q}_\nu^{\pm\mu}(\pm\cos\Theta)/\sin^\mu\Theta$.

\begin{thm}\label{GegexpansionsFerrers} Let $\mu \in \C$ such that ${\Re}\,\mu >-{\tfrac{1}{2}}$, $\mu\ne 0$, $\nu \in \C \setminus -\N$, $\gamma \in \R$, $\theta ,\theta ^{\prime}\in (0,\pi )$ such that
\begin{gather}
\tan\big({{\tfrac{1}{2}}\theta_{<}}\big)\tan\big({{\tfrac{1}{2}}\theta_{>}}\big)<1. \label{Dun2eq21}
\end{gather}
Then provided the Ferrers functions of the second kind are defined,
\begin{gather}
\frac{1}{\sin^\mu\Theta}{\sf P}_\nu^\mu(\cos\Theta)=\frac{2^\mu\Gamma(\mu)}{(\sin\theta\sin\theta^\prime)^\mu}\nonumber\\
\hphantom{\frac{1}{\sin^\mu\Theta}{\sf P}_\nu^\mu(\cos\Theta)=}{}\times \sum_{n=0}^\infty (-1)^n(n+\mu){\sf P}_\nu^{-(\mu+n)}(\cos\theta_<){\sf P}_\nu^{\mu+n}(\cos\theta_>)C_n^\mu(\cos\gamma),\label{conjPmPp}\\
\frac{1}{\sin^\mu\Theta}{\sf Q}_\nu^\mu(\cos\Theta)=\frac{2^\mu\Gamma(\mu)}{(\sin\theta\sin\theta^\prime)^\mu}\nonumber\\
\hphantom{\frac{1}{\sin^\mu\Theta}{\sf Q}_\nu^\mu(\cos\Theta)=}{}\times \sum_{n=0}^\infty (-1)^n(n+\mu){\sf P}_\nu^{-(\mu+n)}(\cos\theta_<){\sf Q}_\nu^{\mu+n}(\cos\theta_>)C_n^\mu(\cos\gamma),\label{conjPmQp}\\
\frac{1}{\sin^\mu\Theta}{\sf P}_\nu^{-\mu}(\cos\Theta)=\frac{2^\mu\Gamma(\mu)}{(\sin\theta\sin\theta^\prime)^\mu}\sum_{n=0}^\infty\Big[(-1)^n(n+\mu)(\nu+\mu+1)_n(\mu-\nu)_n\nonumber\\
\hphantom{\frac{1}{\sin^\mu\Theta}{\sf P}_\nu^{-\mu}(\cos\Theta)=}{} \times{\sf P}_\nu^{-(\mu+n)}(\cos\theta){\sf P}_\nu^{-(\mu+n)}(\cos\theta')C_n^\mu(\cos\gamma)\Big],\label{conjPmPm} \\
\frac{1}{\sin^\mu\Theta}{\sf Q}_\nu^{-\mu}(\cos\Theta)=\frac{2^\mu\Gamma(\mu)}{(\sin\theta\sin\theta^\prime)^\mu}\sum_{n=0}^\infty\Big[(-1)^n(n+\mu)(\nu+\mu+1)_n(\mu-\nu)_n\nonumber\\
\hphantom{\frac{1}{\sin^\mu\Theta}{\sf Q}_\nu^{-\mu}(\cos\Theta)=}{} \times{\sf P}_\nu^{-(\mu+n)}(\cos\theta_<){\sf Q}_\nu^{-(\mu+n)}(\cos\theta_>)C_n^\mu(\cos\gamma)\Big],\label{conjPmQm} \\
\frac{1}{\sin^\mu\Theta}{\sf P}_\nu^{-\mu}(- \cos\Theta)=\frac{2^\mu\Gamma(\mu)}{(\sin\theta\sin\theta^\prime)^\mu}\sum_{n=0}^\infty\Big[(n+\mu)(\nu+\mu+1)_n(\mu-\nu)_n\nonumber\\
\hphantom{\frac{1}{\sin^\mu\Theta}{\sf P}_\nu^{-\mu}(- \cos\Theta)=}{} \times{\sf P}_\nu^{-(\mu+n)}(\cos\theta_<){\sf P}_\nu^{-(\mu+n)}(- \cos\theta_>)C_n^\mu(\cos\gamma)\Big],\label{conjPmPmmx} \\
\frac{1}{\sin^\mu\Theta}{\sf Q}_\nu^{-\mu}(-\cos\Theta)=\frac{2^\mu\Gamma(\mu)}{(\sin\theta\sin\theta^\prime)^\mu}\sum_{n=0}^\infty\Big[(n+\mu)(\nu+\mu+1)_n(\mu-\nu)_n\nonumber\\
\hphantom{\frac{1}{\sin^\mu\Theta}{\sf Q}_\nu^{-\mu}(-\cos\Theta)=}{} \times{\sf P}_\nu^{-(\mu+n)}(\cos\theta_<){\sf Q}_\nu^{-(\mu+n)}(- \cos\theta_>)C_n^\mu(\cos\gamma)\Big],\label{conjQmPmmx}
\end{gather}
where in all but \eqref{conjPmPm} $\theta \neq \theta ^{\prime}$. For $\mu=0$, see Corollary~{\rm \ref{cormuzero}}.
\end{thm}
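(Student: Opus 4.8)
The plan is to isolate the pair \eqref{conjPmPp}, \eqref{conjPmQp} as ``master'' expansions and to deduce the remaining four by purely algebraic use of the Ferrers connection and reflection relations \eqref{Pnummuconnection}--\eqref{Qnummumxconnection}. The starting observation is that \eqref{conjPmPp}, \eqref{conjPmQp} are precisely the Ferrers (interval $(-1,1)$) counterparts of the associated Legendre addition theorems \eqref{conjP}, \eqref{conj}: comparing \eqref{coshrho} with \eqref{cosTheta}, the formal substitution $r\mapsto{\rm i}\theta$, $r'\mapsto{\rm i}\theta'$ sends $\cosh\rho\mapsto\cos\Theta$ and $\sinh\rho\mapsto{\rm i}\sin\Theta$. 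I would therefore establish the masters by the argument that proves \eqref{conjP}, \eqref{conj}, namely the method of \cite[Proposition~5.1]{CohlKalII} (without specializing $\nu=\mu$) for the second kind and \cite[equation~(8.3)]{DurandFishSim} for the first kind, now run on the cut with the relation \eqref{Dun2eq12} between $C_n^\mu(\cos\gamma)$ and Ferrers functions as the engine; the relevant series converges precisely in the region \eqref{Dun2eq21}. This is the material deferred to the appendix.

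Granting the masters, the negative-order expansions \eqref{conjPmPm}, \eqref{conjPmQm} follow by using \eqref{Pnummuconnection}, \eqref{Qnummuconnection} twice. On the left one rewrites ${\sf P}_\nu^\mu(\cos\Theta)$, ${\sf Q}_\nu^\mu(\cos\Theta)$ in terms of ${\sf P}_\nu^{-\mu}(\cos\Theta)$, ${\sf Q}_\nu^{-\mu}(\cos\Theta)$; substituting both masters, the inner $\theta_>$-factor is assembled into the bracket $\cos(\pi\mu){\sf P}_\nu^{\mu+n}(\cos\theta_>)-\tfrac{2}{\pi}\sin(\pi\mu){\sf Q}_\nu^{\mu+n}(\cos\theta_>)$. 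By \eqref{Pnummuconnection} with $\mu\mapsto\mu+n$, and using $\cos(\pi(\mu+n))=(-1)^n\cos(\pi\mu)$, this bracket equals $(-1)^n\tfrac{\Gamma(\nu+\mu+n+1)}{\Gamma(\nu-\mu-n+1)}{\sf P}_\nu^{-(\mu+n)}(\cos\theta_>)$, so no ${\sf Q}$-cross-term survives. Collecting the gamma ratios $\Gamma(\nu-\mu+1)/\Gamma(\nu+\mu+1)$ and $\Gamma(\nu+\mu+n+1)/\Gamma(\nu-\mu-n+1)$ with the identity $\Gamma(z-n)=(-1)^n\Gamma(z)/(-z+1)_n$ collapses them to the Pochhammer factors $(\nu+\mu+1)_n(\mu-\nu)_n$, and the accumulated signs reproduce the weight $(-1)^n(n+\mu)(\nu+\mu+1)_n(\mu-\nu)_n$ of \eqref{conjPmPm}, \eqref{conjPmQm}. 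Because the product of two ${\sf P}_\nu^{-(\mu+n)}$'s is symmetric in $\theta,\theta'$, the ordering drops out in \eqref{conjPmPm}, which is why $\theta\ne\theta'$ is unnecessary there.

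The reflected expansions \eqref{conjPmPmmx}, \eqref{conjQmPmmx} then come from feeding \eqref{conjPmPm}, \eqref{conjPmQm} into \eqref{Pnummumxconnection}, \eqref{Qnummumxconnection}. Writing ${\sf P}_\nu^{-\mu}(-\cos\Theta)=\cos(\pi(\nu-\mu)){\sf P}_\nu^{-\mu}(\cos\Theta)-\tfrac{2}{\pi}\sin(\pi(\nu-\mu)){\sf Q}_\nu^{-\mu}(\cos\Theta)$ and inserting the two negative-order masters, the inner $\theta_>$-factor becomes $\cos(\pi(\nu-\mu)){\sf P}_\nu^{-(\mu+n)}(\cos\theta_>)-\tfrac{2}{\pi}\sin(\pi(\nu-\mu)){\sf Q}_\nu^{-(\mu+n)}(\cos\theta_>)$, which by \eqref{Pnummumxconnection} with $\mu\mapsto\mu+n$ equals $(-1)^n{\sf P}_\nu^{-(\mu+n)}(-\cos\theta_>)$; this new $(-1)^n$ cancels the one already present, explaining the absence of $(-1)^n$ in \eqref{conjPmPmmx}, \eqref{conjQmPmmx}. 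Identity \eqref{conjQmPmmx} is obtained identically from \eqref{Qnummumxconnection}.

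I expect the only genuine obstacle to be Step~1, the masters; once they are in hand the reductions above are essentially forced. Producing them requires either re-deriving the Gegenbauer addition theorem on $(-1,1)$ --- carefully carrying the $(\sin\gamma)^{\frac12-\mu}$ and $(\sin\theta\sin\theta')^{-\mu}$ factors introduced by \eqref{Dun2eq12} --- together with a convergence analysis yielding \eqref{Dun2eq21}, or analytically continuing \eqref{conjP}, \eqref{conj} along $r\mapsto{\rm i}\theta$. In the latter route the delicate points are the powers of ${\rm i}$ generated by $\sinh\rho\mapsto{\rm i}\sin\Theta$ and $(\sinh r\sinh r')^\mu\mapsto(-\sin\theta\sin\theta')^\mu$, which must recombine into a real identity, and the requirement that the continuation path stay inside a common domain of convergence --- equivalently, a direct verification that the Ferrers series converges exactly under \eqref{Dun2eq21}.
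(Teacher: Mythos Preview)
Your algebraic reductions in Steps~2 and~3 are correct and, in fact, mirror what the paper does---except that the paper runs them in the \emph{opposite direction}: it proves \eqref{conjPmPm} first and then deduces \eqref{conjPmPp}, \eqref{conjPmQp}, \eqref{conjPmQm}--\eqref{conjQmPmmx} from it via the same connection formulas you invoke.

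The substantive difference is in how the master identity is established. You propose to obtain \eqref{conjPmPp}, \eqref{conjPmQp} either by rerunning the \cite{CohlKalII}/\cite{DurandFishSim} argument on the cut or by analytically continuing \eqref{conjP}, \eqref{conj} along $r\mapsto{\rm i}\theta$, and you correctly flag the branch and convergence issues this entails. The paper avoids all of that with a genuinely different idea: it takes \eqref{conjPmPm} as the master and proves it by Carlson's theorem. Writing $z=\nu-\mu$, the right-hand side of \eqref{conjPmPm} at $z=l\in\N_0$ truncates (since $(-l)_n=0$ for $n>l$) and then is \emph{exactly} the classical finite Gegenbauer addition theorem \cite[equation~(18.18.8)]{NIST:DLMF}, rewritten through \eqref{Dun2eq12}. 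One then shows the difference of the two sides is analytic in $z$ and of exponential type strictly less than $\pi$ in the right half-plane (this requires uniform estimates on $\mathsf{P}_{\lambda+z}^{-\mu}(\cos\theta)$ for large $|z|$ and large $\Re\mu$, and the bound $C_n^\mu(\cos\gamma)=\mathcal{O}(n^{\Re\mu-1})$), first for $\theta,\theta'$ confined to a small complex neighborhood of $0$, then extended by analytic continuation in $\theta,\theta'$ to the full region \eqref{Dun2eq21}. Carlson's theorem then forces the difference to vanish identically.

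This buys two things your route does not: it bypasses the branch-point crossing at $z=1$ entirely (no $r\mapsto{\rm i}\theta$ continuation is needed), and it reduces the ``new'' content to growth estimates plus the well-known polynomial addition theorem. Your approach, if the Step~1 continuation can be made rigorous, would presumably work too---but the paper's Carlson argument is what actually closes the gap you identified.
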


\begin{proof}See Appendix \ref{ProofoftheAdditionTheoremGegexpansionsFerrers}.
\end{proof}

\begin{rem}\label{remGegexpansionsFerrers}If one is interested in the above addition theorems which correspond to the Ferrers functions with $\nu=\mu$, then one can directly use (\ref{conjPmPp}), (\ref{conjPmQp}).
If one examines the behavior of (\ref{conjPmPm}), when $\nu=\mu$, it is seen that only the $n=0$ term in the sum survives. However, one can see through \cite[equation~(14.4.18)]{NIST:DLMF}, that the left-hand side exactly matches the $n=0$ term on the right-hand side. Note that the left-hand side of (\ref{conjPmQm}) is defined for $\nu=\mu$, but (except for $n=0$) the terms in the series on the right-hand side are undefined in this case. However, the limit of the series exists as $\nu\to\mu$, and is given by Corollary~\ref{Corollaryabovemunmu} below. Similar limiting results can be obtained if $\nu-\mu\in\N$ but we do not pursue this.
\end{rem}

The addition theorems for the Ferrers function of the second kind with $\nu=-\mu$ can be obtained in two different ways. These Ferrers functions appear in the study of a fundamental solution of the Laplace--Beltrami operator on the $d$-dimensional $R$-radius hypersphere. See~(\ref{thms1deq}) below and \cite{Chapling16,Cohlhypersphere,CohlPalmer}.

\begin{cor}\label{Corollaryabovemunmu} Let $\mu\in\C$ such that ${\Re}\,\mu>-{\tfrac{1}{2}}$, $\mu\ne0$, $\theta,\theta'\in(0,\pi)$, $\gamma\in\R$, \eqref{cosTheta} and \eqref{Dun2eq21} hold with $\theta\ne\theta'$. Then
\begin{gather}
\frac{1}{\sin^\mu\Theta}{\sf Q}_\mu^{-\mu}(\cos\Theta)=\frac{\pi\tan(\pi\mu)}{2^{\mu+1}\Gamma(\mu+1)}+\frac{\sqrt{\pi}\,\sec(\pi\mu)}{\mu 2^{\mu}\Gamma\big(\mu+\tfrac12\big)(\sin\theta\sin\theta')^\mu}\nonumber\\
\hphantom{\frac{1}{\sin^\mu\Theta}{\sf Q}_\mu^{-\mu}(\cos\Theta)=}{}\times \sum_{n=0}^\infty (-1)^n(n+\mu){\sf P}_\mu^{-n-\mu}(\cos\theta_<){\sf Q}_\mu^{n+\mu}(\cos\theta_>)C_n^\mu(\cos\gamma)\label{munothalfinteger}\\
\hphantom{\frac{1}{\sin^\mu\Theta}{\sf Q}_\mu^{-\mu}(\cos\Theta)}{}
=\frac{-\pi\cot(\pi\mu)}{2^{\mu+1}\Gamma(\mu+1)}+\frac{\pi^{\frac32}\csc(\pi\mu)}{\mu 2^{\mu+1}\Gamma\big(\mu+\tfrac12\big)(\sin\theta\sin\theta')^\mu}\nonumber\\
\hphantom{\frac{1}{\sin^\mu\Theta}{\sf Q}_\mu^{-\mu}(\cos\Theta)=}{}\times \sum_{n=0}^\infty (-1)^n(n+\mu){\sf P}_\mu^{-n-\mu}(\cos\theta_<){\sf P}_\mu^{n+\mu}(\cos\theta_>)C_n^\mu(\cos\gamma),\label{munotinteger}
\end{gather}
where the first equality is valid except when $\mu$ is a half odd integer, and the second equality is valid except when $\mu$ is an integer.
\end{cor}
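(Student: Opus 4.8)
The plan is to derive both identities algebraically from the $\nu=\mu$ specializations of \eqref{conjPmPp} and \eqref{conjPmQp}, combined with the Ferrers connection formulas \eqref{Pnummuconnection} and \eqref{Qnummuconnection} and a single closed-form evaluation. Since $\Re\mu>-\tfrac{1}{2}$ forces $\nu=\mu\notin-\N$ and $2\mu+n\notin-\N$ for every $n\ge 0$, both addition theorems of Theorem~\ref{GegexpansionsFerrers} remain valid at $\nu=\mu$ (with $\theta\ne\theta'$ and the convergence condition \eqref{Dun2eq21} inherited unchanged). Moreover the two displayed sums in \eqref{munothalfinteger} and \eqref{munotinteger} equal, after multiplication by $2^\mu\Gamma(\mu)/(\sin\theta\sin\theta')^\mu$, exactly the quantities $\sin^{-\mu}\Theta\,{\sf Q}_\mu^\mu(\cos\Theta)$ and $\sin^{-\mu}\Theta\,{\sf P}_\mu^\mu(\cos\Theta)$ respectively. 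I therefore abbreviate $S_Q:=\sin^{-\mu}\Theta\,{\sf Q}_\mu^\mu(\cos\Theta)$ and $S_P:=\sin^{-\mu}\Theta\,{\sf P}_\mu^\mu(\cos\Theta)$, and the whole problem reduces to expressing $\sin^{-\mu}\Theta\,{\sf Q}_\mu^{-\mu}(\cos\Theta)$ first through $S_Q$ alone and then through $S_P$ alone.

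First I would establish the closed form ${\sf P}_\mu^{-\mu}(\cos\Theta)=\sin^\mu\Theta/(2^\mu\Gamma(\mu+1))$. Taking \eqref{FerrersPdefn} at degree $\mu$ and order $-\mu$, the Gauss series collapses via the elementary identity ${}_2F_1(-\mu,\mu+1;\mu+1;w)=(1-w)^{\mu}$, and the prefactor $\bigl(\tfrac{1-x}{1+x}\bigr)^{\mu/2}\bigl(\tfrac{1+x}{2}\bigr)^{\mu}$ simplifies to $(1-x^2)^{\mu/2}/2^\mu$; hence $\sin^{-\mu}\Theta\,{\sf P}_\mu^{-\mu}(\cos\Theta)=1/(2^\mu\Gamma(\mu+1))$ is a genuine constant. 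Putting $\nu=\mu$, $x=\cos\Theta$ in \eqref{Pnummuconnection} and dividing by $\sin^\mu\Theta$ then gives the single linear relation $\cos(\pi\mu)S_P-\tfrac{2}{\pi}\sin(\pi\mu)S_Q=\Gamma(2\mu+1)/(2^\mu\Gamma(\mu+1))$, while the corresponding specialization of \eqref{Qnummuconnection} yields $\sin^{-\mu}\Theta\,{\sf Q}_\mu^{-\mu}(\cos\Theta)=\Gamma(2\mu+1)^{-1}\bigl[\cos(\pi\mu)S_Q+\tfrac{\pi}{2}\sin(\pi\mu)S_P\bigr]$.

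Now I would eliminate one unknown at a time. To obtain \eqref{munothalfinteger} I solve the linear relation for $S_P$ (legitimate when $\cos(\pi\mu)\ne0$) and substitute; the coefficient of $S_Q$ collapses through $\cos(\pi\mu)+\sin(\pi\mu)\tan(\pi\mu)=\sec(\pi\mu)$, and the inhomogeneous piece produces the additive constant $\pi\tan(\pi\mu)/(2^{\mu+1}\Gamma(\mu+1))$. Symmetrically, solving instead for $S_Q$ (legitimate when $\sin(\pi\mu)\ne0$) gives \eqref{munotinteger}, the coefficient of $S_P$ collapsing via $\cos(\pi\mu)\cot(\pi\mu)+\sin(\pi\mu)=\csc(\pi\mu)$ and the constant being $-\pi\cot(\pi\mu)/(2^{\mu+1}\Gamma(\mu+1))$. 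Finally I reinsert the $\nu=\mu$ forms of \eqref{conjPmQp} and \eqref{conjPmPp} for $S_Q$ and $S_P$; the scalar prefactors $\sec(\pi\mu)/\Gamma(2\mu+1)$ and $\pi\csc(\pi\mu)/(2\Gamma(2\mu+1))$, once multiplied by $2^\mu\Gamma(\mu)/(\sin\theta\sin\theta')^\mu$, are brought to the stated $\sqrt{\pi}$-form using the Legendre duplication formula $\Gamma(\mu)\Gamma\bigl(\mu+\tfrac{1}{2}\bigr)=2^{1-2\mu}\sqrt{\pi}\,\Gamma(2\mu)$.

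The calculation is essentially mechanical, so the genuine work is bookkeeping. The key structural observation — and the one place where insight rather than computation is needed — is that the inhomogeneous term ${\sf P}_\mu^{-\mu}(\cos\Theta)/\sin^\mu\Theta$ is independent of all of $\Theta,\theta,\theta',\gamma$; this is precisely what decouples the two addition theorems and accounts for the free additive constants. The only other delicate point is tracking the excluded values: dividing by $\cos(\pi\mu)$ when solving for $S_P$ forces the exclusion of half-odd-integer $\mu$ in the first equality, while dividing by $\sin(\pi\mu)$ when solving for $S_Q$ forces the exclusion of integer $\mu$ in the second, exactly matching the stated hypotheses. No convergence or uniformity issue arises beyond what Theorem~\ref{GegexpansionsFerrers} already supplies.
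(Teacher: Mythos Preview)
Your argument is correct and follows essentially the same route as the paper: specialize \eqref{conjPmQp} and \eqref{conjPmPp} at $\nu=\mu$, invoke the connection relations \eqref{Qnummuconnection} and \eqref{Pnummuconnection}, and use the elementary closed form $\mathsf{P}_\mu^{-\mu}(\cos\Theta)=\sin^\mu\Theta/(2^\mu\Gamma(\mu+1))$ (this is exactly \cite[equation~(14.5.18)]{NIST:DLMF} cited in the paper). The only cosmetic difference is that you organize the two connection relations as a $2\times2$ linear system in $S_P$ and $S_Q$ and eliminate one unknown at a time, whereas the paper phrases each identity as a single application of one connection relation to the left-hand side; the content, the use of the duplication formula, and the tracking of the excluded values $\cos(\pi\mu)=0$ and $\sin(\pi\mu)=0$ are identical.
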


\begin{proof} First start with (\ref{conjPmQp}), let $\nu=\mu$, and apply the connection relation~(\ref{Qnummuconnection}) to the left-hand side of the equation only. The result (\ref{munothalfinteger}) immediately follows using \cite[equation~(14.5.18)]{NIST:DLMF}. The derived formula is clearly undefined for $\mu$ being a half odd integer, but is otherwise valid. Next start with (\ref{conjPmPp}), let $\nu=\mu$, and apply the connection relation~(\ref{Pnummuconnection}), to the left-hand side of the formula only and \cite[equation~(14.5.18)]{NIST:DLMF}. The derived formula (\ref{munotinteger}) is clearly undefined for $\mu$ being an integer, but is otherwise valid.
\end{proof}

In order to obtain the addition theorems for the Ferrers function of the second kind when the order is equal to the negative degree, and is either an integer or a half odd integer, one can then use the above corollary. The following formulae, specializations of Corollary~\ref{Corollaryabovemunmu}, are exactly those results which are derived in \cite[Theorems~3.1 and 4.1]{CohlPalmer} for a fundamental solution of the Laplace--Beltrami operator on the $d$-dimensional $R$-radius hypersphere.

\begin{cor} Let $k\in\N$, $m\in\N_0$, $\theta,\theta'\in(0,\pi)$, $\gamma\in\R$, \eqref{cosTheta} and \eqref{Dun2eq21} hold with $\theta\ne\theta'$. Then
\begin{gather}
\log\cot\big(\tfrac12\Theta\big) =\log\cot\big(\tfrac12\theta_>\big)\nonumber\\
\hphantom{\log\cot\big(\tfrac12\Theta\big) =}{}+\sum_{n=1}^\infty \frac{2^n}{n}\left[\frac{\tan\big(\tfrac12\theta_<\big)}{\sin(\theta_>)}\right]^n
\bigl[\cos^{2n}\big(\tfrac12\theta_>\big)-(-1)^n\sin^{2n}\big(\tfrac12\theta_>\big)\bigr]T_n(\cos\gamma),\label{logcothalfTheta}\\
\frac{1}{\sin^k\Theta}{\sf Q}_k^{-k}(\cos\Theta)=\frac{\sqrt{\pi}(-1)^k}{k2^k\Gamma\big(k+\tfrac12\big)(\sin\theta\sin\theta')^k}\nonumber\\
\hphantom{\frac{1}{\sin^k\Theta}{\sf Q}_k^{-k}(\cos\Theta)=}{}\times\sum_{n=0}^\infty (-1)^n(n+k){\sf P}_k^{-n-k}(\cos\theta_<){\sf Q}_k^{n+k}(\cos\theta_>)C_n^k(\cos\gamma),\label{Qkmk}\\
\frac{1}{\sin^{m+\frac12}\Theta}{\sf Q}_{m+\frac12}^{-m-\frac12}(\cos\Theta)=\frac{(-1)^m\pi^{\frac32}}{(2m+1)2^{m+\frac32}m!(\sin\theta\sin\theta')^{m+\frac12}}\nonumber\\
\qquad {}\times\sum_{n=0}^\infty (-1)^n(2n+2m+1) {\sf P}_{m+\frac12}^{-n-m-\frac12}(\cos\theta_<){\sf P}_{m+\frac12}^{n+m+\frac12}(\cos\theta_>)C_n^{ m+\frac12}(\cos\gamma).\label{Qmhmmh}
\end{gather}
\end{cor}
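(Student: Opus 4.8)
The plan is to obtain all three identities from Corollary~\ref{Corollaryabovemunmu}: equations \eqref{Qkmk} and \eqref{Qmhmmh} will come from the direct substitutions $\mu=k$ and $\mu=m+\tfrac12$, while \eqref{logcothalfTheta} will be extracted as the limit $\mu\to 0$.

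For \eqref{Qkmk} I would set $\mu=k\in\N$ in the first equality \eqref{munothalfinteger}, which is legitimate because $k$ is not a half odd integer. Since $\tan(\pi k)=0$ the free term drops out, and since $\sec(\pi k)=(-1)^k$ the prefactor $\tfrac{\sqrt\pi\,\sec(\pi\mu)}{\mu 2^\mu\Gamma(\mu+\frac12)}$ becomes $\tfrac{\sqrt\pi(-1)^k}{k2^k\Gamma(k+\frac12)}$, reproducing \eqref{Qkmk} verbatim. For \eqref{Qmhmmh} I would instead set $\mu=m+\tfrac12$ in the second equality \eqref{munotinteger}, legitimate because $m+\tfrac12$ is not an integer. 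Here $\cot(\pi(m+\tfrac12))=0$ removes the free term, $\csc(\pi(m+\tfrac12))=(-1)^m$, and $\Gamma(\mu+\tfrac12)=m!$. Writing $n+\mu=\tfrac12(2n+2m+1)$ and $\tfrac1\mu=\tfrac{2}{2m+1}$ absorbs the factor $(n+\mu)/\mu$ into the quoted $(2n+2m+1)/(2m+1)$, and collecting the remaining powers of $2$ yields \eqref{Qmhmmh}.

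The identity \eqref{logcothalfTheta} is the substantive case; here I would pass to the limit $\mu\to 0$ in \eqref{munothalfinteger}. On the left $\sin^\mu\Theta\to 1$ and ${\sf Q}_\mu^{-\mu}(\cos\Theta)\to{\sf Q}_0(\cos\Theta)=\log\cot(\tfrac12\Theta)$. On the right the free term vanishes since $\tan(\pi\mu)\to 0$, and inside the series I would move the factor $\tfrac1\mu$ of the prefactor next to $(n+\mu)C_n^\mu(\cos\gamma)$; by \eqref{mutozeroChebyGeg} this combination tends to $\epsilon_nT_n(\cos\gamma)$, while the residual prefactor $\tfrac{\sqrt\pi\,\sec(\pi\mu)}{2^\mu\Gamma(\mu+\frac12)(\sin\theta\sin\theta')^\mu}$ tends to $\sqrt\pi/\Gamma(\tfrac12)=1$. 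This produces
\begin{gather*}
\log\cot\big(\tfrac12\Theta\big)=\sum_{n=0}^\infty(-1)^n\epsilon_n\,{\sf P}_0^{-n}(\cos\theta_<)\,{\sf Q}_0^{n}(\cos\theta_>)\,T_n(\cos\gamma).
\end{gather*}
I would then substitute the degree-zero evaluations ${\sf P}_0^{-n}(\cos\theta_<)=\tfrac{1}{n!}\tan^n(\tfrac12\theta_<)$ (immediate from \eqref{FerrersPdefn}, the ${}_2F_1$ collapsing to its leading term), together with ${\sf Q}_0(\cos\theta_>)=\log\cot(\tfrac12\theta_>)$ for the $n=0$ term and, for $n\ge1$, ${\sf Q}_0^{n}(\cos\theta_>)=(-1)^n\tfrac{(n-1)!}{2}\big[\cot^n(\tfrac12\theta_>)-(-1)^n\tan^n(\tfrac12\theta_>)\big]$. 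Using $\sin^n(\tfrac12\theta_>)\cos^n(\tfrac12\theta_>)=2^{-n}\sin^n\theta_>$ to rewrite the bracket as $2^n\sin^{-n}\theta_>\big[\cos^{2n}(\tfrac12\theta_>)-(-1)^n\sin^{2n}(\tfrac12\theta_>)\big]$ and collapsing $\tfrac{(n-1)!}{n!}=\tfrac1n$ then matches the summand of \eqref{logcothalfTheta} term by term.

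I expect the main obstacle to be the limiting argument for \eqref{logcothalfTheta}: one must justify interchanging $\mu\to0$ with the infinite sum, which I would handle via the uniform convergence of the expansion of Theorem~\ref{GegexpansionsFerrers} on the domain \eqref{Dun2eq21} together with continuity of the Ferrers functions in the order near $\mu=0$, and one must supply the closed form of ${\sf Q}_0^{n}(\cos\theta_>)$, derivable from \eqref{defFerrersQhypergeo} or by induction in $n$ from the order-raising recurrence. The two substitution cases are by contrast purely computational.
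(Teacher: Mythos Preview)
Your proposal is correct and follows exactly the paper's route: substitute $\mu=k$ in \eqref{munothalfinteger} for \eqref{Qkmk}, $\mu=m+\tfrac12$ in \eqref{munotinteger} for \eqref{Qmhmmh}, and take $\mu\to 0$ in \eqref{munothalfinteger} via \eqref{mutozeroChebyGeg} for \eqref{logcothalfTheta}. You have simply supplied the intermediate computations (the explicit degree-zero evaluations of ${\sf P}_0^{-n}$ and ${\sf Q}_0^{n}$, and the half-angle rewriting) that the paper's one-line proof leaves to the reader.
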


\begin{proof}First substitute $\mu=k\in\N$ in (\ref{munothalfinteger}) which produces (\ref{Qkmk}). Next, substituting $\mu=m+\tfrac12$, $m\in\N_0$, in~(\ref{munotinteger}), to produce~(\ref{Qmhmmh}). Then take the limit as $\mu\to0$ in (\ref{munothalfinteger}) using~(\ref{mutozeroChebyGeg}) which produces~(\ref{logcothalfTheta}).
\end{proof}

\begin{cor}\label{cormuzero}Let $\nu\in\C\setminus-\N$, $\theta,\theta'\in(0,\pi)$, $\gamma\in\R$, \eqref{cosTheta} and \eqref{Dun2eq21} hold with $\theta\ne\theta'$. Then
\begin{gather}
{\sf P}_\nu(\cos\Theta)=\sum_{n=0}^\infty (-1)^n\epsilon_n{\sf P}_\nu^{-n}(\cos\theta_<){\sf P}_\nu^n(\cos\theta_>)T_n(\cos\gamma),\label{ferdep2addnthm}\\
{\sf Q}_\nu(\cos\Theta)=\sum_{n=0}^\infty (-1)^n\epsilon_n{\sf P}_\nu^{-n}(\cos\theta_<){\sf Q}_\nu^n(\cos\theta_>)T_n(\cos\gamma),\label{ferdeq2addnthm}\\
{\sf P}_\nu(-\cos\Theta)=\sum_{n=0}^\infty \epsilon_n(-\nu)_n(\nu+1)_n{\sf P}_\nu^{-n}(\cos\theta_<){\sf P}_\nu^{-n}(-\cos\theta_>)T_n(\cos\gamma),\label{ferdepm2addnthm}\\
{\sf Q}_\nu(-\cos\Theta)=\sum_{n=0}^\infty \epsilon_n(-\nu)_n(\nu+1)_n{\sf P}_\nu^{-n}(\cos\theta_<){\sf Q}_\nu^{-n}(-\cos\theta_>)T_n(\cos\gamma),\label{ferdepm2addnthmq}
\end{gather}
where $\epsilon_n=2-\delta_{n,0}$.
\end{cor}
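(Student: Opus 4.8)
The plan is to obtain all four identities as the $\mu\to 0^+$ limits of the corresponding formulas in Theorem~\ref{GegexpansionsFerrers}, exactly as Corollary~\ref{muzeroasslegexpan} was extracted from~\eqref{conj}. Concretely, I would pair~\eqref{ferdep2addnthm} with~\eqref{conjPmPp}, \eqref{ferdeq2addnthm} with~\eqref{conjPmQp}, \eqref{ferdepm2addnthm} with~\eqref{conjPmPmmx}, and~\eqref{ferdepm2addnthmq} with~\eqref{conjQmPmmx}. Since the convergence hypothesis~\eqref{Dun2eq21} does not involve $\mu$, it is preserved throughout, and on the left-hand sides one has $\sin^{-\mu}\Theta\,{\sf P}_\nu^{\mu}(\cos\Theta)\to{\sf P}_\nu(\cos\Theta)$ (and likewise for the ${\sf Q}$ and the $-\cos\Theta$ versions) by continuity of the order parameter.

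The mechanism driving every term is the regrouping $\Gamma(\mu)(n+\mu)C_n^\mu(\cos\gamma)=\Gamma(\mu+1)\,\tfrac{n+\mu}{\mu}C_n^\mu(\cos\gamma)$, valid because $\mu\,\Gamma(\mu)=\Gamma(\mu+1)$. As $\mu\to 0$ one has $2^\mu\to 1$, $(\sin\theta\sin\theta')^{-\mu}\to 1$, $\Gamma(\mu+1)\to 1$, and, crucially, $\tfrac{n+\mu}{\mu}C_n^\mu(\cos\gamma)\to\epsilon_n T_n(\cos\gamma)$ by~\eqref{mutozeroChebyGeg}, so the prefactor collapses to $\epsilon_n T_n(\cos\gamma)$. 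The remaining Ferrers factors ${\sf P}_\nu^{-(\mu+n)}$, ${\sf P}_\nu^{\mu+n}$, ${\sf Q}_\nu^{\mu+n}$ (and their $-\cos\theta_>$ analogues) tend to their $\mu=0$ counterparts ${\sf P}_\nu^{-n}$, ${\sf P}_\nu^{n}$, ${\sf Q}_\nu^{n}$, using that $\tfrac{1}{\Gamma(c)}\,{}_2F_1$ is entire in its parameters. For the two identities coming from~\eqref{conjPmPmmx} and~\eqref{conjQmPmmx} there is an additional Pochhammer factor, which passes to the limit via $(\nu+\mu+1)_n(\mu-\nu)_n\to(\nu+1)_n(-\nu)_n$; note that the absence of the sign $(-1)^n$ in those two source formulas is precisely what produces~\eqref{ferdepm2addnthm} and~\eqref{ferdepm2addnthmq} without an alternating sign. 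Term by term this reproduces each stated series.

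The one genuine point requiring care—the main obstacle—is justifying the interchange of the limit $\mu\to 0$ with the infinite summation, since the identities in Theorem~\ref{GegexpansionsFerrers} are asserted for $\mu\ne 0$ only. I would establish uniform convergence of the series on a punctured real neighborhood $\mu\in(0,\mu_0]$ by bounding the general term uniformly in $\mu$: the large-$n$ behavior of the Gegenbauer polynomials together with the large-order decay of ${\sf P}_\nu^{-(\mu+n)}(\cos\theta_<)$ and the controlled growth of ${\sf P}_\nu^{\mu+n}(\cos\theta_>)$, ${\sf Q}_\nu^{\mu+n}(\cos\theta_>)$ (whose product carries the geometric decay governed by~\eqref{Dun2eq21}) yields a $\mu$-independent summable majorant. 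Dominated convergence for series then permits taking the limit inside the sum, after which the term-by-term computation above gives the four formulas. The $\epsilon_n=2-\delta_{n,0}$ appearing in the statement is inherited directly from~\eqref{mutozeroChebyGeg}, completing the argument.
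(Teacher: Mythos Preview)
Your proposal is correct and follows essentially the same approach as the paper: the paper's proof simply states that taking the limit $\mu\to 0$ in \eqref{conjPmPp}, \eqref{conjPmQp}, \eqref{conjPmPmmx}, \eqref{conjQmPmmx} and using \eqref{mutozeroChebyGeg} produces the result. Your version is more detailed---in particular you explicitly address the interchange of limit and summation via a dominated-convergence argument, a point the paper leaves implicit.
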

\begin{proof}Taking the limit as $\mu\to0$ in (\ref{conjPmPp}), (\ref{conjPmQp}), (\ref{conjPmPmmx}), (\ref{conjQmPmmx}), and using (\ref{mutozeroChebyGeg}) produces the result.
\end{proof}

Note that the same result can be obtained by using (\ref{conjPmPm}), (\ref{conjPmQm}), using connection formulas in \cite[equations~(14.9.3) and~(14.9.4)]{NIST:DLMF}.
Similarly we can use the addition Theorem~\ref{GegexpansionsFerrers} to expand the functions on the left-hand sides in terms of Legendre polynomials $\big(\mu=\tfrac12\big)$. In this case, the Ferrers functions on the left-hand side reduce to trigonometric functions \cite[equation~(54.5.11)--(14.5.14)]{NIST:DLMF}.

\begin{cor}\label{trigaddnthms}Let $\theta,\theta'\in(0,\pi)$, $\gamma\in\R$, \eqref{cosTheta} and \eqref{Dun2eq21} hold with $\theta\ne\theta'$. Then
\begin{gather*}
\frac{\cos\big(\big(\nu+\tfrac12\big)\Theta\big)}{\sin\Theta}=\frac{\pi}{2\sqrt{\sin\theta\sin\theta'}}\sum_{n=0}^\infty (-1)^n(2n+1){\sf P}_{\nu}^{-n-\frac12}(\cos\theta_<){\sf P}_{\nu}^{n+\frac12}(\cos\theta_>)
P_n(\cos\gamma),\\
\frac{\sin\big(\big(\nu+\tfrac12\big)\Theta\big)}{\sin\Theta}=-\frac{1}{\sqrt{\sin\theta\sin\theta'}}\sum_{n=0}^\infty (-1)^n(2n+1){\sf P}_{\nu}^{-n-\frac12}(\cos\theta_<)
{\sf Q}_{\nu}^{n+\frac12}(\cos\theta_>)P_n(\cos\gamma).
\end{gather*}
\end{cor}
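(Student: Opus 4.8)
The plan is to specialize the two first-kind Ferrers addition theorems \eqref{conjPmPp} and \eqref{conjPmQp} of Theorem~\ref{GegexpansionsFerrers} to $\mu=\tfrac12$ and then rewrite both sides in elementary terms. Since $\Re(\tfrac12)>-\tfrac12$ and $\tfrac12\ne0$, the hypotheses of Theorem~\ref{GegexpansionsFerrers} are satisfied for every $\nu\in\C\setminus-\N$, so both identities are available verbatim with $\mu=\tfrac12$. The right-hand constants then simplify at once: the prefactor becomes $2^{1/2}\Gamma(\tfrac12)=\sqrt{2\pi}$, the weight becomes $(\sin\theta\sin\theta')^{1/2}$, the degree factor becomes $n+\tfrac12=\tfrac12(2n+1)$, and by \eqref{legendrepoly} the Gegenbauer polynomial reduces to $C_n^{1/2}(\cos\gamma)=P_n(\cos\gamma)$. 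The orders $-(n+\tfrac12)$ and $n+\tfrac12$ on the Ferrers functions inside the two series are already exactly those appearing in the statement.

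Next I would reduce the two left-hand sides using the half-odd-integer evaluations \cite[equations~(14.5.11)--(14.5.14)]{NIST:DLMF}, namely $\mathsf{P}_\nu^{1/2}(\cos\Theta)=(2/(\pi\sin\Theta))^{1/2}\cos((\nu+\tfrac12)\Theta)$ and $\mathsf{Q}_\nu^{1/2}(\cos\Theta)=-(\pi/(2\sin\Theta))^{1/2}\sin((\nu+\tfrac12)\Theta)$. Dividing by $\sin^{1/2}\Theta$ gives $\sin^{-1/2}\Theta\,\mathsf{P}_\nu^{1/2}(\cos\Theta)=(2/\pi)^{1/2}\cos((\nu+\tfrac12)\Theta)/\sin\Theta$ and $\sin^{-1/2}\Theta\,\mathsf{Q}_\nu^{1/2}(\cos\Theta)=-(\pi/2)^{1/2}\sin((\nu+\tfrac12)\Theta)/\sin\Theta$, which already exhibit the trigonometric numerators and the single power of $\sin\Theta$ claimed on the left.

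Finally I would clear the overall constants. For the first identity, multiplying through by $(\pi/2)^{1/2}$ combines $(\pi/2)^{1/2}\sqrt{2\pi}=\pi$ with the $\tfrac12$ coming from $n+\tfrac12=\tfrac12(2n+1)$ to yield the coefficient $\pi/(2\sqrt{\sin\theta\sin\theta'})$; for the second, multiplying through by $-(2/\pi)^{1/2}$ gives $-(2/\pi)^{1/2}\sqrt{2\pi}\cdot\tfrac12=-1$, producing $-1/\sqrt{\sin\theta\sin\theta'}$. The resulting expansions are precisely the two stated formulas. The argument is purely computational and rests entirely on Theorem~\ref{GegexpansionsFerrers} together with standard special-function reductions; the only place demanding care is the bookkeeping of the several $\sqrt\pi$ and $\sqrt2$ factors and of the single power of $\sin\Theta$, so that the final constants collapse exactly to $\pi/2$ and $-1$. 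No substantive obstacle is expected.
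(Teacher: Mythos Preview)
Your proposal is correct and follows essentially the same approach as the paper: specialize \eqref{conjPmPp} and \eqref{conjPmQp} at $\mu=\tfrac12$, reduce the left-hand sides via the elementary evaluations \cite[equations~(14.5.11) and (14.5.13)]{NIST:DLMF}, and identify $C_n^{1/2}=P_n$. Your explicit tracking of the constants $\sqrt{2\pi}$, $(\pi/2)^{1/2}$, and $(2/\pi)^{1/2}$ is accurate and simply spells out what the paper leaves to the reader.
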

\begin{proof} Specializing (\ref{conjPmPp}), (\ref{conjPmQp}), with $\mu=\frac12$ and \cite[equations~(14.5.11) and (14.5.13)]{NIST:DLMF}, produces the result.
\end{proof}

\section[Global analysis on Riemannian manifolds of constant curvature]{Global analysis on Riemannian manifolds\\ of constant curvature}\label{Globalanalysisonthehyperboloid}

In this paper, when we refer to a {\it fundamental solution}, it is meant to be a fundamental solution of a partial differential operator on a Riemannian manifold~$M$. A fundamental solution for a~linear partial differential operator ${\mathcal E}_\bfx$ on a~$d$-dimensional Riemannian manifold~$M$, is in general a~distribution ${\mathcal G}^d\colon (M\times M)\setminus\{(\bfx,\bfx)\colon \bfx\in M\}\to\C$ which satisfies the following linear partial differential equation
\begin{gather*}
{\mathcal E}_\bfx{\mathcal G}^d(\bfx,\bfxp)=\delta_g(\bfx,\bfxp),
\end{gather*}
where $\bfx,\bfxp\in M$, $g$ is the Riemannian structure on $M$, and $\delta_g$ is the Dirac delta distribution on~$M$. Note that for the operators that are treated in this paper, a fundamental solution will always be function, but for other operators such as for wave operators (see, e.g., \cite[equation~(4.1.5)]{Kythe1996}), fundamental solutions are in general distributions. Furthermore, we will only perform global and local analysis for Riemannian manifolds $M$ with constant curvature (hyperspherical $\Si_R^d$, hyperbolic $\Hi_R^d$, and Euclidean geometry $\R^d$), and we will only study linear elliptic partial differential operators of the form $\big({-}\Delta\pm\beta^2\big)$, where $-\Delta$ is the positive Laplace--Beltrami operator on $M$ and $\beta^2\ge 0$, $R>0$. In this paper, these operators are referred to as Helmholtz operators, and in the case where $\beta=0$, they are Laplacian operators.

In this section we develop the necessary material in order to study fundamental solutions for these operators on these manifolds.

\subsection[Hyperspherical geometry and the hyperboloid model of hyperbolic geometry]{Hyperspherical geometry and the hyperboloid model\\ of hyperbolic geometry}\label{Thehyperboloidmodelofhyperbolicgeometry}

Hyperbolic space in $d$-dimensions is a fundamental example of a space exhibiting hyperbolic geometry. It was developed independently by Lobachevsky and Bolyai around 1830 (see~\cite{Trudeau}), and most likely by Gauss and Schweikart (although they never published this result), even earlier (see \cite[Chapter~6]{Livio2009}). It is a geometry analogous to Euclidean geometry, but such that Euclid's parallel postulate is no longer assumed to hold.

There are several models of $d$-dimensional hyperbolic space including the Klein, Poincar\'{e}, hyperboloid, upper-half space and hemisphere models (see~\cite{Thurston}). The hyperboloid model for $d$-dimensional
hyperbolic geometry is closely related to the Klein and Poincar\'{e} models: each can be obtained projectively from the others. The upper-half space and hemisphere models can be obtained from one another by inversions with the Poincar\'{e} model (see \cite[Section~2.2]{Thurston}). The model we will be focusing on in this paper is the hyperboloid model.

The hyperboloid model, also known as the Minkowski or Lorentz model, is a model of $d$-dimensional hyperbolic geometry in which points are represented by the upper sheet (submanifold) $S^+$ of a two-sheeted hyperboloid
embedded in the Minkowski space $\R^{d,1}$. Minkowski space is a~$(d+1)$-dimensional pseudo-Riemannian manifold which is a real finite-dimensional vector space, with coordinates given by $\bfx=(x_0,x_1,\ldots,x_d)$. It is equipped with a nondegenerate, symmetric bilinear form, the Minkowski bilinear form $[\cdot,\cdot]\colon \R^{d,1}\times\R^{d,1}\to\R$ defined such that
\begin{gather*}
[\bfx,{\mathbf y}]:=x_0y_0-x_1y_1-\dots-x_dy_d.
\end{gather*}
The above bilinear form is symmetric, but not positive-definite, so it is not an inner product. It is defined analogously with the Euclidean inner product $(\cdot,\cdot)\colon \R^{d+1}\times\R^{d+1}\to\R$ defined such that
\begin{gather*}
(\bfx,{\mathbf y}):=x_0y_0+x_1y_1+\dots+x_dy_d.
\end{gather*}

The variety $[\bfx,\bfx]=x_0^2-x_1^2-\dots-x_d^2=R^2$, for $\bfx\in\R^{d,1}$, using the language of~\cite{Beltrami} (see also \cite[p.~504]{Vilen}), defines a pseudo-sphere of radius~$R$. Points on the pseudo-sphere with zero radius coincide with the cone. Points on the pseudo-sphere with radius greater than zero lie within this cone, and points on the pseudo-sphere with purely imaginary radius lie outside the cone. The upper sheets of the positive radii pseudo-spheres are maximally symmetric, simply connected, negative-constant sectional curvature (given by $-1/R^2$, see for instance \cite[p.~148]{Lee}), $d$-dimensional Riemannian submanifolds, embedded and with induced metric from the ambient Minkowski space $\R^{d,1}$. For $R\in(0,\infty)$, we refer to the upper sheet of this variety $[\bfx,\bfx]=R^2$, with $\bfx\in\R^{d,1},$ as the $R$-radius hyperboloid $\Hi_R^d$.
Similarly, we refer to the variety $(\bfx,\bfx)=R^2$ for $R>0$ and $\bfx\in\R^{d+1}$, as the $R$-radius hypersphere $\Si_R^d$ which is a maximally symmetric, simply connected, positive-constant sectional curvature (given by $1/R^2$) $d$-dimensional Riemannian submanifold, embedded and with induced metric from the ambient Euclidean space. The Euclidean space $\R^d$ equipped with the Pythagorean norm, is a space with vanishing curvature. We denote the unit radius hyperboloid by $\Hi^d:=\Hi_1^d$ and the unit radius hypersphere by $\Si^d:=\Si_1^d$.

In our discussion of a fundamental solution for the Helmholtz operator in the hyperboloid model of hyperbolic geometry, we focus on the positive radius pseudo-sphere which can be parameterized through {\it subgroup-type coordinates,} i.e., those which correspond to a maximal subgroup chain ${\rm O}(d,1)\supset \cdots$ (see for instance~\cite{PogWin}). There exist separable coordinate systems which parameterize points on positive radius pseudo-spheres which can not be constructed using maximal subgroup chains, e.g., such as those which are analogous to parabolic coordinates, etc. We will no longer discuss these.

Geodesic polar coordinates are coordinates which correspond to the maximal subgroup chain given by ${\rm O}(d,1)\supset {\rm O}(d)\supset \cdots$. What we will refer to as {\it standard geodesic polar coordinates} correspond to the subgroup chain given by ${\rm O}(d,1)\supset {\rm O}(d)\supset {\rm O}(d-1)\supset \cdots\supset {\rm O}(2)$. Standard geodesic polar coordinates (see~\cite{groschepogsis, Olevskii}), similar to standard hyperspherical coordinates in Euclidean space, can be given on $\Hi_R^d$ by
\begin{gather}
x_0=R\cosh r,\nonumber\\
x_1=R\sinh r\cos\theta_{d-1},\nonumber\\
x_2=R\sinh r\sin\theta_{d-1}\cos\theta_{d-2},\nonumber\\
\cdots\cdots\cdots\cdots\cdots\cdots\cdots\cdots\cdots\cdots\cdots\nonumber\\
x_{d-2}=R\sinh r\sin\theta_{d-1}\cdots\cos\theta_2,\nonumber\\
x_{d-1}=R\sinh r\sin\theta_{d-1}\cdots\sin\theta_2\cos\phi,\nonumber\\
x_{d}=R\sinh r\sin\theta_{d-1}\cdots\sin\theta_2\sin\phi,\label{standardhyp}
\end{gather}
where $r\in[0,\infty)$, $\phi\in[0,2\pi)$, and $\theta_i\in[0,\pi]$ for $2\le i\le d-1$, and on $\Si_R^d$ by
\begin{gather}
x_0=R\cos\theta,\nonumber\\
x_1=R\sin\theta\cos\theta_{d-1},\nonumber\\
x_2=R\sin\theta\sin\theta_{d-1}\cos\theta_{d-2},\nonumber\\
\cdots\cdots\cdots\cdots\cdots\cdots\cdots\cdots\cdots\cdots\cdots\nonumber\\
x_{d-2}=R\sin\theta\sin\theta_{d-1}\cdots\cos\theta_2,\nonumber\\
x_{d-1}=R\sin\theta\sin\theta_{d-1}\cdots\sin\theta_2\cos\phi,\nonumber\\
x_{d}=R\sin\theta\sin\theta_{d-1}\cdots\sin\theta_2\sin\phi,\label{standardsph}
\end{gather}
 where $\theta:=\theta_d\in[0,\pi]$, $\theta_i\in[0,\pi]$ for $2\leq i\leq d-1$, and $\phi\in[0,2\pi)$.

The isometry group of the space $\Hi_R^d$ is the pseudo-orthogonal group ${\rm SO}(d,1)$, the Lorentz group in $(d+1)$-dimensions. Hyperbolic space $\Hi_R^d$, can be identified with the quotient space ${\rm SO}(d,1)/{\rm SO}(d)$. The isometry group acts transitively on~$\Hi_R^d$. That is, any point on the hyperboloid can be carried, with the help of a~Euclidean rotation of ${\rm SO}(d-1)$, to the point $(\cosh\alpha,\sinh\alpha,0,\ldots,0)$,
and a hyperbolic rotation
\begin{gather*}
x_0^\prime=-x_1\sinh\alpha+x_0\cosh\alpha,\qquad x_1^\prime=-x_1\cosh\alpha-x_0\sinh\alpha,
\end{gather*}
maps that point to the origin $(1,0,\ldots,0)$ of the space.

In order to do analysis on a fundamental solution of the Helmholtz equation on the hyperboloid and hypersphere, we need to describe how one computes distances in these spaces. One may naturally compare distances on the positive radius pseudo-sphere through analogy with the $R$-radius hypersphere. Distances on the hypersphere are simply given by arc lengths, angles between two arbitrary vectors, from the origin, in the ambient Euclidean space. We consider the $d$-dimensional hypersphere embedded in $\R^{d+1}$. Points on the hypersphere can be parameterized using hyperspherical coordinate systems. Any parameterization of the hypersphere $\Si_R^d$, must have $(\bfx,\bfx)=x_0^2+\dots+x_d^2=R^2$, with $R>0$. The geodesic distance between two points on the hypersphere $d_s\colon \Si_R^d\times\Si_R^d\to[0,\pi R]$ is given by
\begin{gather}
d_s(\bfx,\bfxp):=R\rho_s =R\cos^{-1}\left(\frac{(\bfx,\bfxp)}{(\bfx,\bfx)(\bfxp,\bfxp)} \right) =R\cos^{-1}\left(\frac{1}{R^2}(\bfx,\bfxp)\right).\label{dhypersphere}
\end{gather}
This is evident from the fact that the geodesics on $\Si_R^d$ are great circles (i.e., intersections of $\Si_R^d$ with planes through the origin) with constant speed parameterizations (see \cite[p.~82]{Lee}).

Accordingly, we now look at the geodesic distance function on the $d$-dimensional positive radius pseudo-sphere $\Hi_R^d$. Distances between two points on the positive radius pseudo-sphere are given by the hyperangle between two arbitrary vectors, from the origin, in the ambient Minkowski space. Any parameterization of the hyperboloid $\Hi_R^d$, must have $[\bfx,\bfx]=R^2$. The geodesic distance $d_h\colon \Hi_R^d\times\Hi_R^d\to[0,\infty)$ between any two points on the hyperboloid is given by
\begin{gather}
d_h(\bfx,\bfxp):=R\cosh^{-1}\left(\frac{[\bfx,\bfxp]}{[\bfx,\bfx][\bfxp,\bfxp]} \right) =R\cosh^{-1}\left(\frac{1}{R^2}[\bfx,\bfxp]\right),\label{dhyperboloid}
\end{gather}
where the inverse hyperbolic cosine with argument $x\in(1,\infty)$ is given by \cite[equation~(4.37.19)]{NIST:DLMF}, (\ref{Dun.eq4}). Geodesics on $\Hi_R^d$ are great hyperbolas (i.e., intersections of $\Hi_R^d$ with planes through the origin) with constant speed parameterizations (see \cite[p.~84]{Lee}). We also define two global functions $\rho_h\colon \Hi^d\times\Hi^d\to[0,\infty)$, $\rho_s\colon \Si^d\times\Si^d\to[0,\pi]$ which represent the projections of the global geodesic distance functions~(\ref{dhyperboloid}) on~$\Hi_R^d$ and~$\Si_R^d$ onto the corresponding unit radius hyperboloid~$\Hi^d$ and hypersphere~$\Si^d$ respectively, namely
\begin{gather}
\rho_h(\wbfx,\wbfxp):=d_h(\bfx,\bfxp)/R,\label{rhodefn}\\
\rho_s(\wbfx,\wbfxp):=d_s(\bfx,\bfxp)/R,\label{sphrhodefn}
\end{gather}
where $\wbfx=\bfx/R$ and $\wbfxp=\bfxp/R$.

\subsection{The Helmholtz equation in Riemannian spaces of constant curvature}\label{Laplaceequationandharmonicsonthehyperboloid}

Parametrizations of a submanifold embedded in either a Euclidean or Minkowski space are given in terms of coordinate systems whose coordinates are curvilinear. These are coordinates based on some transformation that converts the standard Cartesian coordinates in the ambient space to a coordinate system with the same number of coordinates as the dimension of the submanifold in which the coordinate lines are curved.

On a $d$-dimensional Riemannian manifold $M$ (a manifold together with a Riemannian metric~$g$), the Laplace--Beltrami operator (Laplacian) $\Delta\colon C^p(M)\to C^{p-2}(M)$, $p \ge 2$, in curvilinear coordinates
${\mathbf{\xi}}=\big(\xi^1,\ldots,\xi^d\big)$ is given by
\begin{gather}
\Delta=\sum_{i,j=1}^d\frac{1}{\sqrt{|g|}} \frac{\partial}{\partial \xi^i}\left(\sqrt{|g|}g^{ij}\frac{\partial}{\partial \xi^j} \right),\label{laplacebeltrami}
\end{gather}
where $|g|=|\det(g_{ij})|,$ the infinitesimal distance is given by
\begin{gather}
{\rm d}s^2=\sum_{i,j=1}^{d}g_{ij}{\rm d}\xi^i{\rm d}\xi^j,\label{metric}
\end{gather}
and
\begin{gather*}
\sum_{i=1}^{d}g_{ki}g^{ij}=\delta_k^j.
\end{gather*}
For a Riemannian submanifold, the relation between the metric tensor in the ambient space and~$g_{ij}$ of~(\ref{laplacebeltrami}), (\ref{metric}) is
\begin{gather*}
g_{ij}({\mathbf{\xi}})=\sum_{k,l=0}^dG_{kl}\frac{\partial x^k}{\partial \xi^i} \frac{\partial x^l}{\partial \xi^j}.
\end{gather*}
On $\Hi_R^d$ the ambient space is Minkowski, and therefore $G=\operatorname{diag}(1,-1,\ldots,-1)$. On $\Si_R^d$, $G =\operatorname{diag}(1,1,\ldots,1)$.

The set of all geodesic polar coordinate systems on the hyperboloid correspond to the many ways one can put coordinates on a hyperbolic hypersphere, i.e., the Riemannian submanifold $U\subset\Hi_R^d$ defined for a bounded $\bfxp\in\Hi_R^d$ such that $d_h(\bfx,\bfxp)=b={\rm const}$, where $b\in(0,\infty)$. These are coordinate systems which correspond to maximal subgroup chains starting with ${\rm O}(d,1) \supset {\rm O}(d) \supset \cdots$, with standard geodesic polar coordinates given by~(\ref{standardhyp}) being only one of them. (For a thorough description of these see \cite[Section~X.5]{Vilen}.) They all share the property that they are described by
$d$-variables: $r\in[0,\infty)$ plus $(d-1)$-angles each being given by the values $[0,2\pi)$, $[0,\pi]$, $[-\pi/2,\pi/2]$ or $[0,\pi/2]$ (see~\cite{IPSWa, IPSWb}).

In any of the geodesic polar coordinate systems, the global geodesic distance between any two points on the hyperboloid and hypersphere are given by (cf.~(\ref{dhyperboloid}), (\ref{sphrhodefn}))
\begin{gather}
 d_h(\bfx,\bfxp) =R\cosh^{-1}\bigl( \cosh r\cosh r^\prime-\sinh r\sinh r^\prime\cos\gamma\bigr),\label{diststandard}\\
d_s(\bfx,\bfxp)=R\cos^{-1}\bigl( \cos\theta\cos\theta^\prime+\sin\theta\sinh \theta^\prime\cos\gamma\bigr),\label{diststandards}
\end{gather}
where $\gamma$ is the unique separation angle defined on the unit radius submanifold $\Si^{d-1}$ which is embedded in both $d$-dimensional Riemannian manifolds $\Hi_R^d$ and $\Si_R^d$ for a fixed geodesic radius. For instance, one may write the separation angle in standard geodesic polar coordinates (\ref{standardhyp}) or (\ref{standardsph}) as follows
\begin{gather}
\cos\gamma=\cos(\phi-\phi^\prime)\prod_{i=1}^{d-2}\sin\theta_i{\sin\theta_i}^\prime+\sum_{i=2}^{d-2}\cos\theta_i{\cos\theta_i}^\prime\prod_{j=i+1}^{d-2}\sin\theta_j{\sin\theta_j}^\prime.\label{prodform}
\end{gather}
Corresponding separation angle formulae for any geodesic polar coordinate system can be computed using (\ref{dhypersphere}), (\ref{dhyperboloid}), and the associated formulae for the appropriate inner-products.

Note that by making use of the isometry group ${\rm SO}(d,1)$ to map~$\bfxp$ to the origin, then the geodesic distance $\rho$ as measured from the origin to a point $\bfx\in\Hi_R^d$ with curvilinear coordinate~$r$ is $\rho=Rr$. Hence if $R=1$ which corresponds to $\Hi^d$ (the unit radius hyperboloid), then on this Riemannian manifold, the geodesic distance is $\rho=r$, and there is no distinction between the global geodesic distance and the $r$-parameter in a geodesic polar coordinate system.

The infinitesimal distance in a geodesic polar coordinate system on the submanifold $\Hi_R^d$ is given by
\begin{gather}
{\rm d}s^2=R^2\big({\rm d}r^2+\sinh^2r{\rm d}\gamma^2\big),\label{stanhypmetric}
\end{gather}
and on $\Si_R^d$ by
\begin{gather}
{\rm d}s^2=R^2\big({\rm d}\theta^2+\sin^2\theta{\rm d}\gamma^2\big),\label{stansphmetric}
\end{gather}
where an appropriate expression for $\gamma$ in a curvilinear coordinate system is given by~(\ref{prodform}). If one combines (\ref{standardhyp}) or (\ref{standardsph}), (\ref{laplacebeltrami}), (\ref{prodform})
and (\ref{stanhypmetric}) or (\ref{stansphmetric}), then in a particular geodesic polar coordinate system, the Helmholtz equation on $\Hi_R^d$ is given by
\begin{gather}
\big({-}\Delta \pm \beta^2\big)f = \frac{1}{R^2}\left[-\frac{\partial^2 f}{\partial r^2}-(d-1)\coth r\frac{\partial f}{\partial r}-\frac{1}{\sinh^2r} \Delta_{\Si^{d-1}} f\pm \beta^2 R^2 f \right] =0,\label{genhyplap}
\end{gather}
and on $\Si_R^d$ by
\begin{gather}
\big({-}\Delta\pm\beta^2\big)f = \frac{1}{R^2}\left[-\frac{\partial^2f}{\partial \theta^2}-(d-1)\cot\theta\frac{\partial f}{\partial\theta}-\frac{1}{\sin^2\theta}\Delta_{\Si^{d-1}}f \pm \beta^2R^2f\right]=0,\label{gensphlap}
\end{gather}
where $\Delta_{\Si^{d-1}}$ is the corresponding Laplace--Beltrami operator on the unit radius hypersphe\-re~$\Si^{d-1}$.

\subsection[Homogeneous solutions of the Helmholtz equation in geodesic polar coordinates]{Homogeneous solutions of the Helmholtz equation\\ in geodesic polar coordinates}\label{SepVarStaHyp}

Geodesic polar coordinate systems partition $\Hi_R^d$ into a family of $(d-1)$-dimensional hyperbolic hyperspheres, each with a geodesic radius $Rr$ with $r\in(0,\infty)$ on which all possible hyperspherical coordinate systems for $\Si^{d-1}$ may be used (see for instance~\cite{Vilen}). One then must also consider the limiting case for $r=0$ to fill out all of $\Hi_R^d$. In subgroup-type coordinate systems, one can compute the normalized hyperspherical harmonics in that space by solving the Laplace equation using separation of variables. This results in a general procedure which is given explicitly in~\cite{IPSWa, IPSWb}. These angular harmonics are given as general expressions involving trigonometric functions, Gegenbauer polynomials and Jacobi polynomials.

The harmonics in geodesic polar coordinate systems are given in terms of a radial solution multiplied by the angular harmonics. The angular harmonics are eigenfunctions of the Laplace--Beltrami operator on $\Si^{d-1}$ which satisfy the following eigenvalue problem
\begin{gather*}
\Delta_{\Si^{d-1}}Y_l^K (\wbfx) =-l(l+d-2)Y_l^K(\wbfx),
\end{gather*}
where $\wbfx\in\Si^{d-1}$, $Y_l^K (\wbfx)$ are normalized hyperspherical harmonics, $l\in\N_0$ is the angular momentum quantum number, and~$K$ stands for the set of $(d-2)$-quantum numbers identifying degenerate harmonics for each $l$. The degeneracy $(2l+d-2)(d-3+l)!/(l!(d-2)!)$ (see \cite[equation~(9.2.11)]{Vilen}), tells you how many linearly independent solutions exist for a particular $l$ value and dimension~$d$. The hyperspherical harmonics can optionally be normalized so that
\begin{gather*}
\int_{\Si^{d-1}}Y_l^K(\wbfx)\overline{Y_{l^\prime}^{K^\prime}(\wbfx)} \mathrm{d}\omega=\delta_{l}^{l^\prime}\delta_{K}^{K^\prime},
\end{gather*}
where $d\omega$ is the Riemannian (volume) measure (see for instance \cite[Section 3.4]{Grigor}) on $\Si^{d-1}$ which is invariant under the isometry group ${\rm SO}(d)$ (cf.~(\ref{eucsphmeasureinv})), and for $x+{\rm i}y=z\in\C$, $\overline{z}=x-{\rm i}y$, represents complex conjugation. The generalized Kronecker delta $\delta_K^{K^\prime}\in\{0,1\}$ is defined such that it equals~1 if all of the $(d-2)$-quantum numbers identifying degenerate harmonics for each~$l$ coincide, and equals zero otherwise.

Since the angular solutions (hyperspherical harmonics) are well-known (see for instance \cite[Chapter~IX]{Vilen}, \cite[Chapter~11]{ErdelyiHTFII}), we will now focus on the radial solutions on $\Hi_R^d$ and $\Si_R^d$
coming from $\big({-}\Delta\pm\beta^2\big)$ in geodesic polar coordinate systems. These radial solutions respectively satisfy the following ordinary differential equations (cf.~(\ref{genhyplap})) for all $R\in(0,\infty)$, namely,
\begin{gather}
 -\frac{{\rm d}^2h}{{\rm d}r^2}-(d-1)\coth r\frac{{\rm d}h}{{\rm d}r}+\left(\frac{l(l+d-2)}{\sinh^2r}+\beta^2R^2\right)h=0,\label{hypodep}\\
-\frac{{\rm d}^2h}{{\rm d}r^2}-(d-1)\coth r\frac{{\rm d}h}{{\rm d}r}+\left(\frac{l(l+d-2)}{\sinh^2r}-\beta^2R^2\right)h=0,\label{hypodem}
\end{gather}
on $\Hi_R^d$ and
\begin{gather}
-\frac{{\rm d}^2s}{{\rm d}\theta^2}-(d-1)\cot\theta\frac{{\rm d}s}{{\rm d}\theta} +\left(\frac{l(l+d-2)}{\sin^2\theta}+\beta^2R^2\right)s=0,\label{sphodep}\\
-\frac{{\rm d}^2s}{{\rm d}\theta^2}-(d-1)\cot\theta\frac{{\rm d}s}{{\rm d}\theta} +\left(\frac{l(l+d-2)}{\sin^2\theta}-\beta^2R^2\right)s=0,\label{sphodem}
\end{gather}
on $\Si_R^d$.

Eight solutions to the ordinary differential equation (\ref{hypodep}) $h_{1,R,\beta,\pm}^{d,l,\pm},h_{2,R,\beta,\pm}^{d,l,\pm}\colon (0,\infty)\to\C$ are given by
\begin{gather*}
h_{1,R,\beta,\pm}^{d,l,\pm}(r)=\frac{1}{\sinh^{\frac{d}{2}-1}r} P_{-{\frac{1}{2}}\pm{\frac{1}{2}}\sqrt{(d-1)^2+4\beta^2 R^2}}^{\pm(\frac{d}{2}-1+l)}(\cosh r),\\
h_{2,R,\beta,\pm}^{d,l,\pm}(r)=\frac{1}{\sinh^{\frac{d}{2}-1}r}Q_{-{\frac{1}{2}}\pm{\frac{1}{2}}\sqrt{(d-1)^2+4\beta^2 R^2}}^{\pm(\frac{d}{2}-1+l)}(\cosh r).
\end{gather*}
Similarly, if $(d-1)^2-4\beta^2R^2 \ge 0$, the solutions to (\ref{hypodem}) $f_{1,\beta,R,\pm}^{d,l,\pm}, f_{2,R,\beta,\pm}^{d,l,\pm}\colon (0,\infty)\to\C$ are
\begin{gather*}
f_{1,R,\beta,\pm}^{d,l,\pm}(r)=\frac{1}{\sinh^{\frac{d}{2}-1}r} P_{-{\frac{1}{2}}\pm{\frac{1}{2}}\sqrt{(d-1)^2-4\beta^2R^2}}^{\pm(\frac{d}{2}-1+l)}(\cosh r),\\
f_{2,R,\beta,\pm}^{d,l,\pm}(r)=\frac{1}{\sinh^{\frac{d}{2}-1}r} Q_{-{\frac{1}{2}}\pm{\frac{1}{2}}\sqrt{(d-1)^2-4\beta^2R^2}}^{\pm(\frac{d}{2}-1+l)}(\cosh r).
\end{gather*}
If $(d-1)^2-4\beta^2R^2 \le 0$, the solutions to (\ref{hypodem}) are given in terms of the associated Legendre conical functions \cite[Section~14.20]{NIST:DLMF}
\begin{gather*}
y_{1,R,\beta,\pm}^{d,l,\pm}(r) = \frac{1}{\sinh^{\frac{d}{2}-1}r}P_{-{\frac{1}{2}}\pm\frac{{\rm i}}{2}\sqrt{4\beta^2R^2-(d-1)^2}}^{\pm(\frac{d}{2}-1+l)}(\cosh r),\\
y_{2,R,\beta,\pm}^{d,l,\pm}(r) = \frac{1}{\sinh^{\frac{d}{2}-1}r}Q_{-{\frac{1}{2}}\pm\frac{{\rm i}}{2}\sqrt{4\beta^2R^2-(d-1)^2}}^{\pm(\frac{d}{2}-1+l)}(\cosh r).
\end{gather*}

On $\Si_R^d$, solutions to (\ref{sphodem}) $s_{1,R,\beta,\pm}^{d,l,\pm}, s_{2,R,\beta,\pm}^{d,l,\pm}\colon (0,\pi)\to\C$ are given by
\begin{gather*}
s_{1,R,\beta,\pm}^{d,l,\pm}(\theta) = \frac{1}{\sin^{\frac{d}{2}-1}\theta} \sP_{-{\frac{1}{2}}\pm{\frac{1}{2}}\sqrt{(d-1)^2+4\beta^2R^2}}^{\pm(\frac{d}{2}-1+l)}(\pm\cos\theta),\\
s_{2,R,\beta,\pm}^{d,l,\pm}(\theta) = \frac{1}{\sin^{\frac{d}{2}-1}\theta}\sQ_{-{\frac{1}{2}}\pm{\frac{1}{2}}\sqrt{(d-1)^2+4\beta^2R^2}}^{\pm(\frac{d}{2}-1+l)}(\pm\cos\theta),
\end{gather*}
where $\sP_\nu^\mu, \sQ_\nu^\mu\colon (-1,1)\to\C$ are Ferrers functions of the first and second kind \cite[Section 14.3(i)]{NIST:DLMF}. Similarly, if $(d-1)^2-4\beta^2R^2 \ge 0$, solutions to (\ref{sphodep})
$w_{1,\beta,\pm}^{d,l,\pm}, w_{2,\beta,\pm}^{d,l,\pm}\colon (0,\pi)\to\C$ are
\begin{gather*}
w_{1,R,\beta,\pm}^{d,l,\pm}(\theta) = \frac{1}{\sin^{\frac{d}{2}-1}\theta} \sP_{-{\frac{1}{2}}\pm{\frac{1}{2}}\sqrt{(d-1)^2-4\beta^2R^2}}^{\pm(\frac{d}{2}-1+l)}(\pm\cos\theta),\\
w_{2,R,\beta,\pm}^{d,l,\pm}(\theta) = \frac{1}{\sin^{\frac{d}{2}-1}\theta}\sQ_{-{\frac{1}{2}}\pm{\frac{1}{2}}\sqrt{(d-1)^2-4\beta^2R^2}}^{\pm(\frac{d}{2}-1+l)}(\pm\cos\theta).
\end{gather*}
Otherwise, if $(d-1)^2-4\beta^2R^2 \le 0$, the solutions to (\ref{sphodep}) are given in terms of Ferrers conical functions \cite[Section~14.20]{NIST:DLMF}
\begin{gather*}
u_{1,R,\beta,\pm}^{d,l,\pm}(\theta)= \frac{1}{\sin^{\frac{d}{2}-1}\theta}\sP_{-{\frac{1}{2}}\pm\frac{{\rm i}}{2}\sqrt{4\beta^2R^2-(d-1)^2}}^{\pm(\frac{d}{2}-1+l)}(\pm\cos\theta),\\
u_{2,R,\beta,\pm}^{d,l,\pm}(\theta)=\frac{1}{\sin^{\frac{d}{2}-1}\theta}\sQ_{-{\frac{1}{2}}\pm\frac{{\rm i}}{2}\sqrt{4\beta^2R^2-(d-1)^2}}^{\pm(\frac{d}{2}-1+l)}(\pm\cos\theta).
\end{gather*}

\begin{rem}\label{ellpintrem}\sloppy If $\frac{d}{2}-1+l$ is a half odd integer ($d$ odd), the solutions can be expressed as~ele\-men\-tary functions. From \cite[equations~(14.5.15)--(14.5.17)]{NIST:DLMF}, then $P_\nu^{\pm(\frac{d}{2}-1+l)}(\cosh r)$ and $Q_\nu^{\pm(\frac{d}{2}-1+l)}(\cosh r)$ with odd $d$ can be found by using the order-recurrence relation \cite[equation~(14.10.6)]{NIST:DLMF}. From \cite[equations~(14.5.11)--(14.5.14)]{NIST:DLMF}, then ${\sf P}_\nu^{\pm(\frac{d}{2}-1+l)}(\cos\theta)$ and ${\sf Q}_\nu^{\pm(\frac{d}{2}-1+l)}(\cos\theta)$ with odd $d$ can be found by using the order-recurrence relation \cite[equation~(14.10.1)]{NIST:DLMF}. On the other hand, if $d$ is even and $(d-1)^2-4\beta^2R^2=0$, the solutions to (\ref{hypodep}), (\ref{sphodep}) can be written in terms of complete elliptic integrals of the first and second kind $K\colon [0,1)\to[\tfrac{\pi}{2},\infty)$ and $E\colon [0,1]\to[1,\tfrac{\pi}{2}]$ respectively. These are given in terms of the Gauss hypergeometric function, namely \cite[equations~(19.5.1) and (19.5.2)]{NIST:DLMF} or through definite integrals \cite[Section~19.2]{NIST:DLMF}.
Using \cite[equations~(14.5.24)--(14.5.27)]{NIST:DLMF}, then, $P_{-{\frac{1}{2}}}^1(\cosh r)$ is obtained from $P_{{\frac{1}{2}}}(\cosh r)$ with $P_{-{\frac{1}{2}}}(\cosh r)$, and $Q_{-{\frac{1}{2}}}^1(\cosh r)$ from $Q_{{\frac{1}{2}}}(\cosh r)$ with $Q_{-{\frac{1}{2}}}(\cosh r)$, by the recurrence relations \cite[equation~(14.10.7)]{NIST:DLMF}. Using \cite[equations~(14.5.20)--(14.5.23)]{NIST:DLMF}, then, $\sP_{-{\frac{1}{2}}}^1(\cos\theta)$ is obtained from $\sP_{{\frac{1}{2}}}(\cos\theta)$ with $\sP_{-{\frac{1}{2}}}(\cos\theta)$, and $\sQ_{-{\frac{1}{2}}}^1(\cos\theta)$ from $\sQ_{{\frac{1}{2}}}(\cos\theta)$ with $\sQ_{-{\frac{1}{2}}}(\cos\theta)$, by the recurrence relations \cite[equation~(14.10.2)]{NIST:DLMF}. Subsequently, the recurrence relation \cite[equation~(14.10.6)]{NIST:DLMF} can be used to find $P_{-{\frac{1}{2}}}^{\pm(\frac{d}{2}-1+l)}(\cosh r)$ and $Q_{-{\frac{1}{2}}}^{\pm(\frac{d}{2}-1+l)}(\cosh r)$ in terms of complete elliptic integrals for all half odd integer values of $\pm(\frac{d}{2}-1+l)$ by starting with $P_{-{\frac{1}{2}}}(\cosh r)$, $P_{-{\frac{1}{2}}}^1(\cosh r)$, $Q_{-{\frac{1}{2}}}(\cosh r)$, and $Q_{-{\frac{1}{2}}}^1(\cosh r)$. Similarly, \cite[equation~(14.10.1)]{NIST:DLMF} can be used to find $\sP_{-{\frac{1}{2}}}^{\pm(\frac{d}{2}-1+l)}(\cos\theta)$ and $\sQ_{-{\frac{1}{2}}}^{\pm(\frac{d}{2}-1+l)}(\cos\theta)$ in terms of complete elliptic integrals for all half odd integer values
of $\pm(\frac{d}{2}-1+l)$ by starting with $\sP_{-{\frac{1}{2}}}(\cos\theta)$, $\sP_{-{\frac{1}{2}}}^1(\cos\theta)$, $\sQ_{-{\frac{1}{2}}}(\cos\theta)$, and $\sQ_{-{\frac{1}{2}}}^1(\cos\theta)$.
\end{rem}

\section[A fundamental solution of $\big({-}\Delta\pm\beta^2\big)$ in Riemannian spaces of constant curvature]{A fundamental solution of $\boldsymbol{\big({-}\Delta\pm\beta^2\big)}$ in Riemannian spaces\\ of constant curvature}\label{AGreensfunctioninthehyperboloidmodel}

Due to the fact that the spaces $\Hi_R^d$ and $\Si_R^d$ are homogeneous with respect to their isometry groups, the pseudo-orthogonal group ${\rm SO}(d,1)$ of $\Hi_R^d$ and the orthogonal group ${\rm O}(d)$ of $\Si_R^d$, and therefore isotropic manifolds, we expect that there exists a fundamental solution of the Helmholtz equation on each space with spherically symmetric dependence. We specifically expect these solutions to be given on $\Hi_R^d$ in terms of the associated Legendre function of the second kind with argument given by~$\cosh r$. This associated Legendre function naturally fits our requirements because it is singular at $r=0$ and vanishes at infinity, whereas the associated Legendre function of the first kind, with the same argument, is regular at $r=0$ and singular at infinity. One also might expect a fundamental solution of the Helmholtz equation on hyperspheres to be a~Ferrers function of the second kind with argument given by $\cos\theta$, since this is the result we have found previously for Laplace's equation (see~(\ref{thms1deq}) below). However on hyperspheres for the Helmholtz equation, there is a twist, as we will see in the next subsection.

\subsection[Properties of fundamental solutions of inhomogeneous Laplace/Helmholtz equations on $\Hi_R^d$ and $\Si_R^d$]{Properties of fundamental solutions \\ of inhomogeneous Laplace/Helmholtz equations on $\boldsymbol{\Hi_R^d}$ and $\boldsymbol{\Si_R^d}$} \label{PropertiesLaplaceHelmholtz}

In computing a fundamental solution of the Helmholtz equation, we know that on~$\Hi_R^d$, $\Si_R^d$ respectively, a fundamental solution of Helmholtz operators satisfy
\begin{gather}
\big({-}\Delta\pm\beta^2\big)\mch_{R,\beta}^{d,\pm}(\bfx,\bfxp)= \delta_h(\bfx,\bfxp),\label{hypeq3}\\
\big({-}\Delta\pm\beta^2\big)\mss_{R,\beta}^{d,\pm}(\bfx,\bfxp)= \delta_s(\bfx,\bfxp),\label{spheq3}
\end{gather}
where $h$ and $s$ are Riemannian metrics on $\Hi_R^d$ and $\Si_R^d$ respectively, and $\delta_h(\bfx,\bfxp)$ and $\delta_s(\bfx,\bfxp)$ are the corresponding Dirac delta distributions on those Riemannian manifolds.
\begin{rem}\label{badvaluessphere} Note that solutions to the inhomogeneous Helmholtz equation corresponding to the operator $\big({-}\Delta-\beta^2\big)$ on~$\Si_R^d$ exist except at a countably infinite number of `bad' positive values of $\beta^2$. These correspond to the eigenvalues of the Laplace--Beltrami operator on $\Si_R^d$ and are given by $\nu\in\N_0$. Also note that Szmytkowski~\cite{Szmytkowskiunitsphere07} has studied a {\it modified} or {\it reduced} fundamental solution to deal with this case, but we have not yet pursued this.
\end{rem}

Consider Poisson's equation on $\Si_R^d$, $-\Delta u=\rho$ on a compact Riemannian manifold $M$ with boundary $\partial M$. The divergence theorem on this manifold is given by (cf.~\cite[p.~43]{Lee})
\begin{gather}
\int_M \operatorname{div} \bfX {\rm d}V=\int_{\partial M} \langle \bfX,\bfN\rangle \mathrm{d}{\tilde V},\label{divergencethm}
\end{gather}
where ${\rm d}V$ is the Riemannian volume measure on $M$, $\bfN$ is the outward unit normal to $\partial M$, and ${\rm d}{\tilde V}$ is the Riemannian volume measure of the induced metric on~$\partial M$. If one invokes the divergence theorem on~$\Si_R^d$, with regard to Poisson's equation on this manifold using $\bfX=\nabla u$, then since $\partial \Si_R^d=\varnothing$, one ascertains
\begin{gather*}
\int_{\Si_R^d}\rho \mathrm{d}V=0.
\end{gather*}
Hence on $\Si_R^d$ (and on all compact manifolds without boundary), there does not exist a source density distribution $\rho$, satisfying Poisson's equation, with non-vanishing integral. In fact, a~fundamental solution of Laplace's equation on~$\Si_R^d$ (\ref{thms1deq}), as pointed out in~\cite[Section~5.4]{Chapling16} is actually the solution to the Poisson equation whose inhomogeneous source distribution is given by a~point source at the
origin and another with opposite sign, on the opposite pole of the hypersphere (both modeled by Dirac delta distributions).

Applying the divergence theorem to solutions of the inhomogeneous Helmholtz equations $\big({-}\Delta\pm\beta^2\big)u=\rho$ on $\Si_R^d$ produces
\begin{gather}
\int_{\Si_R^d} \rho \mathrm{d}V = \pm \beta^2\int_{\Si_R^d} u \mathrm{d}V.\label{hyperspheresinglenorm}
\end{gather}
Hence for the Helmholtz equation, the integral of the density distribution over the entire manifold is not required to vanish. Instead, it must be equal to the total integral of the solution multiplied by the wavenumber $\pm \beta^2$. Note that in the case where the total integral of the density distribution vanishes, this implies that the total integral of the solution must also vanish. This is the case for a fundamental solution of Laplace's equation on~$\Si_R^d$, in which case a fundamental solution must be an odd function of the geodesic distance as measured from the equator $\theta=\tfrac12 \pi$ of the hypersphere $\Si_R^d$. We will also study odd solutions for Helmholtz operators on these manifolds and compare the $\beta\to 0^{+}$ limit for Laplace operators for the opposite antipodal fundamental solutions
\begin{gather}
\big({-}\Delta\pm\beta^2\big)\mas_{R,\beta}^{d,\pm}(\bfx,\bfxp)= \delta_s(\bfx,\bfxp)-\delta_s(-\bfx,\bfxp),\label{oaspheq3}
\end{gather}
whose total integral over the manifold must vanish.

Another important requirement for a fundamental solution of the $\big({-}\Delta- \beta^2\big)$ Helmholtz operator in Euclidean space is the {\it Sommerfeld radiation condition}. Sommerfeld defined this radiation condition for a scalar field satisfying the Helmholtz equation as \cite[p.~189]{Sommerfeld49} (see also \cite[p.~396]{Schot92}) ``\dots\ the sources must be {\it sources}, not {\it sinks} of energy. The energy which is radiated from the sources must scatter to infinity; {\it no energy may be radiated from infinity into the prescribed singularities of the field}.'' On $\R^d$, this radiation condition is expressed mathematically as
\begin{gather}
\lim_{r\to\infty}r^{\frac{d-1}{2}}\left(\frac{\partial}{\partial r}-{\rm i}\beta\right)\mcg_{\beta}^{d,-}(r)=0,\label{sommerfeldcond}
\end{gather}
where $r=\|\bfx-\bfxp\|$, $\bfx,\bfxp\in\R^d$. The fundamental solution $\mcg_{\beta}^{d,-}$ satisfying this condition will be given in Theorem~\ref{thmg1n} below.

\subsection[A fundamental solution of the Helmholtz equation in spaces of constant curvature]{A fundamental solution of the Helmholtz equation\\ in spaces of constant curvature}\label{FunSolLapHd}

The Dirac delta distribution with metric $g$ is defined for an open set $U\subset\Hi_R^d$ with $\bfx,\bfxp\in\Hi_R^d$, or $U\subset\Si_R^d$ with $\bfx,\bfxp \in \Si_R^d$, such that
\begin{gather*}
\int_U\delta_g(\bfx,\bfxp) \mathrm{d}\mathrm{vol}_g =
 \begin{cases}
 1, & \mathrm{if}\ \bfxp\in U, \\
 0, & \mathrm{if}\ \bfxp\notin U,
 \end{cases}
\end{gather*}
where ${\rm d}\mathrm{vol}_g$ is the Riemannian (volume) measure, invariant under the isometry group ${\rm SO}(d,1)$ of the Riemannian manifold $\Hi_R^d$ or ${\rm O}(d)$ of $\Si_R^d$, given in standard geodesic polar coordinates on~$\Hi_R^d$ by
\begin{gather}
{\rm d}\mathrm{vol}_h=R^d \sinh^{d-1}r {\rm d}r {\rm d}\omega:= R^d\sinh^{d-1}r {\rm d}r \sin^{d-2}\theta_{d-1}\cdots\sin\theta_2 {\rm d}\theta_{1}\cdots {\rm d}\theta_{d-1},\label{eucsphmeasureinv}
\end{gather}
and on $\Si_R^d$ by
\begin{gather*}
{\rm d}\mathrm{vol}_s=R^d \sin^{d-1}\theta {\rm d}\theta {\rm d}\omega:= R^d\sin^{d-1}\theta {\rm d}\theta \sin^{d-2}\theta_{d-1}\cdots\sin\theta_{2} {\rm d}{\theta_1}\cdots {\rm d}\theta_{d-1},\label{sphdvol}
\end{gather*}
where ${\rm d}\omega$ is the Riemannian volume measure on $\Si^{d-1}$. Notice that as $r\to 0^+$ and $\theta\to 0^+$, ${\rm d}\mathrm{vol}_h$ and ${\rm d}\mathrm{vol}_s$ go to the Euclidean volume measure, invariant under the Euclidean motion group~${\rm E}(d)$, in standard geodesic polar coordinates. Therefore in standard geodesic polar coordinates, we have the following:
\begin{gather*}
\delta_h(\bfx,\bfxp)=\frac{\delta(r-r^\prime)}{R^d\sinh^{d-1}r^\prime}\frac{\delta(\theta_1-\theta_1^\prime)\delta(\theta_2-\theta_2^\prime)\cdots\delta(\theta_{d-1}-\theta_{d-1}^\prime)} {\sin\theta_2^\prime\sin^2\theta_3'\cdots\sin^{d-2}\theta_{d-1}^\prime},\label{diracdeltasubg}\\
\delta_s(\bfx,\bfxp)=\frac{\delta(\theta-\theta^\prime)}{R^d\sin^{d-1}\theta^\prime}\frac{\delta(\theta_1-\theta_1^\prime)\delta(\theta_2-\theta_2^\prime)
\cdots\delta(\theta_{d-1}-\theta_{d-1}^\prime)}{\sin\theta_2^\prime\sin^2\theta_3'\cdots\sin^{d-2}\theta_{d-1}^\prime}.\label{sphdiracdeltasubg}
\end{gather*}

In general since we can add any function that satisfies the homogeneous Helmholtz equation to a fundamental solution of the same equation and still have a fundamental solution, we will use this freedom to make our fundamental solution as simple as possible. It is reasonable to expect that there exists a particular spherically symmetric fundamental solution on each manifold ($\mch_{R,\beta}^{d,\pm}(\bfx,\bfxp)$ on $\Hi_R^d$ and $\mss_{R,\beta}^{d,\pm}(\bfx,\bfxp)$ on $\Si_R^d$) with pure radial ($\rho_h(\wbfx,\wbfxp)$~(\ref{rhodefn}) on $\Hi_R^d$ or $\rho_s(\wbfx,\wbfxp)$~(\ref{sphrhodefn}) on~$\Si_R^d$) and constant angular dependence (invariant under rotations centered about the origin), due to the influence of the point-like nature of the Dirac delta distribution. For a spherically symmetric solution of the Helmholtz equation, the corresponding $\Delta_{\Si^{d-1}}$ term vanishes since only the $l=0$ term survives. In other words, we expect there to exist a fundamental solution of the Helmholtz equation such that, aside from a multiplicative constant which depends on $R$, $\beta$, and $d$, $\mch_{R,\beta}^{d,\pm}(\bfx,\bfxp)=f_1(\rho_h)$ on $\Hi_R^d$ and $\mss_{R,\beta}^{d,\pm}(\bfx,\bfxp)=f_2(\rho_s)$ on $\Si_R^d$.

In the remainder of this section, we derive fundamental solutions for $\big({-}\Delta \pm \beta^2\big)$ on $\Hi_R^d$ (Section \ref{fshoHiRd}), $\big({-}\Delta+\beta^2\big)$ on $\Si_R^d$ (Section \ref{fsonsrdmdmb2}), and study two candidate fundamental solutions for $\big({-}\Delta-\beta^2\big)$ on $\Si_R^d$ (Section \ref{cfsSiRdmDmb2}). We will require a fundamental solution of Helmholtz operators in Euclidean space $\R^d$. These are well-known and can be found in, for instance, \cite[p.~139]{Cialdea}.

\begin{thm}\label{thmg1n}Let $d\in\N$. Define
\begin{gather}
\mcg_\beta^{d,+}({\bf x},{\bf x}^\prime)=\frac{1}{(2\pi)^{\frac{d}{2}}}\left( \frac{\beta}{\|\bfx-\bfxp\|}\right)^{\frac{d}{2}-1} K_{\frac{d}{2}-1}(\beta\|\bfx-\bfxp\|),\label{thmg1np}\\
\mcg_\beta^{d,-}(\bfx, \bfxp) =\frac{{\rm i}}{4}\left(\frac{\beta}{2\pi\|\bfx-\bfxp\|}\right)^{\frac{d}{2}-1} H_{\frac{d}{2}-1}^{(1)}(\beta\|\bfx-\bfxp\|),\label{thmg1nm}
\end{gather}
then $\mcg_\beta^{d,\pm}$ respectively are fundamental solutions for $\big({-}\Delta\pm\beta^2\big)$ in Euclidean space $\R^d$, where~$\Delta$ is the Laplace operator on $\R^d$.
\end{thm}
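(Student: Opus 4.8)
The plan is to verify the two defining properties of a fundamental solution separately: that $\mcg_\beta^{d,\pm}$ solves the homogeneous equation $\big({-}\Delta\pm\beta^2\big)\mcg_\beta^{d,\pm}=0$ away from the singularity $\bfx=\bfxp$, and that its singular behavior at $\bfx=\bfxp$ is exactly the one needed to reproduce the Dirac delta. By translation invariance of $\Delta$ on $\R^d$ we may set $\bfxp=\mathbf{0}$ and write $r=\|\bfx\|$; since both $\mcg_\beta^{d,\pm}$ are radial, everything reduces to a one-dimensional analysis in~$r$.

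First I would check the homogeneous equation. On radial functions the Euclidean Laplacian is $\Delta f=f''(r)+\tfrac{d-1}{r}f'(r)$, so $\big({-}\Delta\pm\beta^2\big)f=0$ becomes $f''+\tfrac{d-1}{r}f'\mp\beta^2 f=0$. Substituting $f(r)=r^{-(d/2-1)}g(\beta r)$ converts this into the modified Bessel equation of order $d/2-1$ in the upper-sign case, and into the ordinary Bessel equation of the same order in the lower-sign case. Thus $K_{d/2-1}(\beta r)$ and $H^{(1)}_{d/2-1}(\beta r)$ are admissible solutions, which is precisely the content of \eqref{thmg1np}, \eqref{thmg1nm}. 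For the $\big({-}\Delta-\beta^2\big)$ operator the choice of $H^{(1)}$ over $H^{(2)}$, $J$, or $Y$ is dictated by the Sommerfeld radiation condition \eqref{sommerfeldcond}, which the large-argument form \eqref{Hinfinity} shows is met only by $H^{(1)}$; for the $\big({-}\Delta+\beta^2\big)$ operator the choice of $K$ over $I$ is forced by decay as $r\to\infty$.

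It remains to pin down the singularity. Since $K_{d/2-1}$ and $H^{(1)}_{d/2-1}$ have the same leading small-argument behavior as a negative power of their argument, each $\mcg_\beta^{d,\pm}$ is locally integrable near $r=0$, so $\beta^2\mcg_\beta^{d,\pm}$ contributes nothing to the distributional singularity. Testing against $\varphi\in C_c^\infty(\R^d)$ and applying Green's second identity on $\R^d\setminus B_\epsilon(\mathbf{0})$, the volume terms carrying $\pm\beta^2$ cancel and the remaining surface integral over $\partial B_\epsilon$ must be evaluated as $\epsilon\to0^{+}$. Using the small-argument asymptotics---\eqref{Hat0} for the Hankel case and the analogous behavior $K_\nu(z)\sim\tfrac12\Gamma(\nu)(z/2)^{-\nu}$ $(\nu>0)$ for the modified Bessel case---one sees that $\mcg_\beta^{d,\pm}(r)$ carries the same leading singularity $\tfrac{\Gamma(d/2-1)}{4\pi^{d/2}}r^{2-d}$ as the Newtonian kernel for $d\ge3$ (and the corresponding logarithmic singularity when $d=2$). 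Consequently the surface integral tends to $\varphi(\mathbf{0})$, establishing $\big({-}\Delta\pm\beta^2\big)\mcg_\beta^{d,\pm}=\delta$ in the distributional sense.

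The main obstacle is the constant bookkeeping: one must track the prefactors $(2\pi)^{-d/2}$ and $\mathrm{i}/4$ through the small-argument expansions of $K_{d/2-1}$ and $H^{(1)}_{d/2-1}$ to confirm that the reproduced coefficient of $\delta$ is exactly $1$ rather than some spurious multiple, and to verify that the $\mathrm{i}$ in \eqref{thmg1nm} precisely cancels the $-\mathrm{i}$ in \eqref{Hat0}, leaving a real, correctly normalized singularity. A secondary point is that the low-dimensional cases require separate attention: for $d=2$ the order $d/2-1$ vanishes and the singularity is logarithmic, while for $d=1$ the function is continuous and the delta arises from the jump in $f'$ across $r=0$; in each case the same Green's-identity computation applies once the appropriate small-argument form is inserted.
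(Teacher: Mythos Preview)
Your outline is correct and in fact supplies considerably more than the paper does: the paper does not prove this theorem at all, but simply states it as a well-known result with a citation (``These are well-known and can be found in, for instance, \cite[p.~139]{Cialdea}''), adding only the remark that the $d=1$ case is easily verified. Your radial reduction to the (modified) Bessel equation, selection of the appropriate solution via decay or the Sommerfeld condition, and Green's-identity verification of the normalization constitute the standard direct proof that the paper omits.
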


Note most authors only present the above theorem for the case $d\ge 2$ but it is easily verified to also be valid for the case $d=1$ as well.
Fundamental solutions for $-\Delta$ on $\Hi_R^d$, $\Si_R^d$ (opposite antipodal) are respectively (see \cite[Theorem~3.1]{CohlKalII}, \cite[equation~(2.4)]{CohlPalmer})
\begin{gather}
\mch_R^d({\bf x},{\bf x}^\prime):= \frac{{\rm e}^{-{\rm i}\pi(\frac{d}{2}-1)}}{(2\pi)^{\frac{d}{2}}R^{d-2}\sinh^{\frac{d}{2}-1}\rho_h}Q_{\frac{d}{2}-1}^{\frac{d}{2}-1}(\cosh\rho_h),\label{thmh1deq}\\
\mathcal A_R^d ({\bf x},{\bf x}^\prime):=\frac{(d-2)!}{(2\pi)^{\frac{d}{2}}R^{d-2}\sin^{\frac{d}{2}-1}\rho_s}{\sf Q}_{\frac{d}{2}-1}^{1-\frac{d}{2}}(\cos\rho_s),\label{thms1deq}
\end{gather}
where the geodesic distances are given by (\ref{diststandard}), (\ref{diststandards}). These fundamental solutions will be useful to constrain the possible homogeneous solutions which are to be used to obtain our fundamental solutions in the $\beta\to0^{+}$ limit (when they exist, see Section~\ref{PropertiesLaplaceHelmholtz}).

Due to the fact that the spaces $\Hi_R^d$ and $\Si_R^d$ are homogeneous with respect to their iso\-met\-ry groups ${\rm SO}(d,1)$ and ${\rm O}(d)$ respectively, and therefore isotropic manifolds, without loss of generality, we are free to map the point $\bfxp\in\Hi_R^d$ or $\bfxp\in\Si_R^d$ to the origin. In this case, the global geodesic distance function $\rho_h$ coincides with the radial parameter~$r$ in geodesic polar coordinates, and~$\rho_s$ with~$\theta$; and therefore we may interchange $r$ with $\rho_h$ and $\theta$ with~$\rho_s$ accordingly (cf.~(\ref{diststandard}) with $r^\prime=0$) in our representation of a fundamental solution for the Helmholtz equation on each manifold. Notice that we can add any homogeneous solution to a~fundamental solution of the Helmholtz equations (\ref{genhyplap}), (\ref{gensphlap}), respectively, and still have a fundamental solution of Helmholtz operators. This is because a~fundamental solution of the Helmholtz equation must satisfy
\begin{gather*}
\int_{\Hi_R^d} \left(-\Delta\pm\beta^2\right)(\varphi_h(\bfxp))\mch_R^d(\bfx,\bfxp) \mathrm{d}\mathrm{vol}_h^\prime = \varphi_h(\bfx),\\
\int_{\Si_R^d} \left(-\Delta\pm\beta^2\right)(\varphi_s(\bfxp))\mathcal A_R^d(\bfx,\bfxp) \mathrm{d}\mathrm{vol}_s^\prime = \varphi_s(\bfx),
\end{gather*}
for all $\varphi_h\in {\mathcal D}(\Hi_R^d)$, $\varphi_s\in {\mathcal D}(\Si_R^d)$, where ${\mathcal D}$ is the space of test functions, and ${\rm d}\mathrm{vol}_{h,s}^\prime$ are the Riemannian (volume) measures on $\Hi_R^d$, $\Si_R^d$ respectively in the primed coordinates.

\subsection[Fundamental solutions of Helmholtz operators on $\Hi_R^d$]{Fundamental solutions of Helmholtz operators on $\boldsymbol{\Hi_R^d}$}\label{fshoHiRd}

\begin{thm}\label{thmh1d} Let $d\in\{2,3,\ldots\}$. Define $\mch_{R, \beta}^{d,+}\colon (\Hi_R^d\times\Hi_R^d)\setminus\{(\bfx,\bfx)\colon \bfx\in\Hi_R^d\}\to\R$ as
\begin{gather*}
\mch_{R, \beta}^{d,+}(\bfx,\bfx^\prime) :=\frac{{\rm e}^{-{\rm i}\pi(\frac{d}{2}-1)}}{(2\pi)^{\frac{d}{2}}R^{d-2}(\sinh\rho_h)^{\frac{d}{2}-1}}Q_{-{\frac{1}{2}}+{\frac{1}{2}}\sqrt{(d-1)^2+4\beta^2R^2}}^{\frac{d}{2}-1}(\cosh\rho_h),
\end{gather*}
where $\rho_h := \cosh^{-1} ([\wbfx, \wbfxp] )$ is the geodesic distance between $\wbfx$ and $\wbfxp$ on the pseudo-sphere of unit radius $\Hi^d$, with $\wbfx=\bfx/R$, $\wbfxp=\bfxp/R$. Then $\mch_{R, \beta}^{d,+}$ is a fundamental solution for the Helmholtz operator $\big({-}\Delta+\beta^2\big)$ on~$\Hi_R^d$.
\end{thm}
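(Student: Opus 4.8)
The plan is to verify the two defining properties of a fundamental solution: that $\big({-}\Delta+\beta^2\big)\mch_{R,\beta}^{d,+}$ annihilates $\mch_{R,\beta}^{d,+}$ away from the source point, and that the distributional singularity at the source reproduces $\delta_h$ with coefficient exactly $1$. First I would use the isotropy of $\Hi_R^d$ under its isometry group ${\rm SO}(d,1)$ to map $\bfxp$ to the origin, so that $\rho_h$ coincides with the radial coordinate $r$ and, for a spherically symmetric function, only the $l=0$ sector survives (the $\Delta_{\Si^{d-1}}$ term contributes nothing). Under this reduction the claim splits into a homogeneous part and a normalization part.

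For $r>0$, observe that the radial factor of $\mch_{R,\beta}^{d,+}$ is precisely $h_{2,R,\beta,+}^{d,0,+}(r)=\sinh^{-(\frac{d}{2}-1)}r\,Q_{\nu}^{\frac{d}{2}-1}(\cosh r)$ with $\nu=-\tfrac12+\tfrac12\sqrt{(d-1)^2+4\beta^2R^2}$, which was exhibited in Section~\ref{SepVarStaHyp} as a solution of the radial equation \eqref{hypodep} with $l=0$; the particular value of $\nu$ is exactly what produces the eigenvalue contribution $+\beta^2R^2$. Hence $\big({-}\Delta+\beta^2\big)\mch_{R,\beta}^{d,+}=0$ on $\Hi_R^d\setminus\{\bfxp\}$. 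The second-kind function $Q_\nu^\mu$, rather than $P_\nu^\mu$, is the correct choice because it is the radial solution that is singular at $r=0$ and decays as $r\to\infty$ (from \eqref{Qnumuzinftysim}, $Q_\nu^\mu(\cosh r)$ decays exponentially), which also secures local integrability at infinity.

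The substantive step is the normalization. I would show $\mch_{R,\beta}^{d,+}$ is locally integrable against $\mathrm{d}\mathrm{vol}_h$ near the origin (the singularity is $O(\rho^{2-d})$, integrable after multiplication by $\rho^{d-1}\,\mathrm{d}\rho$), so it defines a distribution, and then test $\big({-}\Delta+\beta^2\big)\mch_{R,\beta}^{d,+}$ against $\varphi\in\mathcal{D}(\Hi_R^d)$. Since the operator annihilates $\mch_{R,\beta}^{d,+}$ off the origin and the $\beta^2$-term is integrable, Green's identity via the divergence theorem \eqref{divergencethm} collapses the pairing to the radial flux $\lim_{\epsilon\to0^+}\int_{\partial B_\epsilon}\partial_r\mch_{R,\beta}^{d,+}\,\mathrm{d}\tilde V$ across a small geodesic sphere of radius $\epsilon$. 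Feeding in the near-$1$ behavior $Q_\nu^\mu(z)\sim\tfrac12 e^{{\rm i}\pi\mu}\Gamma(\mu)\bigl(\tfrac{2}{z-1}\bigr)^{\mu/2}$ with $\mu=\tfrac d2-1$, together with $\cosh r-1\sim\tfrac12 r^2$, $\sinh r\sim r$, and the surface measure $R^{d-1}\sinh^{d-1}\epsilon\,\mathrm{d}\omega$, the flux tends to a finite nonzero limit; I would check that the prefactor $e^{-{\rm i}\pi(\frac{d}{2}-1)}/\big((2\pi)^{\frac{d}{2}}R^{d-2}\big)$ is exactly what forces this limit to equal $\varphi(\bfxp)$. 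In particular the phase $e^{-{\rm i}\pi\mu}$ cancels the $e^{{\rm i}\pi\mu}$ in the $Q$-asymptotics, rendering the result real, and the remaining factors collapse to the standard constant $c_d=1/\big((d-2)\omega_{d-1}\big)$ with $\omega_{d-1}=2\pi^{\frac{d}{2}}/\Gamma\big(\tfrac d2\big)$.

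The main obstacle I anticipate is the bookkeeping in this last step: tracking the powers of $2$, the factors of $R$ arising from $\rho=Rr$, the surface area $\omega_{d-1}$, and the gamma factors from the $z\to1^+$ expansion so that they telescope to unity. A secondary point is the dimensional split: for $d\ge3$ the singularity is the power law above, whereas for $d=2$ one has $\mu=0$ and $Q_\nu(z)$ carries instead a logarithmic singularity near $z=1$, mirroring the two-dimensional $K_0$ behavior, so I would treat this case separately and verify that the same normalization $1/(2\pi)$ emerges. As an independent consistency check I would compare the leading singularity of $\mch_{R,\beta}^{d,+}$ in the true geodesic distance $\rho=Rr$ with that of the Euclidean fundamental solution $\mcg_\beta^{d,+}$ of Theorem~\ref{thmg1n} (equation~\eqref{thmg1np}); these must agree because $\mathrm{d}\mathrm{vol}_h$ tends to the Euclidean measure as $r\to0^+$, confirming that the coefficient of $\delta_h$ is indeed $1$.
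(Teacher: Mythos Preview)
Your proposal is correct and follows the standard rigorous route: verify the homogeneous equation away from the singularity (via the radial ODE of Section~\ref{SepVarStaHyp}), then confirm the Dirac-delta normalization directly by computing the small-$\epsilon$ flux through a geodesic sphere using the $z\to1^+$ behavior of $Q_\nu^\mu$. This is genuinely different from the paper's argument. The paper instead fixes the constant $c$ by taking the \emph{flat-space limit} $R\to\infty$ (with physical geodesic distance $R\rho_h$ held fixed), invoking the large-degree asymptotics \eqref{iQuaa} for $Q_\nu^\mu(\cosh r)$ as $\nu\sim\beta R\to\infty$, and matching the outcome to the known Euclidean fundamental solution $\mcg_\beta^{d,+}$ of Theorem~\ref{thmg1n}. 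Your approach buys a self-contained distributional verification that neither relies on Theorem~\ref{thmg1n} nor on the (physically natural but here unproved) principle that the $R\to\infty$ limit of a fundamental solution on $\Hi_R^d$ is the Euclidean one; it also handles $d=2$ and $d\ge3$ uniformly once the respective $z\to1^+$ asymptotics are inserted. The paper's approach is shorter once those ingredients are taken as given and makes the geometric degeneration to $\R^d$ explicit. Your closing consistency check---comparing the leading $\rho\to0$ singularity with $\mcg_\beta^{d,+}$---is in effect a local version of the paper's global flat-space matching.
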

\begin{rem}\label{refereerem} It has been brought to our attention by one of the referees that a fundamental solution for the Helmholtz equation $\mch_{1, \beta}^{d,+}(\bfx,\bfx^\prime)$ on the unit-radius hyperboloid $\Hi^d$ is given in \cite[Theorem~3.3]{Matsumoto2001}. Note that the half-space model of hyperbolic geometry is used there, rather than the hyperboloid model which is used in this paper.
\end{rem}
\begin{proof}Our derivation for a fundamental solution of the Helmholtz equation $\mch_{R, \beta}^{d,+}(\bfx,\bfx^\prime)$ on the $R$-radius hyperboloid $\Hi_R^d$ is as follows. By starting with the spherically symmetric solution
\begin{gather*}
\frac{1}{\sinh^{\frac{d}{2}-1}r} Q_{-{\frac{1}{2}}+{\frac{1}{2}}\sqrt{(d-1)^2+4\beta^2R^2}}^{\frac{d}{2}-1}(\cosh r),
\end{gather*}
which is singular at $r=0$, to the homogeneous the Helmholtz equation, we have
\begin{gather*}
\mch_{R,\beta}^{d,+} (\bfx, \bfxp) = \frac{c}{\sinh^{\frac{d}{2}-1}r}Q_{-{\frac{1}{2}}+{\frac{1}{2}}\sqrt{(d-1)^2+4\beta^2R^2}}^{\frac{d}{2}-1}(\cosh \rho_h).
\end{gather*}
The constant $c$ is obtained by matching to a Euclidean fundamental solution of the Helmholtz equation in the flat-space limit $R\to\infty$ (see \cite[Section~2.4]{CohlPalmer}) with geodesic distance $R\rho_h$ and thus $\rho_h\to 0^+$. Using (\ref{iQuaa}), $\nu =-{\tfrac{1}{2}}+{\tfrac{1}{2}}\sqrt{(d-1)^2+4\beta^2R^2} \sim\beta R \to \infty$, as $R\to\infty$ and $\mu=\frac{d}{2}-1$, we have
\begin{gather}
\mch_{R,\beta}^{d,+}(\bfx,\bfxp) \sim \frac{c}{\sinh^{\frac{d}{2}-1}(\rho)}{\rm e}^{{\rm i}\pi\mu}\nu^\mu \left(\frac{\rho_h}{\sinh\rho_h}\right)^{{\frac{1}{2}}} K_\mu \left((\nu+{\tfrac{1}{2}})\rho_h\right)\nonumber\\
\hphantom{\mch_{R,\beta}^{d,+}(\bfx,\bfxp)}{} \sim c {\rm e}^{{\rm i}(\frac{d}{2}-1)\pi}\beta^{\frac{d}{2}-1}\frac{R^{d-2}}{(R\rho)^{\frac{d}{2}-1}} K_{\frac{d}{2}-1}(\beta R \rho)\nonumber\\
\hphantom{\mch_{R,\beta}^{d,+}(\bfx,\bfxp)}{} \sim c {\rm e}^{{\rm i}(\frac{d}{2}-1)\pi}\beta^{\frac{d}{2}-1}\frac{R^{d-2}}{\|\bfx-\bfxp\|^{\frac{d}{2}-1}} K_{\frac{d}{2}-1}(\beta\|\bfx-\bfxp\|),\label{huaa}
\end{gather}
\noindent where we have used $\sinh\rho_h\sim\rho_h$ as $\rho_h\to 0$. By equating (\ref{huaa}) with $\mcg_\beta^{d,+}$ and solving for $c$, we obtain $c = {\rm e}^{-{\rm i}\pi(\frac{d}{2}-1)} (2\pi)^{-\frac{d}{2}}R^{2-d}$, which completes the proof.
\end{proof}

\begin{rem}For $d=2$, the particular choice of $\beta=\sqrt{3}/(2R)$, a fundamental solution for this Helmholtz operator $\mch_{R,\beta}^{d,+}$ reduces to an expression involving complete elliptic integrals of the first and second kind through \cite[equations~(14.5.24)--(14.5.27)]{NIST:DLMF}. This is also true for even $d\ge 2$ with different choices of $\beta$, namely
\begin{gather*}
\beta=\frac{\sqrt{(n+1)^2-(d-1)^2}}{2R},
\end{gather*}
where $n$ is an odd-integer, so that the associated Legendre function of the second kind becomes a toroidal harmonic. All toroidal harmonics can be obtained, for instance, by using \cite[equations~(14.5.24)--(14.5.27)]{NIST:DLMF} as starting points for the recurrence relations for associated Legendre functions of the second kind such as \cite[equations~(14.10.3), (14.10.6) and (14.10.7)]{NIST:DLMF}.
\end{rem}

\begin{thm}\label{thmh1dminus}Let $d\in\{2,3,\ldots\}.$ Define $\mch_{R, \beta}^{d,-}\colon (\Hi_R^d\times\Hi_R^d)\setminus\{(\bfx,\bfx)\colon \bfx\in\Hi_R^d\}\to\R$ as
\begin{gather*}
\mch_{R, \beta}^{d,-}(\bfx,\bfx^\prime) :=\frac{{\rm e}^{-{\rm i}\pi(\frac{d}{2}-1)}}{(2\pi)^{\frac{d}{2}}R^{d-2}(\sinh\rho_h)^{\frac{d}{2}-1}}Q_\nu^{\frac{d}{2}-1}(\cosh\rho_h),
\end{gather*}
where $\rho_h := \cosh^{-1}\left([\wbfx, \wbfxp]\right)$ is the geodesic distance between $\wbfx$ and $\wbfxp$ on the pseudo-sphere of unit radius~$\Hi^d$,
\begin{gather}
\nu=
\begin{cases}
-{\tfrac{1}{2}}+{\tfrac{1}{2}}\sqrt{(d-1)^2-4\beta^2R^2}, & \mathrm{if} \ (d-1)^2-4\beta^2R^2 \geq 0,\vspace{1mm}\\
-{\tfrac{1}{2}}-\frac{{\rm i}}{2}\sqrt{4\beta^2R^2-(d-1)^2}, & \mathrm{if} \ (d-1)^2-4\beta^2R^2 \le 0,
\end{cases}\label{nuhyperboloidminus}
\end{gather}
$\wbfx=\bfx/R$, $\wbfxp=\bfxp/R$. Then $\mch_{R, \beta}^{d,-}$ is a fundamental solution for the Helmholtz operator $\big({-}\Delta- \beta^2\big)$ on $\Hi_R^d$.
\end{thm}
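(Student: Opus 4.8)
The plan is to mirror the proof of Theorem~\ref{thmh1d}, replacing the modified-Bessel flat-space matching used there by the Hankel-function matching appropriate to the operator $\big({-}\Delta-\beta^2\big)$. As before, I would start from the spherically symmetric homogeneous solution that is singular at $r=0$, namely $\sinh^{1-\frac{d}{2}}r\,Q_\nu^{\frac{d}{2}-1}(\cosh r)$ with $\nu$ as in~\eqref{nuhyperboloidminus}, which by construction solves the radial equation~\eqref{hypodem} with $l=0$, and posit $\mch_{R,\beta}^{d,-}(\bfx,\bfxp)=c\,\sinh^{1-\frac{d}{2}}\rho_h\,Q_\nu^{\frac{d}{2}-1}(\cosh\rho_h)$ for a constant $c=c(R,d)$ to be fixed by the flat-space limit $R\to\infty$ with $R\rho_h$ held fixed (so $\rho_h\to0^{+}$), in which $\mch_{R,\beta}^{d,-}$ must reduce to the Euclidean fundamental solution $\mcg_\beta^{d,-}$ of~\eqref{thmg1nm}.

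The decisive structural point is the choice of the lower sign in the conical degree~\eqref{nuhyperboloidminus}. For fixed $\beta>0$ the quantity $(d-1)^2-4\beta^2R^2$ is negative once $R$ is large, so the relevant branch is $\nu=-\tfrac{1}{2}-{\rm i}\tau$ with $\tau=\tfrac{1}{2}\sqrt{4\beta^2R^2-(d-1)^2}\sim\beta R$. By Theorem~\ref{thm:1}, only $Q_{-\frac12-{\rm i}\tau}^{\mu}(\cosh r)$ is asymptotic to the first Hankel function $H_\mu^{(1)}$ (via~\eqref{Dun.eq22}), whereas the upper-sign conical function gives $H_\mu^{(2)}$ (via~\eqref{Dun.eq21}). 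Since $\mcg_\beta^{d,-}$ carries $H_{\frac{d}{2}-1}^{(1)}$ --- the outgoing wave dictated by the Sommerfeld radiation condition~\eqref{sommerfeldcond} --- the minus sign is forced, and it is exactly this choice that makes the matching succeed.

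With $\mu=\tfrac{d}{2}-1$, applying~\eqref{Dun.eq22} and using $\tau\rho_h\sim\beta R\rho_h=\beta\|\bfx-\bfxp\|$ together with $\sinh\rho_h\sim\rho_h$, the leading behavior of $Q_\nu^{\frac{d}{2}-1}(\cosh\rho_h)$ is proportional to $e^{{\rm i}\pi(\frac{d}{2}-1)}(\beta R)^{\frac{d}{2}-1}H_{\frac{d}{2}-1}^{(1)}(\beta\|\bfx-\bfxp\|)$. Substituting into the posited form and comparing with~\eqref{thmg1nm} yields, after cancelling common factors, the same constant as in Theorem~\ref{thmh1d}, namely $c=e^{-{\rm i}\pi(\frac{d}{2}-1)}(2\pi)^{-\frac{d}{2}}R^{2-d}$. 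Notably $c$ is $\beta$-independent and coincides with the constants in~\eqref{thmh1deq} and Theorem~\ref{thmh1d}; since $\nu\mapsto Q_\nu^{\frac{d}{2}-1}(\cosh\rho_h)$ is analytic in $\nu$, the same $c$ serves on the real-degree branch $(d-1)^2-4\beta^2R^2\ge0$ as well.

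The main thing to be careful about is that this single flat-space computation does double duty: besides fixing $c$, it must certify that the Dirac-delta strength is exactly $1$. This works because near $\rho_h=0$ the term $-\beta^2$ and the curvature corrections in~\eqref{hypodem} are subleading, so the singular part of $\sinh^{1-\frac{d}{2}}\rho_h\,Q_\nu^{\frac{d}{2}-1}(\cosh\rho_h)$ as $\rho_h\to0^{+}$ is identical, up to the factor $c$, to that of $\mcg_\beta^{d,-}$; matching the leading singularities therefore forces the coefficient of $\delta_h$ to be precisely $1$. Verifying that $\mch_{R,\beta}^{d,-}$ satisfies the homogeneous equation away from the origin is immediate from the construction, so the only real work is this asymptotic matching and the sign bookkeeping in~\eqref{Dun.eq22}.
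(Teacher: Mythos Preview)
Your proposal is correct and follows essentially the same route as the paper: pick the conical branch $\nu=-\tfrac12-{\rm i}\tau$ so that~\eqref{Dun.eq22} yields $H_\mu^{(1)}$ in the flat-space limit, match to $\mcg_\beta^{d,-}$ to determine $c={\rm e}^{-{\rm i}\pi(\frac{d}{2}-1)}(2\pi)^{-\frac{d}{2}}R^{2-d}$, and then extend to the real-degree branch. The only cosmetic difference is that the paper handles the regime $(d-1)^2-4\beta^2R^2\ge0$ by matching to the Laplace fundamental solution~\eqref{thmh1deq} as $\beta\to0$ and then equating constants at the crossover $\beta=(d-1)/(2R)$, whereas you invoke directly that $c$ is $\beta$-independent (equivalently, that the leading singularity of $Q_\nu^{\frac{d}{2}-1}(\cosh\rho_h)/\sinh^{\frac{d}{2}-1}\rho_h$ at $\rho_h=0$ is $\nu$-independent); both arguments accomplish the same transfer.
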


\begin{proof}Our derivation for a fundamental solution of the Helmholtz equation $\mch_{R, \beta}^{d,-}(\bfx,\bfx^\prime)$ on the $R$-radius hyperboloid $\Hi_R^d$ is as follows. In the flat-space limit $R\to\infty$, we must start with a solution such that $4\beta^2R^2>(d-1)^2$. Therefore we look for spherically symmetric conical solutions of the form
\begin{gather*}
\frac{1}{\sinh^{\frac{d}{2}-1}r} Q_{-{\frac{1}{2}}\pm {\tfrac{{\rm i}}{2}}\sqrt{4\beta^2R^2-(d-1)^2}}^{\frac{d}{2}-1}(\cosh r),
\end{gather*}
(singular at $r=0$), of the homogeneous Helmholtz equation. Therefore we must decide which conical form
\begin{gather*}
\mch_{R,\beta}^{d,-} (\bfx, \bfxp) = \frac{c}{\sinh^{\frac{d}{2}-1}r}Q_{-{\frac{1}{2}}\pm{\tfrac{{\rm i}}{2}}\sqrt{4\beta^2R^2-(d-1)^2}}^{\frac{d}{2}-1}(\cosh \rho_h),
\end{gather*}
where $4\beta^2R^2>(d-1)^2$, is appropriate. Using Theorem~\ref{thmg1n}, we know the flat-space limit should produce a Hankel function of the first kind $H_{\frac{d}{2}-1}^{(1)}$, and therefore from Theorem \ref{thm:1}, we see that the solution with the minus sign in front of the imaginary unit is necessary. The constant~$c$ is obtained by matching to a Euclidean fundamental solution of the Helmholtz equation in the flat-space limit $R\to\infty$ with geodesic distance $R\rho_h$ and thus $\rho_h\to 0^+$. In the flat-space limit $\tau\sim\beta R$ and $\rho_h\sim r/R$, where~$r$ is the geodesic distance between two points in Euclidean space~$\R^d$. Using~(\ref{Dun.eq22}) (with the plus sign in the order), as $R\to\infty$ and $\mu=\frac{d}{2}-1$, we have
\begin{gather}
\mch_{R,\beta}^{d,-}(\bfx,\bfxp) \sim \frac{{\rm i}\pi c {\rm e}^{{\rm i}\pi\mu}\tau^\mu}{2(\sinh\rho_h)^{\frac{d}{2}-1}}\sqrt{\frac{\rho_h}{\sinh\rho_h}}H_\mu^{(1)}(\tau\rho_h)\nonumber\\
\hphantom{\mch_{R,\beta}^{d,-}(\bfx,\bfxp)}{} \sim \frac{c{\rm i}\pi {\rm e}^{{\rm i}\pi(\frac{d}{2}-1)}\beta^{\frac{d}{2}-1} R^{d-2}}{2r^{\frac{d}{2}-1}}H^{(1)}_{\frac{d}{2}-1}(\beta r)\nonumber\\
\hphantom{\mch_{R,\beta}^{d,-}(\bfx,\bfxp)}{} = \frac{c{\rm i}\pi {\rm e}^{{\rm i}\pi(\frac{d}{2}-1)}\beta^{\frac{d}{2}-1} R^{d-2}}{2\|\bfx-\bfxp\|^{\frac{d}{2}-1}}H^{(1)}_{\frac{d}{2}-1}(\beta \|\bfx-\bfxp\|).\label{huaam}
\end{gather}
\noindent By equating (\ref{huaam}) with $\mcg_\beta^{d,-}$ in Theorem \ref{thmg1n} and solving for $c$, we obtain
\begin{gather*}
c = \frac{{\rm e}^{-{\rm i}(\frac{d}{2}-1)\pi}}{(2\pi)^{\frac{d}{2}}R^{d-2}},
\end{gather*}
which proves the full behavior of $\mch_{R,\beta}^{d,-}$ for $4\beta^2R^2>(d-1)^2$. For the case $4\beta^2R^2\le (d-1)^2$, we match up to a fundamental solution of $-\Delta$ on $\Hi_R^d$ (\ref{thmh1deq}) as $\beta\to 0$ which requires a fundamental solution with functional dependence as follows
\begin{gather*}
\mch_{R,\beta}^{d,-}(\bfx,\bfxp)=\frac{g}{\sinh^{\frac{d}{2}-1}\rho_h} Q_{-{\frac{1}{2}}+{\frac{1}{2}}\sqrt{(d-1)^2-4\beta^2R^2}}^{\frac{d}{2}-1}(\cosh \rho_h).
\end{gather*}
Matching the constants $c$ and $g$ at $\beta=(d-1)/(2R)$, completes the proof.
\end{proof}

\subsubsection[Uniqueness of fundamental solutions for $\big({-}\Delta\pm\beta^2\big)$ on $\Hi_R^d$]{Uniqueness of fundamental solutions for $\boldsymbol{\big({-}\Delta\pm\beta^2\big)}$ on $\boldsymbol{\Hi_R^d}$}\label{Uniquenessoffundamentalsolutionintermsofdecayatinfinity}

It is clear that, in general, a fundamental solution of the Helmholtz equation in the hyperboloid model of hyperbolic geometry ${\mathcal H}_{R,\beta}^{d,\pm}$ is not unique since one can add any homogeneous solution of the Helmholtz equations $h_\pm\colon {\bf H}_R^d\to{\mathbf R}$ to ${\mathcal H}_{R,\beta}^{d,\pm}$ and still obtain solutions to~(\ref{hypeq3}) since $h_\pm$ is in the kernel of $\big({-}\Delta\pm\beta^2\big)$.

\begin{prop}\label{propuniquelaplace} There exists precisely one $C^\infty$-function $H_\pm\colon (\Hi_R^d\times\Hi_R^d)\setminus\{(\bfx,\bfx)\colon \bfx\in\Hi_R^d\}\to\R$ such that for all $\bfxp\in\Hi_R^d$ the function
$(H_\pm)_\bfxp\colon \Hi_R^d\setminus\{\bfxp\}\to\R$ defined by $(H_\pm)_\bfxp(\bfx):=(H_\pm)(\bfx,\bfxp)$ is a function on $\Hi_R^d$ with $\big({-}\Delta\pm\beta^2\big) (H_\pm)_\bfxp=\delta_g(\cdot,\bfxp)$, and
\begin{gather}
\lim_{d_h(\bfx,\bfxp)\to\infty}(H_\pm)_\bfxp(\bfx)=0,\label{limitequationunique}
\end{gather}
 where $d_h(\bfx,\bfxp)$ is the geodesic distance between two points $\bfx,\bfxp\in\Hi_R^d$.
\end{prop}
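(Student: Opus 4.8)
The plan is to reduce the assertion to a Liouville-type uniqueness statement for the homogeneous equation and then read off the answer from the growth/decay dichotomy of the radial solutions in \eqref{hypodep}. Existence of $H_\pm$ is already supplied by the explicit fundamental solutions $\mch_{R,\beta}^{d,+}$ and $\mch_{R,\beta}^{d,-}$ of Theorems~\ref{thmh1d} and~\ref{thmh1dminus}, both of which vanish as $d_h(\bfx,\bfxp)\to\infty$, so only uniqueness is at stake. Suppose $H_\pm$ and $\widetilde{H}_\pm$ both satisfy the stated conditions, fix $\bfxp$, and put $h:=(H_\pm)_\bfxp-(\widetilde{H}_\pm)_\bfxp$. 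Each summand produces the same source $\delta_g(\cdot,\bfxp)$, so $h$ is a distributional solution of $(-\Delta\pm\beta^2)h=0$ with no source on all of $\Hi_R^d$; by elliptic regularity $h$ is a genuine $C^\infty$ solution everywhere (including $\bfxp$), and $h\to 0$ at infinity by \eqref{limitequationunique}. Thus the proposition reduces to showing that the only smooth solution of $(-\Delta\pm\beta^2)h=0$ on $\Hi_R^d$ tending to $0$ at infinity is $h\equiv 0$.

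For the operator $(-\Delta+\beta^2)$ I would argue by the maximum principle. Since $-\Delta$ is the \emph{positive} Laplacian, $\Delta h\le 0$ at any interior maximum of $h$, whereas the equation gives $\Delta h=\beta^2 h>0$ wherever $h>0$; hence $h$ admits no positive interior maximum. As $h$ is continuous on the complete manifold $\Hi_R^d$ and tends to $0$ at infinity, a positive supremum would necessarily be attained at an interior point, a contradiction, so $h\le 0$; replacing $h$ by $-h$ (which solves the same equation) gives $h\ge 0$, whence $h\equiv 0$. The same conclusion, and the structural reason behind it, emerges from separation of variables: writing $h=\sum_{l,K}h_l(r)Y_l^K(\wbfx)$, each $h_l$ solves \eqref{hypodep} and, being finite at $r=0$, is a multiple of the solution regular at the origin, the one built from $P_{\nu}^{-(\frac{d}{2}-1+l)}(\cosh r)$ with $\nu=\degp$. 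As noted in the discussion preceding Section~\ref{PropertiesLaplaceHelmholtz}, this first-kind solution is \emph{unbounded at infinity}, its exponential rate $\nu-\big(\tfrac{d}{2}-1\big)=-\tfrac{d-1}{2}+\tfrac12\sqrt{(d-1)^2+4\beta^2R^2}$ being strictly positive; since $h\to 0$, every coefficient must vanish and $h\equiv 0$.

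The hard part is the operator $(-\Delta-\beta^2)$, which I expect to be the main obstacle. Here the zeroth-order term has the wrong sign for the maximum principle — $\Delta h=-\beta^2 h\le 0$ at a positive maximum is perfectly consistent, so that argument collapses — and, worse, the degree $\nu$ of \eqref{nuhyperboloidminus} makes the regular radial solution \emph{decay} rather than grow: when $4\beta^2R^2\ge (d-1)^2$ it is an oscillatory conical function with $\nu=-\tfrac12-\tfrac{{\rm i}}{2}\sqrt{4\beta^2R^2-(d-1)^2}$, and when $4\beta^2R^2<(d-1)^2$ one still has $\nu-\big(\tfrac{d}{2}-1\big)<0$. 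Genuinely decaying homogeneous solutions therefore exist, so the bare condition \eqref{limitequationunique} does not by itself isolate a unique fundamental solution, and recovering uniqueness demands supplementing the decay with the finer data carried by the branch selected in Theorem~\ref{thmh1dminus} — the outgoing $H^{(1)}$ behaviour, i.e.\ a Sommerfeld-type radiation condition in the spirit of \eqref{sommerfeldcond} — or, for $\beta^2$ below the spectral bottom $\big(\tfrac{d-1}{2R}\big)^2$, an $L^2$/resolvent argument. In either approach the large-degree conical asymptotics of Theorem~\ref{thm:1} are exactly the tool needed to make the matching of admissible solutions rigorous.
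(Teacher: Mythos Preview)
For $(-\Delta+\beta^2)$ your argument is essentially the paper's: subtract two candidates, extend the difference $h$ smoothly across $\bfxp$ by elliptic regularity, note $h\to 0$ at infinity, and conclude $h\equiv 0$ via the maximum principle. The paper phrases the last step through a compact exhaustion $\Omega_k\subset\Hi_R^d$ and the strong elliptic maximum/minimum principle on each $\Omega_k$, passing to a subsequence $\bfx_k\in\partial\Omega_k$ tending to infinity; you phrase it directly, but the content is the same. Your separation-of-variables supplement is an additional route not present in the paper.

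For $(-\Delta-\beta^2)$ you and the paper part ways. The paper does \emph{not} distinguish the two signs: it states the strong elliptic maximum/minimum principle in the blanket form ``if $u$ is a homogeneous solution to an elliptic operator, then the supremum/infimum of $u$ in $\Omega$ coincides with the supremum/infimum of $u$ on $\partial\Omega$'' and applies it uniformly to $h_+$ and $h_-$. Your observation that this form of the maximum principle requires a nonnegative zeroth-order coefficient, together with your explicit radial check that the $P$-type solution regular at the origin decays at infinity (exponent $\nu-(\tfrac{d}{2}-1)<0$ for every $\beta>0$), identifies a genuine difficulty that the paper's argument passes over. So while your handling of the $+$ case coincides with the paper, your treatment of the $-$ case is more cautious than the paper's own proof, which treats both signs identically without the sign check you raise.
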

\begin{proof} Existence is clear. For uniqueness, suppose $H_\pm$ and $\tilde{H}_\pm$ are two such functions. Let $\bfxp\in\Hi_R^d$. Define the $C^\infty$-function $h_\pm\colon \Hi_R^d\setminus\{\bfxp\}\to\R$ by $h_\pm=(H_\pm)_\bfxp-({\tilde{H}_\pm)}_\bfxp$. Then $h_\pm$ is a~function on $\Hi_R^d$ with $\big({-}\Delta\pm\beta^2\big) h_\pm=0$. Since $\Hi_R^d$ is locally Euclidean, one has by local elliptic regularity that~$h_\pm$ can be extended to a $C^\infty$-function $\hat{h}_\pm\colon \Hi_R^d\to\R$. It follows from~(\ref{limitequationunique}) for~$H_\pm$ and~$\tilde{H}_\pm$ that
\begin{gather}
\lim_{d_{h,s}(\bfx,\bfxp)\to\infty}\hat{h}_\pm(\bfx)=0.\label{limitequationunique2}
\end{gather}
The strong elliptic maximum/minimum principle on a Riemannian manifold for a bounded domain $\Omega$ states that if $u$ is a homogeneous solution to an elliptic operator, then the supremum/infimum of $u$ in $\Omega$ coincides with the supremum/infimum of~$u$ on the boundary~$\partial\Omega$. By using a compact exhaustion sequence~$\Omega_k$ in a non-compact connected Riemannian manifold and passing to a subsequence $\bfx_k\in\partial\Omega_k$ such that $\bfx_k\to\infty$, the strong elliptic maximum/minimum principle can be extended to non-compact connected Riemannian manifolds with boundary conditions at infinity (see for instance \cite[Section~8.3.2]{Grigor}). Taking $\Omega_k\subset\Hi_R^d$, the strong elliptic maximum/minimum principle for non-compact connected Riemannian manifolds implies using~(\ref{limitequationunique2}) that $\hat{h}_\pm=0$. Therefore $h_\pm=0$ and $H_\pm(\bfx,\bfxp)=\tilde{H}_\pm(\bfx,\bfxp)$ for all $\bfx\in\Hi_R^d\setminus\{\bfxp\}$.
\end{proof}

By Proposition \ref{propuniquelaplace}, for $d\ge 2$, the functions $\mch_{R,\beta}^{d,\pm}$ are the unique fundamental solutions for these Helmholtz operators which satisfy the vanishing decay~(\ref{limitequationunique}).

\subsection[Fundamental solutions of Helmholtz operators $\big({-}\Delta\pm\beta^2\big)$ on $\Si_R^d$]{Fundamental solutions of Helmholtz operators $\boldsymbol{\big({-}\Delta\pm\beta^2\big)}$ on $\boldsymbol{\Si_R^d}$}\label{fssrd}

It has been discussed in Section~\ref{PropertiesLaplaceHelmholtz} that on $\Si_R^d$ there exists fundamental solutions of Helmholtz operators with a~single Dirac delta distribution at the origin. Such fundamental solutions should have the properties that they are singular at the origin and bounded at the opposite pole of the hypersphere. Since homogeneous solutions of the Helmholtz equation are of the form $(\sin\rho_s)^{-\mu}\big[c_1^{(\pm,\pm)}{\sf P}_\nu^{\pm \mu}(\pm\cos\rho_s) +c_2^{(\pm,\pm)}{\sf Q}_\nu^{\pm\mu}(\pm \cos\rho_s)\big]$, where $\mu=\frac{d}{2}-1$ (see Section~\ref{SepVarStaHyp}), one should study
the behavior of these homogeneous solutions as $x\to -1^{+}$. In fact, ${\sf P}_\nu^{-\mu}(-x)$ is the unique solution to the associated Legendre differential equation with the required behavior as $x\to -1^{+}$ (see Section~\ref{SomepropertiesFerrers}). In particular, $(\sin\rho_s)^{-\mu}{\sf P}_\nu^{-\mu}(-x)$ has the required spherically symmetric regular behavior on the opposite pole.

Furthermore, as was seen in Section \ref{PropertiesLaplaceHelmholtz}, a fundamental solution of the Helmholtz equation on~$\Si_R^d$ must also satisfy a total integral normalization property (\ref{hyperspheresinglenorm}), due to applying the divergence theorem. An alternative integration constraint due to the divergence theorem on a~hypersphere can be considered as follows. Consider the compact manifold with boundary by a~ball of radius~$R\epsilon$, $0<\epsilon\ll1$, embedded within $\Si_R^d$ with center at the origin $\theta=0$. Let $\bfX=\nabla {\mathcal S}_{R,\beta}^{d,\pm}(\bfx,\bfxp)$ and through~(\ref{spheq3}), one has
\begin{gather*}
\operatorname{div} \bfX=\Delta{\mathcal S}_{R,\beta}^{d,\pm}(\bfx,\bfxp)=-\delta_s(\bfx,\bfxp)\pm \beta^2{\mathcal S}_{R,\beta}^{d,\pm}(\bfx,\bfxp).
\end{gather*}
Now use the divergence theorem (\ref{divergencethm}), and one obtains what we refer to as the $\epsilon$-ball integral constraint, namely
\begin{gather}
-1\pm\beta^2\int_{\Si_{R\epsilon}^d}{\mathcal S}_{R,\beta}^{d,\pm}(\bfx,\bfxp)\mathrm{d}V =\int_{\Si_{R\epsilon}^{d-1}}\left[\frac{1}{R}\frac{\partial}{\partial\theta} {\mathcal S}_{R,\beta}^{d,\pm}(\bfx,\bfxp)\right]_{\theta=\epsilon}
\mathrm{d}{\tilde V},\label{epsballintcon}
\end{gather}
which can be verified directly. This is because for some differentiable function $u(\theta)$ on $\Si_R^d$, $\theta\in(0,\pi)$, the normal derivative in standard geodesic polar coordinates (\ref{standardsph}) is
\begin{gather*}
\langle\nabla u(\theta),N\rangle=\frac{1}{R}\frac{\partial u(\theta)}{\partial \theta}.
\end{gather*}

Apart from these local behaviors, fundamental solutions of Helmholtz operators on $\Si_R^d$ may or may not have the desired limiting behaviors as $\beta\to0$ or in the flat-space limit $R\to\infty$. For instance, there does not exist a~fundamental solution of Laplace's equation on $\Si_R^d$ with non-vanishing total integral. Hence for fundamental solutions of a single Dirac delta distribution~$\mss_{R,\beta}^{d,\pm}$~(\ref{spheq3}), the \mbox{$\beta\to0$} limit should not exist. However, for an opposite antipodal fundamental solution~$\mas_{R,\beta}^{d,\pm}$~(\ref{oaspheq3}), the $\beta\to0$ limit should exist, namely~(\ref{thms1deq})
\begin{gather*}
\lim_{\beta\to0}\mas_{R,\beta}^{d,\pm}=\mathcal A_R^d.
\end{gather*}

One would also expect that since all manifolds are locally Euclidean, that a fundamental solution of Helmholtz operators should look locally like a Euclidean fundamental solutions of Helmholtz operators (\ref{thmg1np}), (\ref{thmg1nm}), namely
\begin{gather*}
\mss_{R,\beta}^{d,\pm}\stackrel{R\to\infty}{\sim}\mcg_\beta^{d,\pm},\qquad \mas_{R,\beta}^{d,\pm}\stackrel{R\to\infty}{\sim}\mcg_\beta^{d,\pm}.
\end{gather*}
Since the Sommerfeld radiation condition requirement does not apply to the Helmholtz operator $\big({-}\Delta+\beta^2\big)$, for this operator, problems associated with this requirement should not arise. In fact, we will see the above requirements for this operator is easily satisfied.

\subsubsection[Fundamental solutions on $\Si_R^d$ for $\big({-}\Delta+\beta^2\big)$]{Fundamental solutions on $\boldsymbol{\Si_R^d}$ for $\boldsymbol{\big({-}\Delta+\beta^2\big)}$}\label{fsonsrdmdmb2}

In analogy to solutions of a damped pendulum, for the operator under study in this section, the following has been brought to our attention by one of the referees. If $\beta^2$ is not constrained to be either positive or negative, physicists would call the cases:~(1) $4\beta^2R^2-(d-1)^2<0$, ``underdamped'' or ``oscillatory''; (2) $4\beta^2R^2-(d-1)^2>0$, ``overdamped''; and (3) $4\beta^2R^2-(d-1)^2=0$, ``critically damped''. A fundamental solution for $\big({-}\Delta+\beta^2\big)$ will oscillate increasingly on $\Si_R^d$ when $\beta^2$ is made increasingly negative, so that $4\beta^2R^2-(d-1)^2<0$ is increasingly negative. However if $\beta^2$ is made sufficiently positive, the oscillations in a fundamental solution of $\big({-}\Delta+\beta^2\big)$ will disappear.

A fundamental solution with a single Dirac delta distribution at the origin for $\big({-}\Delta+\beta^2\big)$ on~$\Si_R^d$ is given in the following theorem.

\begin{thm}\label{singDiracplus}Let $d\in\{2,3,\ldots\}$. Define $\mss_{R,\beta}^{d,+}\colon \big(\Si_R^d\times\Si_R^d\big)\setminus\big\{(\bfx,\bfx)\colon \bfx\in\Si_R^d\big\}\to\R$ as
\begin{gather}
\mss_{R,\beta}^{d,+}(\bfx,\bfx^\prime) :=\frac{\Gamma(\nu+\mu+1) \Gamma(\mu-\nu)}{2^{\frac{d}{2}+1}\pi^{\frac{d}{2}}R^{d-2}(\sin\rho_s)^{\frac{d}{2}-1}}\sP_{\nu}^{-\mu}(- \cos\rho_s),\label{singDiracpluseqn}
\end{gather}
 where
\begin{gather}
\mu=\tfrac{d}{2}-1, \qquad \nu=
\begin{cases}
-{\tfrac{1}{2}}+{\tfrac{1}{2}}\sqrt{(d-1)^2-4\beta^2R^2}, & \mathrm{if} \ (d-1)^2-4\beta^2R^2 \geq 0,\vspace{1mm}\\
-{\tfrac{1}{2}}+\frac{{\rm i}}{2}\sqrt{4\beta^2R^2-(d-1)^2}, & \mathrm{if} \ (d-1)^2-4\beta^2R^2 \le 0,
\end{cases}\label{munuDiracplus}
\end{gather}
$\rho_s := \cos^{-1}((\wbfx, \wbfxp))$ is the geodesic distance between $\wbfx$ and $\wbfxp$ on the hypersphere of unit radius~$\Si^d$, $\wbfx=\bfx/R$, $\wbfxp=\bfxp/R$. Then $\mss_{R, \beta}^{d,+}$ is a fundamental solution
for the Helmholtz operator $\big({-}\Delta+ \beta^2\big)$ on $\Si_R^d$.
\end{thm}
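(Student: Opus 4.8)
The plan is to mirror the structure of the proof of Theorem~\ref{thmh1dminus}, adapted to the compact geometry. First I would record that the spherically symmetric ($l=0$) homogeneous solutions of $\big({-}\Delta+\beta^2\big)$ on $\Si_R^d$ are, by the radial analysis of Section~\ref{SepVarStaHyp} with $\mu=\tfrac{d}{2}-1$ and $\nu$ as in~\eqref{munuDiracplus}, spanned by $(\sin\rho_s)^{-\mu}\sP_\nu^{\pm\mu}(\pm\cos\rho_s)$ and $(\sin\rho_s)^{-\mu}\sQ_\nu^{\pm\mu}(\pm\cos\rho_s)$. Among these, $(\sin\rho_s)^{-\mu}\sP_\nu^{-\mu}(-\cos\rho_s)$ is the unique (up to scale) solution that is singular as $\rho_s\to0^{+}$ and regular at the antipode $\rho_s\to\pi^{-}$: the regularity there follows from \eqref{firstPnummumxmonelimit}--\eqref{secondPnummumxmonelimit}, which give $(\sin\rho_s)^{-\mu}\sP_\nu^{-\mu}(-\cos\rho_s)\to 2^{-\mu}/\Gamma(\mu+1)$ as $\rho_s\to\pi^{-}$, so this candidate carries a single Dirac source at the origin and none at the antipode. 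Writing $\mss_{R,\beta}^{d,+}=c\,(\sin\rho_s)^{-\mu}\sP_\nu^{-\mu}(-\cos\rho_s)$, it then remains only to fix $c$ so that the strength of the singularity at $\rho_s=0$ equals that of the Euclidean fundamental solution $\mcg_\beta^{d,+}$ in~\eqref{thmg1np}; since $\Si_R^d$ is locally Euclidean and the candidate already solves the homogeneous equation away from the origin, this single normalization condition (equivalently, the $\epsilon$-ball constraint~\eqref{epsballintcon}) is exactly the fundamental-solution property.

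To determine $c$ I would take the flat-space limit $R\to\infty$ at fixed geodesic distance $R\rho_s$, so that $\rho_s\to0^{+}$ and the local singularity is probed. In this regime we are in the oscillatory case of~\eqref{munuDiracplus} with $\nu=-\tfrac12+{\rm i}\tau$, $\tau=\tfrac12\sqrt{4\beta^2R^2-(d-1)^2}\sim\beta R$. Applying the conical Ferrers asymptotic~\eqref{FcPmmumx} gives
\begin{gather*}
\sP_\nu^{-\mu}(-\cos\rho_s)\sim\frac{{\rm e}^{\pi\tau}}{\pi\tau^{\mu}}\sqrt{\frac{\rho_s}{\sin\rho_s}}K_{\mu}(\tau\rho_s),
\end{gather*}
while the prefactor satisfies $\Gamma(\nu+\mu+1)\Gamma(\mu-\nu)=\big|\Gamma\big(\mu+\tfrac12+{\rm i}\tau\big)\big|^2\sim 2\pi\tau^{2\mu}{\rm e}^{-\pi\tau}$ as $\tau\to\infty$, which follows from Lemma~\ref{Gammaratiolemma} (or Stirling's formula). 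The decisive feature is that the exponentially large factor ${\rm e}^{\pi\tau}$ from the Ferrers function is cancelled exactly by the ${\rm e}^{-\pi\tau}$ from the gamma product; bookkeeping the surviving powers of $\tau\sim\beta R$, $\rho_s\sim\|\bfx-\bfxp\|/R$ and $R^{d-2}=R^{2\mu}$ then collapses the candidate to $\mcg_\beta^{d,+}(\bfx,\bfxp)$, confirming that the prefactor in~\eqref{singDiracpluseqn} is the correct $c$.

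To complete the argument I would extend from the oscillatory case to the overdamped case $(d-1)^2-4\beta^2R^2\ge0$ by analytic continuation in $\beta^2$: both sides of~\eqref{singDiracpluseqn}, and in particular the leading singular coefficient at $\rho_s=0$, are analytic in $\beta^2$ through the branch $\nu=-\tfrac12+\tfrac12\sqrt{(d-1)^2-4\beta^2R^2}$, so the identity of singular coefficients established in the oscillatory regime persists, exactly as the two constants are matched at the transition $\beta=(d-1)/(2R)$ in the proof of Theorem~\ref{thmh1dminus}. The case $d=2$ ($\mu=0$) is recovered as the limit $\mu\to0$, where the Euclidean singularity becomes logarithmic.

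I expect the main obstacle to be the exact (rather than merely asymptotic) determination of $c$: the flat-space limit pins down the leading singular coefficient, but to be rigorous one must verify that $c$ times the singular coefficient of $(\sin\rho_s)^{-\mu}\sP_\nu^{-\mu}(-\cos\rho_s)$ equals the Newtonian coefficient of $\mcg_\beta^{d,+}$ for every finite $R$. The clean way to see this is to compute the $\rho_s\to0^{+}$ singularity directly from the connection formula~\eqref{Pnummumxconnection}, whereupon the regular piece $\sP_\nu^{-\mu}(\cos\rho_s)$ drops out and the singular piece comes entirely from $-\tfrac{2}{\pi}\sin(\pi(\nu-\mu))\sQ_\nu^{-\mu}(\cos\rho_s)$; the $x\to1^{-}$ behavior of $\sQ_\nu^{-\mu}$ supplies the factor $(1-\cos\rho_s)^{-\mu/2}$ together with the gamma functions of $\nu$ and $\mu$ that reassemble into $\Gamma(\nu+\mu+1)\Gamma(\mu-\nu)$. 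Care is needed because $\sin(\pi(\nu-\mu))$ vanishes precisely at $\beta=0$ (where $\nu=\mu$), consistent with the nonexistence of a single-source fundamental solution for Laplace's equation on $\Si_R^d$ noted in Section~\ref{PropertiesLaplaceHelmholtz}.
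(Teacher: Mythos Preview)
Your approach is correct, but the paper fixes the constant $c$ by a different and somewhat cleaner route. Rather than matching the leading singularity via the flat-space limit (as you do, mirroring Theorems~\ref{thmh1d} and~\ref{thmh1dminus}), the paper exploits compactness directly: it imposes the global divergence-theorem constraint~\eqref{hyperspheresinglenorm}, namely $\int_{\Si_R^d}\mss_{R,\beta}^{d,+}\,\mathrm{d}\mathrm{vol}_s'=1/\beta^2$, and evaluates the left-hand side exactly using the Mellin-type integral~\eqref{Mellininquestion} together with the volume~\eqref{integraldm1hypers}. This yields $c=\Gamma(\nu+\mu+1)\Gamma(\mu-\nu)/\big(2^{\frac{d}{2}+1}\pi^{\frac{d}{2}}R^{d-2}\big)$ in one stroke, valid for all $\beta>0$ simultaneously without any case split or analytic continuation in~$\beta^2$; the flat-space limit via~\eqref{FcPmmumx} is then carried out only as a consistency check. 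Your route buys uniformity with the hyperboloid proofs and makes the local-Euclidean mechanism explicit, but it forces you to treat the overdamped and oscillatory regimes separately and to argue, as you note in your final paragraph, that the asymptotic matching really pins down $c$ exactly. The paper's integral normalization sidesteps both issues.
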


\begin{proof}Apart from a constant in $\nu$, $\mu$, we guess a form of a fundamental solution for this Helmholtz operator for a single Dirac delta distribution at the origin on~$\Si_R^d$. Hence our ansatz will be
\begin{gather*}
\mss_{R,\beta}^{d,+}(\bfx,\bfx^\prime) :=\frac{c(\nu,\mu) {\sf P}_{\nu}^{-\mu}(-x)}{\big(1-x^2\big)^{\frac{\mu}{2}}},
\end{gather*}
where $x=\cos\rho_s$, and $c(\nu,\mu)$ is constant in $x$. This constant, for instance, can be determined through the integral normalization requirement~(\ref{hyperspheresinglenorm}). We must verify
\begin{gather}
\int_{\Si_R^d}\mss_{R,\beta}^{d,+}(\bfx,\bfx^\prime)\mathrm{d}\mathrm{vol}_s^\prime= \frac{1}{\beta^2}.\label{normalizationintegralplus}
\end{gather}
Starting with the left-hand side of (\ref{normalizationintegralplus}), and using (\ref{Mellininquestion}), noting the standard volume integral
\begin{gather}
\int_{\Si_R^{d-1}} \mathrm{d}{\rm vol}_s'=\frac{2\pi^{\frac{d}{2}}}{\Gamma\big(\frac{d}{2}\big)}R^{d-1},\label{integraldm1hypers}
\end{gather}
after simplifying, one requires that
\begin{gather*}
c(\nu ,\mu )=\frac{\Gamma (\nu +\mu +1)\Gamma (\mu -\nu )}{2^{\frac{d}{2}+1}\pi ^{\frac{d}{2}}R^{d-2}}.
\end{gather*}
All that remains is to demonstrate that (\ref{singDiracpluseqn}) produces~(\ref{thmg1np}) in the flat-space limit, $R\to\infty$. In the flat-space limit $\rho_s\sim \frac{\vartheta}{R}$ and $\nu\sim -\tfrac{1}{2}+{\rm i}\beta R$, with $\vartheta \to \|\bfx-\bfxp\|$. Using the uniform asymptotics given by~(\ref{FcPmmumx}), the demonstration is validated. This completes the proof of Theorem~\ref{singDiracplus}.
\end{proof}

\begin{rem}Note that (\ref{derivopppole}) gives the strength of the cusp on the opposite pole (at $\theta=\pi$) for fundamental solutions on hyperspheres, such as that given in Theorem~\ref{singDiracplus}.
\end{rem}

\begin{rem}If one considers the $\beta\sim0$ approximation for (\ref{singDiracpluseqn}), we see that as $\beta\to0$,
\begin{gather*}
\mss_{R,\beta}^{d,+}(\bfx,\bfxp)\sim\mathcal A_R^d (\bfx,\bfxp)+\frac{\Gamma\big(\frac{d+1}{2}\big)} {2\pi^{\frac{d+1}{2}}R^d\beta^2}.
\end{gather*}
In order to obtain this, we have used \cite[equation~(14.5.18)]{NIST:DLMF}, the binomial and small angle approximations for trigonometric functions. Hence $\lim\limits_{\beta\to0}\mss_{R,\beta}^{d,+}(\bfx,\bfxp)=\infty$, and as one expects (see Section~\ref{PropertiesLaplaceHelmholtz}) the $\beta\to 0$ (Laplace) limit of $\mss_{R,\beta}^{d,+}(\bfx,\bfx^\prime)$ does not exist.
\end{rem}

\begin{rem}\label{epsilonballcondp}The $\epsilon$-ball integral constraint (\ref{epsballintcon}) for $\mss_{R,\beta}^{d,+}$ is satisfied. This can be verified by noting that as $\rho_s\to 0^{+}$,
\begin{gather*}
\mss_{R,\beta}^{d,+}(\bfx,\bfxp)\sim \frac{\Gamma\big(\frac{d}{2}-1\big)} {4\pi^{\frac{d}{2}}(R\rho_s)^{d-2}},\qquad \frac{1}{R}\frac{\partial}{\partial\rho_s} \mss_{R,\beta}^{d,+}(\bfx,\bfxp)
\sim -\frac{\Gamma(\frac{d}{2})} {2\pi^{\frac{d}{2}}(R\rho_s)^{d-1}},
\end{gather*}
and using (\ref{integraldm1hypers}).
\end{rem}

A fundamental solution with two opposite antipodal Dirac delta distributions, one at the origin and another on the opposite pole of~$\Si_R^d$, for $\big({-}\Delta+\beta^2\big)$ is given in the following theorem.

\begin{thm}\label{antiDiracplus} Let $d\in\{2,3,\ldots\}$. Define $\mas_{R,\beta}^{d,+}\colon \big(\Si_R^d\times\Si_R^d\big)\setminus\big\{(\bfx,\bfx)\colon \bfx\in\Si_R^d\big\}\to\R$ by
\begin{gather}
\mas_{R,\beta}^{d,+}(\mathbf{x},\mathbf{x}^{\prime}):=\frac{\Gamma (\nu +\mu +1)\Gamma (\mu -\nu )}{2^{\frac{d}{2}+1}\pi^{\frac{d}{2}}R^{d-2}(\sin \rho _{s})^{\frac{d}{2}-1}}{\mathsf f}_{\nu}^{-\mu}(\cos\rho _{s}),
\label{twoantiDiracpluseqn}
\end{gather}
where $\mu$ and $\nu$ are given by \eqref{munuDiracplus}, $\rho_s = \cos^{-1} ((\wbfx, \wbfxp) )$ is the geodesic distance between~$\wbfx$ and~$\wbfxp$ on the hypersphere of unit radius~$\Si^d$, $\wbfx=\bfx/R$,
$\wbfxp=\bfxp/R$, and ${\mathsf f}_{\nu}^{\mu}$ is the odd Ferrers function defined by~\eqref{FPodddefn}. Then $\mas_{R, \beta}^{d,+}$ is an opposite antipodal fundamental solution for the Helmholtz operator $\big({-}\Delta+ \beta^2\big)$ on~$\Si_R^d$.
\end{thm}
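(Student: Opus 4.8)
The plan is to obtain $\mas_{R,\beta}^{d,+}$ by superposition, expressing it as the difference of two copies of the single--delta fundamental solution $\mss_{R,\beta}^{d,+}$ of Theorem~\ref{singDiracplus} centered at the antipodal points $\bfxp$ and $-\bfxp$, and then invoking linearity of $\big({-}\Delta+\beta^2\big)$. The starting observation is that the odd Ferrers function appearing in \eqref{twoantiDiracpluseqn} splits, by its very definition \eqref{FPodddefn}, as ${\mathsf f}_\nu^{-\mu}(\cos\rho_s)={\sf P}_\nu^{-\mu}(-\cos\rho_s)-{\sf P}_\nu^{-\mu}(\cos\rho_s)$. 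Since the prefactor and the constant $\Gamma(\nu+\mu+1)\Gamma(\mu-\nu)/\big(2^{\frac{d}{2}+1}\pi^{\frac{d}{2}}R^{d-2}\big)$ in \eqref{twoantiDiracpluseqn} coincide exactly with those in \eqref{singDiracpluseqn}, the term carrying ${\sf P}_\nu^{-\mu}(-\cos\rho_s)$ reproduces $\mss_{R,\beta}^{d,+}(\bfx,\bfxp)$ verbatim.

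The key step is to reinterpret the remaining term as the same fundamental solution recentered at the antipode. I would record that the antipodal map $\bfxp\mapsto-\bfxp$ is an isometry of $\Si_R^d$ under which the geodesic distance transforms as $\rho_s\mapsto\tilde\rho_s:=\pi-\rho_s$, since $\cos^{-1}((\wbfx,-\wbfxp))=\pi-\rho_s$. Hence $\cos\tilde\rho_s=-\cos\rho_s$, while $\sin\tilde\rho_s=\sin\rho_s$ leaves the factor $(\sin\rho_s)^{-\mu}$ invariant. Substituting $\tilde\rho_s$ into \eqref{singDiracpluseqn} then identifies the second term precisely as $\mss_{R,\beta}^{d,+}(\bfx,-\bfxp)$, yielding the decomposition $\mas_{R,\beta}^{d,+}(\bfx,\bfxp)=\mss_{R,\beta}^{d,+}(\bfx,\bfxp)-\mss_{R,\beta}^{d,+}(\bfx,-\bfxp)$.

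With this decomposition the conclusion is nearly immediate: applying $\big({-}\Delta+\beta^2\big)$ in $\bfx$ and using Theorem~\ref{singDiracplus} for each center gives $\big({-}\Delta+\beta^2\big)\mas_{R,\beta}^{d,+}(\bfx,\bfxp)=\delta_s(\bfx,\bfxp)-\delta_s(\bfx,-\bfxp)$, and invariance of the Dirac distribution under the isometry $\bfx\mapsto-\bfx$ gives $\delta_s(\bfx,-\bfxp)=\delta_s(-\bfx,\bfxp)$, which is exactly \eqref{oaspheq3}. It then remains to check that the two point singularities do not interfere, i.e., that $\mss_{R,\beta}^{d,+}(\bfx,-\bfxp)$ is smooth near $\bfx=\bfxp$ and conversely; this follows from \eqref{firstPnummumxmonelimit} and \eqref{secondPnummumxmonelimit}, which show that $\mss_{R,\beta}^{d,+}$ is regular at the pole opposite to its own center, so each delta is produced by one term only.

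I expect the main obstacle to be the careful bookkeeping of the antipodal identification, namely verifying that the substitution $x\mapsto-x$ hidden inside ${\mathsf f}_\nu^{-\mu}$ genuinely corresponds to recentering the single--delta solution at $-\bfxp$, with the constant $\Gamma(\nu+\mu+1)\Gamma(\mu-\nu)$ and the overall sign of the induced delta correctly matched, and in justifying the distributional identity $\delta_s(\bfx,-\bfxp)=\delta_s(-\bfx,\bfxp)$ from invariance under $\bfx\mapsto-\bfx$. Everything else is inherited directly from Theorem~\ref{singDiracplus} and the linearity of the operator.
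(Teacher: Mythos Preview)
Your proof is correct and takes a genuinely different route from the paper. The paper argues by ansatz and asymptotic matching: it posits that an opposite antipodal fundamental solution must be odd about $\theta=\tfrac{\pi}{2}$, hence built from ${\mathsf f}_\nu^{-\mu}$, and then verifies the constant by checking the flat-space limit $R\to\infty$ against $\mcg_\beta^{d,+}$ via Corollary~\ref{cor:2.16}, together with the integral normalization~\eqref{hyperspheresinglenorm}, which is trivially satisfied by oddness. Your approach instead obtains $\mas_{R,\beta}^{d,+}$ directly as $\mss_{R,\beta}^{d,+}(\bfx,\bfxp)-\mss_{R,\beta}^{d,+}(\bfx,-\bfxp)$ by reading off the antipodal substitution $\rho_s\mapsto\pi-\rho_s$ in \eqref{singDiracpluseqn}, and then invokes Theorem~\ref{singDiracplus} and linearity to produce the two-delta right-hand side of~\eqref{oaspheq3}. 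This is more economical and arguably more rigorous, since the distributional identity is inherited wholesale from the already-established single-delta case rather than reconstructed from asymptotic and integral constraints; what the paper's route buys is an independent confirmation of the flat-space limit, which is one of the consistency checks the paper emphasizes throughout Section~\ref{fssrd}.
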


\begin{proof}Since the opposite antipodal fundamental solution of the Helmholtz operator $\big({-}\Delta+\beta^2\big)$ must be an odd function, it should be given through~(\ref{FPodddefn}). Starting with~(\ref{twoantiDiracpluseqn}) we then perform the flat-space limit $R\to\infty$ by using the asymptotics given by~(\ref{CPodd1}), along with $\sin(\pi(\nu-\mu))\sim-\tfrac12{\rm e}^{\pi\tau}{\rm e}^{\pm{\rm i} \pi\mu}$; as in the proof of Theorem~\ref{singDiracplus}, we must consider that $\rho_s\sim \vartheta/R$, $\vartheta\sim\|\bfx-\bfxp\|$, $\bfx,\bfxp\in\R^d$. Note that the asymptotic contribution due to $-{\sf P}_\nu^{-\mu}(\cos\rho_s)$ is negligible as compared to ${\sf P}_\nu^{-\mu}(- \cos\rho_s)$. One can then easily see that~(\ref{twoantiDiracpluseqn}) correctly approaches~(\ref{thmg1np}). Furthermore, since~(\ref{twoantiDiracpluseqn}) is an odd function about $\theta=\tfrac{\pi}{2}$
on the hypersphere~$\Si_R^d$, the integral normalization requirement~(\ref{hyperspheresinglenorm}) is clearly satisfied. This completes the proof.
\end{proof}

\begin{rem} If one considers the $\beta\sim0$ approximation for (\ref{twoantiDiracpluseqn}), we see that as $\beta\to0$,
\[
\mas_{R,\beta}^{d,+}(\bfx,\bfxp)\sim \mathcal A_R^d (\bfx,\bfxp)=\frac{(d-2)!}{(2\pi)^{\frac{d}{2}}R^{d-2}\sin^{\frac{d}{2}-1}\rho_s}{\sf Q}_{\frac{d}{2}-1}^{1-\frac{d}{2}}(\cos\rho_s),
\]
the opposite antipodal fundamental solution of Laplace's equation on the hypersphere~(\ref{thms1deq}). In order to demonstrate this limit, we have used the binomial, small angle approximations for trigonometric functions, as well as~(\ref{Pnummumxconnection}).
\end{rem}

\begin{rem} The $\epsilon$-ball integral constraint (\ref{epsballintcon}) for $\mas_{R,\beta}^{d,+}$ is also satisfied. This can be verified in an identical fashion to Remark~\ref{epsilonballcondp} while also noting that ${\sf P}_\nu^{-\mu}(x)$ and its derivative vanish as $x\to1^{-}$, $\Re\mu>0$.
\end{rem}

\begin{rem}Note that the integral normalization requirement (\ref{hyperspheresinglenorm}) is satisfied trivially since~$\mas_{R,\beta}^{d,+}$, is an odd function of $\rho_s$ about $\tfrac{\pi}{2}$.
\end{rem}

\subsubsection[Candidate fundamental solutions on $\Si_R^d$ for $\big({-}\Delta- \beta^2\big)$]{Candidate fundamental solutions on $\boldsymbol{\Si_R^d}$ for $\boldsymbol{\big({-}\Delta- \beta^2\big)}$}\label{cfsSiRdmDmb2}

\begin{defn}\label{defnminussph} Let $d\in\{2,3,\ldots\}$, $R,\beta>0$. Define the {\it candidate functions}
\begin{gather*}
\msss_{R,\beta}^{d,-}\colon \ \big(\Si_R^d\times\Si_R^d\big)\setminus \big\{(\bfx,\bfx)\colon \bfx\in\Si_R^d\big\}\to\R,\\
\mssf_{R,\beta}^{d,-}\colon \ \big(\Si_R^d\times\Si_R^d\big)\setminus \big\{(\bfx,\bfx)\colon \bfx\in\Si_R^d\big\}\to\C,
\end{gather*}
as
\begin{gather}
\msss_{R,\beta}^{d,-}(\mathbf{x},\mathbf{x}^{\prime}):=\frac{\Gamma (\nu +\mu +1)\Gamma (\mu -\nu )}{2^{\frac{d}{2}+1}\pi ^{\frac{d}{2}}R^{d-2}(\sin \rho_{s})^{\frac{d}{2}-1}}\,\mathsf{P}_{\nu}^{-\mu
}(-\cos \rho _{s}), \label{msssRbetadm} \\
\mssf_{R,\beta}^{d,-}(\mathbf{x},\mathbf{x}^{\prime}):=\frac{\Gamma (\nu +\mu +1)}{\Gamma (\nu-\mu +1)R^{d-2}(2\pi )^{\frac{d}{2}}(\sin \rho _{s})^{\frac{d}{2}-1}} \left( \mathsf{Q}_{\nu}^{-\mu}(\cos \rho _{s})+{\rm i}
\frac{\pi}{2}\mathsf{P}_{\nu}^{-\mu}(\cos \rho _{s})\right) \notag\\
\hphantom{\mssf_{R,\beta}^{d,-}(\mathbf{x},\mathbf{x}^{\prime})}{} =\frac{\Gamma (\nu +\mu +1)\Gamma (\mu -\nu )}{2^{\frac{d}{2}+1}\pi ^{\frac{d}{2}}R^{d-2}(\sin \rho _{s})^{\frac{d}{2}-1}}\left(
\mathsf{P}_{\nu}^{-\mu}(-\cos \rho _{s})-{\rm e}^{{\rm i}\pi (\nu -\mu )}\mathsf{P}_{\nu}^{-\mu}(\cos \rho _{s})\right), \label{mssfRbetadm}
\end{gather}
where
\begin{gather}
\mu=\tfrac{d}{2}-1,\qquad \nu=-{\tfrac{1}{2}}+{\tfrac{1}{2}}\sqrt{4\beta^2R^2+(d-1)^2}, \label{munuforminussinglesphereDirac}
\end{gather}
and $\rho_s = \cos^{-1}([\wbfx, \wbfxp])$ is the geodesic distance between $\wbfx$ and $\wbfxp$ on the hypersphere of unit radius $\Si^d$, with $\wbfx=\bfx/R$, $\wbfxp=\bfxp/R$.
\end{defn}

\begin{rem}\label{ssrem1} Note that apart from a sign difference (due to our choice of a positive Lapla\-ce--Beltrami operator $-\Delta$) and the inclusion of the radius of curvature $R$ of the hypersphere $\Si_R^d$, the candidate function $\msss_{R,\beta}^{d,-}(\bfx,\bfxp)$ is identical to that which was found by Liu \& Ryan~(2002) \cite[equation~(5.1)]{LiuRyan2002} and Szmytkowski~(2007) \cite[equation~(3.23)]{Szmytkowskiunitsphere07} in terms of Gegenbauer functions (cf.~(\ref{Dun2eq12})) for unit radius hyperspheres. Note that in order to convert the wavenumber given in~\cite{Szmytkowskiunitsphere07}, so that it is the same as given here, one must take $\beta^2R^2=\lambda(\lambda+d-1)$, and therefore take
\begin{gather*}
\lambda=-\frac{d-1}{2}\pm\frac12 \sqrt{(d-1)^2+4\beta^2R^2},
\end{gather*}
with the plus sign chosen to match the formulae given here.
\end{rem}

\begin{rem}\label{ssrem2}The candidate function $\msss_{R,\beta}^{d,-}$ satisfies the integral normalization require\-ment (\ref{hyperspheresinglenorm}), namely
\begin{gather*}
\int_{\Si_R^d} \msss_{R,\beta}^{d,-}(\bfx,\bfx^\prime)\mathrm{d}\mathrm{vol}_s^\prime= -\frac{1}{\beta^2}.
\end{gather*}
This follows straightforwardly using (\ref{Mellininquestion}), (\ref{msssRbetadm}), with~(\ref{munuforminussinglesphereDirac}).
\end{rem}

\begin{rem}\label{ssrem3}The $\epsilon$-ball integral constraint~(\ref{epsballintcon}) for $\msss_{R,\beta}^{d,-}$ is also satisfied. This can be verified by noting that as $\rho_s\to 0^{+}$,
\begin{gather*}
\msss_{R,\beta}^{d,-}(\bfx,\bfxp)\sim \frac{\Gamma\big(\frac{d}{2}-1\big)} {4\pi^{\frac{d}{2}}(R\rho_s)^{d-2}},\qquad \frac{1}{R}\frac{\partial}{\partial\rho_s} \msss_{R,\beta}^{d,-}(\bfx,\bfxp) \sim -\frac{\Gamma(\frac{d}{2})}{2\pi^{\frac{d}{2}}(R\rho_s)^{d-1}},
\end{gather*}
and using (\ref{integraldm1hypers}).
\end{rem}

\begin{rem}\label{ssrem4}If one considers the small $\beta$ approximation for $\msss_{R,\beta}^{d,-}(\bfx,\bfxp)$ we see that as $\beta\to0$,
\begin{gather*}
\msss_{R,\beta}^{d,-}(\bfx,\bfxp)\sim \mss_{R}^{d}(\bfx,\bfxp)- \frac{\Gamma\big(\frac{d+1}{2}\big)}{2\pi^{\frac{d+1}{2}}R^d\beta^2}.
\end{gather*}
In order to obtain this we have used \cite[equation~(14.5.18)]{NIST:DLMF}, the binomial and small angle approximations for trigonometric functions. Hence $\lim\limits_{\beta\to0}\msss_{R,\beta}^{d,-}(\bfx,\bfxp)=\infty$, and as one would expect (see Section~\ref{PropertiesLaplaceHelmholtz}) the $\beta\to 0$ (Laplace) limit of $\msss_{R,\beta}^{d,-}(\bfx,\bfx^\prime)$ does not exist.
\end{rem}

\begin{rem}\label{ssrem5}The flat-space limit $R\to\infty$ of the function $\msss_{R,\beta}^{d,-}$ should not exist. In fact, it oscillates wildly in this limit, as it approaches the following function
\begin{gather*}
\msss_{R,\beta}^{d,-}\sim \frac{{\rm i}}{4} \left(\frac{\beta}{2\pi\|\bfx-\bfxp\|}\right)^{\frac{d}{2}-1}\left( {\rm i}\cot\big(\pi\beta R+\tfrac{\pi(1-d)}{2}\big)J_{\frac{d}{2}-1}(\beta\|\bfx-\bfxp\|)
+{\rm i}Y_{\frac{d}{2}-1}(\beta\|\bfx-\bfxp\|)\right),
\end{gather*}
where $\bfx,\bfxp\in\R^d$.
\end{rem}

On the other hand, the candidate function $\mssf_{R,\beta}^{d,-}(\bfx,\bfxp)$ does approach the correct function in the flat-space $R\to\infty$ limit~(\ref{thmg1nm}), but does not obey the integral normalization
requirement~(\ref{hyperspheresinglenorm}).

\begin{thm}In the flat-space limit $R\to\infty$, the function $\mssf_{R,\beta}^{d,-}$ approaches the Euclidean fundamental solution of $\big({-}\Delta- \beta^2\big)$, namely \eqref{thmg1nm}
\begin{gather*}
\mssf_{R,\beta}^{d,-}(\bfx,\bfxp)\sim \mcg_\beta^{d,-}(\bfx, \bfxp) = \frac{{\rm i}}{4}\left(\frac{\beta}{2\pi\|\bfx-\bfxp\|}\right)^{\frac{d}{2}-1} H_{\frac{d}{2}-1}^{(1)}(\beta\|\bfx-\bfxp\|).
\end{gather*}
\end{thm}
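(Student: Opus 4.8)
The plan is to substitute the large-degree uniform asymptotics of Theorem~\ref{Ferrersasympt} into the second representation of $\mssf_{R,\beta}^{d,-}$ in~(\ref{mssfRbetadm}) and to track the constants and powers of $R$ as $R\to\infty$. Here $\mu=\tfrac d2-1$ and, by~(\ref{munuforminussinglesphereDirac}), $\nu=-\tfrac12+\tfrac12\sqrt{4\beta^2R^2+(d-1)^2}$ is real with $\nu\sim\beta R\to\infty$, while $\rho_s\sim\vartheta/R\to0^+$ with $\vartheta\to\|\bfx-\bfxp\|$, so that $(\nu+\tfrac12)\rho_s\to\beta\|\bfx-\bfxp\|$. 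The relevant approximations are~(\ref{Fuaa}) for $\mathsf{P}_\nu^{-\mu}(\cos\rho_s)$ and~(\ref{FerrersPmmx}) for $\mathsf{P}_\nu^{-\mu}(-\cos\rho_s)$; both are uniform on $(0,\pi-\delta]$ and therefore remain valid as $\rho_s\to0^+$ with $(\nu+\tfrac12)\rho_s$ held in a bounded set, the relative error being $\mathcal{O}(1/\nu)=\mathcal{O}(1/(\beta R))$.

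First I would insert these into the bracket $\mathsf{P}_\nu^{-\mu}(-\cos\rho_s)-{\rm e}^{{\rm i}\pi(\nu-\mu)}\mathsf{P}_\nu^{-\mu}(\cos\rho_s)$. The decisive algebraic observation is that the two Bessel $J_\mu$ contributions partially cancel: writing ${\rm e}^{{\rm i}\pi(\nu-\mu)}=\cos(\pi(\nu-\mu))+{\rm i}\sin(\pi(\nu-\mu))$, the coefficient of $J_\mu$ becomes $\cos(\pi(\nu-\mu))-{\rm e}^{{\rm i}\pi(\nu-\mu)}=-{\rm i}\sin(\pi(\nu-\mu))$, so the leading part of the bracket is $-\tfrac{{\rm i}}{\nu^\mu}\sqrt{\rho_s/\sin\rho_s}\,\sin(\pi(\nu-\mu))\big(J_\mu+{\rm i}Y_\mu\big)$ evaluated at $(\nu+\tfrac12)\rho_s$, i.e., a multiple of $H_\mu^{(1)}$ via $H_\mu^{(1)}=J_\mu+{\rm i}Y_\mu$. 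This is precisely the twist that singles out the outgoing Hankel function $H^{(1)}$, consistent with the Sommerfeld radiation condition~(\ref{sommerfeldcond}).

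Next I would clear the prefactor. The reflection formula gives $\Gamma(\mu-\nu)\sin(\pi(\nu-\mu))=-\pi/\Gamma(\nu-\mu+1)$, which is finite and in particular removes the apparent poles of $\Gamma(\mu-\nu)$ against the zeros of $\sin(\pi(\nu-\mu))$. The surviving ratio $\Gamma(\nu+\mu+1)/\Gamma(\nu-\mu+1)$ is estimated by Lemma~\ref{Gammaratiolemma} (in the form~(\ref{Gammaratioz}) with $z=\nu+1$) as $\nu^{2\mu}\{1+\mathcal{O}(1/\nu)\}$, so that the $\nu^{-\mu}$ from~(\ref{Fuaa})--(\ref{FerrersPmmx}) combines to give $\nu^\mu$. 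Assembling everything yields, to leading order, $\mssf_{R,\beta}^{d,-}\sim\tfrac{{\rm i}\pi\,\nu^\mu}{2^{d/2+1}\pi^{d/2}R^{d-2}(\sin\rho_s)^{d/2-1}}\sqrt{\rho_s/\sin\rho_s}\,H_{d/2-1}^{(1)}\big((\nu+\tfrac12)\rho_s\big)$.

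Finally I would substitute the flat-space scalings $\nu\sim\nu+\tfrac12\sim\beta R$, $\sin\rho_s\sim\rho_s\sim\vartheta/R$, $\sqrt{\rho_s/\sin\rho_s}\to1$ and $(\nu+\tfrac12)\rho_s\to\beta\|\bfx-\bfxp\|$. A direct count shows the powers of $R$ cancel, since $\nu^\mu(\sin\rho_s)^{1-d/2}R^{2-d}\sim(\beta R)^{d/2-1}(\vartheta/R)^{1-d/2}R^{2-d}$ is independent of $R$; the remaining numerical constant reduces, using $4\cdot2^{d/2-1}=2^{d/2+1}$, to exactly that of $\mcg_\beta^{d,-}$ in~(\ref{thmg1nm}). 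The main obstacle is the careful bookkeeping of these constants and the verification that the apparent singularities in $\Gamma(\mu-\nu)\sin(\pi(\nu-\mu))$ cancel, so that the leading term is well defined for all large $R$ apart from the discrete \emph{bad} values of Remark~\ref{badvaluessphere}, where the second equality in~(\ref{mssfRbetadm}) must be used; the $\mathcal{O}(1/\nu)$ error terms then vanish in the limit, establishing the claim.
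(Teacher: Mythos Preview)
Your argument is correct. The paper's own proof is very terse but proceeds from the \emph{first} representation in~(\ref{mssfRbetadm}), i.e., the combination $\mathsf{Q}_{\nu}^{-\mu}(\cos\rho_s)+\tfrac{{\rm i}\pi}{2}\mathsf{P}_{\nu}^{-\mu}(\cos\rho_s)$, and applies~(\ref{Fuaa}) together with~(\ref{Fuab}); since $-\tfrac{\pi}{2}Y_\mu+\tfrac{{\rm i}\pi}{2}J_\mu=\tfrac{{\rm i}\pi}{2}H_\mu^{(1)}$, the Hankel function appears immediately and the prefactor $\Gamma(\nu+\mu+1)/\Gamma(\nu-\mu+1)\sim\nu^{2\mu}$ is manifestly finite, so no reflection-formula bookkeeping is needed. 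You instead start from the second representation and must cancel $\Gamma(\mu-\nu)$ against $\sin(\pi(\nu-\mu))$ via the reflection formula; this is entirely equivalent (the two representations of~(\ref{mssfRbetadm}) are linked by the very connection formula~(\ref{Pnummumxconnection}) from which~(\ref{FerrersPmmx}) is derived), just slightly longer. Either route yields the stated limit.
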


\begin{proof}In the flat-space limit $R\to\infty$, with geodesic distance $R\rho_s$ bounded, and thus $\rho_s\to 0$. Using (\ref{Fuaa}), (\ref{thmg1nm}),
one has $\mu=\frac{d}{2}-1$, $\nu=-\frac12+\frac12\sqrt{(d-1)^2+4\beta^2R^2}\sim\beta R$, and as $R\to\infty$, $\rho_s\to 0^{+}$, that $\sin\rho_s\sim\rho_s$, $R\rho_s\sim\|\bf x-\bf x'\|$, and
\begin{gather*}
\mssf_{R,\beta}^{d,-}(\bfx,\bfxp)\sim\mcg_\beta^{d,-}(\bfx,\bfxp),
\end{gather*}
since $(\rho_s/\sin\rho_s)\sim 1$ as $\rho_s\to 0$. This completes the proof.
\end{proof}

\begin{rem}The candidate function $\mssf_{R,\beta}^{d,-}$ does not satisfy the integral normalization requirement (\ref{hyperspheresinglenorm}), namely
\begin{gather*}
\int_{\Si_R^d}\mssf_{R,\beta}^{d,-}(\bfx,\bfx^\prime)\mathrm{d}\mathrm{vol}_s^\prime= -\frac{1}{\beta^2}\big(1-{\rm e}^{{\rm i}\pi(\nu-\mu)}\big).
\end{gather*}
This follows straightforwardly using~(\ref{mssfRbetadm}), (\ref{Mellininquestion}) with (\ref{munuforminussinglesphereDirac}).
\end{rem}

\begin{rem}In the limit $\beta\to 0$, the candidate function $\mssf_{R,\beta}^{d,-}$ approaches a Laplace fundamental solution, namely~(\ref{thmg1nm}), except that it differs from~(\ref{thms1deq}) by a purely imaginary constant (this satisfies the homogeneous Laplace equation), namely
\begin{gather*}
\mssf_{R,0}^{d,-}(\bfx,\bfxp)=\mss_R^d(\bfx,\bfxp)+ \frac{{\rm i} \Gamma\big(\frac{d-1}{2}\big)}{4\pi^{\frac{d-1}{2}}R^{d-2}}.
\end{gather*}
\end{rem}

\begin{defn}In contrast to the situation for the operator $\big({-}\Delta +\beta ^{2}\big)$ on ${\mathbf{S}}_{R}^{d}$, we may also define the opposite antipodal candidate functions,
\begin{gather*}
\mass_{R,\beta}^{d,-}(\mathbf{x},\mathbf{x}^{\prime}):=\frac{\Gamma (\nu +\mu +1)\Gamma (\mu -\nu )}{2^{\frac{d}{2}+1}\pi ^{\frac{d}{2}}R^{d-2}(\sin \rho _{s})^{\frac{d}{2}-1}}{\mathsf f}_{\nu}^{-\mu}(\cos\rho _{s}),\\
\masf_{R,\beta}^{d,-}(\mathbf{x},\mathbf{x}^{\prime}):=\frac{\Gamma (\nu +\mu +1)\Gamma (\mu -\nu )\big(1+{\rm e}^{{\rm i}\pi (\nu -\mu)}\big) }{2^{\frac{d}{2}+1}\pi ^{\frac{d}{2}}R^{d-2}(\sin \rho _{s})^{\frac{d}{2}-1}}
{\mathsf f}_{\nu}^{-\mu}(\cos\rho _{s}),
\end{gather*}
where $\masf_{R,\beta}^{d,-}$ is constructed by converting (\ref{mssfRbetadm}) into an odd function using (\ref{FPodddefn}), and the variable definitions are as in Definition~\ref{defnminussph}.
\end{defn}

\begin{rem}The opposite antipodal candidate function $\mass_{R,\beta}^{d,-}(\bfx,\bfx^\prime)$ approaches the Laplace fundamental solution as $\beta\to 0^{+}$, namely (\ref{thms1deq})
\begin{gather*}
\lim_{\beta\to0^{+}} \mass_{R,\beta}^{d,-}(\bfx,\bfx^\prime) =\mathcal A_R^d ({\bf x},{\bf x}^\prime)=\frac{(d-2)!}{(2\pi)^{\frac{d}{2}}R^{d-2}\sin^{\frac{d}{2}-1}\rho_s}{\sf Q}_{\frac{d}{2}-1}^{1-\frac{d}{2}}(\cos\rho_s),
\end{gather*}
and in the flat-space limit it approaches
\begin{gather*}
\mass_{R,\beta}^{d,-}\sim \frac{{\rm i}}{4} \left(\frac{\beta}{2\pi\|\bfx-\bfxp\|}\right)^{\frac{d}{2}-1}\left(H_\mu^{(1)}(\beta\|\bfx-\bfxp\|)- \big(1+{\rm i}\tan(\tfrac{\pi}{2}(\nu-\mu)\big) J_\mu(\beta\|\bfx-\bfxp\|)\right).
\end{gather*}
\end{rem}

\begin{rem}The opposite antipodal candidate function $\masf_{R,\beta}^{d,-}(\bfx,\bfx^\prime)$ approaches the Laplace fundamental solution as $\beta\to 0^{+}$, namely~(\ref{thms1deq})
\begin{gather*}
\lim_{\beta\to0^{+}} \masf_{R,\beta}^{d,-}(\bfx,\bfx^\prime) =\mathcal A_R^d ({\bf x},{\bf x}^\prime)=\frac{(d-2)!}{(2\pi)^{\frac{d}{2}}R^{d-2}\sin^{\frac{d}{2}-1}\rho_s}{\sf Q}_{\frac{d}{2}-1}^{1-\frac{d}{2}}(\cos\rho_s),
\end{gather*}
and in the flat-space limit it approaches
\begin{gather*}
\masf_{R,\beta}^{d,-}\sim \frac{{\rm i}}{4} \left(\frac{\beta}{2\pi\|\bfx-\bfxp\|}\right)^{\frac{d}{2}-1}\left(H_\mu^{(1)}(\beta\|\bfx-\bfxp\|)-{\rm e}^{{\rm i}\pi(\nu-\mu)}J_\mu(\beta\|\bfx-\bfxp\|)\right).
\end{gather*}
\end{rem}

\begin{rem} Note that for both functions, the integral normalization requirement (\ref{hyperspheresinglenorm}) is satisfied trivially since $\mass_{R,\beta}^{d,-}$, $\masf_{R,\beta}^{d,-}$ are odd functions of $\rho_s$ about $\tfrac{\pi}{2}$.
\end{rem}

\begin{Conjecture}\label{singDiracminus} Let $d\in\{2,3,\ldots\}$. Then the candidate function $\msss_{R,\beta}^{d,-}(\bfx,\bfxp)$ is a fundamental solution for the Helmholtz operator $\big({-}\Delta-\beta^2\big)$ on~$\Si_R^d$. Similarly, the candidate function $\mass_{R,\beta}^{d,-}(\bfx,\bfx^\prime)$ is an opposite antipodal fundamental solution for the Helmholtz operator $\big({-}\Delta-\beta^2\big)$ on~$\Si_R^d$.
\end{Conjecture}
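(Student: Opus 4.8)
The plan is to establish the defining distributional identity $\big({-}\Delta-\beta^2\big)\msss_{R,\beta}^{d,-}=\delta_s$ by a direct Green's-identity argument on the punctured manifold, rather than by matching a flat-space limit as in the proof of Theorem~\ref{singDiracplus}; the latter route is closed here because, by Remark~\ref{ssrem5}, the flat-space limit of $\msss_{R,\beta}^{d,-}$ fails to exist. First I would observe that, away from the source, the statement is purely homogeneous: with $\mu=\tfrac{d}{2}-1$ and $\nu$ as in~\eqref{munuforminussinglesphereDirac}, the function $(\sin\rho_s)^{-\mu}\sP_\nu^{-\mu}(-\cos\rho_s)$ is, up to the constant prefactor, exactly the spherically symmetric ($l=0$) solution of the radial equation~\eqref{sphodem}, so $\big({-}\Delta-\beta^2\big)\msss_{R,\beta}^{d,-}=0$ classically for $\rho_s\in(0,\pi)$.

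It then remains to analyse the two poles. At the antipode $\rho_s\to\pi^-$, the limit~\eqref{secondPnummumxmonelimit} shows that $(\sin\rho_s)^{-\mu}\sP_\nu^{-\mu}(-\cos\rho_s)$ has the finite value $1/\big(2^\mu\Gamma(\mu+1)\big)$, so $\msss_{R,\beta}^{d,-}$ is bounded near the antipode; since it solves an elliptic equation on a punctured neighbourhood of an isolated point in dimension $d\ge2$, a removable-singularity argument promotes it to a genuine solution across the antipode, contributing no distributional source there. This boundedness is precisely the feature that distinguishes the single-source candidate from the odd one. At the origin $\rho_s\to0^+$, local Euclidean structure together with the leading behaviour recorded in Remark~\ref{ssrem3}, namely $\msss_{R,\beta}^{d,-}\sim\Gamma\big(\tfrac{d}{2}-1\big)/\big(4\pi^{d/2}(R\rho_s)^{d-2}\big)$ with normal derivative $\sim-\Gamma\big(\tfrac{d}{2}\big)/\big(2\pi^{d/2}(R\rho_s)^{d-1}\big)$, identifies the singularity with that of the Euclidean fundamental solution of $-\Delta$. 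Testing against $\varphi\in\mathcal D(\Si_R^d)$ and writing $\langle\big({-}\Delta-\beta^2\big)\msss_{R,\beta}^{d,-},\varphi\rangle=\lim_{\epsilon\to0}\int_{\Si_R^d\setminus B_{R\epsilon}}\msss_{R,\beta}^{d,-}\big({-}\Delta-\beta^2\big)\varphi\,\mathrm{d}\mathrm{vol}_s$, Green's second identity reduces the integral to a flux over $\partial B_{R\epsilon}$; the term carrying the normal derivative of $\varphi$ vanishes as $\epsilon\to0$, while the term carrying the normal derivative of $\msss_{R,\beta}^{d,-}$ tends to $\varphi(\bfxp)$, with unit coefficient guaranteed by the $\epsilon$-ball constraint~\eqref{epsballintcon} (Remark~\ref{ssrem3}). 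For the opposite antipodal statement I would repeat the argument with $\sP_\nu^{-\mu}(-\cos\rho_s)$ replaced by the odd combination~\eqref{FPodddefn}: oddness about $\theta=\tfrac{\pi}{2}$ reproduces the unit source at the origin and installs a source of opposite sign at the antipode, yielding the right-hand side of~\eqref{oaspheq3}.

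The hard part, and the likely reason the statement is only conjectural, is the exceptional set flagged in Remark~\ref{badvaluessphere}: when $\beta^2$ equals an eigenvalue of $-\Delta$ on $\Si_R^d$, equivalently $\nu-\mu\in\N_0$, the operator $\big({-}\Delta-\beta^2\big)$ has nontrivial kernel, so by the Fredholm alternative no single-source fundamental solution can exist, and correspondingly the prefactor $\Gamma(\mu-\nu)$ acquires a pole and the closed form for $\msss_{R,\beta}^{d,-}$ breaks down. A complete proof must therefore either explicitly excise these countably many values of $\beta^2$ or, following Szmytkowski~\cite{Szmytkowskiunitsphere07}, pass to a reduced fundamental solution; moreover, reconciling the clean distributional identity off the exceptional set with the wildly oscillating, non-convergent flat-space limit of Remark~\ref{ssrem5} is exactly the delicate point that the matching technique used elsewhere in the paper cannot settle. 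A secondary technical check is the antipodal regularity in the smallest dimensions, where $\mu$ is small and the derivative estimate~\eqref{derivopppole} is least comfortable; there I would lean on boundedness together with elliptic removability rather than on an explicit $C^2$ expansion.
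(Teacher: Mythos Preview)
The paper does not provide a proof of this statement: it is stated as a \emph{Conjecture}, supported only by the heuristic evidence assembled in Remarks~\ref{ssrem1}--\ref{ssrem4} and the discussion that follows. The method the paper uses for the analogous results (Theorems~\ref{singDiracplus} and~\ref{antiDiracplus}) is to fix the normalising constant and then confirm consistency by taking the flat-space limit $R\to\infty$; for $\big({-}\Delta-\beta^2\big)$ that route is unavailable because, as recorded in Remark~\ref{ssrem5}, the flat-space limit of $\msss_{R,\beta}^{d,-}$ fails to exist. The authors therefore leave the statement open and tie its ``ultimate resolution'' to formulating a Sommerfeld-type condition on $\Si_R^d$.

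Your proposal takes a genuinely different route: a direct distributional verification via Green's second identity on $\Si_R^d\setminus B_{R\epsilon}$. This line of argument is essentially correct and, off the exceptional eigenvalue set $\nu-\mu\in\N_0$ (Remark~\ref{badvaluessphere}), would in fact establish the distributional identity $\big({-}\Delta-\beta^2\big)\msss_{R,\beta}^{d,-}=\delta_s$. The three ingredients you invoke are all sound: the radial ODE~\eqref{sphodem} gives the homogeneous equation on $(0,\pi)$; the hypergeometric representation~\eqref{hypergeomFerrersPnummumx} shows that $(\sin\rho_s)^{-\mu}\sP_\nu^{-\mu}(-\cos\rho_s)$ is not merely bounded but smooth as a function on the sphere near the antipode (indeed, writing $\sigma=\pi-\rho_s$ it is a smooth even function of $\sigma$), so no removable-singularity theorem is even needed there; and the asymptotics of Remark~\ref{ssrem3} together with~\eqref{integraldm1hypers} give exactly the unit boundary flux required in the Green's identity computation. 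The antipodal statement follows by the same computation applied at both poles of the odd combination~\eqref{FPodddefn}.

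What your argument buys is a proof of the bare distributional statement that does not depend on any flat-space limit; what it does not address, and what the paper seems to regard as part of the problem, is whether $\msss_{R,\beta}^{d,-}$ is the ``right'' fundamental solution from the point of view of radiation/boundary conditions, given that its $R\to\infty$ limit oscillates wildly rather than converging to~\eqref{thmg1nm}. That is a separate (and genuinely open) question from the distributional identity. One small technical point: for $d=2$ one has $\mu=0$ and the power-law asymptotics quoted from Remark~\ref{ssrem3} must be replaced by the logarithmic singularity of $\sP_\nu(-\cos\rho_s)$ near $\rho_s=0$; the Green's identity computation still closes, but the estimates need to be redone in that case.
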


Conjecture~\ref{singDiracminus} seems quite plausible since, unlike $\mssf_{R,\beta}^{d,-}$, $\msss_{R,\beta}^{d,-}$ satisfies all the critical Remarks~\ref{ssrem1}--\ref{ssrem4}. Apart from having the correct flat-space limit, $\mssf_{R,\beta}^{d,-}$ does not satisfy the integral normalization requirement~(\ref{hyperspheresinglenorm}) which is not allowed, as shown in Section~\ref{PropertiesLaplaceHelmholtz}. The only critical property that
$\mssf_{R,\beta}^{d,-}$ satisfies is the correct Euclidean flat-space limit. The flat-space behavior of~$\msss_{R,\beta}^{d,-}$, displayed in Remark~\ref{ssrem5}, may well be due to imposing incorrect boundary conditions since~$\Si_R^d$ is a compact manifold without boundary, whereas Euclidean space is actually a~noncompact manifold with boundary (at infinity). Furthermore, $\mssf_{R,\beta}^{d,-}$ is partially `antipodal' to the source point (diverges at the source point as well as at the antipole), which is not a~desirable property. It was also pointed out by one of the referees that due to the $\Gamma(\mu-\nu)$ factor, $\msss_{R,\beta}^{d,-}$ will diverge when $\beta=0$, as is necessary (see Remark~\ref{ssrem3}), and at an infinite number of points in the underdamped (oscillatory) regime which correspond to the wavenumber being one of the eigenvalues of the Laplace--Beltrami operator on~$\Si_R^d$. Similarly, $\mass_{R,\beta}^{d,-}$ has all the right properties that an opposite antipodal fundamental solution of the Helmholtz operator $\big({-}\Delta-\beta^2\big)$ on~$\Si_R^d$ should satisfy, with the exception of the correct flat-space limit. Given a resolution of the flat-space limit for $\msss_{R,\beta}^{d,-}$, the same problem will be solved for~$\mass_{R,\beta}^{d,-}$. The ultimate resolution of this Conjecture~\ref{singDiracminus} must also be associated with a correct and consistent formulation of the Sommerfeld radiation condition~(\ref{sommerfeldcond}) on~$\Si_R^d$.

\section{Gegenbauer expansions in geodesic polar coordinates}\label{Gegenbauerexpansioninhyperbolichypersphericalcoordinates}

For convenience, define the following degrees
\begin{gather*}\nu_{ +}:=
\begin{cases}
-{\tfrac{1}{2}}+{\tfrac{1}{2}}\sqrt{(d-1)^2-4\beta^2R^2}, & \mathrm{if} \ (d-1)^2-4\beta^2R^2 \geq 0,\vspace{1mm}\\
-{\tfrac{1}{2}}+\frac{{\rm i}}{2}\sqrt{4\beta^2R^2-(d-1)^2}& \mathrm{if} \ (d-1)^2-4\beta^2R^2 \le 0,
\end{cases}\\
\nu_{ -}:=-{\tfrac{1}{2}}+{\tfrac{1}{2}}\sqrt{(d-1)^2+4\beta^2R^2}.
\end{gather*}
\noindent In the boundary case, $(d-1)^2-4\beta^2R^2=0$, or equivalently,
\begin{gather*}
\beta\mapsto\frac{d-1}{2R}, \qquad \nu_{ +}\mapsto-\frac12, \qquad \nu_{ -}\mapsto\frac{\sqrt{2}(d-1)-1}{2},
\end{gather*}
the wavenumber only has dependence on the dimension of the space $d$ and the radius of curvature of the manifold $R$. In this case, the Ferrers and associated Legendre functions reduce to elementary or elementary transcendental functions. For instance, if $\mu=\frac{d}{2}-1 \in \Z$ ($d$~even), then the fundamental solutions are given in terms of complete elliptic integrals (the Legendre functions are toroidal harmonics, and the equivalent for the Ferrers functions). Alternatively, if $\mu$ is a half odd integer ($d$~odd), then the Ferrers functions and associated Legendre functions reduce to trigonometric and hyperbolic functions respectively. Given starting points for various degrees and orders, a~lattice of degrees and orders which satisfy these properties can be obtained using recurrence relations for these functions, namely
\cite[equations~(14.10.3), (14.10.6) and (14.10.7)]{NIST:DLMF}, \cite[equations~(14.10.1) and (14.10.2)]{NIST:DLMF}.

In fact, recent results due to Maier \cite[Theorem~6.1]{Maier2016} have shown that the space of Ferrers and associated Legendre functions is much more rich than was previously expected. In fact if the orders are integers and the degrees differ by $\pm 1/r$ $(r=2,3,4,6)$ from an integer, then the resulting Legendre functions are also given in terms of complete elliptic integrals of the first and second kind!

Define the constants
\begin{gather*}
\mathsf{a}_{R}^{d}:=\frac{\Gamma \big(\frac{d}{2}\big)}{2(d-2)\pi ^{\frac{d}{2} }R^{d-2}},\qquad \mathsf{b}_{R,\beta }^{d,\pm }:=\frac{2^{\mu }\Gamma (\mu)\Gamma (\nu _{ \pm } +\mu +1)\Gamma (\mu -\nu _{ \pm } )}{2^{\frac{d}{
2}+2}\pi ^{\frac{d}{2}}R^{d-2}}.
\end{gather*}
\begin{thm}\label{Gegfundsolexpansionsthm} Let $d\ge 3$, $R>0$, $\beta\in\R$, $r,r'\in (0,\infty)$, $r\ne r'$, $\theta,\theta'\in(0,\pi)$, $\gamma\in[0,\pi]$. Then
\begin{gather}
\mch_{R,\beta}^{d,+}(\bfx,\bfxp)= \frac{{\rm e}^{-{\rm i}\pi(\frac{d}{2}-1)}{\sf a}_{R}^{d}}{(\sinh r\sinh r')^{\frac{d}{2}-1}}\label{hgegenexpansionp} \\
\hphantom{\mch_{R,\beta}^{d,+}(\bfx,\bfxp)=}{} \times\sum_{l=0}^\infty (-1)^l(2l + d - 2)P_{\nu_{+}}^{-(\frac{d}{2} - 1 +l)}(\cosh r_<)Q_{\nu_{+}}^{\frac{d}{2}-1+l}(\cosh r_>)
C_l^{\frac{d}{2}-1}(\cos\gamma),\nonumber\\
\mch_{R,\beta}^{d,-}(\bfx,\bfxp)=\frac{{\rm e}^{-{\rm i}\pi(\frac{d}{2}-1)}{\sf a}_{R}^{d}}{(\sinh r\sinh r')^{\frac{d}{2}-1}} \label{hgegenexpansionm}\\
\hphantom{\mch_{R,\beta}^{d,-}(\bfx,\bfxp)=}{} \times\sum_{l=0}^\infty (-1)^l (2l + d - 2) P_{\nu_{-}}^{-(\frac{d}{2} - 1 +l)}(\cosh r_<)Q_{\nu_{-}}^{\frac{d}{2}-1+l}(\cosh r_>)C_l^{\frac{d}{2}-1}(\cos\gamma),\nonumber\\
\mss_{R,\beta}^{d,+}(\bfx,\bfxp)=\frac{{\sf b}_{R,\beta}^{d,+}}{(\sin\theta\sin\theta')^{\frac{d}{2}-1}}\sum_{l=0}^{\infty}(2l+d-2)\big(\nu_{ +} +\tfrac{d}{2}\big)_l\big(\tfrac{d}{2}-1-\nu_{ +}\big)_l\nonumber \\
\hphantom{\mss_{R,\beta}^{d,+}(\bfx,\bfxp)=}{} \times \sP_{\nu_{+}}^{-(\frac{d}{2}-1+l)}(\cos\theta_<)\sP_{\nu_{+}}^{-(\frac{d}{2}-1+l)}(- \cos\theta_>)C_l^{\frac{d}{2}-1}(\cos\gamma),\label{sgegenexpansionp}\\
\mas_{R,\beta}^{d,+}(\bfx,\bfxp)=\frac{{\sf b}_{R,\beta}^{d,+}}{(\sin\theta\sin\theta')^{\frac{d}{2}-1}}\sum_{l=0}^{\infty}(2l+d-2)\left(\nu_{ +} +\tfrac{d}{2}\right)_l\big(\tfrac{d}{2}-1-\nu_{ +}\big)_l\nonumber \\
\hphantom{\mas_{R,\beta}^{d,+}(\bfx,\bfxp)=}{} \times \sP_{\nu_{+}}^{-(\frac{d}{2}-1+l)}(\cos\theta_<){\mathsf f}_{\nu_{+}}^{-(\frac{d}{2}-1+l)}(\cos\theta_>)
C_l^{\frac{d}{2}-1}(\cos\gamma),\label{sgegenexpansionap}\\
\msss_{R,\beta}^{d,-}(\bfx,\bfxp)=\frac{{\sf b}_{R,\beta}^{d,-}}{(\sin\theta\sin\theta')^{\frac{d}{2}-1}}\sum_{l=0}^{\infty}(2l+d-2)\big(\nu_{ -} +\tfrac{d}{2}\big)_l \big(\tfrac{d}{2}-1-\nu_{ -}\big)_l\nonumber \\
\hphantom{\msss_{R,\beta}^{d,-}(\bfx,\bfxp)=}{} \times \sP_{\nu_{-}}^{-(\frac{d}{2}-1+l)}(\cos\theta_<)\sP_{\nu_{-}}^{-(\frac{d}{2}-1+l)}(- \cos\theta_>)C_l^{\frac{d}{2}-1}(\cos\gamma),\label{sgegenexpansionm1}\\
\mssf_{R,\beta}^{d,-}(\bfx,\bfxp)=\frac{{\sf a}_R^d \Gamma\big(\nu_{-} +\frac{d}{2}\big)}{\Gamma\big(\nu_{-} -\frac{d}{2}+2\big)(\sin\theta\sin\theta')^{\frac{d}{2}-1}}\nonumber\\
\hphantom{\mssf_{R,\beta}^{d,-}(\bfx,\bfxp)=}{} \times
\sum_{l=0}^{\infty}(-1)^l(2l+d-2)\big(\nu_{-} +\tfrac{d}{2}\big)_l \big({-}\nu_{-} +\tfrac{d}{2}-1\big)_l \sP_{\nu_{-}}^{-(\tfrac{d}{2}-1+l)}(\cos\theta_<)\\
\hphantom{\mssf_{R,\beta}^{d,-}(\bfx,\bfxp)=}{} \times
\left[\sQ_{\nu_{-}}^{-(\frac{d}{2}-1+l)}(\cos\theta_>)+{\rm i}\frac{\pi}{2}\sP_{\nu_{-}}^{-(\frac{d}{2}-1+l)}(\cos\theta_>)\right]
C_l^{\frac{d}{2}-1}(\cos\gamma),\label{sgegenexpansionm2}
\end{gather}
where for uniform convergence, $\theta\ne\theta'$, $r\ne r'$, and for \eqref{sgegenexpansionp}--\eqref{sgegenexpansionm2}, require \eqref{Dun2eq21}. In~\eqref{hgegenexpansionm}, ${\rm i}$~should be replaced with $-{\rm i}$ for $4\beta^2R^2> (d-1)^2$, in $\nu_{-}$ $($see~\eqref{nuhyperboloidminus}$)$.
\end{thm}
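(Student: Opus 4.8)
The plan is to read each fundamental solution as a constant multiple of $(\sinh\rho_h)^{-\mu}$ times an associated Legendre function of $\cosh\rho_h$, or of $(\sin\rho_s)^{-\mu}$ times a Ferrers function of $\pm\cos\rho_s$, with $\mu=\tfrac{d}{2}-1$, and then to apply the matching addition theorem from the addition-theorem subsections~\ref{alfat} and~\ref{ffat}. The crucial observation is that the global geodesic-distance formulas \eqref{diststandard} and \eqref{diststandards} give precisely $\cosh\rho_h=\cosh r\cosh r'-\sinh r\sinh r'\cos\gamma$ and $\cos\rho_s=\cos\theta\cos\theta'+\sin\theta\sin\theta'\cos\gamma$, i.e.\ exactly the composite arguments \eqref{coshrho} and \eqref{cosTheta} fed into those addition theorems. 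Hence no change of variable is required: the expansion variable is automatically $\cos\gamma$, the Gegenbauer order is $\mu=\tfrac{d}{2}-1$, and the radial coefficients split into $r_<,r_>$ (resp.\ $\theta_<,\theta_>$) pieces. Since the addition theorems hold for $\nu\in\C\setminus-\N$, both the overdamped (real degree) and the underdamped/oscillatory (conical, imaginary degree) regimes are covered with no modification, the only care being the sign of the imaginary part of $\nu$ fixed in Theorem~\ref{thmh1dminus}.

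For the hyperboloid solutions $\mch_{R,\beta}^{d,\pm}$ (Theorems~\ref{thmh1d} and~\ref{thmh1dminus}) I would substitute their closed forms, which up to a constant are $Q_\nu^{\frac{d}{2}-1}(\cosh\rho_h)/(\sinh\rho_h)^{\frac{d}{2}-1}$ with $\nu$ the degree prescribed in each theorem, into the addition theorem \eqref{conj}. Taking $\mu=\tfrac{d}{2}-1$, the factor $(n+\mu)$ equals $\tfrac12(2l+d-2)$ and the prefactor $2^{\mu}\Gamma(\mu)/\big((2\pi)^{\frac{d}{2}}R^{d-2}\big)$ collapses to $2\,{\sf a}_R^d$ after using $\Gamma\big(\tfrac{d}{2}-1\big)=\Gamma\big(\tfrac{d}{2}\big)/\big(\tfrac{d}{2}-1\big)$; this yields \eqref{hgegenexpansionp} and \eqref{hgegenexpansionm}. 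For the single-source sphere solutions $\mss_{R,\beta}^{d,+}$ (Theorem~\ref{singDiracplus}) and $\msss_{R,\beta}^{d,-}$ (Definition~\ref{defnminussph}), whose closed forms are $\sP_\nu^{-\mu}(-\cos\rho_s)/(\sin\rho_s)^{\mu}$, I would apply \eqref{conjPmPmmx}; the Pochhammer factors match as $(\nu+\mu+1)_n=\big(\nu+\tfrac{d}{2}\big)_n$ and $(\mu-\nu)_n=\big(\tfrac{d}{2}-1-\nu\big)_n$, and absorbing the constants into ${\sf b}_{R,\beta}^{d,\pm}$ produces \eqref{sgegenexpansionp} and \eqref{sgegenexpansionm1}. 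The candidate $\mssf_{R,\beta}^{d,-}$ is the combination $\sQ_\nu^{-\mu}(\cos\rho_s)+\tfrac{{\rm i}\pi}{2}\sP_\nu^{-\mu}(\cos\rho_s)$, so I would apply \eqref{conjPmQm} and \eqref{conjPmPm} with the same weights and add termwise, obtaining the bracketed $\theta_>$-coefficient of \eqref{sgegenexpansionm2}; the ratio $\Gamma\big(\nu_-+\tfrac{d}{2}\big)/\Gamma\big(\nu_--\tfrac{d}{2}+2\big)$ is the normalization already present in \eqref{mssfRbetadm}.

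The two odd/antipodal expansions need slightly more care. For $\mas_{R,\beta}^{d,+}$ I would write the defining odd Ferrers function via \eqref{FPodddefn} as ${\mathsf f}_\nu^{-\mu}(\cos\rho_s)=\sP_\nu^{-\mu}(-\cos\rho_s)-\sP_\nu^{-\mu}(\cos\rho_s)$, expand the two pieces by \eqref{conjPmPmmx} and \eqref{conjPmPm} respectively, and recombine so that the $\theta_>$-coefficient becomes the single odd function ${\mathsf f}_{\nu}^{-(\frac{d}{2}-1+l)}(\cos\theta_>)$ of \eqref{sgegenexpansionap}. I expect this to be the main obstacle: the alternating sign $(-1)^n$ is present in the $\sP_\nu^{-\mu}(\cos\Theta)$ series \eqref{conjPmPm} but absent from the $\sP_\nu^{-\mu}(-\cos\Theta)$ series \eqref{conjPmPmmx}, so one must check carefully that the two series recombine term by term into a single Gegenbauer series whose summand is $\sP^{-(\mu+l)}(\cos\theta_<)$ times ${\mathsf f}^{-(\mu+l)}(\cos\theta_>)$ with no stray sign, and likewise reconcile the $(-1)^n$ against the overall $(-1)^l$ appearing in the hyperboloid and $\mssf$ formulas. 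The remaining point is convergence: each addition theorem converges uniformly under exactly the hypotheses inherited here, namely $r\ne r'$ (resp.\ $\theta\ne\theta'$) together with the geometric constraint \eqref{Dun2eq21} on the sphere, so the resulting Gegenbauer series converge uniformly off the diagonal, which is what the theorem claims. Apart from the sign and Pochhammer reconciliation in the odd and second-kind cases, every step is substitution followed by gamma-function algebra.
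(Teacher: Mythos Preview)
Your proposal is correct and follows essentially the same route as the paper: the paper's proof is a single sentence stating that the expansions follow immediately from the addition theorems \eqref{conj} and \eqref{conjPmPm}--\eqref{conjPmPmmx}, and your plan is exactly to substitute each closed-form fundamental solution into the matching addition theorem with $\mu=\tfrac{d}{2}-1$ and then clean up the constants. Your identification of the sign bookkeeping in the $\mas_{R,\beta}^{d,+}$ case (where subtracting \eqref{conjPmPm} from \eqref{conjPmPmmx} leaves an extra $(-1)^n$ against the $\sP_\nu^{-(\mu+n)}(\cos\theta_>)$ term rather than a clean ${\mathsf f}_\nu^{-(\mu+n)}(\cos\theta_>)$) is a genuine detail the paper's terse proof does not address, so your instinct to flag it is well placed; but it is a matter of careful algebra within the same framework, not a different method.
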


\begin{proof}The Gegenbauer expansions for $\mch_{R,\beta}^{d,\pm}$, $\mss_{R,\beta}^{d,+}$, $\mas_{R,\beta}^{d,+}$, $\msss_{R,\beta}^{d,-}$, $\mssf_{R,\beta}^{d,-}$, follow immediately from (\ref{conj}), (\ref{conjPmPm})--(\ref{conjPmPmmx}).
\end{proof}

\begin{rem}Similar Gegenbauer expansions for $\mass_{R,\beta}^{d,-}$, $\masf_{R,\beta}^{d,-}$, can be easily obtained using (\ref{conjPmPm}), (\ref{conjPmPmmx}). We leave these to the reader.
\end{rem}

\begin{thm} In the flat-space limit, the fundamental solution Gegenbauer expansions for $\mch_{R,\beta}^{d,\pm}$, $\mss_{R,\beta}^{d,+}$, $\mas_{R,\beta}^{d,+}$, approach the corresponding Euclidean fundamental solution expansions, namely {\rm \cite[equations~(11.41.4) and (11.41.8)]{Watson}}
\begin{gather}
\frac{H_{\mu}^{(1)}(\beta\|\bfx-\bfxp\|)}{\|\bfx-\bfxp\|^{\mu}}=\frac{2^{\mu}\Gamma(\mu)}{(\beta\|\bfx\|\|\bfxp\|)^{\mu}}\sum_{l=0}^{\infty}(l+\mu)J_{\mu+l}(\beta\|\bfx\|_<)H_{\mu+l}^{(1)}(\beta\|\bfx\|_>)
C_l^\mu(\cos\gamma),\label{eusolexpm}\\
\frac{K_{\mu}(\beta\|\bfx-\bfxp\|)}{\|\bfx-\bfxp\|^{\mu}}=\frac{2^{\mu}\Gamma(\mu)}{(\beta\|\bfx\|\|\bfxp\|)^{\mu}}\sum_{l=0}^{\infty}(l+\mu)I_{\mu+l}(\beta\|\bfx\|_<)K_{\mu+l}(\beta\|\bfx\|_>)C_l^\mu(\cos\gamma).
\label{eusolexpp}
\end{gather}
\end{thm}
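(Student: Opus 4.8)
The plan is to take the flat-space limit $R\to\infty$ term by term in each of the Gegenbauer series \eqref{hgegenexpansionp}, \eqref{hgegenexpansionm}, \eqref{sgegenexpansionp}, \eqref{sgegenexpansionap}, verifying that the $l$-th summand converges to the $l$-th summand of the appropriate Watson expansion \eqref{eusolexpm} or \eqref{eusolexpp}, and then to pass the limit through the sum by dominated convergence. The governing dictionary is the one already used in Theorems~\ref{thmh1d}--\ref{antiDiracplus}: the geodesic distances satisfy $Rr\to\|\bfx\|$, $Rr'\to\|\bfxp\|$ (respectively $R\theta\to\|\bfx\|$, $R\theta'\to\|\bfxp\|$), so that $\sinh r\sim r\sim\|\bfx\|/R$ and $\sin\theta\sim\theta\sim\|\bfx\|/R$; the degrees grow like $\nu_{-}\sim\beta R$, and in the conical cases $\tau:=\tfrac12\sqrt{4\beta^2R^2-(d-1)^2}\sim\beta R$; and the Bessel arguments stabilize, $(\nu+\tfrac12)r\to\beta\|\bfx\|$ and $\tau\theta\to\beta\|\bfx\|$. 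Throughout I write $\mu=\tfrac{d}{2}-1$, so that $d-2=2\mu$, $\Gamma(\tfrac{d}{2})=\mu\Gamma(\mu)$, and the index shift $\mu\mapsto\mu+l$ is what appears in each summand.

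For the hyperboloid expansion \eqref{hgegenexpansionp} I would insert the real-degree approximations \eqref{iPuaa}, \eqref{iQuaa} with order $\mu+l$ and $\nu=\nu_{-}$, giving $P_{\nu_-}^{-(\mu+l)}(\cosh r_<)Q_{\nu_-}^{\mu+l}(\cosh r_>)\sim {\rm e}^{{\rm i}\pi(\mu+l)}\sqrt{\tfrac{r_<r_>}{\sinh r_<\sinh r_>}}\,I_{\mu+l}\bigl((\nu_-+\tfrac12)r_<\bigr)K_{\mu+l}\bigl((\nu_-+\tfrac12)r_>\bigr)$, in which the factors $\nu_-^{\pm(\mu+l)}$ cancel. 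The phase ${\rm e}^{{\rm i}\pi(\mu+l)}={\rm e}^{{\rm i}\pi\mu}(-1)^l$ combines with the explicit $(-1)^l$ in the summand and the ${\rm e}^{-{\rm i}\pi\mu}$ in the prefactor to leave $+1$; simplifying $\mathsf{a}_R^d R^{d-2}=\Gamma(\mu)/(4\pi^{\mu+1})$ and using $(\sinh r\sinh r')^\mu\sim(\|\bfx\|\|\bfxp\|/R^2)^\mu$ then reproduces exactly the $l$-th term of \eqref{eusolexpp}. The case \eqref{hgegenexpansionm} with $4\beta^2R^2>(d-1)^2$ is identical in spirit, except one uses the conical approximations of Theorem~\ref{thm:1}: \eqref{Dun.eq16} converts $P_{-\frac12-{\rm i}\tau}^{-(\mu+l)}$ to $J_{\mu+l}$ and \eqref{Dun.eq22} converts $Q_{-\frac12-{\rm i}\tau}^{\mu+l}$ to $H_{\mu+l}^{(1)}$, so the product limits to $J_{\mu+l}(\beta\|\bfx\|_<)H_{\mu+l}^{(1)}(\beta\|\bfx\|_>)$ and the $l$-th term of \eqref{eusolexpm} emerges after the analogous constant bookkeeping.

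The hypersphere expansions \eqref{sgegenexpansionp}, \eqref{sgegenexpansionap} are the delicate part, and I expect the main obstacle to be the interlocking cancellation of exponential and power-of-$\tau$ factors. Here $\nu_{+}=-\tfrac12+{\rm i}\tau$, and three growing quantities must conspire: the Pochhammer coefficient $(\nu_{+}+\tfrac{d}{2})_l(\tfrac{d}{2}-1-\nu_{+})_l=(\mu+\tfrac12+{\rm i}\tau)_l(\mu+\tfrac12-{\rm i}\tau)_l\sim\tau^{2l}$; the product of Ferrers functions, where \eqref{Fcuab} gives $\sP_{\nu_+}^{-(\mu+l)}(\cos\theta_<)\sim\tau^{-(\mu+l)}\sqrt{\tfrac{\theta_<}{\sin\theta_<}}I_{\mu+l}(\tau\theta_<)$ while the opposite-pole approximation \eqref{FcPmmumx} gives $\sP_{\nu_+}^{-(\mu+l)}(-\cos\theta_>)\sim\tfrac{{\rm e}^{\pi\tau}}{\pi}\tau^{-(\mu+l)}\sqrt{\tfrac{\theta_>}{\sin\theta_>}}K_{\mu+l}(\tau\theta_>)$, contributing $\tau^{-2(\mu+l)}{\rm e}^{\pi\tau}$; and the gamma factor in $\mathsf{b}_{R,\beta}^{d,+}$, which I would estimate by Lemma~\ref{Gammaratiolemma} (equivalently Stirling) as $\Gamma(\nu_{+}+\mu+1)\Gamma(\mu-\nu_{+})=\bigl|\Gamma(\mu+\tfrac12+{\rm i}\tau)\bigr|^2\sim 2\pi\tau^{2\mu}{\rm e}^{-\pi\tau}$. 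The point is that the $\tau^{2l}$ from the Pochhammer cancels the $\tau^{-2l}$ from the Ferrers product, the ${\rm e}^{\pi\tau}$ from \eqref{FcPmmumx} cancels the ${\rm e}^{-\pi\tau}$ from the gamma factor, and $\tau^{2\mu}$ cancels $\tau^{-2\mu}$, leaving a finite limit; reducing the remaining constants with $2^{\frac{d}{2}+2}=2^{\mu+3}$ collapses the $l$-th term to $\tfrac{\Gamma(\mu)}{2\pi^{\mu+1}(\|\bfx\|\|\bfxp\|)^\mu}(l+\mu)I_{\mu+l}(\beta\|\bfx\|_<)K_{\mu+l}(\beta\|\bfx\|_>)C_l^\mu(\cos\gamma)$, which is precisely the $l$-th term of \eqref{eusolexpp}. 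The antipodal case \eqref{sgegenexpansionap} then follows from \eqref{sgegenexpansionp} since ${\mathsf f}_{\nu_+}^{-(\mu+l)}(\cos\theta_>)=\sP_{\nu_+}^{-(\mu+l)}(-\cos\theta_>)-\sP_{\nu_+}^{-(\mu+l)}(\cos\theta_>)$, where as $\theta_>\to0$ the second piece is only $\mathcal{O}(1)$ whereas the first carries the factor ${\rm e}^{\pi\tau}$, so the $-\sP(\cos\theta_>)$ contribution is negligible in the limit.

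Finally, I would justify the interchange of limit and summation. The cleanest route is to bound each summand uniformly in $R$ by the corresponding term of the convergent Watson series (using that the $\mathcal{O}(1/\tau)$ and $\mathcal{O}(1/\nu)$ errors in the asymptotics are controlled, and that the Gegenbauer series on the right converges absolutely under \eqref{Dun2eq21} with $\theta\ne\theta'$, $r\ne r'$), and then invoke dominated convergence; alternatively one notes that the closed-form flat-space limits of $\mch_{R,\beta}^{d,\pm}$, $\mss_{R,\beta}^{d,+}$, $\mas_{R,\beta}^{d,+}$ were already identified in Theorems~\ref{thmh1d}--\ref{antiDiracplus} with $\mcg_\beta^{d,\pm}$, whose Gegenbauer expansions are \eqref{eusolexpm}, \eqref{eusolexpp}, so that the term-by-term limit computed above and the limit of the closed form necessarily agree. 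I expect the genuine work to lie entirely in the hypersphere bookkeeping of the preceding paragraph; the hyperboloid cases and the summation interchange are routine by comparison.
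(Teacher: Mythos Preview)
Your proposal is correct and follows essentially the same route as the paper's proof: both arguments reduce to a term-by-term flat-space limit using the large-degree asymptotics \eqref{iPuaa}, \eqref{iQuaa}, \eqref{Dun.eq16}, \eqref{Dun.eq22}, \eqref{Fcuab}, \eqref{FcPmmumx}, \eqref{CPodd1} together with the gamma-ratio estimates of Lemmas~\ref{Gammaratiolemma}--\ref{negGammaratiolemma}, after noting (as in Theorems~\ref{thmh1d}--\ref{antiDiracplus}) that the closed forms already converge to $\mcg_\beta^{d,\pm}$. Your treatment is simply more explicit---you spell out the cancellation of the $\tau^{2l}$, ${\rm e}^{\pm\pi\tau}$ and $\tau^{\pm2\mu}$ factors in the hypersphere case and add a dominated-convergence remark for the interchange of limit and sum---whereas the paper compresses all of this into a citation of the relevant asymptotic formulas.
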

\begin{proof} We have already shown in the flat-space limit that $\mch_{R,\beta}^{d,\pm}$, $\mss_{R,\beta}^{d,+}$, $\mas_{R,\beta}^{d,+}$ $\mapsto \mcg_{\beta}^{d,\pm}$ (see proofs of Theorems~\ref{thmh1d}, \ref{thmh1dminus}, \ref{singDiracplus}, \ref{antiDiracplus}). Now we only need to check that in the flat-space limits, the expansions on the right-hand sides of Theorem~\ref{Gegfundsolexpansionsthm} approach the correct series coefficients.
Using standard geodesic polar coordinates (\ref{standardhyp}) or (\ref{standardsph}), in the flat-space limit $(r,r')\sim (\bar{r},\bar{r}')/R$, and $(\theta,\theta')\sim(\bar{r},\bar{r}')/R$, where $\bar{r}:=\|\bfx\|$, $\bar{r}':=\|\bfxp\|$, are the radial coordinates for the primed and unprimed position vectors in $d$-dimensional Euclidean space $\bfx,\bfxp\in\R^d$. Let $\mu=\frac{d}{2}-1$. In the flat-space limit
\begin{gather*}
{\sf a}_R^d\sim\frac{\Gamma(\mu)}{4\pi^{\frac{d}{2}}R^{d-2}}, \qquad \nu_{ -} \sim-\tfrac{1}{2}+\beta R\sim\beta R,\\ \nu_{ +} \sim-\tfrac{1}{2}+{\rm i}\beta R, \qquad {\sf b}_{R,\beta}^{d,+}\sim\frac{\beta^{2\mu}
\Gamma(\mu){\rm e}^{-\pi\beta R}}{4\pi^{\frac{d}{2}-1}}.
\end{gather*}
By applying the appropriate flat-space limits, the Gegenbauer expansions (\ref{hgegenexpansionp}), (\ref{sgegenexpansionp}), (\ref{sgegenexpansionap}), all approach (\ref{eusolexpp}) using (\ref{Gammaratio}), (\ref{Gammaratioz}), (\ref{negratiogamma}), (\ref{iPuaa}), (\ref{iQuaa}), (\ref{Fcuab}), (\ref{Fcuaa}), (\ref{CPodd1}). Similarly, the Gegenbauer expansions (\ref{hgegenexpansionm}), (\ref{sgegenexpansionm2}) both approach (\ref{eusolexpm}) using (\ref{Dun.eq16}), (\ref{Dun.eq21}), (\ref{Fuaa}), (\ref{Fuab}).
\end{proof}
\begin{rem}
For (\ref{sgegenexpansionm1}), the flat-space limit of this expansion does not exist.
\end{rem}
\begin{rem} Notice that (\ref{hgegenexpansionp})--(\ref{sgegenexpansionm2}) can be further expanded over the remaining $(d-2)$-quantum numbers in $K$ in terms of a simply separable product of normalized hyperspherical harmonics $Y_l^K(\wbfx)\overline{Y_l^K(\wbfxp)}$, where $\wbfx,\wbfxp\in\Si^{d-1}$, using the addition theorem for hyperspherical harmonics \cite[Chapter~9]{AAR}
\begin{gather*}
C_l^{\frac{d}{2}-1}(\cos\gamma)=\frac{2\pi^{\frac{d}{2}}(d-2)}{(2l+d-2)\Gamma\big(\frac{d}{2}\big)} \sum_{K}Y_l^K(\wbfx)\overline{Y_l^K(\wbfxp)}.\end{gather*}
\end{rem}

\subsection{Azimuthal Fourier series in two dimensions}\label{AzimuthalFourierseries}

It is interesting to consider how we can obtain azimuthal Fourier expansions for our fundamental solutions. For the particular case where a fundamental solution is given in terms of an associated Legendre function of the second kind, we can use the addition theorem~(\ref{conj}) for the two-dimensional case. One has the following azimuthal Fourier expansions in $d=2$.

\begin{cor}\label{Fourfundsolexpansionsthm}Let $\beta\in\R$, $r,r'\in (0,\infty)$, $r\ne r'$, $\theta,\theta',\phi-\phi'\in[0,\pi]$. Then
\begin{gather}
\mch_{R,\beta}^{2,+}(\bfx,\bfxp)=\sum_{l=0}^\infty(-1)^l\epsilon_l P_{\nu_{+}}^{-l}(\cosh r_<)Q_{\nu_{+}}^{l}(\cosh r_>)T_l(\cos(\phi-\phi')),\label{hfourenexpansionp}\\
\mch_{R,\beta}^{2,-}(\bfx,\bfxp)=\sum_{l=0}^\infty (-1)^l \epsilon_lP_{\nu_{-}}^{-l}(\cosh r_<) Q_{\nu_{-}}^{l}(\cosh r_>)T_l(\cos(\phi-\phi')),\label{hfourenexpansionm}\\
\mss_{R,\beta}^{2,+}(\bfx,\bfxp)=\frac{-1}{4\sin(\pi\nu_{+})}\nonumber\\
\hphantom{\mss_{R,\beta}^{2,+}(\bfx,\bfxp)=}{}\times \sum_{l=0}^{\infty}\epsilon_l(-\nu_{ +})_l(\nu_{ +} +1)_l\sP_{\nu_{+}}^{-l}(\cos\theta_<)\sP_{\nu_{+}}^{-l}(- \cos\theta_>)T_l(\cos(\phi-\phi')),\label{sfourenexpansionp}\\
\mas_{R,\beta}^{2,+}(\bfx,\bfxp)=\frac{-1}{4\sin(\pi\nu_{+})} \!\sum_{l=0}^{\infty}\epsilon_l(-\nu_{ +})_l(\nu_{ +} +1)_l\sP_{\nu_{+}}^{-l}(\cos\theta_<){\mathsf f}_{\nu_{+}}^{-l}(\cos\theta_>)T_l(\cos(\phi-\phi')),\!\!\!\!\!
\label{sfourenexpansionp+}\\
\mss_{R,\beta}^{2,-}(\bfx,\bfxp)=\sum_{l=0}^{\infty}(-1)^l\epsilon_l\sP_{\nu_{-}}^{-l}(\cos\theta_<)\left[\sQ_{\nu_{-}}^{l}(\cos\theta_>)+{\rm i}\frac{\pi}{2}\sP_{\nu_{-}}^{l}(\cos\theta_>)\right]
T_l(\cos(\phi-\phi')). \label{sfourenexpansionm}
\end{gather}
For \eqref{hfourenexpansionm}, in $\nu_{ -}$, ${\rm i}$ should be replaced with $-{\rm i}$ for $4\beta^2R^2> (d-1)^2$ $($see~\eqref{nuhyperboloidminus}$)$.
\end{cor}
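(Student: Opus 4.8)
The plan is to obtain each of \eqref{hfourenexpansionp}--\eqref{sfourenexpansionm} as the $d=2$ specialization of the fundamental solutions derived in Sections~\ref{fshoHiRd} and~\ref{fssrd}, the essential point being that $\mu:=\tfrac{d}{2}-1=0$ when $d=2$. First I would record the two simplifications this forces. The separation angle degenerates: setting $d=2$ in \eqref{prodform} leaves both the product and the sum empty, so $\cos\gamma=\cos(\phi-\phi')$, i.e.\ the single angle on $\Si^{1}$ is the azimuthal coordinate and a Gegenbauer series becomes an azimuthal Fourier (cosine) series. Moreover every factor $(\sinh r\sinh r')^{-\mu}$ or $(\sin\theta\sin\theta')^{-\mu}$ collapses to $1$, and the orders of all associated Legendre and Ferrers functions drop from $\tfrac{d}{2}-1+l$ to $l$, so that the closed forms of Theorems~\ref{thmh1d}, \ref{thmh1dminus}, \ref{singDiracplus}, \ref{antiDiracplus} and Definition~\ref{defnminussph} reduce to order-zero functions of $\cosh\rho_h$ or $\pm\cos\rho_s$.

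The cleanest route, mirroring the proofs of Corollaries~\ref{muzeroasslegexpan} and~\ref{trigaddnthms}, is then to insert these $d=2$ reductions into the order-zero addition theorems already recorded in Corollary~\ref{muzeroasslegexpan} (hyperboloid) and Corollary~\ref{cormuzero} (hypersphere). Concretely, the hyperboloid solutions $\mch_{R,\beta}^{2,\pm}$ reduce to constant multiples of $Q_{\nu_{\pm}}(\cosh\rho_h)$, which I expand by \eqref{deq2addnthm} to obtain \eqref{hfourenexpansionp}, \eqref{hfourenexpansionm} (the overdamped replacement ${\rm i}\mapsto-{\rm i}$ being inherited verbatim from \eqref{nuhyperboloidminus} and Theorem~\ref{thmh1dminus}). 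The single-source sphere solution $\mss_{R,\beta}^{2,+}$ reduces to a multiple of ${\sf P}_{\nu_{+}}(-\cos\rho_s)$, expanded by \eqref{ferdepm2addnthm}; the candidate $\mss_{R,\beta}^{2,-}$ reduces to a multiple of ${\sf Q}_{\nu_{-}}(\cos\rho_s)+{\rm i}\tfrac{\pi}{2}{\sf P}_{\nu_{-}}(\cos\rho_s)$, expanded by combining \eqref{ferdeq2addnthm} and \eqref{ferdep2addnthm}, which is exactly what produces the positive-order bracket $[\,{\sf Q}_{\nu_{-}}^{l}+{\rm i}\tfrac{\pi}{2}{\sf P}_{\nu_{-}}^{l}\,](\cos\theta_>)$ displayed in \eqref{sfourenexpansionm}. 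In simplifying the prefactors I would use the Euler reflection identity $\Gamma(\nu+1)\Gamma(-\nu)=-\pi/\sin(\pi\nu)$; this is the origin of the $-1/(4\sin(\pi\nu_{+}))$ constants in \eqref{sfourenexpansionp}, \eqref{sfourenexpansionp+}. The convergence constraints are inherited from the parent addition theorems, namely $r\ne r'$ on the hyperboloid and $\theta\ne\theta'$ together with \eqref{Dun2eq21} on the sphere.

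The one case cleaner to read off differently is the opposite antipodal solution $\mas_{R,\beta}^{2,+}$, a multiple of the odd Ferrers function ${\mathsf f}_{\nu_{+}}(\cos\rho_s)$ of \eqref{FPodddefn}: rather than expanding ${\sf P}_{\nu_{+}}(-\cos\Theta)-{\sf P}_{\nu_{+}}(\cos\Theta)$ termwise (which would then require a connection identity to recombine the summands into ${\mathsf f}_{\nu_{+}}^{-l}$), I would simply take the $d\to2$ limit of the general-$d$ expansion \eqref{sgegenexpansionap} already established in Theorem~\ref{Gegfundsolexpansionsthm}, where the odd function ${\mathsf f}_{\nu_{+}}^{-(\frac{d}{2}-1+l)}$ appears intact and degenerates directly to ${\mathsf f}_{\nu_{+}}^{-l}$; this yields \eqref{sfourenexpansionp+}.

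The main obstacle, and the only genuinely non-routine step, is the $\mu\to0$ degeneracy underlying the limit route: a naive substitution $d=2$ into Theorem~\ref{Gegfundsolexpansionsthm} fails because the $\Gamma(\mu)$ in ${\sf b}_{R,\beta}^{d,\pm}$ (and the $(d-2)^{-1}$ in ${\sf a}_R^d$) diverges while $C_l^{\mu}(\cos\gamma)$ vanishes for every $l\ge1$, so the resulting $0\cdot\infty$ indeterminacy must be resolved through the limit \eqref{mutozeroChebyGeg}, $\lim_{\mu\to0}\tfrac{l+\mu}{\mu}C_l^{\mu}(x)=\epsilon_l T_l(x)$; this is precisely what converts the summand factors $2l+d-2=2(l+\mu)$ and $C_l^{\frac{d}{2}-1}$ into the finite Fourier coefficients $\epsilon_l T_l(\cos(\phi-\phi'))$. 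Working instead from the order-zero corollaries sidesteps this, since their $\epsilon_n$ and $T_n$ already encode the limit. A final consistency check is that the surviving $\Gamma(\mu-\nu)=\Gamma(-\nu_{\pm})$ prefactor is finite under the stated hypotheses: it is singular exactly at $\nu\in\N_0$ and at $\beta=0$, which are the excluded eigenvalue (``bad'') wavenumbers of Remark~\ref{badvaluessphere} and the non-existent Laplace limit, so the hypotheses are consistent and no further analytic input is needed.
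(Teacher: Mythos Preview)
Your approach is correct and essentially matches the paper's: both derive the $d=2$ Fourier expansions by applying the order-zero addition theorems \eqref{deq2addnthm} and \eqref{ferdep2addnthm}--\eqref{ferdepm2addnthm} to the $\mu=0$ specializations of the fundamental solutions, with the Euler reflection and the $\epsilon_l T_l$ limit handling the constants exactly as you describe. Your treatment of $\mas_{R,\beta}^{2,+}$ via the $d\to2$ limit of \eqref{sgegenexpansionap} is a minor but legitimate variant of the paper's implicit termwise subtraction of \eqref{ferdep2addnthm} from \eqref{ferdepm2addnthm}; either route lands on the same series.
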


\begin{proof}This follows from (\ref{deq2addnthm}), (\ref{ferdep2addnthm})--(\ref{ferdepm2addnthm}).
\end{proof}

If $\beta=1/(2R)$ then the fundamental solution $\mch_{R,1/(2R)}^{2,-}$ reduces to a complete elliptic integral of the first kind through \cite[equations~(14.5.24)--(14.5.27)]{NIST:DLMF}, namely
\begin{gather*}
\mch_{R,1/(2R)}^{2,-}(\bfx,\bfxp)=\frac{1}{2\pi}\sech\big(\tfrac12\rho\big)K \big( \sech\big(\tfrac12\rho\big)\big)\\
\hphantom{\mch_{R,1/(2R)}^{2,-}(\bfx,\bfxp)}{} =\frac{1}{2\pi}\sum_{m=0}^\infty \epsilon_m (-1)^m P_{-\frac12}^{-m}(\cosh r_<)Q_{-\frac12}^{m}(\cosh r_>)\cos(m(\phi-\phi')).
\end{gather*}
Similar expansions are possible for the other fundamental solutions.

Two-dimensions is the only case where we are able to compute azimuthal Fourier expansions of our fundamental solutions whose coefficients are given without a sum. For dimensions greater than two, the approach used previously for Laplace's equation \cite[Section~4]{CohlKalII} does not easily generalize for the Helmholtz operator.

\appendix

\section{Proof of the addition Theorem \ref{GegexpansionsFerrers}}\label{ProofoftheAdditionTheoremGegexpansionsFerrers}

Since the proof of the addition Theorem \ref{GegexpansionsFerrers} is fairly long, we have delegated it to this appendix. We begin by studying the convergence of (\ref{conjPmPp})--(\ref{conjPmQm}) by determining the asymptotic behavior of the $n$th terms as $n\rightarrow \infty$. To this end, for (\ref{conjPmPm}) use \cite[equation~(14.15.1)]{NIST:DLMF}, and from this we deduce that as ${\Re}\mu \rightarrow \infty$,
\begin{gather}
\mathsf{P}_{\nu}^{-\mu} ( \cos \theta ) \sim \frac{\tan ^{\mu }\bigl( \frac{1}{2}\theta \bigr)}{\Gamma ( 1+\mu )},\qquad
\mathsf{P}_{\nu}^{-\mu}(- \cos \theta ) \sim \frac{\cot ^{\mu}\big( {{\frac{1}{2}}\theta}\big)}{\Gamma ( 1+\mu)}.\label{Dun2eq1}
\end{gather}
Then, from the large argument asymptotics for the gamma function (Stirling's formula \cite[equation~(5.11.3)]{NIST:DLMF}) we obtain
\begin{gather}
( n+\mu) ( \nu +\mu +1)_{n}( \mu -\nu) _{n}\mathsf{P}_{\nu}^{-( \mu +n)}\bigl( \cos \theta _{<}\bigr) \mathsf{P}_{\nu}^{-(\mu +n)
}\bigl( {\cos \theta _{>}}\bigr) \notag \\
\qquad{} \sim \frac{\big\{ {\tan \bigl( {{\frac{1}{2}}\theta _{<}}\bigr) \tan \bigl( {{\frac{1}{2}}\theta _{>}}\bigr)}\big\} ^{\mu +n}}{\Gamma (\nu +\mu +1) \Gamma (\mu -\nu)}.\label{Dun2eq3}
\end{gather}
Using the connection formula \cite[equation~(14.9.7)]{NIST:DLMF} and (\ref{Dun2eq1}) we have as ${\Re}\mu \rightarrow \infty$,
\begin{gather}
\mathsf{P}_{\nu}^{\mu} ( \cos \theta) \sim \frac{\Gamma( \nu +\mu +1) \Gamma (\mu -\nu)}{\pi \Gamma( \mu +1)}\big\{ {-}\sin (\nu \pi) \tan ^{\mu}\big( {{\tfrac{1}{2}}\theta}\big) +\sin ( \mu \pi) \cot
^{\mu}\bigl( {{\tfrac{1}{2}}\theta}\bigr) \big\} . \label{Dun2eq5}
\end{gather}
Hence, for (\ref{conjPmPp}), from (\ref{Dun2eq1}), (\ref{Dun2eq5}), and Stirling's formula, one obtains as $n\rightarrow \infty$,
\begin{gather}
( n+\mu) \mathsf{P}_{\nu}^{-(\mu +n)}\bigl( \cos \theta _{<}\bigr) \mathsf{P}_{\nu}^{\mu +n}( \cos \theta _{>}) \notag \\
\qquad{} \sim \frac{1}{\pi}\tan ^{\mu +n}\bigl( {{\tfrac{1}{2}}\theta
_{<}}\bigr) \big\{ {-}\sin ( {\nu \pi} ) \tan ^{\mu +n}\bigl( {{%
\tfrac{1}{2}}\theta _{>}}\bigr) + ( {-1} ) ^{n}\sin \left( {\mu
\pi}\right) \cot ^{\mu +n}\bigl( {{\tfrac{1}{2}}\theta _{>}}\bigr)\big\} . \label{Dun2eq6}
\end{gather}
For (\ref{conjPmQp}), using the connection formula \cite[equation~(14.9.9)]{NIST:DLMF}, one has as $n\rightarrow \infty$,
\begin{gather}
( n+\mu) \mathsf{P}_{\nu}^{-(\mu +n)}( \cos \theta _{<}) \mathsf{Q}_{\nu}^{\mu +n}(\cos \theta _{>}) \notag \\
\qquad{} \sim \frac{1}{2}\tan ^{\mu +n}\bigl( {{\tfrac{1}{2}}\theta _{<}}\bigr) \big\{ {-}\cos ( \nu \pi) \tan ^{\mu +n}\bigl( {{\tfrac{1}{2}}\theta _{>}}\bigr) +( {-1}) ^{n}\cos ( \mu\pi) \cot ^{\mu +n}\bigl( {{\tfrac{1}{2}}\theta _{>}}\bigr)\big\} . \label{Dun2eq8}
\end{gather}
Similarly for (\ref{conjPmQm}), using (\ref{Dun2eq1}), the connection formula \cite[equation~(14.9.10)]{NIST:DLMF}, and Stirling's formula, yields for $\mu-\nu \notin \N$
\begin{gather}
 (n+\mu )(\nu +\mu +1)_{n}(\mu -\nu )_{n}\mathsf{P}_{\nu}^{-( {\mu +n})}\bigl( {\cos \theta _{<}}\bigr) \mathsf{Q}_{\nu}^{-( {\mu +n})}( \cos \theta _{>}) \sim \frac{\pi \tan ^{\mu
+n} \bigl( {{\frac{1}{2}}\theta _{<}}\bigr)}{2 \Gamma ( \nu +\mu +1) \Gamma ( \mu -\nu)} \notag \\
\qquad{} \times \big\{ {\cot ( {(\nu -\mu )\pi} ) \tan ^{\mu +n}\bigl( {{\tfrac{1}{2}}\theta _{>}}\bigr) +( -1)^{n+1}\csc ( (\nu -\mu )\pi) \cot ^{\mu +n}\bigl( {{\tfrac{1}{2}}\theta _{>}}\bigr)}\big\} . \label{Dun2eq10}
\end{gather}
It remains to estimate the Gegenbauer polynomials of the first kind which appear in (\ref{conjPmPp})--(\ref{conjPmPm}). We were not able to find asymptotic results in the literature for large $n$ and complex parameter $\mu$. However the following estimate will suffice.

\begin{prop}\label{Dun2prop}For $\gamma \in (0,\pi )$ and $\mu \in \C$ bounded, one has as $n\rightarrow \infty $,
\begin{gather}
C_n^\mu(\cos\gamma) =\mathcal{O}\big( {n^{{\Re}\mu -1}}\big) . \label{Dun2propeq}
\end{gather}%
\end{prop}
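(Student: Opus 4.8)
The plan is to read off the large-$n$ behaviour of $C_n^\mu(\cos\gamma)$ from its generating function by Darboux's method, which applies uniformly for complex $\mu$. Recall \cite[equation~(18.12.4)]{NIST:DLMF} that $\big(1-2t\cos\gamma+t^2\big)^{-\mu}=\sum_{n=0}^\infty C_n^\mu(\cos\gamma)t^n$ for small $|t|$, and factor the quadratic as $(1-{\rm e}^{{\rm i}\gamma}t)(1-{\rm e}^{-{\rm i}\gamma}t)$, so that the generating function is $f(t):=(1-{\rm e}^{{\rm i}\gamma}t)^{-\mu}(1-{\rm e}^{-{\rm i}\gamma}t)^{-\mu}$. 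First I would dispose of the degenerate case $-\mu\in\N_0$: then $f$ is a polynomial, $C_n^\mu(\cos\gamma)=0$ for all large $n$, and the estimate holds trivially. Otherwise, because $\gamma\in(0,\pi)$ the two branch points ${\rm e}^{\pm{\rm i}\gamma}$ are distinct, both lie on the unit circle, and neither coincides with $\pm1$; these are the only finite singularities of $f$, and choosing the principal branches with cuts running radially outward from ${\rm e}^{\pm{\rm i}\gamma}$ makes $f$ analytic in a disc of radius larger than $1$ with two thin sectors removed.

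Next I would apply Darboux's method (see, e.g., \cite{Olver}) at the two singularities. Near $t={\rm e}^{-{\rm i}\gamma}$ the factor $(1-{\rm e}^{-{\rm i}\gamma}t)^{-\mu}$ is analytic and nonzero, so $f$ is governed by the model function $(1-{\rm e}^{{\rm i}\gamma}t)^{-\mu}$, whose Taylor coefficients are exactly $\frac{(\mu)_n}{n!}{\rm e}^{-{\rm i}n\gamma}$; by the symmetry $\gamma\mapsto-\gamma$, near $t={\rm e}^{{\rm i}\gamma}$ the model is $(1-{\rm e}^{-{\rm i}\gamma}t)^{-\mu}$ with coefficients $\frac{(\mu)_n}{n!}{\rm e}^{{\rm i}n\gamma}$. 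Quantitatively, in the Delta-domain described above one has $|f(t)|=\mathcal O\big(|t-{\rm e}^{\mp{\rm i}\gamma}|^{-\Re\mu}\big)$ near each branch point, while on the complementary arcs of the unit circle $f$ is bounded and contributes coefficients of strictly smaller order. Hence each singularity contributes a term of magnitude $\mathcal O(n^{\Re\mu-1})$, carrying a unimodular factor ${\rm e}^{\mp{\rm i}n\gamma}$.

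The coefficient order itself follows from $\frac{(\mu)_n}{n!}=\frac{\Gamma(n+\mu)}{\Gamma(\mu)\Gamma(n+1)}$ together with the gamma-ratio asymptotics~(\ref{Gammaratioz}), which give $\big|\frac{(\mu)_n}{n!}\big|=\mathcal O\big(n^{\Re\mu-1}\big)$ uniformly in the imaginary part of $\mu$; adding the two contributions yields $C_n^\mu(\cos\gamma)=\mathcal O(n^{\Re\mu-1})$. The main obstacle is making this uniform in complex $\mu$: since both branch points lie at distance exactly $1$ from the origin there is no single dominant singularity, so one must treat ${\rm e}^{{\rm i}\gamma}$ and ${\rm e}^{-{\rm i}\gamma}$ simultaneously and verify that the remainder—the part of $f$ analytic across the unit circle together with the next term of each local expansion—has coefficients of order $\mathcal O(n^{\Re\mu-2})$. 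The multiple-singularity form of Darboux's estimate handles precisely this, and the only genuine care required is that the implied constants stay bounded as $\mu$ ranges over a bounded set and $\gamma$ over a compact subinterval of $(0,\pi)$. A purely real-variable attempt, bounding the explicit sum $\sum_{k=0}^n\frac{(\mu)_k(\mu)_{n-k}}{k!\,(n-k)!}\cos((n-2k)\gamma)$ termwise, only yields $\mathcal O(n^{2\Re\mu-1})$; the gain to $n^{\Re\mu-1}$ comes entirely from the oscillation, which is exactly what the singularity analysis captures.
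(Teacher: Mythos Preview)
Your approach via the generating function and Darboux's method is correct and takes a genuinely different route from the paper. The paper instead writes $C_n^\mu(\cos\gamma)$ in terms of the Ferrers function $\mathsf{P}_{\mu+n-1/2}^{1/2-\mu}(\cos\gamma)$ using~(\ref{Dun2eq12}), inserts the integral representation \cite[equation~(14.12.1)]{NIST:DLMF} to obtain a pair of Fourier integrals, and then applies the asymptotics of Fourier integrals \cite[Section~4.2, Theorem~1]{Wong2001} to get the bound for $\Re\mu>0$; the extension to $\Re\mu\le 0$ is carried out step by step via the recurrence \cite[equation~(18.9.7)]{NIST:DLMF}. Your singularity analysis is more direct: it treats all bounded complex $\mu$ at once (modulo the trivial polynomial case $-\mu\in\N_0$) with no case split on $\Re\mu$, and actually delivers the leading asymptotic term, not merely the order. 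The paper's route has the modest advantage of reusing the Fourier-integral machinery already present in the appendix and of making the transition from $\Re\mu>0$ to general $\mu$ completely explicit. One harmless slip in your write-up: the Taylor coefficients of $(1-{\rm e}^{{\rm i}\gamma}t)^{-\mu}$ are $\tfrac{(\mu)_n}{n!}{\rm e}^{{\rm i}n\gamma}$, not ${\rm e}^{-{\rm i}n\gamma}$ (and symmetrically at the other singularity); since these factors are unimodular this does not affect the $\mathcal{O}$-estimate.
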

\begin{proof}Assume first that ${\Re}\mu >0.$ Then for $\gamma \in (0,\pi )$ we use the following integral representation \cite[equation~(14.12.1)]{NIST:DLMF}
\begin{gather}
\mathsf{P}_{\mu +n-\frac{1}{2}}^{\frac{1}{2}-\mu}(\cos\gamma) =\frac{\sqrt{2} ( {\sin \gamma} ) ^{\frac{1}{2}-\mu}}{\sqrt{\pi}\Gamma ( \mu )}\int_{0}^{\gamma}\frac{\cos ( {(\mu +n) t})}{( {\cos t-\cos \gamma})
^{1-\mu}}\mathrm{d}t. \label{Dun2eq13}
\end{gather}
Then with $\cos \theta ={\frac{1}{2}}\big( {{\rm e}^{{\rm i}\theta}+{\rm e}^{-{\rm i}\theta}}\big) $, and substituting $t\mapsto \gamma -t$, one arrives at
\begin{gather}
 \int_{0}^{\gamma}\frac{\cos ( {(\mu +n) t} )}{ ( {\cos t-\cos \gamma} ) ^{1-\mu}} \mathrm{d}t \label{Dun2eq15}\\
 \qquad {} =\frac{{\rm e}^{{\rm i}(\mu +n) \gamma}}{2}\int_{0}^{\gamma}\frac{{\rm e}^{-{\rm i}\mu t}{\rm e}^{-{\rm i}nt}}{( {\cos ( {\gamma -t}) -\cos \gamma})^{1-\mu}}\mathrm{d}t
+\frac{{\rm e}^{-{\rm i}(\mu +n) \gamma}}{2}\int_{0}^{\gamma}\frac{{\rm e}^{{\rm i}\mu t}{\rm e}^{{\rm i}nt}}{( {\cos ( {\gamma -t}) -\cos \gamma}) ^{1-\mu}}\mathrm{d}t.\notag
\end{gather}
Thus from (\ref{Dun2eq13}), (\ref{Dun2eq15}), one has for $\gamma \in (0,\pi)$, ${\Re}\mu >0$,
\begin{gather}
\mathsf{P}_{\mu +n-\frac{1}{2}}^{\frac{1}{2}-\mu}(\cos\gamma) =\frac{\sqrt{2} ( {\sin \gamma} ) ^{\frac{1}{2}-\mu}}{\sqrt{\pi} \Gamma ( \mu )}\left[ {\int_{0}^{\infty}\phi
_{+} (t) {\rm e}^{{\rm i}nt}\,\mathrm{d}t+\int_{0}^{\infty}\phi
_{-} (t) {\rm e}^{-{\rm i}nt} \mathrm{d}t}\right] , \label{Dun2eq16}
\end{gather}
where $\phi _{\pm}\colon (0,\infty )\rightarrow {\mathbf{C}}$ are defined by
\begin{gather*}
\phi _{\pm} (t) :=\frac{{\rm e}^{\mp {\rm i}(\mu +n) \gamma }{\rm e}^{\pm {\rm i}\mu t}H ( {\gamma -t} )}{2 ( {\cos ( {\gamma -t} ) -\cos \gamma} ) ^{1-\mu}},
\end{gather*}
where $H(x) $ is the Heaviside function \cite[equation~(1.16.13)]{NIST:DLMF}. Next $\phi _{\pm}(t) \!\sim\! {\frac{1}{2}}{\rm e}^{\mp {\rm i}(\mu +n) \gamma} ( {t\sin \gamma} ) ^{\mu -1}$ as $t\rightarrow 0$ and therefore from the asymptotics of Fourier integrals \cite[Section~4.2, Theorem~1, p.~199]{Wong2001} applied to both integrals on the right-hand side of~(\ref{Dun2eq16}) we deduce for $\gamma \in (0,\pi )$, ${\Re}\mu >0$, $n\rightarrow \infty $,
\begin{gather*}
\mathsf{P}_{\mu +n-\frac{1}{2}}^{\frac{1}{2}-\mu}(\cos\gamma) =\mathcal{O}\big( {n^{-{\Re}\mu}}\big).
\end{gather*}%
The estimate (\ref{Dun2propeq}) then follows from (\ref{Dun2eq12}) with $\lambda =n$ and Stirling's formula. Finally, to extend this to bounded $\mu$ lying in the left half plane, we have from \cite[equation~(18.9.7)]{NIST:DLMF}
\begin{gather}
C_n^{\mu-1}(\cos\gamma) =\frac{\mu -1}{n+\mu -1}\big( C_n^\mu(\cos\gamma)-C_{n-2}^{\mu}(\cos\gamma) \big) =\mathcal{O}\big( {n^{{\Re}\mu -2}}\big), \label{Dun2eq16a}
\end{gather} for ${\Re}\mu >0$. Hence (\ref{Dun2propeq}) holds for ${\Re}\mu >-1$, and on repeated applications of~(\ref{Dun2eq16a}), for ${\Re}\mu >-m$ for arbitrary bounded positive integer~$m$.
\end{proof}

\begin{thm}\label{tantanthm}The series in \eqref{conjPmPm} is absolutely convergent under the condition \eqref{Dun2eq21}. The series in \eqref{conjPmPp}, \eqref{conjPmQp}, \eqref{conjPmQm}--\eqref{conjQmPmmx} are absolutely convergent if \eqref{Dun2eq21} holds, and in addition $\theta \neq \theta ^{\prime}$.
\end{thm}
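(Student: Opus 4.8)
The plan is to prove absolute convergence by a single comparison test, feeding the large-$n$ asymptotics of the individual terms assembled above into the Gegenbauer bound of Proposition~\ref{Dun2prop}. Recall that (\ref{Dun2propeq}) gives $C_n^\mu(\cos\gamma)=\mathcal{O}(n^{\Re\mu-1})$, a purely sub-exponential factor; hence the convergence of each series is decided entirely by the geometric growth rate of the accompanying product of Ferrers functions. The estimates (\ref{Dun2eq3}), (\ref{Dun2eq6}), (\ref{Dun2eq8}) and (\ref{Dun2eq10}) show that, in every case, this product is (up to the fixed prefactor $\{\tan(\tfrac12\theta_<)\tan(\tfrac12\theta_>)\}^{\mu}$ and bounded trigonometric and gamma constants) a finite combination of the two geometric terms $\{\tan(\tfrac12\theta_<)\tan(\tfrac12\theta_>)\}^{n}$ and $\{\tan(\tfrac12\theta_<)\cot(\tfrac12\theta_>)\}^{n}$, which I will call the \emph{same-side} factor and the \emph{mixed} factor respectively.

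These are controlled by two elementary inequalities. By hypothesis (\ref{Dun2eq21}) the same-side ratio satisfies $\tan(\tfrac12\theta_<)\tan(\tfrac12\theta_>)<1$, so any series whose $n$th term is $\mathcal{O}\big(n^{\Re\mu-1}\{\tan(\tfrac12\theta_<)\tan(\tfrac12\theta_>)\}^{n}\big)$ converges absolutely. On the other hand, since $\theta,\theta'\in(0,\pi)$ and $t\mapsto\tan(\tfrac12 t)$ is increasing on $(0,\pi)$, the mixed ratio satisfies $\tan(\tfrac12\theta_<)\cot(\tfrac12\theta_>)=\tan(\tfrac12\theta_<)/\tan(\tfrac12\theta_>)\le 1$, with equality precisely when $\theta_<=\theta_>$, i.e.\ $\theta=\theta'$; thus a term $\mathcal{O}\big(n^{\Re\mu-1}\{\tan(\tfrac12\theta_<)\cot(\tfrac12\theta_>)\}^{n}\big)$ gives an absolutely convergent series exactly when $\theta\neq\theta'$. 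This dichotomy is the whole content of the theorem: the expansion (\ref{conjPmPm}), whose term by (\ref{Dun2eq3}) involves only the same-side factor (both Ferrers functions carrying the recessive $\tan^{\mu}$ behaviour of the first relation in (\ref{Dun2eq1})), converges under (\ref{Dun2eq21}) alone, whereas the terms of (\ref{conjPmPp}), (\ref{conjPmQp}) and (\ref{conjPmQm}) each acquire, through the $\cot^{\mu+n}(\tfrac12\theta_>)$ contribution visible in (\ref{Dun2eq6}), (\ref{Dun2eq8}) and (\ref{Dun2eq10}), a mixed factor and therefore additionally require $\theta\neq\theta'$.

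It remains to treat the two reflected expansions (\ref{conjPmPmmx}) and (\ref{conjQmPmmx}), whose summands evaluate a Ferrers function at $-\cos\theta_>$. For these I would first derive the large-order analogues of (\ref{Dun2eq3})--(\ref{Dun2eq10}) using the \emph{second} relation in (\ref{Dun2eq1}), namely $\mathsf{P}_\nu^{-\mu}(-\cos\theta)\sim\cot^{\mu}(\tfrac12\theta)/\Gamma(1+\mu)$, together with the reflection formulas (\ref{Pnummumxconnection}), (\ref{Qnummumxconnection}) to handle $\mathsf{Q}_\nu^{-(\mu+n)}(-\cos\theta_>)$. Because the argument $-\cos\theta_>$ replaces the recessive $\tan^{\mu+n}(\tfrac12\theta_>)$ by the growing $\cot^{\mu+n}(\tfrac12\theta_>)$, the dominant geometric contribution is again the mixed factor $\{\tan(\tfrac12\theta_<)\cot(\tfrac12\theta_>)\}^{n}$, so the same comparison gives absolute convergence precisely under (\ref{Dun2eq21}) and $\theta\neq\theta'$. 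The main obstacle is bookkeeping rather than conceptual: one must check that after multiplying by the Pochhammer weights $(\nu+\mu+1)_n(\mu-\nu)_n$ and simplifying by Stirling's formula, no algebraic growth survives beyond the $n^{\Re\mu-1}$ supplied by Proposition~\ref{Dun2prop}, and that the borderline mixed factor is genuinely $<1$ rather than $=1$ once $\theta\neq\theta'$ is imposed. Collecting the comparisons for all six series then completes the proof.
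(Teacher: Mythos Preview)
Your proposal is correct and follows essentially the same route as the paper: combine the large-$n$ asymptotics (\ref{Dun2eq3}), (\ref{Dun2eq6}), (\ref{Dun2eq8}), (\ref{Dun2eq10}) with the Gegenbauer bound of Proposition~\ref{Dun2prop}, then observe that the two geometric ratios $\tan(\tfrac12\theta_<)\tan(\tfrac12\theta_>)$ and $\tan(\tfrac12\theta_<)\cot(\tfrac12\theta_>)$ are forced below~$1$ by (\ref{Dun2eq21}) and by $\theta\neq\theta'$ respectively, with the reflected expansions (\ref{conjPmPmmx}), (\ref{conjQmPmmx}) handled ``similarly'' via the second relation in (\ref{Dun2eq1}). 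The only overstatement is the word ``exactly'' for the mixed factor --- at the borderline $\theta=\theta'$ convergence may still occur for sufficiently negative $\Re\mu$ --- but this does not affect the sufficiency claim actually asserted in the theorem.
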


\begin{proof}Using Proposition \ref{Dun2prop}, we see that absolute convergence of (\ref{conjPmPp}), (\ref{conjPmQp}), (\ref{conjPmQm}) is assured if the right-hand sides of (\ref{Dun2eq3}), (\ref{Dun2eq6})--(\ref{Dun2eq10}) are exponentially small in~$n$. For (\ref{conjPmPm}), we see from (\ref{Dun2eq3}) that this is so if (\ref{Dun2eq21}) holds. Similarly we see that (\ref{Dun2eq6})--(\ref{Dun2eq10}) are exponentially small in $n$ provided \textit{both} of the following hold (unless one of the coefficient trigonometric functions vanish, in which case one of these can be relaxed, as described below)
\begin{gather}
\tan \bigl( {{\tfrac{1}{2}}\theta _{<}}\bigr) \tan \bigl( {{\tfrac{1}{2}}\theta _{>}}\bigr) <1\qquad \text{and} \qquad \tan \bigl( {{\tfrac{1}{2}}\theta _{<}}\bigr) \cot \bigl( {{\tfrac{1}{2}}\theta _{>}}\bigr) <1. \label{Dun2eq22}
\end{gather}
But the latter of (\ref{Dun2eq22}) is equivalent to $\tan \bigl( {{\frac{1}{2}}\theta _{<}}\bigr) <\tan \bigl( {{\frac{1}{2}}\theta _{>}}\bigr)$, i.e., $\theta _{<}<\theta _{>}$, or equivalently $\theta\neq \theta'$. Thus
convergence of (\ref{conjPmPp}), (\ref{conjPmQp}), (\ref{conjPmQm}) is also assured by condition (\ref{Dun2eq21}) provided $\theta\neq \theta'$. Absolute convergence of the expansions (\ref{conjPmPmmx}), (\ref{conjQmPmmx}) is proved similarly.
\end{proof}

\begin{rem} The condition $\theta _{<}\neq \theta _{>}$ or equivalently $\theta\ne\theta'$, can be relaxed for (\ref{conjPmPp}) if $\mu$ is an integer, and for (\ref{conjPmQp}) if $\mu$ is half an odd integer. Interestingly, if $\nu$ is an integer, then from (\ref{Dun2eq6}), (\ref{Dun2eq8}) we see that (\ref{conjPmPp}), (\ref{conjPmQp}) converge without requiring (\ref{Dun2eq21}), with the only requirement in this case being $\theta\ne\theta'$.
\end{rem}

Assume (\ref{Dun2eq21}) and define
\begin{gather}
\mathsf{A}( {z;\lambda ,\theta ,{\theta}^{\prime}}):=\sum\limits_{n=0}^{\infty}{(-1) ^{n}( {n+\lambda}) ( {2\lambda +z+1}) _{n}( {-z}) _{n}}\nonumber\\
\hphantom{\mathsf{A}( {z;\lambda ,\theta ,{\theta}^{\prime}}):=}{}\times \mathsf{P}_{\lambda +z}^{-( {\lambda +n})}( {\cos \theta})
\mathsf{P}_{\lambda +z}^{-( {\lambda +n})}( {\cos {\theta}^{\prime}}) C_n^\lambda(\cos\gamma) .\label{Dun3eq1}
\end{gather}%
Note that for all $n\in\N_0$,
\begin{gather*}
( {2\lambda +z+1}) _{n}( {-z}) _{n}=\frac{\Gamma( {n+2\lambda +z+1}) \Gamma ( {n-z})}{\Gamma ( {2\lambda +z+1}) \Gamma ( {-z})}=(-1) ^{n}\frac{\Gamma ( {n+2\lambda +z+1}) \Gamma ( {z+1})}{\Gamma ( {2\lambda +z+1}) \Gamma ( {z+1-n})},
\end{gather*}%
and is analytic for all $z\in\C$.

\begin{lem}\label{lintlemma}Let $l\in {\mathbf{N}}_{0}$. Then
\begin{gather}
\mathsf{A}( {l;\lambda ,\theta ,{\theta}^{\prime}})=\sum\limits_{n=0}^{l}{(-1) ^{n}( {n+\lambda})( {2\lambda +l+1}) _{n}( {-l}) _{n}}\mathsf{P}_{\lambda +l}^{-( {\lambda +n})}( {\cos \theta})
\mathsf{P}_{\lambda +l}^{-( {\lambda +n})}( {\cos {\theta}^{\prime}}) C_n^\lambda(\cos\gamma) \notag \\
\hphantom{\mathsf{A}( {l;\lambda ,\theta ,{\theta}^{\prime}})}{} =\frac{1}{2^\lambda\Gamma ( \lambda )}\left(\frac{{\sin \theta \sin {\theta}^{\prime}}}{\sin \Theta}\right) ^{\lambda
}\mathsf{P}_{\lambda +l}^{-\lambda}( {\cos \Theta}) .\label{Dun3eq3}
\end{gather}
\end{lem}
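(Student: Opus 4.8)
The plan is to treat the two equalities separately: the first is an immediate truncation of the series, and the second is the classical Gegenbauer addition theorem in disguise, recovered after converting every Ferrers function into a Gegenbauer polynomial via~(\ref{Dun2eq12}).

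First I would dispatch the truncation. Setting $z=l\in\N_0$ in~(\ref{Dun3eq1}), the Pochhammer factor $(-z)_n=(-l)_n$ vanishes for every $n\ge l+1$; equivalently, by the identity recorded immediately after~(\ref{Dun3eq1}), one has $(2\lambda+l+1)_n(-l)_n=(-1)^n\Gamma(n+2\lambda+l+1)\Gamma(l+1)/\{\Gamma(2\lambda+l+1)\Gamma(l+1-n)\}$, and $1/\Gamma(l+1-n)=0$ as soon as $n\ge l+1$. Hence, although the individual Ferrers functions $\mathsf{P}_{\lambda+l}^{-(\lambda+n)}$ with $n>l$ need not vanish, every term of the series with $n>l$ is killed by its coefficient, so $\mathsf{A}(l;\lambda,\theta,\theta')$ collapses to the displayed finite sum. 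No convergence hypothesis such as~(\ref{Dun2eq21}) is needed, since the sum is now finite.

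For the second equality I would read~(\ref{Dun2eq12}) backwards. Applied so as to match the order $-(\lambda+n)$ and degree $\lambda+l$, it writes $\mathsf{P}_{\lambda+l}^{-(\lambda+n)}(\cos\theta)$ as a constant multiple of $(\sin\theta)^{\lambda+n}C_{l-n}^{\lambda+n+\frac12}(\cos\theta)$, a genuine polynomial since $l-n\ge0$, and similarly for the $\theta'$ factor; applied to match order $-\lambda$ and degree $\lambda+l$, it writes the target $\mathsf{P}_{\lambda+l}^{-\lambda}(\cos\Theta)$ as a constant multiple of $(\sin\Theta)^{\lambda}C_{l}^{\lambda+\frac12}(\cos\Theta)$. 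After these substitutions the factor $(\sin\Theta)^{\lambda}$ cancels against $(\sin\Theta)^{-\lambda}$, the common power $(\sin\theta\sin\theta')^{\lambda}$ divides out, and—using~(\ref{cosTheta}) for $\cos\Theta$—the asserted identity reduces to one of the form
\[
C_{l}^{\lambda+\frac12}(\cos\Theta)=\sum_{n=0}^{l} d_{l,n}\,(\sin\theta\sin\theta')^{n}\,C_{l-n}^{\lambda+n+\frac12}(\cos\theta)\,C_{l-n}^{\lambda+n+\frac12}(\cos\theta')\,C_{n}^{\lambda}(\cos\gamma),
\]
which is precisely the Gegenbauer (ultraspherical) addition theorem with index $\nu=\lambda+\tfrac12$, so that the inner polynomials in $\gamma$ carry index $\nu-\tfrac12=\lambda$, matching the $C_n^\lambda(\cos\gamma)$ already present in~(\ref{Dun3eq1}).

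The only delicate point, and the main obstacle, is the bookkeeping of constants: I must verify that the coefficient $(-1)^n(n+\lambda)(2\lambda+l+1)_n(-l)_n$ multiplied by the three proportionality constants from the three uses of~(\ref{Dun2eq12}) reproduces exactly the known coefficient $d_{l,n}$ of the Gegenbauer addition theorem, together with the overall prefactor $1/(2^\lambda\Gamma(\lambda))$. This is a routine simplification of ratios of gamma functions and Pochhammer symbols, using $\Gamma(z+1)=z\Gamma(z)$ and, where the factors $\Gamma(\lambda+n+\tfrac12)$ and $\Gamma(\lambda+\tfrac12)$ occur, the duplication formula. Throughout, the identifications are legitimate under the standing hypotheses $\Re\lambda>-\tfrac12$, $\lambda\ne0$ (so that $\Gamma(\lambda)$ and all $\Gamma(\lambda+n+\tfrac12)$ are finite), and since $C_{l-n}^{\lambda+n+\frac12}$ and $C_l^{\lambda+\frac12}$ are genuine polynomials in the cosines this integer case needs no analytic continuation; the continuation in the degree that promotes this lemma to the full statement of Theorem~\ref{GegexpansionsFerrers} is carried out afterward.
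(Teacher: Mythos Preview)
Your proposal is correct and matches the paper's proof almost verbatim: the paper handles the first equality by the same observation that $(-l)_n=0$ for $n\ge l+1$, and for the second equality it invokes the Gegenbauer addition theorem \cite[equation~(18.18.8)]{NIST:DLMF} and uses~(\ref{Dun2eq12}) to pass between Ferrers functions and Gegenbauer polynomials, leaving the constant bookkeeping implicit exactly as you do. The only cosmetic difference is orientation---the paper starts from the addition theorem and converts its Gegenbauer polynomials into Ferrers functions, whereas you start from the Ferrers sum and convert back to Gegenbauer---but this is the same identity read in two directions.
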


\begin{proof} The first equality of (\ref{Dun3eq3}) comes from (\ref{Dun3eq1}) and noting that $(-l)_{n}=0$ for $n\geq l+1$. Next, use the addition theorem for Gegenbauer polynomials \cite[equation~(18.18.8)]{NIST:DLMF} with degree~$l$, which is a finite sum over $n=0,1,2,\ldots ,l$. The Gegenbauer polynomial on the left-hand side and the first two Gegenbauer polynomials appearing in the sum are expressed in terms of Ferrers functions of the first kind using~(\ref{Dun2eq12}). Performing these replacements and simplifying produces the second equality of~(\ref{Dun3eq3}).
\end{proof}

In what follows we shall make use the following uniqueness theorem of complex analysis, due to Carlson (cf.~\cite[Section 5.81]{Titchmarsh1939}).

\begin{thm} \label{Carlsonthm} Let $\kappa <\pi$, $\mathsf{F}\colon {\mathbf{C}}\rightarrow {\mathbf{C}}$ be analytic and $\mathcal{O}\big({\rm e}^{\kappa |z|}\big)$ for $\Re z\geq 0$, and in addition $\mathsf{F}(z)=0$ for $z\in {\mathbf{N}}_{0}$. Then $\mathsf{F}(z)=0$.
\end{thm}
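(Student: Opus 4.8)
The plan is to treat this as a Phragmén–Lindelöf-type uniqueness statement, in which the growth rate $\kappa<\pi$ in the right half-plane is precisely what is needed to dominate the growth of $\sin(\pi z)$, whose zeros encode the interpolation nodes $\mathbf{N}_0$. Accordingly, the first step is to introduce the auxiliary function $\phi(z):=\mathsf{F}(z)/\sin(\pi z)$. Since $\sin(\pi z)$ has only simple zeros, located exactly at the integers, and $\mathsf{F}$ vanishes at every point of $\mathbf{N}_0$, the quotient $\phi$ extends holomorphically across each nonnegative integer; on the imaginary axis $\sin(\pi z)$ vanishes only at $z=0$, where $\mathsf{F}$ vanishes as well, so $\phi$ is in fact holomorphic on the entire closed right half-plane $\Re z\ge 0$.

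Second, I would estimate $\phi$ on the boundary of that half-plane. For $z={\rm i}y$ one has $|\sin(\pi z)|=|\sinh(\pi y)|$, which grows like $\tfrac12 {\rm e}^{\pi|y|}$, while the hypothesis gives $|\mathsf{F}({\rm i}y)|=\mathcal{O}({\rm e}^{\kappa|y|})$. Hence $|\phi({\rm i}y)|=\mathcal{O}({\rm e}^{(\kappa-\pi)|y|})$, and because $\kappa<\pi$ this shows not merely that $\phi$ is bounded on the imaginary axis but that it decays exponentially there.

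Third comes the interior control, which I expect to be the main obstacle. Away from the real axis, say for $|\Im z|\ge 1$, the lower bound $|\sin(\pi z)|\ge c\,{\rm e}^{\pi|\Im z|}$ gives $\phi=\mathcal{O}(1)$ at once from the growth of $\mathsf{F}$; the delicate region is the horizontal strip $|\Im z|<1$, where $\sin(\pi z)$ is small near each node. The key point is that $\phi$ has no poles there, so the maximum principle bounds $\phi$ inside the strip in terms of its values on the lines $|\Im z|=1$ together with its values on the vertical half-integer lines $\Re z=m+\tfrac12$, on which $|\sin(\pi z)|\ge 1$. Combining this with a Phragmén–Lindelöf argument in the right half-plane, exploiting that $\phi$ is of order at most one there and bounded on the imaginary axis, then yields that $\phi$ is bounded throughout $\Re z\ge 0$.

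Finally, I would combine boundedness in the half-plane with the exponential boundary decay from the second step. A nonzero bounded holomorphic function on the right half-plane must satisfy $\int_{\mathbf{R}}\log|\phi({\rm i}y)|\,(1+y^2)^{-1}\,{\rm d}y>-\infty$, the standard log-integrability (Poisson) criterion for such functions. The estimate $\log|\phi({\rm i}y)|\le (\kappa-\pi)|y|+\text{const}$ makes this integral diverge to $-\infty$, a contradiction unless $\phi\equiv 0$. Therefore $\phi\equiv 0$, whence $\mathsf{F}\equiv 0$ on $\Re z\ge 0$; since $\mathsf{F}$ is entire, $\mathsf{F}\equiv 0$ on all of $\mathbf{C}$, which is the assertion.
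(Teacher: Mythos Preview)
The paper does not prove this statement; it records it as Carlson's classical uniqueness theorem and refers the reader to Titchmarsh, \textit{The Theory of Functions}, \S 5.81. Your outline follows exactly the classical route found there: divide by $\sin(\pi z)$, estimate on the imaginary axis, control the quotient in the half-plane, and finish with a boundary log-integrability criterion.

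There is, however, a genuine gap in your third step. The claim that $\phi=\mathcal{O}(1)$ for $|\Im z|\ge 1$ is false: with $z=x+{\rm i}$ and $x\to\infty$ one has $|\mathsf{F}(z)|=\mathcal{O}({\rm e}^{\kappa x})$ while $|\sin(\pi z)|$ is merely bounded below by a fixed positive constant, so $|\phi(z)|$ may grow like ${\rm e}^{\kappa x}$. Consequently your appeal to Phragm\'en--Lindel\"of ``in the right half-plane, exploiting that $\phi$ is of order at most one'' does not go through: in a sector of opening $\pi$ the critical order is exactly $1$, and a function of order $1$ bounded on the boundary need not be bounded inside (consider ${\rm e}^{z}$).

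The standard repair is to work in each quarter-plane. From your box/half-integer-line argument one does get $|\phi(z)|=\mathcal{O}({\rm e}^{\kappa|z|})$ throughout $\Re z\ge 0$. Now set $G(z):=\phi(z){\rm e}^{-\kappa z}$. Then $|G(x)|=\mathcal{O}(1)$ on the positive real axis, $|G(\pm{\rm i}y)|=|\phi(\pm{\rm i}y)|=\mathcal{O}({\rm e}^{(\kappa-\pi)|y|})$ on the imaginary axis, and $G$ is of exponential type. Phragm\'en--Lindel\"of in the first and fourth quadrants (opening $\pi/2$, critical order $2>1$) now yields $G$ bounded on $\Re z\ge 0$. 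Your fourth step, the $H^{\infty}$ boundary log-integrability criterion, then applies verbatim to $G$ and forces $G\equiv 0$, hence $\mathsf{F}\equiv 0$.
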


\begin{rem}The order restriction as stated is only required for $\Re z = 0$, and can be relaxed to $\mathsf{F}(z)$ being of (arbitrary) exponential type for $\Re z > 0$; however the above weaker version of Carlson's
theorem suffices for our purposes.
\end{rem}

In order to apply the above theorem, we need estimates on $\mathsf{A}( {z;\lambda ,\theta ,\theta^{\prime}})$ for large complex $z$. To do so, we need to temporarily assume that ${\theta}$, ${{\theta}^{\prime}}$ are
complex. To this end, we again use \cite[equation~(14.12.1)]{NIST:DLMF} to obtain the integral representation for ${\theta}\in ( 0,\pi )$, ${\Re}\mu >-\frac{1}{2}$,
\begin{gather}
\mathsf{P}_{\lambda +z}^{-\mu}( {\cos \theta}) =\left( {\frac{2}{\pi}}\right) ^{\frac12}\frac{1}{\Gamma \big( {\mu + {\frac{1}{2}}}\big)}%
\int_{0}^{\theta}{\frac{\cos \big( {\big( {\lambda +z+{\frac{1}{2}}}\big) t}\big)}{ ( {\cos t-\cos \theta} ) ^{\frac12}}\left( {\frac{\cos t-\cos \theta}{\sin \theta}}\right) ^{\mu}\mathrm{d}t}. \label{PFcomplex}
\end{gather}
Taking principal branches, we then use this to (uniquely) define $\mathsf{P}_{\lambda +z}^{-\mu} ({\cos \theta} )$ as an analytic function of $\theta \in \mathbf{C}$ with ${\Re}\theta \in ( 0,\pi )$. Then if we choose the path of integration in~(\ref{PFcomplex}) to be a~straight line from $t=0$ to $t=\theta$, parameterize $t=s\theta $ ($0\leq s\leq 1$), and use a~Maclaurin series for trigonometric functions, we have%
\begin{gather}
\frac{\cos t-\cos \theta}{\sin \theta}=\frac{1}{2}\big( 1-s^{2}\big) \theta +{\mathcal{O}}\big( \theta ^{3}\big) . \label{est1}
\end{gather}
In addition, for $t$ lying on this path we have
\begin{gather}
\cos \big( {\big( {\lambda +z+{\tfrac{1}{2}}}\big) t}\big) =\cos \big( {\big( {\lambda +z+{\tfrac{1}{2}}}\big)}s\theta \big) ={\mathcal{O}}\big( {\rm e}^{ \vert {\theta z} \vert}\big) . \label{est2}
\end{gather}
From (\ref{PFcomplex}), (\ref{est1}), (\ref{est2}) it follows that for complex values of $\theta$ such that ${\Re}\theta >0$, ${\vert \theta \vert <}\frac{1}{2}\pi -1$ (a bound clarified below), there is an assignable positive number $a_{0}$ such that{\samepage
\begin{gather}
\mathsf{P}_{\lambda +z}^{-\mu} ( {\cos \theta} ) =\frac{( {a}_{0}{ \vert \theta \vert} ) ^{{\Re}\mu}{\rm e}^{\vert {\theta z}\vert}}{\Gamma \big( {\mu +{\frac{1}{2}}}\big)}{\mathcal{O}}(1) , \label{Dun3eq5}
\end{gather}
uniformly for one or both $ \vert z \vert$ and ${\Re}\mu$ large.}

Next from \cite[equations~(4.5.13) and (5.6.1)]{NIST:DLMF} and the definition of the Pochhammer symbol, we have for unbounded positive $n$, and $w\in \mathbf{C}$,
\begin{gather*}
\frac{\bigl\vert {(w) _{n}}\bigr\vert}{\Gamma ( n)}\leq \frac{n^{\frac12}{\rm e}^{n}\vert w\vert ( {\vert w \vert +1} ) ( { \vert w \vert +2} ) \cdots
( { \vert w \vert +n} )}{( {2\pi})^{\frac12}n^{n}}\leq \frac{n^{\frac12}{\rm e}^{n}}{( {2\pi}) ^{\frac12}}\left( {1+\frac{ \vert w \vert}{n}}\right) ^{n}\leq \frac{n^{\frac12}{\rm e}^{n}{\rm e}^{\vert w\vert}}{( {2\pi}) ^{\frac12}},
\end{gather*}
and therefore from (\ref{Dun3eq5}) and Stirling's formula,
\begin{gather*}
( {n+\lambda} ) ( {2\lambda +z+1} ) _{n} ( {-z}) _{n}\mathsf{P}_{\lambda +z}^{-( {\lambda +n})}( {\cos \theta}) \mathsf{P}_{\lambda +z}^{-( {\lambda +n})}( {\cos {\theta}^{\prime}}) \\
=\frac{\Gamma^2 ( n)}{\Gamma^2 \big( {n+\lambda +{\frac{1}{2}}}\big)} \big( {a}_{0}^{2}{{\rm e}^{2}\vert {\theta {\theta}^{\prime}}\vert}\big) ^{n}{\rm e}^{( {2+\vert \theta\vert +\vert {{\theta}^{\prime}}\vert}) \vert
z\vert}{\mathcal{O}}\big( {n^{2}}\big) =\big( {a}_{0}^{2}{{\rm e}^{2} \vert {\theta {\theta}^{\prime}} \vert}\big) ^{n}{\rm e}^{( {2+\vert \theta \vert +\vert {{\theta}^{\prime}}\vert}) \vert z\vert}{\mathcal{O}}\big({n^{1-2{\Re}\lambda}}\big) .
\end{gather*}
Using (\ref{Dun2propeq}), along with the temporary assumption that $\vert \theta \vert ,\vert {{\theta}^{\prime}}\vert \leq \theta _{0}<\frac{1}{2}\pi -1$ (for some positive~$\theta _{0}$, to be specified shortly) we arrive at
\begin{gather}
( {n+\lambda}) ( {2\lambda +z+1}) _{n}( {-z}) _{n}\mathsf{P}_{\lambda +z}^{-( {\lambda +n})} ( {\cos \theta}) \mathsf{P}_{\lambda +z}^{- ( {\lambda +n} )
} ( {\cos {\theta}^{\prime}} )C_n^\lambda ( {\cos\gamma} )\nonumber\\
\qquad{} =\delta ^{n}{\rm e}^{\kappa \vert z \vert}{\mathcal{O}}\big( {n^{-{\Re}\lambda}}\big) , \label{Dun3eq8}
\end{gather}
where $\kappa :=2+\vert \theta \vert +\vert {{\theta} ^{\prime}}\vert \leq 2+2\theta _{0}<\pi$, and $\delta :={a}_{0}^{2}{\rm e}^{2}\vert {\theta {\theta}^{\prime}}\vert \leq {a} _{0}^{2}{\rm e}^{2}\theta _{0}^{2}$. We wish that $\delta <1$, and this is achieved by choosing $\theta_{0}$ so that $\theta_0 \in \big(0,\min\big\{ \frac{1}{2}\pi -1,{a}_{0}^{-1}{\rm e}^{-1}\big\}\big)$.

Next, from (\ref{cosTheta}), we observe that $\Theta \rightarrow 0$ as $ \vert \theta \vert + \vert {{\theta}^{\prime}} \vert \rightarrow 0$, and therefore there exists a~posi\-ti\-ve~$\theta_{1}$ such that $ \vert \Theta \vert \leq \kappa < \pi$ when $\max \{ |\theta|, |\theta'| \} \leq \theta_1$. Now define $\theta _{2}:=\min \{ \theta _{0},\theta _{1}\}$, and also define
\begin{gather}
\mathsf{F}( {z;\lambda ,\theta ,{\theta}^{\prime}}) :=\frac{1}{2^\lambda\Gamma ( \lambda )}\left( \frac{{\sin \theta \sin {
\theta}^{\prime}}}{\sin \Theta}\right) ^\lambda\mathsf{P}_{\lambda+z}^{-\lambda}( {\cos \Theta}) -\mathsf{A}( {z;\lambda,\theta ,{\theta}^{\prime}}) . \label{Dun3eq9}
\end{gather}%
Then from (\ref{Dun3eq1}), (\ref{Dun3eq5}), (\ref{Dun3eq8}), we see that for $| \theta | ,| {{\theta}^{\prime}}|\in(0,\theta_2)$, ${\Re}z\geq 0$, the function $ \mathsf{F} ( {z;\lambda ,\theta ,{\theta}^{\prime}} )$ is well-defined and analytic as a function of~$z$, and is ${\mathcal{O}}\big( {{\rm e}^{\kappa \vert z \vert}} \big)$ as $z\rightarrow \infty$, where $\kappa <\pi$. Moreover from (\ref{Dun3eq3}), (\ref{Dun3eq9}), we see that $\mathsf{F} ( {z;\lambda ,\theta ,{\theta}^{\prime}} ) =0$ for $z\in {\mathbf{N}}_{0}$, and therefore by Carlson's Theorem \ref{Carlsonthm} we deduce that $ \mathsf{F} ( {z;\lambda,\theta ,{\theta}^{\prime}} )$ is identically zero.

The restriction $0< \vert \theta \vert , \vert {{\theta}^{\prime}} \vert <\theta _{2}$ can be relaxed as follows. For fixed~$z$, $\lambda$, the function given by~(\ref{Dun3eq9}) is also analytic in $\theta$ in a neighborhood of the origin with ${\Re}\theta >0$, and likewise for ${\theta}^{\prime}$. Hence by analytic continuation, we deduce that it must be identically zero in larger domains (open and connected) containing this neighborhood, and for which $\mathsf{P}_{\lambda +z}^{-\lambda} ( {\cos \Theta} )$ is analytic in both of these variables, and for which the series in (\ref{Dun3eq1}) is absolutely convergent. In particular, it follows that $ \mathsf{F} ( {z;\lambda ,\theta ,{\theta}^{\prime}} ) =0$ for positive values of these variables such that \mbox{$\tan \bigl( {{\frac{1}{2}}\theta}\bigr) \tan \bigl( {{\frac{1}{2}}{\theta}^{\prime}}\bigr) <1$}. The identity (\ref{conjPmPm}) follows by replacing $z\mapsto \nu -\lambda$, $\lambda \mapsto \mu$, in (\ref{Dun3eq9}) and equating this function to zero.

It remains to prove (\ref{conjPmPp}), (\ref{conjPmQp}), (\ref{conjPmQm})--(\ref{conjQmPmmx}). These come from (\ref{conjPmPm}) and connection formulas. In (\ref{conjPmPm}), apply the connection relation (\ref{Pnummuconnection}) to the Ferrers function of the first kind on the left-hand side and the one furthest to the right on the right-hand side. This produces two separate sums which are given by (\ref{conjPmPp}), (\ref{conjPmQp}). In (\ref{conjPmQp}), applying the connection relation \cite[equation~(14.9.1)]{NIST:DLMF} to the left-hand side, and to the Ferrers function of the second kind furthest on the right, produces yet two more sums, one being (\ref{conjPmQm}). The addition theorems (\ref{conjPmPmmx}), (\ref{conjQmPmmx}), then follow using (\ref{Pnummumxconnection}), (\ref{Qnummumxconnection}) in (\ref{conjPmPm}), (\ref{conjPmQm}). This completes the proof of Theorem~\ref{GegexpansionsFerrers}. \begin{rem}In terms of the validity of the series expansions, it should be noted that the Ferrers function of the second kind $\mathsf{Q}_{\nu}^{\mu}(x)$ is not defined for $\nu+\mu =-1,-2,\ldots$, except for the anomalous cases $\nu =-\frac{3}{2},-\frac{5}{2},\ldots$, see~(\ref{defFerrersQhypergeo}).
\end{rem}

\subsection*{Acknowledgements}
We thank George Pogosyan and Loyal Durand for valuable discussions, and the referees for many helpful suggestions. T.M.D.~acknowledges support from Ministerio de Econom\'{i}a y~Competitividad, Spain, project MTM2015-67142-P (MINECO/FEDER, UE).

\pdfbookmark[1]{References}{ref}
\LastPageEnding

\end{document}